\numberwithin{equation}{section}
\newtheorem{theorem}{Theorem}[section]
\newtheorem{proposition}[theorem]{Proposition}
\newtheorem{corollary}[theorem]{Corollary}
\newtheorem{lemma}[theorem]{Lemma}
\newtheorem{conjecture}[theorem]{Conjecture}
\newtheorem{observation}[theorem]{Observation}
\newtheorem{problem}[theorem]{Problem}
\newtheorem{example}[theorem]{Example}
\newtheorem{defn}[theorem]{Definition}
\theoremstyle{definition}
\newcommand{\maj}{{\mathrm {maj}}}
\newcommand{\inv}{{\mathrm {inv}}}
\newcommand{\Val}{{\mathrm {Val}}}
\newcommand{\Rise}{{\mathrm {Rise}}}
\newcommand{\Stir}{{\mathrm {Stir}}}
\newcommand{\Hilb}{{\mathrm {Hilb}}}
\newcommand{\grFrob}{{\mathrm {grFrob}}}
\newcommand{\conv}{{\mathrm {conv}}}
\newcommand{\std}{{\mathrm {std}}}
\newcommand{\rev}{{\mathrm {rev}}}
\newcommand{\stat}{{\mathrm {stat}}}
\newcommand{\Mat}{{\mathrm {Mat}}}
\newcommand{\SYT}{{\mathrm {SYT}}}
\newcommand{\Frob}{{\mathrm {Frob}}}
\newcommand{\initial}{{\mathrm {in}}}
\newcommand{\shape}{{\mathrm {shape}}}
\newcommand{\symm}{{\mathfrak{S}}}
\newcommand{\PM}{{\mathrm{PM}}}
\newcommand{\OPM}{{\mathrm{OPM}}}
\newcommand{\KK}{{\mathbb {K}}}
\newcommand{\FF}{{\mathbb {F}}}
\newcommand{\CC}{{\mathbb {C}}}
\newcommand{\QQ}{{\mathbb {Q}}}
\newcommand{\ZZ}{{\mathbb {Z}}}
\newcommand{\NN}{{\mathbb{N}}}
\newcommand{\PP}{{\mathbb{P}}}
\newcommand{\OP}{{\mathcal{OP}}}
\newcommand{\WWW}{{\mathcal{W}}}
\newcommand{\UUU}{{\mathcal{U}}}
\newcommand{\NNN}{{\mathcal{N}}}
\newcommand{\MMM}{{\mathcal{M}}}
\newcommand{\WW}{{\mathcal{W}}}
\newcommand{\Fl}{{\mathcal{F\ell}}}
\newcommand{\NS}{{\mathcal{NS}}}
\newcommand{\TTT}{{\mathcal{T}}}
\newcommand{\xx}{{\mathbf {x}}}
\DeclareMathOperator{\codim}{codim}
\begin{document}

\title[A flag variety for the Delta Conjecture]
{A flag variety for the Delta Conjecture}

\author{Brendan Pawlowski}
\address
{Department of Mathematics \newline \indent
University of Michigan \newline \indent
Ann Arbor, MI, 48103-1043, USA}
\email{pawlows@umich.edu}

\author{Brendon Rhoades}
\address
{Department of Mathematics \newline \indent
University of California, San Diego \newline \indent
La Jolla, CA, 92093-0112, USA}
\email{bprhoades@math.ucsd.edu}

\begin{abstract}
The {\em Delta Conjecture} of Haglund, Remmel, and Wilson predicts 
the monomial expansion of the symmetric function $\Delta'_{e_{k-1}} e_n$, where
$k \leq n$ are positive integers and $\Delta'_{e_{k-1}}$ is a Macdonald eigenoperator.
When $k = n$, the specialization $\Delta'_{e_{n-1}} e_n|_{t = 0}$ 
is the Frobenius image of the graded $S_n$-module afforded by the cohomology
ring of the {\em flag variety} consisting of complete flags in $\CC^n$.
We define and study a variety $X_{n,k}$ which carries an
action of $S_n$ whose cohomology ring $H^{\bullet}(X_{n,k})$ has 
Frobenius image given by $\Delta'_{e_{k-1}} e_n|_{t = 0}$,
up to a minor twist.
 The variety $X_{n,k}$ has a cellular decomposition with cells $C_w$ indexed
by length $n$ words $w = w_1 \dots w_n$ in the alphabet $\{1, 2, \dots, k\}$
in which each letter appears at least once.
When $k = n$, the variety $X_{n,k}$ is homotopy equivalent to the flag variety.
We give a presentation for the cohomology ring $H^{\bullet}(X_{n,k})$ as a quotient of 
the polynomial ring
$\ZZ[x_1, \dots, x_n]$ and describe polynomial representatives for the classes
$[ \overline{C}_w]$ of the closures of the cells $C_w$; these representatives 
generalize the classical Schubert polynomials.
\end{abstract}

\keywords{flag variety, symmetric function, coinvariant algebra}
\maketitle

\section{Introduction}
\label{Introduction}

The purpose of this paper is to introduce a new algebraic variety $X_{n,k}$ which generalizes
the flag variety.  Our new variety will have geometric and combinatorial properties
which generalize those of the flag variety, as well as representation theoretic properties
related to the theory of Macdonald polynomials.

Given any symmetric function $f$, the (primed) {\em delta operator} 
$\Delta'_f$ indexed by $f$
is a Macdonald eigenoperator on the space of symmetric functions.
Symmetric function theory has produced many (sometimes conjectural)
combinatorial   expansions of 
expressions of the form $\Delta'_f e_n$ \cite{CarlssonMellit, HRW}. 
When $f = e_{n-1}$, the specialization
$\Delta'_{e_{n-1}} e_n|_{t = 0}$ has deep connections to the 
algebra of the coinvariant ring and the geometry of the flag manifold.
For other symmetric functions $f$, the algebraic and geometric meanings of
$\Delta'_f e_n$ or $\Delta'_f e_n|_{t = 0}$ are less clear.
Our new variety $X_{n,k}$  serves as a `flag variety' for 
the symmetric function
$\Delta'_{e_{k-1}} e_n$ appearing in the {\em Delta Conjecture} \cite{HRW}.

Let the symmetric group $S_n$ act on the polynomial ring $\ZZ[\xx_n] := \ZZ[x_1, \dots, x_n]$ by subscript permutation.
The polynomials $f \in \ZZ[\xx_n]^{S_n}$ which are invariant under this action are called {\em symmetric}.  
The ring $\ZZ[\xx_n]^{S_n}$ is a graded $\ZZ$-algebra which is freely generated by the 
 {\em elementary symmetric polynomials} $e_1(\xx_n), \dots, e_n(\xx_n)$, where
 \begin{equation*}
 e_d(\xx_n) = \sum_{1 \leq i_1 < \cdots < i_d \leq n} x_{i_1} \cdots x_{i_d}.
 \end{equation*}
 
 The {\em invariant ideal} $I_n \subseteq \ZZ[\xx_n]$ is the ideal generated by the set $\ZZ[\xx_n]^{S_n}_+$ of 
 symmetric polynomials with vanishing constant term.  Equivalently, we have
 $I_n = \langle e_1(\xx_n), \dots, e_n(\xx_n) \rangle$.
The corresponding quotient $R_n := \ZZ[\xx_n]/I_n$ is called the {\em coinvariant ring}; it is a free graded
$\ZZ$-module of rank $n!$.

Borel gave a geometric interpretation of $R_n$ as the cohomology of the flag variety \cite{Borel}.
A {\em (complete) flag} in the vector space $\CC^n$ is a chain of nested subspaces 
$V_{\bullet} = (V_1 \subset \cdots \subset V_n)$ in $\CC^n$ such that $\dim(V_i) = i$.
The {\em flag variety} $\Fl(n)$ is the collection of all flags in $\CC^n$.  The set $\Fl(n)$ attains the structure of a 
smooth manifold and a complex algebraic variety by its identification
$\Fl(n) = GL_n(\CC)/B$, where $B \subseteq GL_n(\CC)$ is the subgroup of upper triangular matrices.

The cohomology of $\Fl(n)$ may be presented \cite{Borel} as 
$H^{\bullet}(\Fl(n)) = R_n$, 
where $H^{\bullet}$ stands for singular cohomology with $\ZZ$-coefficients, where we regard
each variable $x_i$ as having degree $2$.
Explicitly, the variable $x_i$ represents $-c_1(V_i/V_{i-1}) \in H^2(\Fl(n))$, 
where $c_1(V_i/V_{i-1})$ is the first Chern
class of the quotient bundle $V_i/V_{i-1}$ over $\Fl(n)$.

The algebraic and geometric properties of $R_n$ and $\Fl(n)$ 
are governed by combinatorial properties of permutations in $S_n$.
\begin{itemize}
\item  The Hilbert series $\Hilb(R_n; q)$ of the free graded $\ZZ$-module $R_n$ is 
given by the Mahonian distribution 
$[n]!_q = (1+q) (1+q+q^2) \cdots (1+q+ \cdots + q^{n-1})$.  There are monomial bases of $R_n$ 
(Artin \cite{Artin} and Garsia-Stanton \cite{Garsia, GarsiaStanton}; see e.g. \cite{HRS} for their definitions)
which reflect the two most important Mahonian statistics (inversion count $\inv$ and 
major index $\maj$) on $S_n$.
\item  Extending coefficients to $\QQ$, Chevalley proved that the {\em ungraded} isomorphism type of 
$R_{n}^{\QQ} := \QQ \otimes_{\ZZ} R_n$ as an $S_n$-module is given by the regular representation of $S_n$, that is
$R_n^{\QQ} \cong \QQ[S_n]$ as $S_n$-modules \cite{Chevalley}.
\item  
Lusztig (unpublished) and Stanley \cite{Stanley} proved that
the {\em graded} isomorphism type of $R_n^{\QQ}$ (or equivalently the symmetric function
$\grFrob(R_n^{\QQ}; q) = \sum_{d \geq 0} \Frob((R_n^{\QQ})_d; q)$ given by its graded Frobenius image) is given by
\begin{equation*}
\grFrob(R_n^{\QQ}; q) =
\sum_{T \in \SYT(n)} q^{\maj(T)} s_{\shape(T)}.
\end{equation*}
Here  $T$ ranges over all standard
tableaux with $n$ boxes, $\maj(T)$ is the major index of $T$, 
and $s_{\shape(T)}$ is the Schur function indexed by the shape of $T$.
\item  We have a CW decomposition of the flag variety $\Fl(n)$ whose open cells $\{ X_w \,:\, w \in S_n \}$ 
are the {\em Schubert cells} $X_w := B w B/B$ (where we identify $w$ with its $n \times n$ permutation matrix).
In particular, the Poincar\'e series $P_{\Fl(n)}(q)$ of $\Fl(n)$ is given by $[n]!_{q^2}$.  Under Borel's identification
$H^{\bullet}(\Fl(n)) = R_n$, the class $[\overline{X}_w] \in H^{\bullet}(\Fl(n))$ of the closure of $X_w$ is represented
by the {\em Schubert polynomial} $\symm_{w^{\vee}} \in \ZZ[\xx_n]$.  Here $w^{\vee} \in S_n$ has one-line notation
$w^{\vee}  = (n-w_1+1) \dots (n-w_n+1)$, where $w = w_1 \dots w_n$ is the one-line notation for $w$.
In particular, the set $\{ \symm_w \,:\, w \in S_n \}$ of Schubert polynomials descends to a $\ZZ$-basis for $R_n$.
\end{itemize}

For positive integers $k \leq n$,
let $I_{n,k} \subseteq \ZZ[\xx_n]$ be the ideal
\begin{equation}
I_{n,k} = \langle x_1^k, x_2^k, \dots, x_n^k, e_n(\xx_n), e_{n-1}(\xx_n), \dots, e_{n-k+1}(\xx_n) \rangle
\end{equation}
and let $R_{n,k} = \ZZ[\xx_n]/I_{n,k}$ be the corresponding quotient.
Since $I_{n,k}$ is homogeneous, the quotient $R_{n,k}$ is a graded $\ZZ$-algebra.
Since $I_{n,k}$ is stable under the $S_n$-action, the quotient $R_{n,k}$ is a graded $\ZZ[S_n]$-module.
When $k = 1$, we have $R_{n,1} = \ZZ$ (the trivial representation in degree $0$); when 
$k = n$ we recover the classical coinvariant ring $R_{n,n} = R_n$. 

The $\QQ$-version $R_{n,k}^{\QQ} = \QQ \otimes_{\ZZ} R_{n,k}$ of the generalized coinvariant ring $R_{n,k}$ was
defined by Haglund-Rhoades-Shimozono \cite{HRS}.
The connection between $R_n$ and permutations  is shown in \cite{HRS}
to generalize to a corresponding connection between $R_{n,k}^{\QQ}$ and  ordered set partitions.

An {\em ordered set partition of size $n$ with $k$ blocks} is a $k$-block set partition of 
$[n] := \{1, 2, \dots, n\}$ equipped with a total order on its blocks.  
Write $\OP_{n,k}$ for the family of ordered set partitions of size $n$ with $k$ blocks.  For example, we have 
\begin{equation*}
\{2,5\} \prec \{3\} \prec \{1,4\} \in \OP_{5,3}.
\end{equation*}
We can think of ordered set partitions algebraically as parabolic cosets in the symmetric group $S_n$
or as faces in the Coxeter complex attached to $S_n$.
The set $\OP_{n,k}$ is counted by
\begin{equation}
|\OP_{n,k}| = k! \cdot \Stir(n,k),
\end{equation}
where $\Stir(n,k)$ is the (signless) Stirling number of the second kind counting $k$-block set partitions
of $[n]$.  When $k = n$ we may identify ordered set partitions with permutations: $\OP_{n,n} = S_n$.

In this paper we  represent ordered set partitions with words.
A length $n$ word $w = w_1 \dots w_n$ in the alphabet of positive integers is called {\em Fubini} if for any 
letter $i > 1$ appearing in $w$, the letter $i-1$ also appears in $w$.
Let $\WW_{n,k}$ be the set of Fubini words of length $n$ with maximum letter $k$.  Equivalently, 
\begin{equation*}
\WW_{n,k} = \{ w = w_1 \dots w_n \in [k]^n \,:\, \text{the letters $1, 2, \dots, k$ all appear in $w$} \}.
\end{equation*}
There is a natural bijection between $\OP_{n,k}$ and $\WW_{n,k}$: if 
$\sigma \in \OP_{n,k}$ the corresponding word $w_1 \dots w_n \in \WW_{n,k}$ has 
$w_i = j$ if and only if $i$ is in the $j^{th}$ block of $\sigma$.  If $\sigma \in \OP_{5,3}$ is as above,
\begin{equation*}
\{2,5\} \prec \{3\} \prec \{1,4\} \leftrightarrow 31231 \in \WW_{5,3}.
\end{equation*}

Haglund-Rhoades-Shimozono established the following algebraic relationships between $R^{\QQ}_{n,k}$ and
$\WW_{n,k}$ \cite{HRS}.  Let $\rev_q$ be the operator on polynomials in $q$ which reverses 
coefficient sequences, so that e.g. $\rev_q(q^3 + 2q - 1) = -q^3 + 2q^2 + 1$.
Also let ${a \brack b}_q$ be the usual $q$-binomial coefficient 
${a \brack b}_q := \frac{[a]!_q}{[b]!_q \cdot [a-b]!_q}$.
If $b > a$ or $a < 0$ we adopt the convention
${a \brack b}_q = 0$.
Postponing some definitions to Section~\ref{Background}, we 
have the following results from \cite{HRS}.

\begin{itemize}
\item  The Hilbert series $\Hilb(R^{\QQ}_{n,k}; q)$ of the graded $\QQ$-vector space 
$R^{\QQ}_{n,k}$ is $\rev_q([k]!_q \cdot \Stir_q(n,k))$,
where $\Stir_q(n,k)$ is the {\em $q$-Stirling number}.
There are monomial bases of $R_{n,k}^{\QQ}$
generalizing  the Artin and Garsia-Stanton bases of $R_n^{\QQ}$.
\item  As an {\em ungraded} $S_n$-module, we have $R_n^{\QQ} \cong \QQ[\WW_{n,k}]$, where $S_n$ acts on the set
of Fubini words $\WW_{n,k}$ by $\pi.(w_1 \dots w_n) = w_{\pi_1} \dots w_{\pi_n}$ for all $\pi \in S_n$ and
$w_1 \dots w_n \in \WW_{n,k}$.
\item  As a {\em graded} $S_n$-module, we have 
\begin{equation*}
\grFrob(R_{n,k}^{\QQ}; q) = \sum_{T \in \SYT(n)} q^{\maj(T)} {n - \mathrm{des}(T) - 1 \brack n-k}_q s_{\shape(T)}.
\end{equation*}
Here $\mathrm{des}(T)$ is the number of descents in the tableau $T$.
\end{itemize}

The construction of $R_{n,k}^{\QQ}$ in \cite{HRS} was motivated by the theory of Macdonald polynomials.
Let $\{ \widetilde{H}_{\mu} \,:\, \mu \vdash n \}$ be the modified Macdonald basis of the vector space 
$\Lambda_n$ of homogeneous degree $n$ symmetric functions over the field $\QQ(q,t)$.
If $f$ is an arbitrary symmetric function, the corresponding {\em delta operator}
is the Macdonald eigenoperator $\Delta'_f: \Lambda_n \rightarrow \Lambda_n$ defined by
\begin{equation}
\Delta'_f: \widetilde{H}_{\mu} \mapsto f( \dots, q^i t^j, \dots ) \cdot \widetilde{H}_{\mu},
\end{equation}
where $(i,j)$ range over all coordinates $\neq (0,0)$ of the boxes of the Ferrers diagram of $\mu$
(and we set all other variables of $f$ to zero).
As an example, if $\mu = (3,2) \vdash 5$ then we fill the Ferrers diagram of $\mu$ as
\begin{center}
\begin{Young}
$\cdot$ & $q$ & $q^2$ \cr
$t$ & $qt$
\end{Young},
\end{center}
so that
\begin{equation*}
\Delta'_f: \widetilde{H}_{\mu} \mapsto f(q,q^2,t,qt) \cdot \widetilde{H}_{\mu}.
\end{equation*}

For any positive integers $k \leq n$, the {\em Delta Conjecture} of Haglund-Remmel-Wilson \cite{HRW} predicts 
the monomial expansion of the delta operator application $\Delta'_{e_{k-1}} e_n$.  It reads
\begin{equation}
\Delta'_{e_{k-1}} e_n = \Rise_{n,k}(\xx;q,t) = \Val_{n,k}(\xx;q,t).
\end{equation}
Here $\Rise_{n,k}(\xx;q,t)$ and $\Val_{n,k}(\xx;q,t)$ are certain formal power series 
in the infinite variable set $\xx = (x_1, x_2, \dots )$ depending on the two parameters $q,t$.  
The combinatorial definition of $\Rise$ and $\Val$ depends on various statistics on length $n$ labeled Dyck paths;
see \cite{HRW} for details.

In the case $k = n$ we have $\Delta'_{e_{n-1}} e_n = \nabla e_n$, where $\nabla$ is the Bergeron-Garsia nabla operator,
and the Delta Conjecture reduces to the Shuffle Theorem \cite{CarlssonMellit}.
In particular, the classical coinvariant ring $R_n^{\QQ}$ has graded Frobenius image
$\nabla e_n|_{t = 0}$ obtained by specializing $\nabla e_n$ at $t = 0$.

While the Delta Conjecture is open in general, it has been proven in the case where one of the variables $q,t$ is set to zero.

\begin{theorem}
\label{zero-delta-conjecture}
(Garsia-Haglund-Remmel-Yoo \cite{GHRY}, R. \cite{RhoadesOSP}, Wilson \cite{Wilson})  Let $k \leq n$ be positive integers.  We have
\begin{equation*}
\Delta'_{e_{k-1}} e_n|_{t = 0} = 
\Delta'_{e_{k-1}} e_n|_{q = 0, t = q} =
\Rise_{n,k}(\xx;q,0) =
\Rise_{n,k}(\xx;0,q) = 
\Val_{n,k}(\xx;q,0) = 
\Val_{n,k}(\xx;0,q).
\end{equation*}
\end{theorem}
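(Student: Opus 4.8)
The plan is to split the chain of six equalities into three layers and dispatch them in turn. First, $\Delta'_{e_{k-1}} e_n$ is symmetric in its two parameters: the conjugation symmetry $\widetilde{H}_\mu[\xx;q,t] = \widetilde{H}_{\mu'}[\xx;t,q]$, together with the matching behaviour of $B_\mu(q,t)$ and of the eigenvalue $e_{k-1}[B_\mu - 1]$ under $(\mu,q,t) \mapsto (\mu',t,q)$ in the modified Macdonald expansion, forces $\Delta'_{e_{k-1}} e_n(q,t) = \Delta'_{e_{k-1}} e_n(t,q)$. Specializing $t = 0$ and then relabelling $q$ therefore identifies the first two expressions, $\Delta'_{e_{k-1}} e_n|_{t=0} = \Delta'_{e_{k-1}} e_n|_{q=0,t=q}$, with no further work. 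What remains is (i) the four equalities among the $t=0$ and $q=0$ specializations of $\Rise_{n,k}$ and $\Val_{n,k}$, and (ii) the identification of any one of these with $\Delta'_{e_{k-1}} e_n|_{t=0}$.

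For (i) I would unwind the definitions of $\Rise_{n,k}(\xx;q,t)$ and $\Val_{n,k}(\xx;q,t)$ as generating functions over labelled Dyck paths of size $n$ (carrying a decoration recording $k$), weighted by $q^{\dinv}t^{\area}$ for $\Rise$ and by the analogous pair of statistics for $\Val$. Setting $t=0$ annihilates every contribution with positive $\area$-type statistic; the survivors are the paths hugging the diagonal subject to the $k$-decoration, and this residual family is naturally in bijection with $\OP_{n,k}$ (equivalently $\WW_{n,k}$). Carrying this out separately for $\Rise$ and for $\Val$ rewrites $\Rise_{n,k}(\xx;q,0)$ and $\Val_{n,k}(\xx;q,0)$ as explicit $q$-generating functions over $\OP_{n,k}$ — one governed by a $\maj$-type statistic, the other by a $\minimaj$-type statistic — and a descent-preserving bijection between the two presentations (equivalently, a comparison of their expansions into Gessel's fundamental quasisymmetric functions) gives $\Rise_{n,k}(\xx;q,0) = \Val_{n,k}(\xx;q,0)$. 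The equalities $\Rise_{n,k}(\xx;q,0) = \Rise_{n,k}(\xx;0,q)$ and $\Val_{n,k}(\xx;q,0) = \Val_{n,k}(\xx;0,q)$ have genuine content, since $\Rise$ and $\Val$ are not individually $qt$-symmetric; here I would use a statistic-swapping bijection on the relevant labelled Dyck paths — a $k$-decorated analogue of the classical sweep/zeta map — interchanging the two path statistics that degenerate at $t=0$ and at $q=0$.

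Step (ii) is the main obstacle. By the graded Frobenius formula $\grFrob(R_{n,k}^{\QQ};q) = \sum_{T \in \SYT(n)} q^{\maj(T)} {n-\mathrm{des}(T)-1 \brack n-k}_q s_{\shape(T)}$ of \cite{HRS} — to which step (i) will have pinned both combinatorial sides — it is enough to prove $\Delta'_{e_{k-1}} e_n|_{t=0} = \grFrob(R_{n,k}^{\QQ};q)$. I would attack this in the modified Macdonald basis: expand $e_n = \sum_{\mu \vdash n} c_\mu(q,t)\,\widetilde{H}_\mu$, apply $\Delta'_{e_{k-1}} \widetilde{H}_\mu = e_{k-1}[B_\mu - 1]\,\widetilde{H}_\mu$, and let $t \to 0$. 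At $t=0$ the polynomial $\widetilde{H}_\mu[\xx;q,0]$ degenerates to a transformed Hall--Littlewood function, Schur-expanded by Kostka--Foulkes polynomials, while the scalar $c_\mu(q,0)\cdot e_{k-1}[B_\mu-1]|_{t=0}$ vanishes for all but a controlled set of $\mu$; collecting the survivors produces a sum over partitions weighted by $q$-binomial coefficients, which a $q$-Stirling identity — the same one behind $\Hilb(R_{n,k}^{\QQ};q) = \rev_q([k]!_q\cdot\Stir_q(n,k))$ — converts into the stated sum over standard Young tableaux. The technical heart is precisely this reduction: the eigenvalue $e_{k-1}[B_\mu-1]$ carries no visible positivity, and the $t=0$ limit of $\widetilde{H}_\mu$ must be handled uniformly across all $\mu$, so identifying which $\mu$ contribute and resumming cleanly is where the real effort goes.

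A more representation-theoretic route to (ii) would sidestep the plethysm: construct $R_{n,k}^{\QQ}$ directly, exhibit a monomial (Garsia--Stanton- or standard-monomial-type) basis together with a branching short exact sequence in the parameter $k$, and check that $\grFrob(R_{n,k}^{\QQ};q)$ obeys the same recursion in $k$ that the eigenvalue identity imposes on $\Delta'_{e_{k-1}} e_n|_{t=0}$, with the shared base case $k=1$ and the shared terminal case $k=n$, where $\Delta'_{e_{n-1}} e_n|_{t=0} = \nabla e_n|_{t=0} = \grFrob(R_n^{\QQ};q)$. Once $\Delta'_{e_{k-1}} e_n|_{t=0}$, $\Rise_{n,k}(\xx;q,0)$ and $\Val_{n,k}(\xx;q,0)$ have all been identified with the single expansion $\sum_{T \in \SYT(n)} q^{\maj(T)} {n-\mathrm{des}(T)-1 \brack n-k}_q s_{\shape(T)}$, the full chain of six equalities follows, the remaining two being the $q=0$ images already matched in step (i).
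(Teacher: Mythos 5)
This theorem is not proved in the paper at all: it is imported wholesale from Garsia--Haglund--Remmel--Yoo \cite{GHRY}, Rhoades \cite{RhoadesOSP}, and Wilson \cite{Wilson}, so there is no internal argument to compare yours against. Judged on its own terms, your proposal is a reasonable roadmap that mirrors how the literature actually proceeds (the $q,t$-symmetry argument for $\Delta'_{e_{k-1}}e_n|_{t=0} = \Delta'_{e_{k-1}}e_n|_{q=0,t=q}$ via $\widetilde{H}_{\mu}(\xx;q,t) = \widetilde{H}_{\mu'}(\xx;t,q)$ and $e_{k-1}[B_{\mu}-1]$ is correct and is genuinely the easy part), but it is a strategy, not a proof: every step with real content is deferred. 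In step (i), the $t=0$ survivors of $\Rise_{n,k}$ and $\Val_{n,k}$ are not simply ``paths hugging the diagonal,'' because the decorated rises/valleys are discounted from the area-type statistic; what one actually gets are generating functions over $\OP_{n,k}$ in four distinct statistics ($\dinv$-, $\maj$-, $\minimaj$-, and $\coinv$-type), and proving their equidistribution is the main theorem of \cite{RhoadesOSP} and \cite{Wilson} --- a substantial piece of combinatorics, not a routine descent-preserving bijection that can be waved at.

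The more serious gap is step (ii). Your proposed reduction ``it is enough to prove $\Delta'_{e_{k-1}}e_n|_{t=0} = \grFrob(R_{n,k}^{\QQ};q)$'' is wrong as stated: Theorem~\ref{delta-coinvariant-algebra} says $\grFrob(R_{n,k}^{\QQ};q) = (\rev_q \circ \omega)\, C_{n,k}(\xx;q)$, so the two sides differ by the twist $\rev_q \circ \omega$, and moreover that theorem of \cite{HRS} is itself established on top of the very identities you are trying to prove (its statement already presupposes the common value $C_{n,k}$ of Theorem~\ref{zero-delta-conjecture}), so invoking it here is circular. Stripped of that, what remains of step (ii) is ``expand $e_n$ in the $\widetilde{H}_{\mu}$ basis, apply the eigenoperator, let $t \to 0$, and resum'' --- but the coefficients of $e_n$ in the modified Macdonald basis are complicated rational functions in $q,t$, the $t \to 0$ limit does not localize to a small set of $\mu$ in any obvious way, and controlling the resulting Hall--Littlewood/Kostka--Foulkes data is exactly the content of \cite{GHRY}. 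You correctly flag this as ``where the real effort goes,'' but that effort is the theorem; as written, the central analytic identity linking $\Delta'_{e_{k-1}}e_n|_{t=0}$ to any one of the combinatorial expressions remains unproved.
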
 

It is an open problem to find a `nice' bigraded $S_n$-module whose Frobenius image is 
given by (even after a `nice' modification) any of the expressions $\Delta'_{e_{k-1}} e_n, \Rise_{n,k}(\xx;q,t),$ or 
$\Val_{n,k}(\xx;q,t)$ in the Delta Conjecture.
The ring $R_{n,k}^{\QQ}$ solves this problem at $t = 0$. 

\begin{theorem}
\label{delta-coinvariant-algebra}  
(Haglund-R.-Shimozono \cite{HRS})
Let $k \leq n$ be positive integers.  If $C_{n,k}(\xx;q)$ is the common symmetric function of Theorem~\ref{zero-delta-conjecture}
then 
\begin{equation*}
\grFrob(R_{n,k}^{\QQ}; q) = (\rev_q \circ \omega) C_{n,k}(\xx;q),
\end{equation*}
where $\omega$ is the symmetric function involution which interchanges the elementary 
symmetric function $e_r$ and the homogeneous symmetric function $h_r$.
\end{theorem}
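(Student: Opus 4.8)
The theorem is stated in tandem with the companion formula $\grFrob(R_{n,k}^{\QQ};q) = \sum_{T \in \SYT(n)} q^{\maj(T)} {n - \mathrm{des}(T) - 1 \brack n-k}_q s_{\shape(T)}$ recorded above; granting that formula, Theorem~\ref{delta-coinvariant-algebra} is equivalent to the purely symmetric-function identity
\begin{equation*}
C_{n,k}(\xx;q) = (\omega \circ \rev_q) \sum_{T \in \SYT(n)} q^{\maj(T)} {n - \mathrm{des}(T) - 1 \brack n-k}_q s_{\shape(T)},
\end{equation*}
using $\omega^2 = \rev_q^2 = \mathrm{id}$ and the fact that $\omega$ and $\rev_q$ commute. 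Invoking $\omega s_\lambda = s_{\lambda'}$ and the transpose involution on standard Young tableaux (it conjugates $\shape(T)$, complements $\Des(T)$ inside $\{1, \dots, n-1\}$, and hence replaces $\maj(T)$ by $\binom{n}{2} - \maj(T)$), the right side unwinds to an explicit Schur expansion of $C_{n,k}(\xx;q)$. The plan, then, is to carry out two independent computations and match them: (a) derive the companion graded Frobenius formula for $R_{n,k}^{\QQ}$, and (b) compute the Schur expansion of $C_{n,k}(\xx;q)$.

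For (a): begin with a Gr\"obner-basis computation on $I_{n,k} = \langle x_1^k, \dots, x_n^k, e_n(\xx_n), \dots, e_{n-k+1}(\xx_n)\rangle$ for a term order such as $\neglex$; the leading terms are $x_1^k, \dots, x_n^k$ together with the top terms of suitable ``truncated'' polynomials built from the $e_{n-j}(\xx_n)$, and the associated standard monomials form an explicit family generalizing the Garsia--Stanton monomials and counted by $\rev_q([k]!_q \cdot \Stir_q(n,k))$, fixing $\Hilb(R_{n,k}^{\QQ};q)$. Next, the ungraded isomorphism $R_{n,k}^{\QQ} \cong \QQ[\WW_{n,k}]$ can be obtained by orbit harmonics: exhibit a finite $S_n$-stable point set $Z_{n,k} \subseteq \CC^n$ whose coordinates are roots of unity patterned by the Fubini words in $\WW_{n,k}$, verify that the ideal of $Z_{n,k}$ degenerates to exactly $I_{n,k}$, and conclude that $R_{n,k}^{\QQ}$ is the associated graded of $\CC[Z_{n,k}] \cong \CC[\WW_{n,k}]$. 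Finally, to refine this to a graded character, set $f_{n,k} := \grFrob(R_{n,k}^{\QQ};q)$ and establish a short exact sequence of graded $S_{n-1}$-modules relating $R_{n,k}^{\QQ}$, a grading shift of $R_{n-1,k}^{\QQ}$, and a grading shift of $R_{n-1,k-1}^{\QQ}$ (e.g.\ via the substitution $x_n \mapsto 0$); this yields a recursion for $f_{n,k}$ that couples the branching rule for Schur functions with the $q$-Pascal recursion for the $q$-binomial coefficients, and with the base case $f_{k,k} = \grFrob(R_k^{\QQ};q) = \sum_{T \in \SYT(k)} q^{\maj(T)} s_{\shape(T)}$ (Lusztig--Stanley) it is solved by the asserted formula.

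For (b): Theorem~\ref{zero-delta-conjecture} allows $C_{n,k}(\xx;q)$ to be replaced by $\Rise_{n,k}(\xx;q,0)$ (or $\Val_{n,k}(\xx;q,0)$). Specializing $t = 0$ collapses the labeled-Dyck-path sum defining $\Rise_{n,k}$ onto a much smaller family of objects carrying a single $q$-statistic and in bijection with the Fubini words $\WW_{n,k}$; its expansion into Gessel fundamental quasisymmetric functions is read directly off the descent data, and grouping fundamental quasisymmetric functions into Schur functions by standardization produces the $q$-binomial coefficients. This is the computation underlying \cite{GHRY, RhoadesOSP, Wilson}, which I would cite and, if needed, reprove in the simplified $t = 0$ regime. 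Applying $\rev_q \circ \omega$ and using the palindromicity of the $q$-binomial coefficients together with the transpose bookkeeping from the first paragraph then identifies (b) with (a).

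The principal obstacle is the graded refinement in (a): the Hilbert series and the ungraded $S_n$-module are comparatively mechanical (Gr\"obner bases and orbit harmonics), whereas extracting the exact $q$-binomial coefficients — whether by controlling the $S_{n-1}$-branching short exact sequence tightly enough for the recursion to close, or by computing directly the graded character of the harmonic space cut out by $I_{n,k}$ — is where the real work lies. Part (b) is then essentially bookkeeping once the $t = 0$ Delta Conjecture combinatorics of Theorem~\ref{zero-delta-conjecture} are in hand.
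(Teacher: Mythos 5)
This statement is not proved in the paper at all: it is quoted from \cite{HRS}, with the $t=0$ evaluations of Theorem~\ref{zero-delta-conjecture} supplied by \cite{GHRY, RhoadesOSP, Wilson}, so the only ``proof'' in the paper to compare against is that citation. Judged on its own terms, your outline does follow the general contours of the cited work --- Gr\"obner/skip-monomial control of the Hilbert series, an orbit-harmonics point locus for the ungraded module structure, and the $t=0$ Delta combinatorics for the Schur expansion of $C_{n,k}(\xx;q)$ --- and your part (b) is explicitly a citation, which is legitimate given how the theorem is phrased. The difficulty is part (a), exactly where you locate the real work, and there the plan as written breaks down rather than merely being unfinished.

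A short exact sequence relating $R^{\QQ}_{n,k}$ to a single grading shift of $R^{\QQ}_{n-1,k}$ and a single shift of $R^{\QQ}_{n-1,k-1}$ cannot exist: $\dim R^{\QQ}_{n,k} = k!\cdot\Stir(n,k) = k\,\bigl(\dim R^{\QQ}_{n-1,k-1} + \dim R^{\QQ}_{n-1,k}\bigr)$, consistent with the factor $[k]_q$ in the Hilbert series recursion, so at best one has a filtration with $k$ shifted copies of each smaller ring. Moreover the map you suggest, $x_n \mapsto 0$, carries $R^{\QQ}_{n,k}$ onto $\QQ\otimes_{\ZZ}\ZZ[\xx_{n-1}]/\langle x_1^k,\dots,x_{n-1}^k, e_{n-1},\dots,e_{n-k+1}\rangle$, which is the three-parameter ring $R^{\QQ}_{n-1,k,k-1}$ of Section~\ref{Variations}, not $R^{\QQ}_{n-1,k}$ or $R^{\QQ}_{n-1,k-1}$; this is precisely why \cite{HRS} introduce the rings $R_{n,k,s}$. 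Most seriously, even a corrected restriction statement only computes $e_1^{\perp}\,\grFrob(R^{\QQ}_{n,k};q)$, and $e_1^{\perp}$ has nontrivial kernel on $\Lambda_n$ (there are more partitions of $n$ than of $n-1$), so a branching recursion alone cannot determine the graded Frobenius image; it pins the answer down only modulo that kernel at every stage. The argument in \cite{HRS} supplies the missing leverage by working with the whole family $R_{n,k,s}$ and skewing identities of the type quoted in Section~\ref{Variations}, which control $e_m^{\perp}\grFrob(R^{\QQ}_{n,k};q)$ for the relevant $m$ and lead to the dual Hall--Littlewood expansion \eqref{dual-hall-littlewood-expansion}, subsequently matched with the $t=0$ combinatorics. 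Until you either establish such a family of skewing identities or otherwise rigidify the graded character, your derivation of the $q$-binomial formula --- and hence of the theorem --- is incomplete.
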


Theorem~\ref{delta-coinvariant-algebra} justifies the claim that $R_{n,k}^{\QQ}$ is the coinvariant algebra for the Delta 
Conjecture.
In \cite[Prob. 7.2]{HRS} the problem was posed of finding a version of the {\em flag variety} of the Delta Conjecture.
That is, one would like a complex algebraic variety whose cohomology is isomorphic to 
$R_{n,k}$.
This variety should have a cellular decomposition
with cells indexed by $\WW_{n,k}$ giving rise to a Schubert polynomial
basis $\{ \symm_w \,:\, w \in \WW_{n,k} \}$ for $R_{n,k}$.

In this paper we solve \cite[Prob. 7.2]{HRS} by means of the following construction;
it is the main object of study.

\begin{defn}
\label{main-definition-x}
Let $k \leq n$ be positive integers.  We denote by $X_{n,k}$ be the following space of line 
configurations in $\CC^k$:
\begin{equation}
X_{n,k} := \{ (\ell_1, \dots , \ell_n) \,:\, \text{$\ell_i$ a line through the origin in $\CC^k$ and $\ell_1 + \cdots + \ell_n = \CC^k$} \}.
\end{equation}
\end{defn}

A point in $X_{n,k}$ is an $n$-tuple $\ell_{\bullet} = (\ell_1, \dots, \ell_n)$ 
of $1$-dimensional subspaces of $\CC^k$ which together
span $\CC^k$.

If $\PP^{k-1}$ is $(k-1)$-dimensional complex projective space and 
$(\PP^{k-1})^n$ is its $n$-fold Cartesian product, the space $X_{n,k}$ is 
an open subvariety of $(\PP^{k-1})^n$, and therefore a smooth complex
manifold.
When $k = 1$, the space $X_{n,1}$ is the one-point space.
When $k = n$, if $T \subseteq GL_n(\CC)$ is the diagonal torus, the canonical projection
\begin{equation*}
X_{n,n} = GL_n(\CC)/T \twoheadrightarrow GL_n(\CC)/B = \Fl(n)
\end{equation*}
is a homotopy equivalence.   
\footnote{Indeed, the map $X_{n,n} \twoheadrightarrow \Fl(n)$ is a fiber bundle
over a Hausdorff base space
whose fiber is isomorphic to the (contractible) unipotent group of upper triangular
$n \times n$ matrices with $1$'s on the diagonal.}
We prove the following results about $X_{n,k}$.

\begin{itemize}
\item The variety $X_{n,k}$ has a cellular decomposition with cells $\{ C_w \,:\, w \in \WW_{n,k} \}$ indexed by
Fubini words (Theorem~\ref{cohomology-of-x} (2)).
\item The cohomology ring $H^{\bullet}(X_{n,k})$ may be presented as 
$H^{\bullet}(X_{n,k}) = R_{n,k}$, where each generating variable $x_i$ of $R_{n,k}$ lies in cohomological
degree $2$. Under this presentation, we may regard the variable $x_i$ as 
representing the first Chern class $c_1(\ell_i^*)$ of the dual $\ell_i^*$ of the $i^{th}$ tautological line bundle 
$\ell_i \twoheadrightarrow X_{n,k}$ over $X_{n,k}$ (Theorem~\ref{cohomology-of-x} (3)).
\item  For any Fubini word $w \in \WW_{n,k}$, we have an explicitly defined
polynomial $\symm_w \in \ZZ_{\geq 0}[\xx_n]$ 
(see Definition~\ref{fubini-schuberts}) such that $\symm_w$ represents the class
of $[\overline{C}_w]$ in $H^{\bullet}(X_{n,k})$.  In particular, the set 
$\{ \symm_w \,:\, w \in \WW_{n,k} \}$ descends to a $\ZZ$-basis of $R_{n,k}$.
\end{itemize}

The symmetric group $S_n$ acts on $X_{n,k}$ by permuting the lines in any $n$-tuple
$(\ell_1, \dots, \ell_n) \in X_{n,k}$.  This gives an action of $S_n$ on the cohomology ring 
$H^{\bullet}(X_{n,k})$.  The interpretation of the variables $x_i$ as representing 
$c_1(\ell_i^*)$ shows that our presentation $H^{\bullet}(X_{n,k}) = R_{n,k}$ is not just an identification
of graded $\ZZ$-algebras, but also of graded $\ZZ[S_n]$-modules
(see Theorem~\ref{thm:equivariance}).

Although many geometric properties of $X_{n,k}$ generalize those of the flag manifold $\Fl(n)$,
there there are some significant differences between $\Fl(n)$ and $X_{n,k}$ for general $k \leq n$.
These stem from the fact that the Poincar\'e series 
$P_{X_{n,k}}(q) = \Hilb(R_{n,k}; q^2) = \rev_q ( [k]!_{q^2} \cdot \Stir_{q^2}(n,k))$ 
of $X_{n,k}$ is not in general palindromic.
For example, we have $P_{X_{3,2}}(q) = \Hilb(R_{3,2}; q^2) = 1 + 3 q^2 + 2 q^4$.

Since $P_{X_{n,k}}(q)$ is not in general palindromic, the space $X_{n,k}$ is not homotopy equivalent to 
a compact smooth manifold for all $k \leq n$.
Since any finite CW complex is compact, our cellular decomposition
$\{ C_w \,:\, w \in \WW_{n,k} \}$ cannot come from a CW decomposition of $X_{n,k}$.
In fact, there in general exist $v, w \in \WW_{n,k}$ such that 
$C_v \not\subseteq \overline{C}_w$ and yet $C_v \cap \overline{C}_w \neq \varnothing$.

The rest of the paper is organized as follows.
In {\bf Section~\ref{Background}} we give combinatorial and geometric background.
In {\bf Section~\ref{Orbit}} we work over an arbitrary field $\KK$ and introduce the $\KK$-variety $X_{n,k}^{\KK}$
together with a related $\KK$-variety $Y_{n,k}^{\KK}$; we use a variant to Gaussian elimination to prove 
our first facts about these objects.
In {\bf Section~\ref{Cellular}} we build up the tools from cellular cohomology we will use to present
$H^{\bullet}(X_{n,k})$.
In {\bf Section~\ref{Cohomology}} we present $H^{\bullet}(X_{n,k}) = R_{n,k}$ as a graded $\ZZ$-algebra.
We do this by giving a cellular decomposition of $X_{n,k}$.  This gives rise to our Schubert basis
$\{ \symm_w \,:\, w \in \WW_{n,k} \}$ for $R_{n,k}$.
In {\bf Section~\ref{Chern}} we reformulate the identification $H^{\bullet}(X_{n,k}) = R_{n,k}$ using Chern 
classes. 
In {\bf Section~\ref{Stability}} we consider stable analogs of our results under natural towers of embeddings
$X_{n,k} \subset X_{n+1,k}$ and $X_{n,k} \subset X_{n+1,k+1}$.
In {\bf Section~\ref{Variations}} we present variants on the variety $X_{n,k}$ whose cohomology rings 
are (sometimes conjecturally) equal to variants of the ring $R_{n,k}$.
In {\bf Section~\ref{Closing}} we make closing remarks.

\section{Background}
\label{Background}

\subsection{Ordered set partitions and words}
An {\em ordered set partition of size $n$ with $k$ blocks} is a $k$-block set partition
of $[n]$ with a total order on its blocks.  We let $\OP_{n,k}$ be the family 
of ordered set partitions of $[n]$ with $k$ blocks.  As an example, we have
$\sigma = (\{2,5\} \prec \{3\} \prec \{1,4\}) \in \OP_{5,3}$.  
The {\em bar notation} for ordered set partitions uses bars to denote 
separation between blocks,
so our example ordered set partition becomes
\begin{equation*}
\sigma = (25 \mid 3 \mid 14) \in \OP_{5,3}.
\end{equation*}
The {\em star notation} for ordered set partitions uses stars to indicate connectives
between elements of the same block, so our example ordered set partition becomes
\begin{equation*}
\sigma = 2_*5 \, \, 3 \, \, 1_*4 \in \OP_{5,3}.
\end{equation*}
Rather than bars or stars, we will represent ordered set partitions with words in this
paper.

Let $w = w_1 \dots w_n$ be an arbitrary word over the positive integers.  A position $1 \leq i \leq n$ is called {\em initial in $w$} if 
$w_{i'} \neq w_i$ for all $i' < i$.   Let $\initial(w) = \{ 1 \leq i \leq n \,:\, \text{$i$ is initial in $w$} \}$ be the set of all 
initial positions in $w$.  As an example, $\initial(3343414) = \{1,3,6\}$.
The letters appearing in initial positions are called {\em initial letters}; the initial letters in $3343414$ are (from left to right)
$3, 4,$ and $1$.

A word $w = w_1 \dots w_n$ over the positive 
integers is called {\em Fubini} if, for all $i \geq 2$ such that $i$ appears in 
$w$, the letter $(i-1)$ also appears in $w$.  We let $\WW_{n,k}$ be the family
of Fubini words of length $n$ 
with maximum letter $k$.  There is a natural bijection between
$\WW_{n,k}$ and $\OP_{n,k}$.

Suppose $w = w_1 \dots w_n \in \WW_{n,k}$ is a Fubini word.  
By listing the initial letters in $w$ from left to right, 
we get
a permutation $\pi(w) = \pi_1 \dots \pi_k \in S_k$.  
For example, for $331443123 \in \WW_{9,4}$
we have $\pi(331443123) = 3142 \in S_4$.

A word $w = w_1 \dots w_n$ is called {\em convex} if $w$ contains no subword of the form
$i \dots j \dots i$ for $i \neq j$.  For example, $224443$ is convex but $244234$ is not.
The {\em convexification} $\conv(w) = w_1' \dots w_n'$ of a word $w = w_1 \dots w_n$ is the unique convex word with the same
letter multiplicities as $w$, with the initial letters appearing in the same order from left to right,
e.g. $\conv(244234) = 224443$.

Given $n \geq 0$, we have the $q$-analogs 
\begin{align}
[n]_q &:= 1 + q + q^2 + \cdots + q^{n-1}, \\
[n]!_q &:= [n]_q [n-1]_q \cdots [1]_q.
\end{align}
Recall that a permutation statistic $\stat: S_n \rightarrow \ZZ_{\geq 0}$ is called {\em Mahonian} 
if its distribution $\sum_{\pi \in S_n} q^{\stat(\pi)}$ is equal to $[n]!_q$.
The most important Mahonian statistic on permutations is the {\em inversion count}.  For 
$\pi = \pi_1 \dots \pi_n \in S_n$ the statistic $\inv(\pi)$ is
given by $\inv(\pi) = | \{ 1 \leq i < j \leq n \,:\, \pi_i > \pi_j \} |$.

More generally, a statistic $\stat: \WW_{n,k} \rightarrow \ZZ_{\geq 0}$ is called {\em Mahonian}
if its distribution $\sum_{w \in \WW_{n,k}} q^{\stat(w)}$ is equal to
$[k]!_q \cdot \Stir_q(n,k)$, where $\Stir_q(n,k)$ is the $q$-analog of the Stirling number 
recursively defined by
\begin{equation}
\Stir_q(n,k) = \begin{cases}
\delta_{k,0} & \text{if $n = 0$} \\
\Stir_q(n-1,k-1) + [k]_q \cdot \Stir_q(n-1,k) & \text{if $n > 0$}.
\end{cases}
\end{equation}

\subsection{Schubert polynomials}
For $1 \leq i \leq n-1$, let $s_i \in S_n$ be the adjacent transposition $s_i = (i, i+1)$.
The group $S_n$ has Coxeter presentation with generators $s_1, \dots, s_{n-1}$ subject to the relations
\begin{equation}
\begin{cases}
s_i^2 = 1 & 1 \leq i \leq n-1, \\
s_i s_j = s_j s_i & |i-j| > 1, \\
s_i s_{i+1} s_i = s_{i+1} s_i s_{i+1} & 1 \leq i \leq n-2.
\end{cases}
\end{equation}

For $1 \leq i \leq n-1$ the {\em divided difference operator} $\partial_i: \ZZ[\xx_n] \rightarrow \ZZ[\xx_n]$ is given by
\begin{equation}
\partial_i(f) := \frac{f - s_i.f}{x_i - x_{i+1}},
\end{equation}
where $s_i.f$ is the result of interchanging $x_i$ and $x_{i+1}$ in $f = f(x_1, \dots, x_n)$.  The operators $\partial_i$
satisfy the {\em nilCoxeter relations}
\begin{equation}
\begin{cases}
\partial_i^2 = 0 & 1 \leq i \leq n-1, \\
\partial_i \partial_j = \partial_j \partial_i & |i-j| > 1, \\
\partial_i \partial_{i+1} \partial_i = \partial_{i+1} \partial_i \partial_{i+1} & 1 \leq i \leq n-2.
\end{cases}
\end{equation}

The {\em Schubert polynomials} $\{ \symm_w \,:\, w \in S_n \}$ are the family of polynomials in
$\ZZ[\xx_n]$
defined recursively as follows.  
If $w_0 \in S_n$ is the reversal permutation with one-line notation
$w_0 = n \dots 21$, then $\symm_{w_0} = x_1^{n-1} x_2^{n-2} \dots x_{n-1}^1$.  If 
$w = w_1 \dots w_n \in S_n$
and $1 \leq i \leq n-1$ is such that $w_i > w_{i+1}$, then 
$\symm_{w  s_i} = \partial_i \symm_{w}$.
The set $\{ \symm_{\pi} \,:\, \pi \in S_n \}$ of Schubert polynomials 
descends to a $\ZZ$-basis for the coinvariant algebra
$R_n$.

We will also need the {\em double Schubert polynomials}
$\symm_w(x_1, \dots, x_n; y_1, \dots, y_n)$ depending on two sets of $n$ variables.
These are given in terms of the ordinary Schubert polynomials:
\begin{equation}
\symm_w(x_1, \dots, x_n; y_1, \dots, y_n) := 
\sum_{\substack{w = v^{-1} u \\ \inv(u) + \inv(v) = \inv(w)}}
\symm_u(x_1, \dots, x_n) \cdot \symm_v(-y_1, \dots, -y_n).
\end{equation}

\subsection{Symmetric functions}
If $n$ is a positive integer, a {\em partition of $n$} is a weakly decreasing sequence
$\lambda = (\lambda_1 \geq \cdots \geq \lambda_k)$ of positive integers with
$\lambda_1 + \cdots + \lambda_k = n$.  
The partition $\lambda$ is identified with its (English) Ferrers diagram,
consisting of $\lambda_i$ left justified boxes in row $i$.
We write $\lambda \vdash n$ to indicate that 
$\lambda$ is a partition of $n$.  
We also write $\ell(\lambda) = k$ for the number of parts of $\lambda$.

Let $\Lambda = \bigoplus_{n \geq 0} \Lambda_n$ be the ring of symmetric functions
in the infinite variable set $\xx = (x_1, x_2, \dots )$ over the ground field $\QQ(q,t)$.  
If $\lambda \vdash n$ is a partition, we let
\begin{equation*}
e_{\lambda} = e_{\lambda}(\xx), \quad
h_{\lambda} = h_{\lambda}(\xx), \quad
s_{\lambda} = s_{\lambda}(\xx), \quad
Q'_{\lambda} = Q'_{\lambda}(\xx;q), \quad
\widetilde{H}_{\lambda} = \widetilde{H}_{\lambda}(\xx;q,t)
\end{equation*}
denote the {\em elementary, (complete) homogeneous, Schur, dual Hall-Littlewood,
and modified Macdonald} symmetric function attached to $\lambda$
(respectively).
Each of the five sets 
$\{ e_{\lambda} \,:\, \lambda \vdash n \},
\{h_{\lambda} \,:\, \lambda \vdash n \},
\{s_{\lambda} \,:\, \lambda \vdash n \},
\{ Q'_{\lambda} \,:\, \lambda \vdash n \},
\{ \widetilde{H}_{\lambda} \,:\, \lambda \vdash n \}$
gives a basis for the vector space $\Lambda_n$ of homogeneous degree $n$
symmetric functions.

If $f \in \Lambda$ is any symmetric function, we have the Macdonald eigenoperator
$\Delta'_f: \Lambda \rightarrow \Lambda$ defined in the introduction.
We will also refer to an `unprimed' version 
$\Delta_f: \Lambda \rightarrow \Lambda$ defined on the Macdonald basis by
\begin{equation}
\Delta_f: \widetilde{H}_{\mu} \mapsto f(\dots, q^i t^j , \dots ) \cdot \widetilde{H}_{\mu},
\end{equation}
where $(i,j)$ range over {\em all} coordinates of the boxes in the Ferrers diagram of $\mu$
(including the northwest corner $(0,0)$).  For example, if $\mu = (3,2)$ we fill
in the boxes of $\mu$ as
\begin{equation*}
\begin{Young}
1 & q & q^2 \cr
t & qt
\end{Young}
\end{equation*}
so that
\begin{equation*}
\Delta_f: \widetilde{H}_{(3,2)} \mapsto f(1, q, q^2, t, qt) \cdot \widetilde{H}_{(3,2)}.
\end{equation*}
The only difference between the operator $\Delta_f$ and the operator $\Delta'_f$
is the presence of $1$ among the arguments of the eigenvalue of 
$\widetilde{H}_{\mu}$.  Using the notation of plethysm, the operators $\Delta_f$ and
$\Delta'_f$ may be described more succinctly as 
\begin{align}
&\Delta_f:  \widetilde{H}_{\mu} \mapsto f[B_{\mu}] \cdot \widetilde{H}_{\mu}, \\
&\Delta'_f: \widetilde{H}_{\mu} \mapsto f[B_{\mu} - 1] \cdot \widetilde{H}_{\mu}.
\end{align}

Schur functions are related to Schubert polynomials as follows.
A permutation $w = w_1 \dots w_m \in S_m$ is called 
{\em Grassmannian} if there exists a unique number $1 \leq n \leq m-1$ such that
$w_n > w_{n+1}$.  If $w \in S_m$ is a Grassmannian permutation whose 
unique descent is at $n$, we have
\begin{equation}
\symm_{w} = s_{\lambda}(x_1, x_2, \dots, x_n),
\end{equation}
where $\lambda = (w_n - n, \dots, w_2 - 2, w_1 - 1)$.  
More generally, if $w \in S_m$ is any permutation for which
$w_{n+1} < w_{n+2} < \cdots < w_m$ then
$\symm_{w} \in \ZZ[x_1, \dots, x_n]$ only involves the first $n$ variables
$x_1, \dots, x_n$.

The irreducible representations of the symmetric group $S_n$ are indexed by 
partitions $\lambda \vdash n$.
If $\lambda \vdash n$ is a partition, we let $S^{\lambda}$ be the associated 
irreducible $S_n$-module.  
If $V$ is any finite-dimensional $S_n$-module, there exist unique integers
$c_{\lambda}$ such that 
$V \cong_{S_n} \bigoplus_{\lambda \vdash n} c_{\lambda} S^{\lambda}$.
The {\em Frobenius characteristic} $\Frob(V) \in \Lambda_n$ is the symmetric function
\begin{equation}
\Frob(V) := \sum_{\lambda \vdash n} c_{\lambda} s_{\lambda}.
\end{equation}
More generally, if $V = \bigoplus_{d \geq 0} V_d$ is a graded $S_n$-module
such that each graded piece $V_d$ is finite-dimensional, the 
{\em graded Frobenius characteristic} is 
\begin{equation}
\grFrob(V;q) := \sum_{d \geq 0} \Frob(V_d) \cdot q^d.
\end{equation}

\subsection{Gr\"obner theory}
If $\KK$ is any
field, a {\em term order} is a total order $<$ on the monomials in
$\KK[\xx_n]$ such that 
\begin{itemize}
\item  we have $1 \leq m$ for all monomials $m$, and
\item  if $m, m',$ and $m''$ are monomials with $m \leq m'$, then
$m \cdot m'' \leq m' \cdot m''$.
\end{itemize}
The only term order we will use in this paper is the {\em lexicographic} term order
defined by $x_1^{a_1} \cdots x_n^{a_n} < x_1^{b_1} \cdots x_n^{b_n}$
if and only if there exists $1 \leq i \leq n$ such that 
$a_1 = b_1, \dots, a_{i-1} = b_{i-1}$ and $a_i < b_i$.

Fix a term order $<$ on the monomials in $\KK[\xx_n]$.  If $f \in \KK[\xx_n]$ is any
nonzero polynomial, let $\initial_<(f)$ be the largest monomial appearing in $f$
with respect to $<$.  If $I \subseteq \KK[\xx_n]$ is an ideal, 
the {\em standard monomial basis} is the collection
\begin{equation}
\{ \text{monomials $m \in \KK[\xx_n]$} \,:\, \initial_<(f) \nmid m \text{ for all $f \in I - \{0\}$} \}
\end{equation}
of all monomials in $\KK[\xx_n]$ which are not divisible by the leading terms
of any nonzero elements of $I$.
The standard monomial basis  descends to a basis of the $\KK$-vector
space $\KK[\xx_n]/I$.

\subsection{Matrix notation}
In this paper we will use lowercase letters $a, b, u, m \dots $ to refer to matrices 
in order to avoid confusion with the uppercase letters $B, T, U, GL_n, \dots$
used to denote {\em sets} of matrices.
Let $m = (m_{i,j})$ be a $k \times n$ matrix with entries in some field $\KK$.
Given any subsets $I \subseteq [k]$ and $J \subseteq [n]$ with $|I| = |J|$,
the {\em $I,J$-minor of $m$} is
\begin{equation}
\Delta_{I,J}(m) :=
\text{determinant of the submatrix of $m$ with row set $I$ and column set $J$}.
\end{equation}
We adopt the convention $\Delta_{\varnothing, \varnothing}(m) = 1$.

\subsection{Chern classes}
Let $X$ be a smooth complex manifold and let 
$E \twoheadrightarrow X$ be a complex vector bundle over $X$.
For all $p > 0$, we have the {\em Chern class} 
$c_p(E) \in H^{2p}(X)$, where 
$H^{2p}(X)$ is singular cohomology with integer coefficients.  
We let $c_{\bullet}(E) := 1 + \sum_{p \geq 0} c_p(E) \cdot t^p$ be the 
generating function for these classes.

We will use the following facts about Chern classes.
\begin{itemize}
\item  If $E = \CC^r \times X$ is a trivial vector bundle over $X$, then
$c_p(E) = 0$ for all $p > 0$.
\item  If $E$ is a rank $r$ vector bundle, then $c_p(E) = 0$ for all $p > r$.
\item  If $E$ and $F$ are two vector bundles over $X$ and $E \oplus F$ is the 
associated direct sum, we have the {\em Whitney sum formula}
\begin{equation*}
c_{\bullet}(E \oplus F) = c_{\bullet}(E) \cdot c_{\bullet}(F).
\end{equation*}
\item If $\ell$ is a {\em line bundle} (i.e. a rank 1 vector bundle) over $X$ and $\ell^*$ is the dual
bundle, then $c_1(\ell^*) = - c_1(\ell)$.
\end{itemize}

\section{The orbit sets $X_{n,k}^{\KK}$ and $Y_{n,k}^{\KK}$}
\label{Orbit}

For this section, we work over an arbitrary field $\KK$.  
Given positive integers $k \leq n$, we let $\Mat_{k \times n}(\KK)$ stand for the set of $k \times n$ matrices with entries in 
$\KK$.  If $U \subseteq GL_k(\KK)$ denotes the subgroup of {\em lower} 
triangular $k \times k$ matrices with entries on the diagonal equal to $1$
and $T \subseteq GL_n(\KK)$ denotes the subgroup of diagonal matrices, then $\Mat_{k \times n}(\KK)$ carries a
left action of $U$ (on rows) and a right action of $T$ (on columns). 

Let $\MMM^{\KK}_{n,k} \subseteq \Mat_{k \times n}(\KK)$ be the set of full rank matrices in $\Mat_{k \times n}$
which have no columns equal to zero.  The set $\MMM^{\KK}_{n,k}$ is stable under the left action of $U$ and the right
action of $T$.  We define the sets of orbits $X_{n,k}^{\KK}$ and $Y_{n,k}^{\KK}$ by
\begin{align}
X_{n,k}^{\KK} &:= \MMM^{\KK}_{n,k}/T = \{ mT \,:\, m \in \MMM_{n,k}^{\KK} \}, \\
Y_{n,k}^{\KK} &:= U \backslash \MMM_{n,k}^{\KK} / T = \{ UmT \,:\, m \in \MMM_{n,k}^{\KK} \}.
\end{align}
The orbits in $Y_{n,k}^{\KK}$ are unions of orbits in $X_{n,k}^{\KK}$; this gives a canonical surjection of sets
$X_{n,k}^{\KK} \twoheadrightarrow Y_{n,k}^{\KK}$.  The set $X_{n,k}^{\KK}$ may be identified with the 
collection of $n$-tuples of $1$-dimensional subspaces $(\ell_1, \dots, \ell_n)$ of $\KK^k$ 
such that $\ell_1 + \cdots + \ell_n = \KK^k$.  

The product group $U \times T$ acts on the set $\Mat_{k \times n}(\KK)$ by the rule 
$(u,t).m := umt$. (This is a well-defined action because $T$ is abelian.)
Although this action is not free, it becomes free when restricted to $\MMM_{n,k}^{\KK}$.

\begin{lemma}
\label{action-is-free}
The action of the product group $U \times T$ on the set $\MMM_{n,k}^{\KK}$ given by
$(u,t).m := umt$ is free.
\end{lemma}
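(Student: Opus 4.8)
The plan is to show directly that if $(u,t) \in U \times T$ fixes some $m \in \MMM_{n,k}^{\KK}$, i.e. $umt = m$, then $u$ is the identity matrix and $t$ is the identity matrix. Since $U \times T$ is a group, freeness of the action is equivalent to the stabilizer of every point being trivial, so this suffices.

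First I would extract scalar information from $umt = m$ by looking at individual columns. Writing $m_j$ for the $j$-th column of $m$ and $t = \operatorname{diag}(t_1, \dots, t_n)$, the equation $umt = m$ says $u \, m_j \, t_j = m_j$ for each $j$, i.e. $u \, m_j = t_j^{-1} m_j$. Because $m \in \MMM_{n,k}^{\KK}$, no column $m_j$ is zero, so each $m_j$ is a nonzero eigenvector of $u$ with eigenvalue $t_j^{-1}$. But $u$ is lower unitriangular, hence its only eigenvalue is $1$; therefore $t_j^{-1} = 1$ for all $j$, giving $t = 1$ (the identity of $T$). The equation now reduces to $u m = m$, i.e. $(u - 1) m = 0$. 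Since $m$ has full rank $k$ and $k \leq n$, its rows span $\KK^n$ as they are linearly independent... more to the point, $m$ has a right inverse: there is an $n \times k$ matrix $m'$ with $m m' = 1_k$. Hence $(u-1) m m' = 0$ forces $u - 1 = 0$, so $u = 1_k$. Thus the stabilizer of $m$ is trivial, and the action is free.

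The only point requiring a little care is the claim that a full-rank $k \times n$ matrix with $k \leq n$ has a right inverse; this is standard linear algebra over a field (the $k$ rows are linearly independent, so the map $\KK^n \to \KK^k$ given by $m$ is surjective, hence split). I do not anticipate any genuine obstacle here: the argument is a short eigenvalue-plus-rank computation, and the hypotheses defining $\MMM_{n,k}^{\KK}$ (full rank, no zero column) are exactly what makes it go through — the no-zero-column condition is what forces $t = 1$, and the full-rank condition is what forces $u = 1$.
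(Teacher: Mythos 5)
Your proof is correct, and it takes a genuinely different (and shorter) route than the paper's. You dispose of $t$ first: reading $umt=m$ column by column gives $u\,m_j = t_j^{-1} m_j$, and since each column $m_j$ is nonzero and a lower unitriangular matrix has $(1-\lambda)^k$ as its characteristic polynomial, the only possible eigenvalue is $1$, forcing $t_j=1$ for every $j$; then $um=m$ together with a right inverse of the full-rank matrix $m$ (which exists over any field since $m:\KK^n\to\KK^k$ is surjective) gives $u=I_k$. The paper argues in the opposite order and by different means: it restricts to an invertible $k\times k$ submatrix $m'$, uses the fact that left multiplication by a lower unitriangular matrix preserves the minors $\Delta_{[r],J}$, and scales along a nested chain of column sets $J_1\subset\cdots\subset J_k$ with nonvanishing minors to force the relevant diagonal entries of $t$ to be $1$, then gets $u=I_k$ from invertibility of $m'$, and only at the end uses the no-zero-column hypothesis to kill the remaining entries of $t$. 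Your eigenvalue-plus-right-inverse argument is more elementary and avoids the choice of a nonsingular submatrix and the minor bookkeeping; the paper's minor computation, on the other hand, is of a piece with the Gaussian-elimination and minor techniques it reuses later (e.g., in Proposition~\ref{pattern-matrix-representatives} and Lemma~\ref{omega-structural-result}), so it serves as a warm-up for those arguments. Both proofs use the hypotheses in the same essential way: no zero columns to pin down $t$, full rank to pin down $u$.
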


\begin{proof}
Let $m \in \MMM_{n,k}^{\KK}$ and $(u,t) \in U \times T$ be such that $umt = m$.  We must show that $(u,t) = (I_k, I_n)$
are the identity matrices of sizes $k$ and $n$.

The equation $umt = m$ is an equation of $k \times n$ matrices.  Since $m$ has full rank,
there exists $I \subseteq [n]$ such that $|I| = k$ and the $k \times k$ submatrix of $m$ with column set $I$ is nonsingular.
Let $m'$ be this $k \times k$ submatrix and let $t' \in GL_k(\KK)$ be the diagonal matrix obtained by restricting $t$ 
to the rows and columns indexed by $I$.  We have the equation $u m' t' = m'$ of $k \times k$ matrices, where $m'$ is invertible.
We argue that $u = t' = I_k$ as follows.

Write $t' = \mathrm{diag}(t_1, \dots, t_k)$.  Since $u$ is unipotent, for $1 \leq r \leq k$ and any 
$J \subseteq [k]$ with $|J| = r$ the minor
$\Delta_{[r], J}(m')$ is given by
\begin{equation}
\label{free-product-action}
\Delta_{[r], J}(m') = \Delta_{[r], J}(u m' t') = \Delta_{[r], J} (m' t') = \prod_{j \in J} t_j \cdot \Delta_{[r], J}(m').
\end{equation}

Since $m'$ is an invertible $k \times k$ matrix, we can find a nest of sets 
$\varnothing \subset J_1 \subset J_2 \subset \cdots \subset J_k = [k]$ such that $|J_r| = r$ and
$\Delta_{[r], J_r}(m') \neq 0$ for all $1 \leq r \leq k$. 
(Indeed, expanding $\det(m') \neq 0$ along the last row implies the existence of
$J_{k-1} \subset [k]$ with $|J_{k-1}| = k-1$ and $\Delta_{[k-1], J_{k-1}}(m') \neq 0$; now induct on $k$.)
Applying Equation~\eqref{free-product-action}
to each of the sets $J = J_1, \dots, J_k$ in turn implies $t' = I_k$.  The equation $um't' = m'$ now reads
$um' = m'$.  Since $m'$ is invertible, we have $u = I_k$.

Consider the original equation $umt = m$ of $k \times n$ matrices.  We know $u = I_k$, so that this equation
reads $mt = m$.  Since $m \in \MMM_{n,k}^{\KK}$, the matrix $m$ does not have any zero columns.  
This implies $t = I_n$.
\end{proof}

We may use Fubini words and a version of Gaussian elimination to parametrize the orbits in
$Y_{n,k}^{\KK}$, and hence in $X_{n,k}^{\KK}$.
More precisely, given a Fubini word $w = w_1 \dots w_n \in \WW_{n,k}$, the {\em pattern matrix}
$\PM(w)$ is the $k \times n$ matrix with entries in $\{0, 1, \star \}$ whose entries 
$\PM(w)_{i,j}$ (for $1 \leq i \leq k$ and $1 \leq j \leq n$) are as follows.
The rule for constructing the $j^{th}$ column of $\PM(w)$ depends on whether $j$ is initial in $w = w_1 \dots w_n$.
\begin{itemize}
\item  We have $\PM(w)_{i,j} = 1$ if and only if $w_j = i$.
\item  Suppose $j \in \initial(w)$ is an initial position of $w$ and $w_j \neq i$.  If $w_j > i$ and there exists $j' < j$ with
$w_{j'} = i$ then $\PM(w)_{i,j} = \star$.  Otherwise $\PM(w)_{i,j} = 0$.
\item  Suppose $j \notin \initial(w)$ is not an initial position of $w$ and $w_j \neq i$.  
If the first occurrence of $i$ in $w = w_1 \dots w_n$ is before the first occurrence of 
$w_j$ in $w = w_1 \dots w_n$ then $\PM(w)_{i,j} = \star$.
Otherwise $\PM(w)_{i,j} = 0$.
\end{itemize}

As an example of the pattern matrix construction, suppose $w = 2331231 \in \WW_{7,3}$.  The initial positions are
$\initial(w) = \{1,2,4\}$ and the pattern matrix is
\begin{equation*}
\PM(w) = \PM(2331231) = 
\begin{pmatrix}
0 & 0 & 0 & 1 & 0 & 0 & 1 \\
1 & \star & \star & 0 & 1 & \star & \star \\
0 & 1 & 1 & 0 & 0 & 1 & \star
\end{pmatrix}.
\end{equation*}

If $w \in \WW_{n,k}$ is a Fubini word, a matrix $m \in \Mat_{k \times n}(\KK)$ is said to {\em fit the pattern of $w$}
if $m$ can be obtained by replacing the $\star$'s in the pattern matrix $\PM(w)$ with elements of $\KK$.
Any matrix $m$ which fits the pattern of $w$ has full rank (indeed, the maximal minor with column set $\initial(w)$ equals
$\pm 1$) and has no zero columns, and so lies in $\MMM_{n,k}^{\KK}$.  
In fact, we have the following version of Gaussian elimination for the orbits in $Y_{n,k}^{\KK}$.

\begin{proposition}
\label{pattern-matrix-representatives}
Let $m \in \MMM_{n,k}^{\KK}$ and let $UmT \in Y_{n,k}^{\KK}$ be the corresponding orbit.  There exists a unique
Fubini word $w \in \WW_{n,k}$ and a unique matrix $m' \in \MMM_{n,k}^{\KK}$ which fits the pattern of $w$ such that
$m' \in UmT$.
\end{proposition}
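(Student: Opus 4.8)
The plan is to establish existence by an explicit elimination algorithm and uniqueness in two stages — first that the Fubini word $w$ is forced by orbit invariants, then that the star entries are forced once $w$ is known. For existence, write $m = (c_1 \mid \cdots \mid c_n)$ and process the columns from left to right, keeping a set $P \subseteq [k]$ of ``pivot rows'', initially empty, together with the list of its elements in the order they were added. At stage $j$, let $c$ be the current $j$th column. If $c$ has a nonzero entry in some row outside $P$, declare $j$ initial, let $r$ be the smallest such row, rescale column $j$ by an element of $T$ so that its row-$r$ entry is $1$, and apply the lower-triangular row operations $\text{row}_i \mapsto \text{row}_i - c_i\,\text{row}_r$ for $i > r$; since $r \notin P$ and every previously processed column is supported on $P$, these operations leave columns $1,\dots,j-1$ untouched. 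Set $w_j := r$ and add $r$ to $P$. If instead $c$ is supported on $P$, declare $j$ non-initial, let $r$ be the most recently added element of $P$ at which $c$ is nonzero, rescale column $j$ so its row-$r$ entry is $1$, perform no row operations, and set $w_j := r$. Since $\dim\langle c_1,\dots,c_j\rangle$ strictly increases exactly at the initial columns, $|P| = k$ at the end, so every letter of $[k]$ occurs in $w = w_1 \cdots w_n$ and $w \in \WW_{n,k}$. One then checks column by column that the resulting matrix $m'$ fits the pattern of $w$: at an initial column $j$ the entry in row $w_j$ equals $1$, the entries below row $w_j$ are cleared, and the only rows above row $w_j$ that may remain nonzero are the earlier pivot rows, which matches the initial-position rule; at a non-initial column the reduced column lies in the coordinate span of the pivot rows added so far and carries its $1$ in the last such row occurring in its support, which matches the non-initial-position rule.

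For uniqueness of $w$, observe that $U$ stabilizes the coordinate flag $F_\bullet$ with $F_i := \langle e_{k-i+1},\dots,e_k\rangle$, while $T$ fixes each partial span $V_j := \langle c_1,\dots,c_j\rangle$; hence the integers $d(i,j) := \dim(V_j \cap F_i)$ depend only on the orbit $UmT$. From the $d(i,j)$ one recovers that $j$ is initial exactly when $\dim V_j > \dim V_{j-1}$; that for initial $j$ one has $w_j = k+1-\ell$ with $\ell = \min\{i : d(i,j) > d(i,j-1)\}$; and that for non-initial $j$, where $c_j \in V_{j-1}$, one has $w_j = w_{j'}$ with $j'$ the initial column for which $c_j$ lies in the span of the columns up through $j'$ but not in the span of the columns up through the previous initial column — an orbit-invariant condition. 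Applying the construction above to a matrix fitting $\PM(w)$ shows its invariants are exactly these formulas, so the orbit of any pattern representative determines $w$.

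For uniqueness of $m'$ given $w$, suppose $m', m''$ both fit $\PM(w)$ and $m'' = u m' t$ with $u \in U$, $t \in T$. Let $j_1 < \cdots < j_k$ be the initial columns of $w$ and set $\pi_s := w_{j_s}$; then $\{\pi_1,\dots,\pi_k\} = [k]$ because $w \in \WW_{n,k}$. By the initial-position rule, the $j_s$th column of either matrix has a $1$ in row $\pi_s$, vanishes in all rows of index $> \pi_s$, and meets rows of index $< \pi_s$ only among $\pi_1,\dots,\pi_{s-1}$. Induct on $s$: granting $u e_{\pi_{s'}} = e_{\pi_{s'}}$ and $t_{j_{s'}} = 1$ for $s' < s$, expanding $c''_{j_s} = t_{j_s}\,u\,c'_{j_s}$ and using that $c''_{j_s}$ vanishes in rows of index $> \pi_s$ forces the below-diagonal part of column $\pi_s$ of $u$ to vanish, so $u e_{\pi_s} = e_{\pi_s}$, and comparing the entries in row $\pi_s$ gives $t_{j_s} = 1$. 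After $s = k$ we have $u = I_k$, hence $m'' = m' t$; since every column of a pattern matrix carries a $1$, this forces $t = I_n$, so $m'' = m'$. Throughout, the linear-algebra skeleton is robust, and the main obstacle is the bookkeeping that matches it to the (rather case-heavy) definition of $\PM(w)$: checking that the elimination clears an entry exactly when $\PM(w)$ prescribes a $0$ rather than a $\star$, treating the initial- and non-initial-position rules separately, and confirming that the invariants $d(i,j)$ of a matrix fitting $\PM(w)$ reproduce $w$.
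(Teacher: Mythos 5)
Your proof is correct. The existence half is exactly the paper's algorithm (process columns left to right, pivot on the minimal unused row for initial columns, scale to the most recently added pivot row otherwise), and you actually supply details the paper glosses over: that later row operations cannot disturb already-processed columns because those columns are supported on the current pivot set, and that full rank forces all $k$ letters to appear. Where you genuinely diverge is uniqueness. The paper's (sketched) route is a single induction on $j$ proving the prefix equalities $w_1\dots w_j = v_1\dots v_j$ and $(m'_1\cdots m'_j)=(a'_1\cdots a'_j)$ simultaneously; you instead split the problem: first you recover the word from orbit invariants — the dimensions $d(i,j)=\dim(V_j\cap F_i)$, where $F_\bullet$ is the $U$-stable flag of spans of the last coordinates and $V_j$ is the ($T$-invariant) span of the first $j$ columns — using that for a matrix fitting $\PM(v)$ each $V_j$ is the coordinate subspace indexed by the letters of $v_1\dots v_j$, so initial positions, initial letters, and (via the span-membership criterion) non-initial letters are all read off from the orbit; then you pin down the star entries by the triangular computation on initial columns, which is essentially the same unipotent argument as Lemma~\ref{action-is-free}. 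Both routes work; yours has the advantage of making explicit that $w$ is an invariant of the double coset built from rank-type data (anticipating the rank-function stratification by the $\Omega_w$ in Section~\ref{Cohomology}), while the paper's prefix induction is more elementary and needs no auxiliary flag, at the cost of being left to the reader.
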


\begin{proof} (Sketch)
The left action of $U$ allows us to add any nonzero multiple of a row of $m$ to a lower row and the right
action of $T$ allows us to scale the columns of $m = (m_{i,j})$.
To form $m'$ from $m$, we process the columns $1, 2, \dots, n$ of $m$ from left to right.

Starting with column $1$,  let $i$ be minimal such that $m_{i,1} \neq 0$.  Use the action of $U$ to clear every 
entry of $m$ below $(i,1)$ and then use the action of $T$ to scale $m_{i,1}$ to $1$.  We form the one-letter word
$w_1 = i$.

Assuming columns $1, 2, \dots, j-1$ have already been processed and we currently have the word $w_1 \dots w_{j-1}$, 
we look at column $j$ in our intermediate matrix $m$.
The processing of column $j$ depends on whether there exists a letter $1 \leq i \leq k$ not appearing in the word
$w_1 \dots w_{j-1}$ such that $m_{i,j} \neq 0$.
\begin{itemize}
\item  Suppose there exists $1 \leq i \leq k$ not appearing in the word $w_1 \dots w_{j-1}$ such that  $m_{i,j} \neq 0$.
Choose such a value of $i$ minimal.  Use the action of $U$ to clear every entry of $m$ below $(i,j)$, and then use
the action of $T$ to scale $m_{i,j}$ to $1$.  Append $i$ to the word $w_1 \dots w_{j-1}$ to get the $j$-letter word
$w_1 \dots w_{j-1} i$.
\item  Suppose for every index $1 \leq i \leq k$ such that $m_{i,j} \neq 0$, the letter $i$ appears in $w_1 \dots w_{j-1}$.
Let $(i_1, i_2, \dots, i_r)$ be the initial letters of $w$, read from left to right in order of their initial positions in $w$.
Choose $s$ {\em maximal} such that $m_{i_s,j} \neq 0$ and let $i = i_s$.  Use the action of $T$ so scale $m_{i,j}$ to $1$.
Append $i$ to the word $w_1 \dots w_{j-1}$ to get the $j$-letter word $w_1 \dots w_{j-1} i$.
\end{itemize}

After each of the columns $1, 2, \dots, n$ have been processed, we get a matrix $m' \in \MMM_{n,k}$ and 
a word $w = w_1 \dots w_n$ in the alphabet $[k]$.  Since $m'$ is invertible, the letters $1, 2, \dots, k$
all appear in $w$, so that $w \in \WW_{n,k}$ is Fubini.  By construction, the matrix $m'$ fits the pattern of $w$
and we have $m' \in UmT$.  This proves existence.

It remains to show uniqueness.  Let $a' \in \MMM_{n,k}^{\KK}$ and $v = v_1 \dots v_n \in \WW_{n,k}$ be such that
$a'$ fits the pattern of $v$ and $a' \in UmT$.  Write $m' = (m'_1 \cdots m'_n)$ and $a' = (a'_1 \cdots a'_n)$ for the columns
of $m'$ and $a'$; we have $a' = u m' t$ for some  $u \in U$ and $t \in T$.
As with the existence proof, the idea is to prove the prefix equality $w_1 \dots w_j = v_1 \dots v_j$
and the corresponding matrix equality $(m'_1 \cdots m'_j) = (a'_1 \cdots a'_j)$ by induction on $j$.  We leave the details
to the reader.
\end{proof}

To see how the `Gaussian elimination' of 
Proposition~\ref{pattern-matrix-representatives} works, let 
$(n,k) = (7,3)$, consider the ground field $\KK = \QQ$, and let
\begin{equation*}
m = \begin{pmatrix}
0 & 0 & 0 & 2 & 0 & 0 & 3 \\
1 & 6 & 0 & 2 & 1 & 4 & 0 \\
-\frac{1}{3} & 0 & -4 & -\frac{8}{3} &  -\frac{1}{3} & -\frac{2}{3} & 3
\end{pmatrix}  \in \MMM_{7,3}^{\QQ}.
\end{equation*}
We want to find the unique matrix $m' \in UmT$ which fits the pattern of some
Fubini word $w \in \WW_{7,3}$.  To do this, we process the columns of 
$m$ from left to right as follows.
\begin{align*}
\begin{pmatrix}
0 & 0 & 0 & 2 & 0 & 0 & 3 \\
1 & 6 & 0 & 2 & 1 & 4 & 0 \\
-\frac{1}{3} & 0 & -4 & -\frac{8}{3} &  -\frac{1}{3} & -\frac{2}{3} & 3
\end{pmatrix} 
&\leadsto
\begin{pmatrix}
0 & 0 & 0 & 2 & 0 & 0 & 3 \\
1 & 6 & 0 & 2 & 1 & 4 & 0 \\
0 & 2 & -4 & -2 &  0 & 2 & 3
\end{pmatrix} 
\leadsto
\begin{pmatrix}
0 & 0 & 0 & 2 & 0 & 0 & 3 \\
1 & 3 & 0 & 2 & 1 & 4 & 0 \\
0 & 1 & -4 & -2 &  0 & 2 & 3
\end{pmatrix} 
\\ 
 \leadsto \begin{pmatrix}
0 & 0 & 0 & 2 & 0 & 0 & 3 \\
1 & 3 & 0 & 2 & 1 & 4 & 0 \\
0 & 1 & 1 & -2 &  0 & 2 & 3
\end{pmatrix}& \leadsto
\begin{pmatrix}
0 & 0 & 0 & 1 & 0 & 0 & 3 \\
1 & 3 & 0 & 0 & 1 & 4 & -3 \\
0 & 1 & 1 & 0 & 0 & 2 & 6
\end{pmatrix} 
\leadsto
\begin{pmatrix}
0 & 0 & 0 & 1 & 0 & 0 & 1 \\
1 & 3 & 0 & 0 & 1 & 2 & -1 \\
0 & 1 & 1 & 0 & 0 & 1 & 2
\end{pmatrix} = m'.
\end{align*}
Explicitly, the algorithm in the proof of 
Proposition~\ref{pattern-matrix-representatives}
\begin{itemize}
\item uses the action of $U$ to add $1/3$ times row 2 to row 3,
\item uses the action of $T$ to scale column 2 by $1/2$,
\item uses the action of $T$ to scale column 3 by $-1/4$,
\item uses the action of $U$ to add $-1$ times row $1$ to row $2$, uses the 
action of $U$ to add $1$ times row $1$ to row $3$, and uses the action of $T$ 
to scale column $4$ by $1/2$, and finally
\item uses the action of $T$ to scale column $6$ by $1/2$ and column 7 by $1/3$.
\end{itemize}
We conclude that $m'$ fits the pattern of
$2331231 \in \WW_{7,3}$.

Given any Fubini word $w \in \WW_{n,k}$, let $\widehat{C}_w^{\KK} \subseteq \MMM_{n,k}^{\KK}$ 
be the set of matrices which fit the 
pattern of $w$.
We get  subsets $C_w^{\KK} \subseteq X_{n,k}^{\KK}$ and $D_w^{\KK} 
\subseteq Y_{n,k}^{\KK}$ by
\begin{equation}
C_w^{\KK} := \text{ image of $U \widehat{C}_w^{\KK}$ in $X_{n,k}^{\KK}$,} \quad
D_w^{\KK} := \text{ image of $\widehat{C}_w^{\KK}$ in $Y_{n,k}^{\KK}$.}
\end{equation}
The notational strangeness of letting $C_w^{\KK}$ rather than $D_w^{\KK}$ correspond to
$U \widehat{C}_w^{\KK}$ 
is rationalized by the greater geometric importance of the space $X_{n,k}$.
By Lemma~\ref{action-is-free} and Proposition~\ref{pattern-matrix-representatives}
we have  disjoint union decompositions
\begin{equation}
X_{n,k}^{\KK} = \bigsqcup_{w \in \WW_{n,k}} C_w^{\KK}, \quad Y_{n,k}^{\KK} = \bigsqcup_{w \in \WW_{n,k}} D_w^{\KK}
\end{equation}
and the natural maps 
$U \times \widehat{C}_w^{\KK} \xrightarrow{\times} U \widehat{C}_w^{\KK} \rightarrow C_w^{\KK}$
and $\widehat{C}_w^{\KK} \rightarrow D_w^{\KK}$
are both bijections.

Let $q$ be a prime power and let $\FF_q$ be the field with $q$ elements.  We conclude this section by 
relating $Y_{n,k}^{\FF_q}$ to the Mahonian distribution on $\WW_{n,k}$.   

The {\em dimension} $\dim(w)$ of a Fubini word $w \in \WW_{n,k}$ is given by
\begin{equation}
\dim(w) := \text{number of $\star$'s in the pattern matrix $\PM(w)$}.
\end{equation}
In particular, the set $\widehat{C}_w^{\KK}$ of matrices may be viewed as a copy
of the product
$\KK^{\dim(w)}$.
Our pattern matrix example shows that 
if $w = 2331231 \in \WW_{7,3}$ then $\dim(w) = 5$.
The dimension $\dim(w)$ of a Fubini word $w \in \WW_{n,k}$
may be alternatively described as follows.

\begin{observation}
\label{dimension-alternative}
Let $w = w_1 \dots w_n \in \WW_{n,k}$ and let $\pi = \pi(w) =  \pi_1 \dots \pi_k \in S_k$ 
be the associated permutation.
We have
\begin{align}
\dim(w) &= {k \choose 2} - \inv(\pi) + \sum_{i \notin \initial(w)} (\pi^{-1}_{w_i} - 1) \\
&=  - \inv(\pi) - n + \sum_{i = 1}^n \pi^{-1}_{w_i}.
\end{align}
\end{observation}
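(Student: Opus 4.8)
The plan is to compute the number of $\star$'s in $\PM(w)$ by splitting the entries of the pattern matrix according to whether the column index $j$ lies in $\initial(w)$ or not, and carefully counting in each case. First I would fix $w = w_1\dots w_n \in \WW_{n,k}$ and write $\pi = \pi(w) \in S_k$ for the permutation obtained by listing the initial letters of $w$ left to right; thus $\pi_s$ is the $s$-th initial letter, and $\pi^{-1}_a$ is the position (among initial positions, counted $1$ through $k$) at which the letter $a$ first appears. The key observation I want to extract from the definition of $\PM(w)$ is: in a column $j$, an entry $\PM(w)_{i,j} = \star$ exactly when (a) $w_j \neq i$, and (b) the letter $i$ has already appeared strictly before position $j$, and (c) $i$ "lies below" $w_j$ in the sense dictated by the two bullet cases — for $j$ initial this means $w_j > i$, while for $j$ non-initial the condition is that the first occurrence of $i$ precedes the first occurrence of $w_j$.

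Next I would handle the initial columns. For $j$ initial with $w_j = \pi_s$ (the $s$-th initial letter), the $\star$'s in column $j$ are the rows $i$ with $i < \pi_s$ and $i$ appearing before position $j$, i.e. $i$ is among the first $s-1$ initial letters $\pi_1,\dots,\pi_{s-1}$ and satisfies $i < \pi_s$. Summing the count of such $i$ over all initial columns $j$ (equivalently over $s = 1,\dots,k$) gives $\sum_{s=1}^k |\{t < s : \pi_t < \pi_s\}| = \binom{k}{2} - \inv(\pi)$, the number of non-inversions of $\pi$. For the non-initial columns: if $j \notin \initial(w)$ and $w_j = a$, the $\star$'s are the rows $i \neq a$ whose first occurrence precedes that of $a$; since all of $1,\dots,k$ occur as initial letters, the first occurrence of $i$ precedes the first occurrence of $a$ iff $\pi^{-1}_i < \pi^{-1}_a$, and there are exactly $\pi^{-1}_a - 1$ such values of $i$. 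Hence the non-initial columns contribute $\sum_{i \notin \initial(w)}(\pi^{-1}_{w_i} - 1)$, and adding the two contributions yields the first displayed formula
\[
\dim(w) = \binom{k}{2} - \inv(\pi) + \sum_{i \notin \initial(w)}(\pi^{-1}_{w_i} - 1).
\]

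Finally, to get the second formula I would convert the sum over non-initial positions into a sum over all positions $1,\dots,n$. For an initial position $i$ with $w_i = \pi_s$, we have $\pi^{-1}_{w_i} = s$, so $\sum_{i \in \initial(w)} \pi^{-1}_{w_i} = 1 + 2 + \cdots + k = \binom{k}{2} + k$, while $\sum_{i \in \initial(w)} 1 = k$. Therefore
\[
\sum_{i \notin \initial(w)}(\pi^{-1}_{w_i} - 1) = \left(\sum_{i=1}^n \pi^{-1}_{w_i} - \binom{k}{2} - k\right) - (n - k) = \sum_{i=1}^n \pi^{-1}_{w_i} - \binom{k}{2} - n,
\]
and substituting into the first formula makes the $\binom{k}{2}$ terms cancel, leaving $\dim(w) = -\inv(\pi) - n + \sum_{i=1}^n \pi^{-1}_{w_i}$, as claimed. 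The only delicate point — and the place I expect to spend the most care — is verifying that condition (b) ("$i$ appears before position $j$") and the case-specific condition in the definition of $\PM(w)$ combine exactly as stated above; in particular, for non-initial $j$ one must check that the definition's "first occurrence of $i$ is before the first occurrence of $w_j$" is automatically compatible with $i$ having appeared before position $j$ (it is, since $j$ is non-initial so $w_j$ has occurred before $j$, hence so has any $i$ whose first occurrence precedes that of $w_j$), and that the $w_j = i$ case (giving a $1$, not a $\star$) is correctly excluded. Once this bookkeeping is pinned down, both identities follow by the elementary summations above.
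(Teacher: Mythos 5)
Your proof is correct, and it is exactly the argument the paper intends: the statement is given as an Observation with no written proof (only the worked example $w=2331231$), and your column-by-column count of $\star$'s in $\PM(w)$ — contributing $\binom{k}{2}-\inv(\pi)$ from the initial columns and $\pi^{-1}_{w_j}-1$ from each non-initial column $j$ — is the intended justification, with the passage to the second formula being the same elementary bookkeeping via $\sum_{i\in\initial(w)}\pi^{-1}_{w_i}=\binom{k}{2}+k$.
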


To see how the formulas of 
Observation~\ref{dimension-alternative} work, we continue our example  
$(n,k) = (7,3)$ and $w = 2331231 \in \WW_{n,k}$.  The initial positions of $w$
are $\initial(w) = \{1,2,4\}$ and the permutation corresponding to $w$
is $\pi = 231 \in S_3$.
In particular, we have $\inv(\pi) = 2$ and
$\pi^{-1}_1 = 3, \pi^{-1}_2 = 1,$ and $\pi^{-1}_3 = 2$.  We have the expected
values
\begin{align*}
{k \choose 2} - \inv(\pi) + \sum_{i \notin \initial(w)} (\pi^{-1}_{w_i} - 1) &=
{3 \choose 2} - 2 + [(2-1) + (1-1) + (2-1) + (3-1)] = 5, \\
- \inv(\pi) - n + \sum_{i = 1}^n \pi^{-1}_{w_i} &=
- 2 - 7 + [1 + 2 + 2 + 3 + 1 + 2 + 3] = 5.
\end{align*}

\begin{proposition}
\label{dimension-is-mahonian}
Let $k \leq n$ be positive integers.
 The dimension 
$\dim$ is a Mahonian statistic on $\WW_{n,k}$.
\end{proposition}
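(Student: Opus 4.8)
The plan is to show that the polynomial $A_{n,k}(q) := \sum_{w \in \WW_{n,k}} q^{\dim(w)}$ satisfies the same recurrence and boundary conditions as $[k]!_q \cdot \Stir_q(n,k)$. Multiplying the defining recursion for $\Stir_q$ through by $[k]!_q$ shows that $f(n,k) := [k]!_q\, \Stir_q(n,k)$ obeys
\begin{equation*}
f(n,k) = [k]_q \, ( f(n-1,k-1) + f(n-1,k) ), \qquad f(0,0) = 1, \quad f(0,k) = 0 \ \ (k \geq 1).
\end{equation*}
Since the empty word gives $A_{0,0}(q) = 1$ and $\WW_{0,k}$ is empty for $k \geq 1$, it suffices to prove $A_{n,k}(q) = [k]_q(A_{n-1,k-1}(q) + A_{n-1,k}(q))$ for all $n \geq 1$ and then induct on $n$. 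Every computation will use the formula $\dim(w) = -\inv(\pi(w)) - n + \sum_{i=1}^n \pi(w)^{-1}_{w_i}$ of Observation~\ref{dimension-alternative}, where $\pi(w) \in S_k$ lists the initial letters of $w$.

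To obtain the recursion I would partition $\WW_{n,k}$ by the final letter $c := w_n$ of $w = w_1 \cdots w_n$, into the set $\AAA$ of words in which $c$ occurs only in position $n$ and the set $\BBB$ of words in which $c$ also occurs earlier. For $w \in \BBB$, deleting $w_n$ yields $v := w_1 \cdots w_{n-1} \in \WW_{n-1,k}$, position $n$ is not initial, and $\pi(w) = \pi(v)$; conversely every pair $(v,c) \in \WW_{n-1,k} \times [k]$ arises exactly once (since all of $1, \dots, k$ already appear in $v$). Subtracting the dimension formula for $v$ from that for $w$ gives $\dim(w) = \dim(v) + \pi(v)^{-1}_c - 1$, and as $c$ runs over $[k]$ the quantity $\pi(v)^{-1}_c - 1$ runs over $\{0, 1, \dots, k-1\}$, so $\BBB$ contributes $[k]_q\, A_{n-1,k}(q)$.

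For $w \in \AAA$, position $n$ is initial and $c$ is the last initial letter, so $\pi(w) = \rho_1 \cdots \rho_{k-1}\, c$ where $\rho$ is the sequence of initial letters of $v := w_1 \cdots w_{n-1}$ (a permutation of $[k] \setminus \{c\}$). Flattening $v$ by the order-preserving bijection $\phi \colon [k] \setminus \{c\} \to [k-1]$ yields $\bar v \in \WW_{n-1,k-1}$, and $(\bar v, c) \mapsto w$ is a bijection $\WW_{n-1,k-1} \times [k] \to \AAA$. Because $\phi$ preserves relative order it preserves inversions, so $\inv(\pi(w)) = \inv(\pi(\bar v)) + (k-c)$ (the correction counting the letters exceeding $c$, all of which precede the terminal $c$); because $\phi$ acts entrywise it preserves positions, so $\sum_{i=1}^n \pi(w)^{-1}_{w_i} = k + \sum_{i=1}^{n-1}\pi(\bar v)^{-1}_{\bar v_i}$. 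Feeding these into the dimension formula and subtracting the one for $\bar v$ leaves $\dim(w) = \dim(\bar v) + (c-1)$, so $\AAA$ contributes $[k]_q\, A_{n-1,k-1}(q)$; adding the two contributions completes the induction.

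I expect the crux to be the analysis of $\AAA$: one must verify that flattening shifts $\inv(\pi(w))$ and $\sum_i \pi(w)^{-1}_{w_i}$ by exactly offsetting amounts so that $\dim$ changes by the clean value $c-1$, which requires getting both auxiliary identities and the bookkeeping of which letters of $[k] \setminus \{c\}$ lie to the left of the appended $c$ correct; the $\BBB$ case is immediate by comparison. An alternative route would be to count $\FF_q$-points: by Lemma~\ref{action-is-free} and the cell decomposition, $\sum_w q^{\dim(w)} = |Y_{n,k}^{\FF_q}| = |\MMM_{n,k}^{\FF_q}| / (|U|\,|T|)$, and an inclusion--exclusion count of full-rank $k \times n$ matrices over $\FF_q$ with no zero columns --- valid for infinitely many prime powers $q$ --- would determine the polynomial $A_{n,k}(q)$; but the recurrence above is cleaner.
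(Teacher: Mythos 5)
Your proof is correct and is essentially the paper's argument: the paper establishes the same recursion $D_{n,k}(q) = [k]_q(D_{n-1,k-1}(q)+D_{n-1,k}(q))$ via its ``star growth'' (append a letter already present, dimension increment $\pi^{-1}_j-1$) and ``bar growth'' (relabel and append a new letter, increment $j-1$), which are exactly the inverses of your deletion of the final letter in the cases $\BBB$ and $\AAA$ respectively. The only cosmetic difference is that you run the bijection in the deletion direction and justify the increments through the formula of Observation~\ref{dimension-alternative}, whereas the paper argues in the insertion direction directly from the effect on $\pi(w)$.
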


\begin{proof}
Let $D_{n,k}(q) := \sum_{w \in \WW_{n,k}} q^{\dim(w)}$ be the generating function for $\dim$.  We want to show
$D_{n,k}(q) = [k]!_q \cdot \Stir_q(n,k)$.  This reduces to showing the recursion
\begin{equation}
\label{desired-dimension-recursion}
D_{n,k}(q) = [k]_q \cdot (D_{n-1,k-1}(q) + D_{n-1,k}(q)),
\end{equation}
where we interpret $D_{n-1,k}(q) = 0$ if $k = n$.

Equation~\eqref{desired-dimension-recursion} will be proven by considering a 
growth process which uniquely generates all possible Fubini words.  Let $w = w_1 \dots w_n \in \WW_{n,k}$ 
be a Fubini word of length $n$ with maximum letter $k$.  There are two ways to grow $w$ to a Fubini word of length
$n+1$.
\begin{itemize}
\item  Choose a letter $1 \leq j \leq k$ and transform $w = w_1 \dots w_n \mapsto w_1 \dots w_n j \in \WW_{n+1,k}$.
This is called a {\em star growth at $j$}.
\item  Choose a letter $1 \leq j \leq k+1$, increase the letters $j, j+1, \dots, k$ in $w = w_1 \dots w_n$ by $1$ to get a new
word $v_1 \dots v_n$, and transform $w = w_1 \dots w_n \mapsto v_1 \dots v_n j \in \WW_{n+1,k+1}$.
This is called a {\em bar growth at $j$}.
\end{itemize}
The star and bar terminology comes from the corresponding effect on ordered set partitions; whereas a bar growth increases the
number of blocks (and hence the number of bars), a star growth increases the number of `connectives' (which are 
represented as stars); see e.g. \cite[Sec. 4]{HRS}.  

As an example of these growth processes, suppose $w = 21124231 \in \WW_{7,4}$.  Star growth at $1, 2, 3,$ and $4$ 
leads to the following 
four elements of $\WW_{8,4}$:
\begin{equation*}
21124231{\bf 1}, \quad 21124231{\bf 2}, \quad 21124231{\bf 3}, \quad 21124231{\bf 4}.
\end{equation*}
Bar growth at $1, 2, 3, 4,$ and $5$ leads to the following five elements of $\WW_{8,5}$:
\begin{equation*}
32235342{\bf 1}, \quad 31135341{\bf 2}, \quad 21125241{\bf 3}, \quad 21125231 {\bf 4}, \quad 21124231 {\bf 5}.
\end{equation*}
Starting with a given word $w \in \WW_{n,k}$, there are always $k$ possibilities for star growth and $k+1$ possibilities
for bar growth.  It is clear that any Fubini word $w \in \WW_{n,k}$ may be obtained uniquely from $1 \in \WW_{1,1}$ by 
a sequence of bar and star growths.

Let us consider the effect of star and bar growths on dimension.  Fix $w \in \WW_{n,k}$ and let $\pi(w) = \pi_1 \dots \pi_k \in S_k$
be the corresponding permutation.
\begin{itemize}
\item  Let $1 \leq j \leq k$ and suppose $v = v_1 \dots v_n v_{n+1} \in \WW_{n+1,k}$ is obtained from $w$ by a star growth at $j$.
Then $\pi(v) = \pi(w)$ so that $\dim(v) = \dim(w) + (v_{n+1} - 1) = \dim(w) + (\pi^{-1}_j - 1)$.  Since $\pi_1 \dots \pi_k$ is a 
permutation in $S_k$, performing star growth at all  $1 \leq j \leq k$ gives dimension increases of $0, 1, \dots, k-1$.
\item  Let $1 \leq i \leq k+1$ and suppose $v = v_1 \dots v_n v_{n+1} \in \WW_{n+1,k+1}$ is obtained from $w$ by a bar 
growth at $j$.   Then $\pi(v) \in S_{k+1}$ is obtained from $\pi(w) \in S_k$ by appending the letter $j$ to the end of 
$\pi(w) = \pi_1 \dots \pi_k$ and increasing the letters $j, j+1, \dots, k$ in $\pi(w)$ by $1$.  It follows that 
$\dim(v) = \dim(w) + (j-1)$.  Performing bar growth at all $1 \leq j \leq k+1$ gives dimension increases of 
$0, 1, \dots, k$.
\end{itemize}
The effects on dimension of bar and star growth noted above are equivalent to the desired recursion
\eqref{desired-dimension-recursion} for $D_{n,k}(q)$.
\end{proof}

There are many statistics on $\OP_{n,k}$ which are known to be Mahonian; see
\cite{RemmelWilson, RhoadesOSP, Stein} for some examples.
The Mahonian statistic $\dim$ on $\WW_{n,k}$ (or equivalently on $\OP_{n,k}$) appears to be new.
The dimension statistic leads to the following interpretation of the Mahonian distribution 
$[k]!_q \cdot \Stir_q(n,k)$ in terms of finite fields.

\begin{corollary}
\label{y-set-count}
Let $k \leq n$ be positive integers,
let $q$ be a prime power, and let $\FF_q$ be the field with $q$ elements.  We have
\begin{equation}
|Y_{n,k}^{\FF_q}| = [k]!_q \cdot \Stir_q(n,k).
\end{equation}
\end{corollary}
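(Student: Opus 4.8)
The plan is to assemble this as a direct consequence of the structural results already established earlier in Section~\ref{Orbit}. The key observation is that all the real work has been front-loaded into Proposition~\ref{dimension-is-mahonian} (that $\dim$ is Mahonian on $\WW_{n,k}$) and into the orbit decomposition coming from Lemma~\ref{action-is-free} and Proposition~\ref{pattern-matrix-representatives}; the corollary is then just a counting bookkeeping step over the field $\FF_q$.

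First I would invoke the disjoint union decomposition $Y_{n,k}^{\FF_q} = \bigsqcup_{w \in \WW_{n,k}} D_w^{\FF_q}$, which was recorded immediately after Proposition~\ref{pattern-matrix-representatives}. Next I would use the fact, also recorded there, that the natural map $\widehat{C}_w^{\FF_q} \to D_w^{\FF_q}$ is a bijection, together with the observation that $\widehat{C}_w^{\KK}$ is a copy of the affine space $\KK^{\dim(w)}$ (a matrix fits the pattern of $w$ precisely by choosing the $\dim(w)$ entries in the $\star$ positions of $\PM(w)$ freely). Over the finite field this gives $|D_w^{\FF_q}| = |\widehat{C}_w^{\FF_q}| = q^{\dim(w)}$.

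Summing over the decomposition then yields
\begin{equation*}
|Y_{n,k}^{\FF_q}| = \sum_{w \in \WW_{n,k}} |D_w^{\FF_q}| = \sum_{w \in \WW_{n,k}} q^{\dim(w)} = D_{n,k}(q),
\end{equation*}
where $D_{n,k}(q)$ is exactly the generating function for the dimension statistic introduced in the proof of Proposition~\ref{dimension-is-mahonian}. Finally I would apply Proposition~\ref{dimension-is-mahonian}, which states $D_{n,k}(q) = [k]!_q \cdot \Stir_q(n,k)$, to conclude. There is no genuine obstacle here: the only thing to be careful about is making sure the bijection $\widehat{C}_w^{\FF_q} \to D_w^{\FF_q}$ and the affine-space identification are cited correctly so that the count $q^{\dim(w)}$ is justified cleanly, but both are immediate from the material preceding the corollary.
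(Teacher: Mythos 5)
Your proposal is correct and follows exactly the paper's own argument: decompose $Y_{n,k}^{\FF_q}$ into the pieces $D_w^{\FF_q}$, use the bijection $\widehat{C}_w^{\FF_q} \to D_w^{\FF_q}$ and the count $|\widehat{C}_w^{\FF_q}| = q^{\dim(w)}$, then invoke Proposition~\ref{dimension-is-mahonian}. Nothing is missing.
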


\begin{proof}
We have $Y_{n,k}^{\FF_q} = \bigsqcup_{w \in \WW_{n,k}} D^{\FF_q}_w$ and the natural map
$\widehat{C}^{\FF_q}_w \rightarrow D^{\FF_q}_w$ is a bijection.  By the definition of dimension 
$|\widehat{C}^{\FF_q}_w| = q^{\dim(w)}$.  Now apply Proposition~\ref{dimension-is-mahonian}.
\end{proof}

Corollary~\ref{y-set-count} may be viewed as an analog of the classical result that 
the number of complete flags in the $\FF_q$-vector space $(\FF_q)^n$ is 
$[n]!_q$.  Lemma~\ref{action-is-free} and Proposition~\ref{pattern-matrix-representatives} give the enumeration
\begin{equation}
|X_{n,k}^{\FF_q}| = q^{{k \choose 2}} \cdot [k]!_q \cdot \Stir_q(n,k).
\end{equation}

\section{Cellular Decompositions}
\label{Cellular}

For the remainder of the paper, we specialize our  field to $\CC$.  We drop the superscript field reminders,
so $\MMM_{n,k}^{\CC} = \MMM_{n,k}, X_{n,k}^{\CC} = X_{n,k}, C_w^{\CC} = C_w$, etc.

The Euclidean topology on $\MMM_{n,k}$ induces a quotient topology on the space $X_{n,k}$ and $Y_{n,k}$.
The natural projection $X_{n,k} \twoheadrightarrow Y_{n,k}$ is the (continuous) quotient topology 
projection.  The following example illustrates the bad geometric behavior of the space $Y_{n,k}$, despite
the combinatorial result of Corollary~\ref{y-set-count}.

\begin{example}
\label{y-is-bad}
Let $(n,k) = (3,2)$.  For any complex number $t \neq 0$ let $m(t)$ be the point 
\begin{equation*}
m(t) = \begin{pmatrix} 0 & 1 & 1 \\ 1 & 0 & t + 1/t \end{pmatrix} \in Y_{3,2}.
\end{equation*}
We claim that the limit of $m(t)$ as $t \rightarrow \infty$ takes on more than one value in $Y_{3,2}$
(with respect to the quotient topology induced from the Euclidean topology on $\MMM_{3,2}$).  In particular, the space
$Y_{3,2}$ is not Hausdorff.  

Indeed, we have
\begin{equation*}
\lim_{t \rightarrow \infty} m(t) = \lim_{t \rightarrow \infty} \begin{pmatrix} 0 & 1 & 1 \\ 1 & 0 & t + 1/t \end{pmatrix} 
= \lim_{t \rightarrow \infty} \begin{pmatrix} 0 & 1 & 1/t \\ 1 & 0 & 1 + 1/t^2 \end{pmatrix} = 
\begin{pmatrix} 0 & 1 & 0 \\ 1 & 0 & 1 \end{pmatrix}
\end{equation*}
and also
\begin{equation*}
\lim_{t \rightarrow \infty} m(t) = \lim_{t \rightarrow \infty} \begin{pmatrix} 0 & 1 & 1 \\ 1 & 0 & t + 1/t \end{pmatrix}  =
\lim_{t \rightarrow \infty} \begin{pmatrix} 0 & 1 & 1 \\ 1 & -t & 1/t \end{pmatrix} =
\lim_{t \rightarrow \infty} \begin{pmatrix} 0 & -1/t & 1 \\ 1 & 1 & 1/t \end{pmatrix} =
\begin{pmatrix} 0 & 0 & 1 \\ 1 & 1 & 0 \end{pmatrix}.
\end{equation*}
These two limits represent different points in $Y_{3,2}$.
\end{example}

The space $X_{n,k}$ is a much nicer geometric object than $Y_{n,k}$.  Writing $\PP^{k-1}$  for the 
complex projective space of $1$-dimensional subspaces of $\CC^k$, we may identify
\begin{equation*}
X_{n,k} = \{ \ell_{\bullet} = (\ell_1, \dots, \ell_n) \,:\, \
\text{$\ell_i \in \PP^{k-1}$ for $1 \leq i \leq n$ and $\ell_1 + \cdots + \ell_n = \CC^k$} \}.
\end{equation*}
Explicitly, the $j^{th}$ column $m_j$ of a matrix $m = (m_1 \cdots m_n)$ representing a point in $X_{n,k}$
spans the line $\ell_j$.
The space $X_{n,k}$ is an open subvariety of the $n$-fold product $(\PP^{k-1})^n$, and as such 
is a smooth variety of complex dimension $n(k-1)$.

We want to compute the cohomology of $X_{n,k}$.  To do this, 
we make use of the embedding 
$X_{n,k} \subseteq (\PP^{k-1})^n$ and realize the disjoint union decomposition 
$X_{n,k} = \bigsqcup_{w \in \WW_{n,k}} C_w$ as a `cellular decomposition' of the 
space $X_{n,k}$.   
We work out the theory of cellular decompositions in greater generality as follows.

Let $X$ be a complex algebraic variety or a complex manifold of (complex) dimension $d$.
We let $X^+ = X \cup \{ \infty \}$ be the one-point compactification of $X$, where $\infty$ denotes the point at infinity.
The {\em Borel-Moore homology groups} of $X$ are the relative homology groups
$\bar{H}_k(X) := H_k(X^+, \{\infty\}; \ZZ)$.
We will need the following facts about the Borel-Moore homology groups $\bar{H}_{\bullet}(X)$; 
see \cite[Ch. 19]{FultonIntersection} or \cite[\S 2.6]{ChrissGinzburg}.
\begin{enumerate}
\item  If $X$ is an irreducible variety (not necessarily smooth), 
then $\bar{H}_{2d}(X) \cong \ZZ$ and has a canonical choice of generator, the 
{\em fundamental class} $[X]$ of $X$.  If $i > 2d$ or $i$ is odd, then $\bar{H}_i(X) = 0$.
\item  Poincar\'e duality: If $X$ is a complex manifold, we have 
$\bar{H}_i(X) \cong H^{2d-i}(X)$, where $H^{\bullet}$ is singular 
cohomology with integer coefficients.
Explicitly, this isomorphism is induced by capping with the fundamental class of
$X$ (see e.g. \cite[Appendix B.2.8]{PetersSteenbrink}).
\item  Pushforward maps:  If $f: Z \rightarrow X$ is a continuous proper map, then the induced
map $f^+: Z^+ \rightarrow X^+$ is also continuous, and so induces a map $f_*: \bar{H}_k(Z) \rightarrow \bar{H}_k(X)$.
The inclusion of a closed set is proper.  If $\iota: Z \hookrightarrow X$ is a closed subvariety of $X$, we abuse
notation by defining
$[Z] \in \bar{H}_{\bullet}(X)$ to be $\iota_*([Z])$.
\item  Restriction maps:  If $U \subseteq X$ is an open subset of a complex manifold, then inclusion
$j: U \hookrightarrow X$ induces pullback maps $j^*: H^i(X) \rightarrow H^i(U)$, and hence 
restriction maps $j^!: \overline{H}_{2d-i}(X) \rightarrow \overline{H}_{2d-i}(U)$ by Poincar\'e duality.
The restriction maps preserve fundamental classes in the sense that $j^*([X]) = [U]$.
\item  If $\iota: Z \hookrightarrow X$ is the inclusion of a closed subset, and $j: X - Z \hookrightarrow X$ is the inclusion
of its complement, there is a long exact sequence
\begin{equation*}
\cdots \rightarrow \bar{H}_{k+1}(X-Z) \rightarrow \bar{H}_k(Z) \xrightarrow{\iota_*} \bar{H}_k(X)
\xrightarrow{j^*} \bar{H}_k(X-Z) \rightarrow \bar{H}_{k-1}(Z) \rightarrow \cdots
\end{equation*}
\end{enumerate}

\begin{defn}
\label{cellular-decompositions-definition}
Let $X$ be a complex algebraic variety.  A {\em cellular decomposition} of $X$ is a filtration 
$X_{\bullet} = (X = X_0 \supset X_1 \supset \cdots \supset X_m = \varnothing)$ where each $X_i$ is a closed subvariety of
$X$ for $0 \leq i \leq m$ and $X_i - X_{i+1}$ is isomorphic
as an algebraic variety
to a nonempty disjoint
union of complex affine spaces.  
\end{defn}

If $X_{\bullet} = (X = X_0 \supset X_1 \supset \cdots \supset X_m = \varnothing)$ is a cellular decomposition
of $X$, we write each difference $X_i - X_{i+1}$ as a disjoint union $X_i - X_{i+1} = \bigsqcup_j A_{ij}$ of affine
spaces.  The $A_{ij}$ are called the {\em cells} of the decomposition $X_{\bullet}$.

The stratification of $X$ given in Definition~\ref{cellular-decompositions-definition}
is also called a `paving by affines' in the literature.

\begin{lemma}
\label{single-cell-lemma}
Let $X$ be a complex algebraic variety.  If $X$ admits a cellular decomposition $X_{\bullet}$, then $X$ admits
a cellular decomposition $X'_{\bullet}$ with the same cells as $X_{\bullet}$ in which each $X'_i - X'_{i+1}$ is a single cell.
\end{lemma}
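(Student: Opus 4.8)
The plan is to refine the given filtration $X_\bullet = (X = X_0 \supset X_1 \supset \cdots \supset X_m = \varnothing)$ by peeling off the cells in each layer one at a time. For each $i$, write the layer $X_i - X_{i+1} = A_{i1} \sqcup \cdots \sqcup A_{i r_i}$ as its decomposition into affine-space cells (here $r_i \geq 1$ since the layer is nonempty). The key observation is that, since $X_{i+1}$ is closed in $X_i$, the layer $X_i - X_{i+1}$ is open in $X_i$, and each $A_{ij}$ — being one of the affine, hence irreducible, pieces of this disjoint union — is simultaneously open and closed in $X_i - X_{i+1}$. Hence $A_{ij}$ is locally closed in $X_i$, and its Zariski closure $\overline{A_{ij}}$, taken in $X_i$ (equivalently in $X$), is a closed subvariety with $\overline{A_{ij}} \setminus A_{ij} \subseteq X_{i+1}$; in particular $\overline{A_{ij}} \subseteq A_{ij} \cup X_{i+1} \subseteq X_i$.

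Next I would insert, for each $i$ and each $0 \le s \le r_i$, the set
\[
X_i^{(s)} := X_{i+1} \cup \overline{A_{i1}} \cup \cdots \cup \overline{A_{is}},
\]
which is a finite union of closed subvarieties of $X$, hence itself a closed subvariety. By construction $X_i^{(0)} = X_{i+1}$, while $X_i^{(r_i)} = X_{i+1} \cup \bigcup_{j} A_{ij} = X_i$ using the containment above. The refined filtration $X'_\bullet$ is obtained by splicing these chains together:
\[
X = X_0^{(r_0)} \supseteq \cdots \supseteq X_0^{(0)} = X_1^{(r_1)} \supseteq \cdots \supseteq X_1^{(0)} = X_2^{(r_2)} \supseteq \cdots \supseteq X_{m-1}^{(0)} = X_m = \varnothing,
\]
each term a closed subvariety of $X$ (and, as we check below, each consecutive difference being nonempty, the repeated terms $X_i^{(0)} = X_{i+1}^{(r_{i+1})}$ are simply identified).

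Finally I would verify that each successive difference of $X'_\bullet$ is a single cell. Since $X_i^{(s)} = X_i^{(s-1)} \cup \overline{A_{is}}$, we have $X_i^{(s)} - X_i^{(s-1)} = \overline{A_{is}} - X_i^{(s-1)}$. On one hand $\overline{A_{is}} \subseteq A_{is} \cup X_{i+1}$ and $X_{i+1} \subseteq X_i^{(s-1)}$, so this difference is contained in $A_{is}$. On the other hand $A_{is}$ is disjoint from $X_{i+1}$ and from each $\overline{A_{ij}}$ with $j < s$ — because $\overline{A_{ij}} \subseteq A_{ij} \cup X_{i+1}$ and the $A_{ij}$ are pairwise disjoint and disjoint from $X_{i+1}$ — so $A_{is} \cap X_i^{(s-1)} = \varnothing$ and $A_{is} \subseteq \overline{A_{is}} - X_i^{(s-1)}$. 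Hence $X_i^{(s)} - X_i^{(s-1)} = A_{is}$, a single affine space, and the cells of $X'_\bullet$ are exactly the $A_{ij}$, i.e. the cells of $X_\bullet$.

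I do not expect a serious obstacle: the argument is entirely point-set and variety bookkeeping. The only step demanding a moment's care is the claim that each $A_{ij}$ is closed in the open subset $X_i - X_{i+1}$ of $X_i$, so that passing to its closure in $X_i$ adds only points lying in $X_{i+1}$; once this is in hand, everything else is formal.
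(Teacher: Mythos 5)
Your proposal is correct and follows essentially the same route as the paper: both refine the filtration by inserting the closed sets $X_{i+1} \cup \overline{A}_{i1} \cup \cdots \cup \overline{A}_{ij}$ and both rest on the observation that each $A_{ij}$ is closed in $X_i - X_{i+1}$, so that $\overline{A}_{ij} - A_{ij} \subseteq X_{i+1}$ and the successive differences are exactly the original cells. No substantive difference to report.
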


\begin{proof}
We obtain $X'_{\bullet}$ from $X_{\bullet}$ by replacing each inclusion $X_{i+1} \subset X_i$ with associated cells
$X_i - X_{i+1} = \bigsqcup_{j = 1}^p A_{ij}$ by the chain
\begin{equation}
\label{single-cell-chain}
X_{i+1} \subseteq X_{i+1} \cup \overline{A}_{i1} \subseteq \cdots \subseteq 
X_{i+1} \cup \overline{A}_{i1} \cup \cdots \cup \overline{A}_{ip} = X_i,
\end{equation}
where the closures $\overline{A}_{ij}$ are taken in the Zariski topology.

We want to show that the cells associated to the chain \eqref{single-cell-chain} are still $A_{i1}, \dots, A_{ip}$.  Indeed, since $X_i - X_{i+1} = \bigsqcup_{j = 1}^p A_{ij}$
as varieties, if we compute Zariski closures in $X_i - X_{i+1}$ 
we have $\overline{A}_{ij} \cap \overline{A}_{ij'} = \varnothing$ whenever $j \neq j'$
and $\overline{A}_{ij} = A_{ij}$.
If we instead compute closures in the larger space $X_i$ (or in $X$), 
we see that $(\overline{A}_{ij} - A_{ij}) \subseteq X_{i+1}$  for all $j$
and $\overline{A}_{ij} \cap \overline{A}_{ij'} \subseteq X_{i+1}$  for all $j \neq j'$.
Computing closures inside $X_i$, it follows that 
\begin{equation}
(X_{i+1} \cup \overline{A}_{i1} \cup \cdots \cup \overline{A}_{i(j-1)} 
\cup \overline{A}_{ij}) -
(X_{i+1} \cup \overline{A}_{i1} \cup \cdots \cup \overline{A}_{i(j-1)}) = A_{ij}
\end{equation}
for $j = 1, 2, \dots, p$.
\end{proof}

Cellular decompositions may be used to compute the Borel-Moore homology of a variety $X$.
The following result may be found in \cite[19.1.11]{FultonIntersection}.

\begin{lemma}
\label{homology-from-cellular}
If a variety $X$ admits a cellular decomposition $X_{\bullet}$, the Borel-Moore homology $\bar{H}_{\bullet}(X)$ is free 
abelian, and the classes $[\overline{A}] \in \bar{H}_{2 \dim(A)}(X)$ 
where $A$ runs over the cells of $X_{\bullet}$ form a basis.  Here $\dim(A)$ is complex dimension of $A$.
\end{lemma}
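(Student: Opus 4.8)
The plan is to reduce to the case of a decomposition that peels off one cell at a time and then induct along the filtration, using the long exact sequence of Borel--Moore homology for a closed--open pair. First, by Lemma~\ref{single-cell-lemma} we may assume the cellular decomposition is $X_\bullet = (X = X_0 \supset X_1 \supset \cdots \supset X_m = \varnothing)$ with each difference a single cell $A_i := X_i - X_{i+1} \cong \CC^{d_i}$ for $0 \leq i \leq m-1$; these $A_i$ are exactly the cells of $X_\bullet$, and $\dim(A_i) = d_i$. I would then induct on $m$. If $m = 0$ then $X = \varnothing$ and there is nothing to prove. If $m = 1$ then $X = A_0 \cong \CC^{d_0}$, whose one-point compactification is the sphere $S^{2d_0}$; combined with property (1) this gives $\bar{H}_k(X) \cong H_k(S^{2d_0}, \{\infty\})$, which is $\ZZ$ for $k = 2d_0$ and $0$ otherwise, generated by the fundamental class $[X] = [\overline{A_0}]$.

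For $m > 1$, let $\iota \colon X_1 \hookrightarrow X$ be the (proper) closed inclusion and $j \colon A_0 = X - X_1 \hookrightarrow X$ the open inclusion. Since $A_0 \cong \CC^{d_0}$, $\bar{H}_k(A_0)$ is $\ZZ$ for $k = 2d_0$, generated by $[A_0]$, and $0$ otherwise. The sub-filtration $X_1 \supset X_2 \supset \cdots \supset X_m = \varnothing$ is a single-cell cellular decomposition of $X_1$, so by induction $\bar{H}_\bullet(X_1)$ is free abelian on the classes $[\overline{A_i}]$ ($1 \leq i \leq m-1$), with closures taken in $X_1$; in particular $\bar{H}_\bullet(X_1)$ vanishes in odd degrees. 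Since each $X_i$ is closed in $X$, these closures agree with the closures taken in $X$, and $\iota_*$ carries $[\overline{A_i}]$ to the class of the same name in $\bar{H}_\bullet(X)$ (properties (3) and (4)), so I suppress $\iota_*$ below. Feeding the vanishing facts into the long exact sequence
\[ \cdots \to \bar{H}_{k+1}(A_0) \to \bar{H}_k(X_1) \xrightarrow{\iota_*} \bar{H}_k(X) \xrightarrow{j^*} \bar{H}_k(A_0) \to \bar{H}_{k-1}(X_1) \to \cdots \]
shows $\iota_*$ is an isomorphism $\bar{H}_k(X_1) \xrightarrow{\sim} \bar{H}_k(X)$ for $k \notin \{2d_0 - 1, 2d_0\}$, that $\bar{H}_{2d_0-1}(X) = 0$, and that in degree $2d_0$ one has a short exact sequence $0 \to \bar{H}_{2d_0}(X_1) \xrightarrow{\iota_*} \bar{H}_{2d_0}(X) \xrightarrow{j^*} \bar{H}_{2d_0}(A_0) \to 0$, using $\bar{H}_{2d_0+1}(A_0) = 0$ and $\bar{H}_{2d_0-1}(X_1) = 0$. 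Now $\overline{A_0}$, the Zariski closure of $A_0$ in $X$, is irreducible of dimension $d_0$, so property (1) gives $\bar{H}_{2d_0}(\overline{A_0}) = \ZZ[\overline{A_0}]$; and since restriction preserves fundamental classes (property (4)), $j^*([\overline{A_0}]) = [A_0]$ generates $\bar{H}_{2d_0}(A_0)$. Hence the short exact sequence splits with $[\overline{A_0}]$ projecting to a generator of the cokernel, giving $\bar{H}_{2d_0}(X) = \bar{H}_{2d_0}(X_1) \oplus \ZZ[\overline{A_0}]$. Together with the isomorphisms in the remaining degrees and the inductive hypothesis, this shows $\bar{H}_\bullet(X)$ is free abelian with basis $\{[\overline{A_i}] : 0 \leq i \leq m-1\}$, completing the induction.

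The diagram chase itself is routine; the points that need care are: (i) the propagation of the odd-degree vanishing of $\bar{H}_\bullet$ up the filtration, which is exactly what forces every connecting homomorphism to die and every short exact sequence in the induction to split, and (ii) the identification $j^*([\overline{A_0}]) = [A_0]$ together with the irreducibility of $\overline{A_0}$ — these are what guarantee that the fundamental classes $[\overline{A_i}]$ of the cell closures, rather than some abstract integral generators, form the basis. A minor bookkeeping point is that Zariski closures computed inside $X_1$ agree with those computed inside $X$, so that the inductively chosen basis of $\bar{H}_\bullet(X_1)$ pushes forward to the expected classes in $\bar{H}_\bullet(X)$; this holds because each $X_i$ is closed in $X$. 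I expect step (i) — keeping the vanishing and the splittings consistent along the whole filtration — to be the part where one must be most careful, even though each individual step is essentially forced once the setup is in place.
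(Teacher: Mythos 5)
Your proof is correct, but be aware that the paper does not actually prove this lemma: it simply cites \cite[19.1.11]{FultonIntersection}. What you have written is essentially the standard argument behind that citation — reduce via Lemma~\ref{single-cell-lemma} to a filtration that peels off one cell at a time, then induct using the long exact sequence of property (5), with the even/odd dichotomy killing every connecting map and the freeness of $\bar{H}_{2d_0}(A_0) \cong \ZZ$ splitting the resulting short exact sequence in degree $2d_0$. The induction and the bookkeeping about closures (closures in $X_1$ versus $X$, pushforward along $X_1 \hookrightarrow X$) are all fine. The one step that leans on slightly more than the literal facts (1)--(5) listed in Section~\ref{Cellular} is the identification $j^*([\overline{A}_0]) = [A_0]$: here $[\overline{A}_0] \in \bar{H}_{2d_0}(X)$ is defined as a pushforward under the closed inclusion $\overline{A}_0 \hookrightarrow X$, while $j^*$ is restriction to the open set $A_0 = X - X_1$, so you need (a) that restriction to a dense open subset of the possibly \emph{singular} irreducible variety $\overline{A}_0$ carries the fundamental class to the fundamental class — property (4) as stated covers only complex manifolds — and (b) that proper pushforward commutes with restriction to open subsets. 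Both are standard properties of Borel--Moore homology (see \cite[Ch. 19]{FultonIntersection} or \cite[\S 2.6]{ChrissGinzburg}), so this is a matter of invoking the right compatibility rather than a genuine gap; with that citation added, your self-contained induction is a perfectly acceptable replacement for the paper's appeal to Fulton.
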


Our fundamental tool for computing cohomology is as follows.

\begin{theorem}
\label{cohomology-presentation-theorem}
Suppose $X$ is a smooth complex algebraic variety which admits a cellular decomposition, and $Z \subseteq X$ is a 
union of cell closures.  Let $I$ be the ideal in $H^{\bullet}(X)$ generated by the classes $[\overline{A}]$ for all
cells $A \subseteq Z$.  The inclusion $X - Z \hookrightarrow X$ induces an isomorphism
$H^{\bullet}(X - Z) \cong H^{\bullet}(X)/I$ of  cohomology rings.
\end{theorem}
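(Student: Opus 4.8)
The plan is to pass to Borel--Moore homology, where the combinatorics of a cellular decomposition is transparent, to apply the long exact sequence of the pair $(X,Z)$, and then to transport the conclusion back to singular cohomology via Poincar\'e duality. First I would note that $X - Z$ is open in the smooth variety $X$, hence is itself a smooth variety of the same complex dimension $d := \dim X$. Using Lemma~\ref{single-cell-lemma}, I would refine the given cellular decomposition of $X$ to one $X = X_0 \supset X_1 \supset \cdots \supset X_m = \varnothing$ in which each successive difference $X_{i-1} - X_i$ is a single cell, ordered so that $Z$ occurs among the terms of the filtration, say $Z = X_p$. (This is where the hypothesis that $Z$ is a union of cell closures enters: the cells contained in $Z$ can be taken to come last in the filtration.) Then $Z = X_p \supset X_{p+1} \supset \cdots \supset X_m = \varnothing$ is a cellular decomposition of $Z$ whose cells are exactly the cells of $X$ contained in $Z$, while $X - Z = (X_0 - X_p) \supset (X_1 - X_p) \supset \cdots \supset (X_{p-1} - X_p) \supset \varnothing$ is a cellular decomposition of $X - Z$ whose cells are exactly the cells $A$ of $X$ with $A \not\subseteq Z$.

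Next I would apply Lemma~\ref{homology-from-cellular} to $X$, to $Z$, and to $X - Z$: each of $\bar{H}_{\bullet}(X)$, $\bar{H}_{\bullet}(Z)$, $\bar{H}_{\bullet}(X-Z)$ is a free abelian group concentrated in even degrees, with $\ZZ$-basis the fundamental classes of the closures of the respective cells. Feeding this into the long exact sequence of fact~(5) for the closed inclusion $\iota \colon Z \hookrightarrow X$ with open complement $j \colon X - Z \hookrightarrow X$, every odd-degree term vanishes, so the sequence collapses, degree by degree, into short exact sequences
\[
0 \longrightarrow \bar{H}_{\bullet}(Z) \xrightarrow{\ \iota_*\ } \bar{H}_{\bullet}(X) \xrightarrow{\ \rho\ } \bar{H}_{\bullet}(X-Z) \longrightarrow 0,
\]
where $\iota_*$ is pushforward along the closed inclusion and $\rho$ is restriction to the open subset. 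By fact~(3), $\iota_*$ carries the basis element $[\overline{A}]$ of $\bar{H}_{\bullet}(Z)$, for a cell $A \subseteq Z$, to the class $[\overline{A}] \in \bar{H}_{\bullet}(X)$. Hence $\rho$ is surjective, and $\ker \rho$ is precisely the $\ZZ$-span of $\{\,[\overline{A}] : A \text{ a cell with } A \subseteq Z\,\}$ (a direct summand of $\bar{H}_{\bullet}(X)$).

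Finally I would transport this to cohomology by Poincar\'e duality (fact~(2)). The isomorphisms $H^{i}(X) \cong \bar{H}_{2d-i}(X)$ and $H^{i}(X-Z) \cong \bar{H}_{2d-i}(X-Z)$ intertwine the pullback $j^* \colon H^{\bullet}(X) \to H^{\bullet}(X-Z)$ with the restriction $\rho$ (fact~(4)) and match the cohomology class $[\overline{A}]$ with the Borel--Moore class $[\overline{A}]$. Thus $j^*$ is surjective with $\ker j^*$ the additive span of $\{\,[\overline{A}] : A \subseteq Z\,\}$, so $j^*$ induces an isomorphism of graded rings $H^{\bullet}(X)/\ker j^* \xrightarrow{\ \sim\ } H^{\bullet}(X-Z)$. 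To identify $\ker j^*$ with $I$: being the kernel of a ring homomorphism, $\ker j^*$ is an ideal, and it contains every generator $[\overline{A}]$ of $I$ since $j^*[\overline{A}] = [\overline{A} \cap (X-Z)] = 0$ (because $\overline{A} \subseteq Z$), so $I \subseteq \ker j^*$; conversely $\ker j^*$ is the additive span of those generators, so $\ker j^* \subseteq I$. Hence $\ker j^* = I$, and $H^{\bullet}(X - Z) \cong H^{\bullet}(X)/I$ as graded rings.

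The step I expect to demand the most care is the first: arranging the cellular decompositions of $Z$ and of $X - Z$ compatibly with that of $X$, so that their cells are respectively the cells of $X$ inside and outside $Z$. Concretely, one must check that in a single-cell refinement the cells contained in $Z$ can be pushed to the tail of the filtration while keeping every $X_i$ a closed subvariety; this is where the structure of $Z$ as a union of cell closures, and the compatibility of the cell-closure order with the filtration, are really used. Once that bookkeeping is in place, the homological argument above runs mechanically.
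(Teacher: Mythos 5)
Your proposal is correct and follows essentially the same route as the paper's own proof: refine to a single-cell filtration via Lemma~\ref{single-cell-lemma} so that $Z$ appears as a term, use Lemma~\ref{homology-from-cellular} and the Borel--Moore long exact sequence to get short exact sequences $0 \to \bar{H}_\bullet(Z) \to \bar{H}_\bullet(X) \to \bar{H}_\bullet(X-Z) \to 0$, and transport to cohomology by Poincar\'e duality, identifying the kernel of $j^*$ with the ideal $I$. The one step you flag as delicate---arranging the filtration so that $Z$ is a closed term whose cells are exactly the cells inside $Z$---is precisely the step the paper also handles with a brief assertion (that after the single-cell refinement $Z$ equals some $X_i$), so your treatment, including your explicit span-versus-ideal argument for $\ker j^* = I$, matches the paper's proof in both substance and level of detail.
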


\begin{proof}
By Lemma~\ref{single-cell-lemma}, we may assume that our cellular decomposition 
$X_{\bullet} = (X_0 \supset  X_1 \supset \cdots  \supset X_m)$ is such that each $X_i - X_{i+1}$ is a single cell.
The cell closures in this case are precisely  $X_0, X_1, \dots, X_m$, so that $Z = X_i$ for some $i$.

The variety $X - Z = X - X_i$ admits a cellular decomposition, namely
\begin{equation*}
X - Z = X_0 - Z \supset X_1 - Z \supset \cdots \supset X_{i-1} - Z.
\end{equation*}
By Lemma~\ref{homology-from-cellular}, we have $\bar{H}_k(X-Z) = 0$ when $k$ is odd, so for each even $k$ 
we have a short exact sequence
\begin{equation}
\label{cohomology-presentation-sequence}
0 \rightarrow \bar{H}_k(Z) \rightarrow \bar{H}_k(X) \rightarrow \bar{H}_k(X - Z) \rightarrow 0.
\end{equation}

Both $X$ and $X-Z$ are complex manifolds, say of complex dimension $d$.
Thus, the surjection $\bar{H}_{\bullet}(X) \twoheadrightarrow \bar{H}_{\bullet}(X-Z)$ corresponds under Poincar\'e duality
to the ring homomorphism $H^{2d - \bullet}(X) \twoheadrightarrow \bar{H}^{2d - \bullet}(X - Z)$
induced by the inclusion $X - Z \hookrightarrow X$.
By the exactness of \eqref{cohomology-presentation-sequence}, the kernel of 
$\bar{H}_{\bullet}(X) \twoheadrightarrow \bar{H}_{\bullet}(X-Z)$ is generated by the image of  
$\bar{H}_{\bullet}(Z)$ in $\bar{H}_{\bullet}(X)$, which by Lemma~\ref{homology-from-cellular} is generated by 
the classes of the cell closures lying in $Z$.
\end{proof}

\section{The cohomology of $X_{n,k}$ via cellular decompositions}
\label{Cohomology}

Our fundamental example of a cellular decomposition is the decomposition of 
complex projective space $\PP^{k-1}$ given by 
\begin{equation}
\label{projective-cellular}
\PP^{k-1} = \{ [\star : \star : \cdots : \star ] \} \supset \{ [0 : \star : \cdots : \star] \} \supset \cdots \supset
 \{ [0 : 0 : \cdots : \star] \} \supset \varnothing.
\end{equation}
More precisely, we have the cellular decomposition $X_0 \supset X_1 \supset \cdots \supset X_{k-1} \supset \varnothing$
where the points of
$X_i$ are represented by the nonzero row vectors $v = (v_1, \dots, v_k) \in \CC^k$ with $v_1 = \cdots = v_i = 0$.
The cell $A_i = X_i - X_{i+1} \cong \CC^{k-i-1}$ is represented by row vectors
$v = (v_1, \dots, v_k) \in \CC^k$ with $v_1 = \cdots = v_i = 0$ and $v_{i+1} \neq 0$.
One checks by taking intersections that we have the cup product
$[\overline{A}_i] \smile [\overline{A}_j] = [\overline{A}_{i+j}]$, with
$[\overline{A}_{i+1}] = 0$ if $i + j \geq k$.  We therefore have
$H^{\bullet}(\PP^{k-1}) = \ZZ[x]/\langle x^k \rangle$, where $x  = [\overline{A}_1]$.

The cellular decomposition of $\PP^{k-1}$ in the last paragraph leads  to a cellular decomposition
of the $n$-fold product $(\PP^{k-1})^n$.  The cells of this decomposition are labeled by length $n$ words
$w = w_1 \dots w_n$ in the alphabet $\{0, 1, \dots, k-1 \}$.  Given a word $w$, points in the corresponding cell
$A_w \subseteq (\PP^{k-1})^n$ are represented by $k \times n$ matrices $m = (m_{i,j})$ with
$m_{i,j} = 0$ whenever $i < w_j$ and $m_{w_j, j} \neq 0$.  This gives rise to the presentation 
\begin{equation}
\label{classical-projective-cohomology-presentation}
H^{\bullet}((\PP^{k-1})^n) = \ZZ[x_1, \dots, x_n]/ \langle x_1^k, \dots, x_n^k \rangle,
\end{equation}
where for any word $w = w_1 \dots w_n$ the corresponding cell closure $\overline{A}_w$ is represented by
$x_1^{w_1} \cdots x_n^{w_n}$.

Our goal is to calculate $H^{\bullet}(X_{n,k})$ by means of Theorem~\ref{cohomology-presentation-theorem}.
To do this, we will use a different cellular decomposition of $(\PP^{k-1})^n$, this time indexed by length
$n$ words $w = w_1 \dots w_n$ in the alphabet $\{1, 2, \dots, k\}$. We give a detailed description of this decomposition below, but briefly, the cells will be defined as follows. Given an $n$-tuple of lines $\ell_{\bullet} = (\ell_1, \ldots, \ell_n) \in (\PP^{k-1})^n$, set 
$F_j = \ell_1 + \cdots + \ell_j$, and let $\{j_1 < \cdots < j_m\}$ be the set of indices $j$ such that $F_{j-1} \neq F_j$. Then $F_{j_1} \subseteq \cdots \subseteq F_{j_m} = \CC^k$ is a partial flag in $\CC^k$, and it belongs to a unique Schubert cell in the appropriate partial flag variety, indexed by a word $\pi \in [k]^m$ with distinct letters. For each $j \in [n]$ there is a unique $i \in [m]$ such that $\ell_j \subseteq F_{j_i}$ but $\ell_j \not\subseteq F_{j_{i-1}}$, and we set $w_j = \pi_{j_i}$. This word $w$ will index the cell containing $\ell_{\bullet}$.

If $m = (m_{i,j}) \in \Mat_{k \times n}$ is any $k \times n$ matrix, the {\em rank function}
\begin{equation}
r(m): [k] \times [n] \rightarrow \ZZ_{\geq 0}
\end{equation}
is defined by
\begin{equation}
r(m): (i,j) \mapsto \text{rank of the submatrix of $m$ with row set $[i]$ and column set $[j]$}.
\end{equation}
Since $T$ acts on $\Mat_{k \times n}$ by nonzero column scaling and since $U \subset GL_k$ is the group
of {\em lower} triangular unipotent matrices, there holds the rank function equality
$r(umt) = r(m)$ for all matrices $u \in U, m \in \Mat_{k \times n},$ and $t = T$.

If $w = w_1 \dots w_n$ is any word in the alphabet $[k]$, we may define $r(w): [k] \times [n] \rightarrow \ZZ_{\geq 0}$
by identifying $w$ with the $k \times n$ matrix which has $1$'s in positions $(w_j, j)$ for $1 \leq j \leq n$ and 
$0$'s elsewhere.  If 
$\ell_{\bullet} = (\ell_1, \dots, \ell_n) \in (\PP^{k-1})^n$ is any $n$-tuple of lines in $\CC^k$, we can think of 
$\ell_{\bullet}$ as a point in the orbit space $\UUU_{n,k}/T$, where
\begin{equation}
\UUU_{n,k} = \{ m \in \Mat_{k \times n} \,:\, \text{$m$ has no zero columns} \}.
\end{equation}
Since $r$ is invariant under the action of $T$, we get a well-defined rank function
$r(\ell_{\bullet}): [k] \times [n] \rightarrow \ZZ_{\geq 0}$.
We stratify the space $(\PP^{k-1})^n$ into pieces $\Omega_w$ as follows.

\begin{defn}
\label{omega-definition}
Let $w = w_1 \dots w_n$ be a length $n$ word in the alphabet $[k]$.  Define $\Omega_w \subseteq (\PP^{k-1})^n$ by
\begin{equation*}
\Omega_w := \{ \ell_{\bullet} \in (\PP^{k-1})^n \,:\, r(\ell_{\bullet}) = r(w) \}.
\end{equation*}
\end{defn}

It is clear that $(\PP^{k-1})^n = \bigcup_{w \in [k]^n} \Omega_w$.  
In fact, the disjoint union here need only run over the {\em convex} words $w \in [k]^n$.

\begin{lemma}
\label{convex-disjoint-union}
$(\PP^{k-1})^n$ is the disjoint union of the $\Omega_w$ over all convex words $w \in [k]^n$.
\end{lemma}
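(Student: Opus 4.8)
The plan is to split the statement into disjointness and covering. Observe first that for any two words $v, w \in [k]^n$ the pieces $\Omega_v, \Omega_w$ of Definition~\ref{omega-definition} are either equal or disjoint, and they are equal exactly when $r(v) = r(w)$. So it suffices to prove: (a) distinct \emph{convex} words in $[k]^n$ have distinct rank functions; and (b) every $\ell_\bullet \in (\PP^{k-1})^n$ has $r(\ell_\bullet) = r(w)$ for some convex word $w$.

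The engine for both parts is an explicit description of $r(w)$ for a word $w = w_1 \cdots w_n$. Identifying $w$ with its $0/1$ matrix, one has $r(w)(i,j) = |\{w_1,\dots,w_j\} \cap [i]|$, so the increment $r(w)(i,j) - r(w)(i,j-1)$ equals $1$ if $w_j \le i$ and $w_j \notin \{w_1,\dots,w_{j-1}\}$, and $0$ otherwise. I would record the two consequences: a position $j$ is initial in $w$ iff $r(w)(k,j) > r(w)(k,j-1)$, and when $j$ is initial, $w_j$ is the least $i$ with $r(w)(i,j) > r(w)(i,j-1)$. Thus $r(w)$ and the pair consisting of $\initial(w)$ together with the list of initial letters of $w$ (in left-to-right order) determine each other; equivalently, $\Omega_v = \Omega_w$ iff $v$ and $w$ have the same initial positions and the same initial letters in the same order.

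For (a) I would show that a convex word is pinned down by exactly this data. The key point is elementary: in a convex word, a non-initial position $j \ge 2$ must satisfy $w_j = w_{j-1}$, for otherwise, setting $a = w_j \ne w_{j-1} = b$, the earlier occurrence of $a$, the position $j-1$, and the position $j$ would produce the forbidden subword $a \cdots b \cdots a$. Hence if $\initial(w) = \{1 = j_1 < \cdots < j_m\}$ with initial letters $\pi_1, \dots, \pi_m$, an immediate induction gives $w_j = \pi_i$ whenever $j_i \le j < j_{i+1}$ (convention $j_{m+1} := n+1$); that is, $w = \pi_1^{\,j_2-j_1}\pi_2^{\,j_3-j_2}\cdots\pi_m^{\,n+1-j_m}$, which is determined. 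Combined with the previous paragraph, distinct convex words have distinct rank functions, giving (a).

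For (b) I would use the remark recorded just above the lemma that $(\PP^{k-1})^n = \bigcup_{w \in [k]^n}\Omega_w$, so each $\ell_\bullet$ satisfies $r(\ell_\bullet) = r(w)$ for some word $w$; if one prefers a self-contained argument, this follows from a Gaussian-elimination reduction in the style of Proposition~\ref{pattern-matrix-representatives}, representing $\ell_\bullet$ by a matrix with no zero column and sweeping the columns left to right using lower-unitriangular row operations, column rescalings, and additions of scalar multiples of an earlier column to a later one — all of which leave the rank function unchanged — until every column is a standard basis vector. Given such a $w$, take $w'$ to be the convex word built as the block concatenation above from $\initial(w)$ and the initial letters of $w$; by the rank-function description $r(w') = r(w) = r(\ell_\bullet)$, so $\ell_\bullet \in \Omega_{w'}$, proving (b). I expect the only genuine obstacle to be the bookkeeping in the second paragraph (pinning down exactly what $r(w)$ remembers) together with the convexity rigidity in the third; part (b) is essentially free once the covering remark is granted.
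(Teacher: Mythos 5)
Your proof is correct, but it takes a more self-contained route than the paper. The paper's proof simply invokes Fulton's lemma that the rank functions of $k \times n$ matrices are exactly the rank functions of partial permutation matrices (words in $\{0,1,\dots,k\}^n$ with distinct nonzero letters), observes that a point of $(\PP^{k-1})^n$ forces the first letter to be nonzero, and converts such a word to a convex word by propagating each letter rightward over the zeros; disjointness is left implicit, resting on the uniqueness in Fulton's statement. You instead characterize $r(w)$ directly -- it records exactly $\initial(w)$ together with the initial letters in order -- and prove the rigidity fact that a convex word is the block word determined by this data, which makes the injectivity (hence disjointness over convex words) explicit and elementary; for coverage you either cite the covering remark preceding the lemma or sketch a Gaussian-elimination reduction using lower-unitriangular row operations, column scalings, and left-to-right column additions, all of which preserve the rank function. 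That sketch does go through, with one detail worth spelling out: when a column becomes a combination of earlier pivot columns, clearing it would create a zero column, so you must instead reset it to (a multiple of) an earlier pivot column -- still an allowed operation -- to end with every column a standard basis vector; alternatively, you land on a word in $\{0,1,\dots,k\}^n$ and apply your own observation that only the initial data matters. The trade-off: the paper's argument is shorter given Fulton's \cite[Lem.~3.1]{FultonFlag}, while yours avoids that citation, makes the disjointness argument visible, and gives the explicit block-word description of convex words that is reused implicitly elsewhere (e.g.\ in Lemma~\ref{lem:Cw-codimension}).
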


\begin{proof}
This is essentially due to Fulton \cite[Lem. 3.1]{FultonFlag}.  More precisely, Fulton shows that 
the possible rank functions of $k \times n$ matrices are the rank functions of $0,1$-matrices with at most one $1$
in any row and column; that is, rank functions of words $v = v_1 \dots v_n$ in $\{0, 1, \dots, k\}^n$ where 
$v_i = 0$ indicates that the $i^{th}$ column of the corresponding matrix is all $0$'s.  If $v$ is any such word
with $v_1 \neq 0$, and $w \in [k]^n$ is the convex word obtained by replacing each $0$ with the letter
immediately to its left, then $r(w) = r(v)$.  On the other hand, if $v_1 = 0$, no point $\ell_{\bullet} \in (\PP^{k-1})^n$
has rank function $r(v)$.  This shows that the rank functions of points $\ell_{\bullet} \in (\PP^{k-1})^n$ 
are precisely the rank functions of convex words in $[k]^n$.
\end{proof}

The closure of $\Omega_w$ for any convex word $w \in [k]^n$ has the following description.

\begin{lemma}
\label{omega-closure}
Let $w \in [k]^n$ be a convex word.  The closure $\overline{\Omega}_w$ is equal to a  union of some $\Omega_v$
for various convex words $v \in [k]^n$.  In fact, we have
\begin{equation*}
\overline{\Omega}_w = \bigcup_{\substack{v \in [k]^n \text{convex} \\ r(v) \leq r(w)}} \Omega_v,
\end{equation*}
where we interpret the functional inequality $r(v) \leq r(w)$ pointwise on the domain $[k] \times [n]$.
\end{lemma}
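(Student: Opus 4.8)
Since the statement identifies $\overline{\Omega}_w$ with a union of strata, I would begin by setting $Z_w := \{\ell_\bullet \in (\PP^{k-1})^n \,:\, r(\ell_\bullet) \le r(w)\}$, the order being pointwise on $[k]\times[n]$. Lemma~\ref{convex-disjoint-union} says every $\ell_\bullet$ lies in a unique $\Omega_v$ with $v$ convex, and then $r(\ell_\bullet) = r(v)$; hence $Z_w = \bigsqcup_{v\text{ convex},\ r(v)\le r(w)} \Omega_v$ is literally the right-hand side of the claim, and everything reduces to proving $\overline{\Omega}_w = Z_w$. The inclusion $\overline{\Omega}_w \subseteq Z_w$ is the routine half: each number $r(\ell_\bullet)(i,j)$ is the largest size of a nonvanishing minor of the top-left $i\times j$ block, minors are polynomial in local coordinates on $(\PP^{k-1})^n$, so each $r(\cdot)(i,j)$ is lower semicontinuous and can only drop in a limit; thus $r(\ell_\bullet) \le r(w)$ on all of $\overline{\Omega}_w$. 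This already proves the first assertion of the lemma.

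For the reverse inclusion $Z_w \subseteq \overline{\Omega}_w$, the plan is to show that $\Omega_w$ is dense in $Z_w$. First, $\Omega_w = \{\ell_\bullet \in Z_w \,:\, r(\ell_\bullet) = r(w)\}$ is an open subset of $Z_w$, being the non-vanishing locus of the finitely many NW minors of the sizes prescribed by $r(w)$, and it is nonempty, since the $0/1$ matrix of $w$ lies in it. So it suffices to prove that $Z_w$ is irreducible. For this I would lift to matrices: $Z_w$ is the image of $\{m \in \UUU_{n,k} \,:\, r(m) \le r(w)\}$ under the quotient $\UUU_{n,k} \twoheadrightarrow (\PP^{k-1})^n = \UUU_{n,k}/T$, and the latter set is a nonempty open subset (cut out by $\UUU_{n,k}$, the no-zero-column condition) of the matrix Schubert variety $\{m \in \Mat_{k\times n} \,:\, r(m) \le r(p)\}$, where $p$ is a partial permutation matrix with $r(p) = r(w)$ — such a $p$ exists by Fulton's description of rank functions in \cite[Lem.~3.1]{FultonFlag}, the same input used for Lemma~\ref{convex-disjoint-union}. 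Since matrix Schubert varieties are irreducible, so is this nonempty open subset, and hence so is its image $Z_w$; therefore $\Omega_w$, being open, nonempty, and dense in $Z_w$, satisfies $\overline{\Omega}_w = Z_w$.

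The main obstacle is the irreducibility of $Z_w$, and essentially all the content of the reverse inclusion is concentrated there. I would obtain it by recognizing $Z_w$ as the image under $\UUU_{n,k}\twoheadrightarrow(\PP^{k-1})^n$ of an open piece of a classical determinantal / matrix Schubert variety and invoking its irreducibility; if a self-contained argument is preferred, one can instead exhibit $Z_w$ as the image of an explicit irreducible variety, following the construction sketched before Definition~\ref{omega-definition} — a Schubert variety of partial flags $F_\bullet$ spanned by the partial sums $\ell_1+\cdots+\ell_j$, fibered over by the choices of the lines $\ell_j$ inside the prescribed steps of $F_\bullet$ — or, more concretely still, exhibit for each convex $v$ with $r(v)\le r(w)$ and each $\ell_\bullet \in \Omega_v$ a one-parameter family $t \mapsto m(t)$ of $k\times n$ matrices that fits the pattern of $w$ for $t\ne 0$ and degenerates to a representative of $\ell_\bullet$ at $t=0$, obtained by perturbing the NW blocks by suitable powers of $t$ so as to raise their ranks from $r(v)$ exactly up to $r(w)$. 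The semicontinuity half needs nothing beyond continuity of minors.
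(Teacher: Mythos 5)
Your proposal is correct and takes essentially the same route as the paper: identify the right-hand side with the closed locus $Z_w=\{\ell_\bullet : r(\ell_\bullet)\le r(w)\}$, obtain $\overline{\Omega}_w\subseteq Z_w$ from the closedness of the minor-vanishing (rank) conditions, and obtain the reverse inclusion by lifting to matrix space and invoking Fulton's irreducibility of the rank-condition (matrix Schubert) variety, restricted to the open set $\UUU_{n,k}$ and pushed down along the $T$-quotient. The only cosmetic difference is that you finish by noting $\Omega_w$ is a nonempty open, hence dense, subset of the irreducible $Z_w$, whereas the paper finishes with the equivalent dimension comparison $\dim\Omega_w=\dim R_w$, both resting on the same input from Fulton.
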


\begin{proof}
Define $R_w := \{ \ell_{\bullet} \in (\PP^{k-1})^n \,:\, r(\ell_{\bullet}) \leq r(w) \}$. By definition
\begin{equation}
R_w = \bigcup_{\substack{v \in [k]^n \\ r(v) \leq r(w)}} \Omega_v.
\end{equation}
We claim that 
\begin{itemize}
\item
we may replace this union with the (disjoint by Lemma~\ref{convex-disjoint-union}) union over all 
{\em convex} $v \in [k]^n$ with $r(v) \leq r(w)$, and
\item we have $R_w = \overline{\Omega}_w$. 
\end{itemize}
By the proof of Lemma~\ref{convex-disjoint-union}, given any word $v' \in [k]^n$ there is a convex word $v \in [k]^n$ with $r(v') = r(v)$. Hence $\Omega_{v'} = \Omega_v$, and we can assume that the union in the definition of $R_w$ runs over only the convex $v$ with $r(v) \leq r(w)$.

For $\ell_{\bullet} \in (\PP^{k-1})^n$, the condition that $r(\ell_{\bullet})(i,j) \leq p$ for a fixed $p$ is equivalent to the vanishing of certain minors of any matrix representing $\ell_{\bullet}$, so it is a closed condition: that is, if $r(\cdot)(i,j) \leq p$ on some $A \subseteq (\PP^{k-1})^n$, the same is true on $\overline{A}$. In particular, $\overline{\Omega}_w \subseteq R_w$. It therefore suffices to show that $\dim \Omega_w = \dim R_w$ and that $R_w$ is irreducible.

Let $\Omega_w^M = \{m \in \Mat_{k \times n} : r(m) = r(w)\}$ and $R_w^M = \{m \in \Mat_{k \times n} : r(m) \leq r(w)\}$. Fulton proves \cite[Prop. 3.3]{FultonFlag} that $\overline{\Omega}_w^{M} = R_w^M$, and that $R_w^M$ is irreducible. Since
\begin{equation*}
\Omega_w = (\Omega_w^M \cap \UUU_{n,k})/T \quad \text{and} \quad R_w = (R_w^M \cap \UUU_{n,k})/T,
\end{equation*}
and $\dim(\Omega_w^M \cap \UUU_{n,k}) = \dim(R_w^M \cap \UUU_{n,k})$ by Fulton's result, we see that $\dim \Omega_w = \dim R_w$. Also, $R_w^M \cap \UUU_{n,k}$ is irreducible because $\UUU_{n,k} \subseteq \Mat_{k \times n}$ is open, so its image $R_w$ in $(\PP^{k-1})^n$ is also irreducible.
\end{proof}

Let $w = w_1 \dots w_n \in [k]^n$ be a convex word with initial set $\initial(w) = \{j_1 < \cdots < j_m \}$.
We view $\Omega_w$ explicitly as follows.
The {\em omega pattern matrix} $\OPM(w)$ is the $k \times n$ matrix with entries in
$\{0, \star, \diamond\}$ given by the following process.
\begin{enumerate}
\item  Set $\OPM(w)_{w_{j_1}, j_1} = \cdots = \OPM(w)_{w_{j_m}, j_m} = \diamond$.
\item  For $1 \leq r \leq m$, in the column $j_r$ place a $\star$ in every position of $\OPM(w)$
above the $\diamond$
in this column which is to the right of a pre-existing $\diamond$.  Place $0$'s elsewhere in this column.
\item  For a noninitial column $j \in [n] - \initial(w)$ of $\OPM(w)$, place a $\diamond$ in rows 
$w_1, w_2, \dots, w_j$ and $0$'s elsewhere.
\end{enumerate}
As an example, suppose $(n,k) = (8,5)$ and $w = 44111533$.  The initial set 
is $\initial(w) = \{1,3,6,7\}$ and the omega pattern matrix $\OPM(w)$ is 
\begin{equation*}
\OPM(w) = \OPM(44111533) =
\begin{pmatrix}
0 & 0 & \diamond & \diamond & \diamond & \star & \star & \diamond \\
0 & 0 & 0 & 0 & 0 & 0 & 0 & 0 \\
0 & 0 & 0 & 0 & 0 & 0 & \diamond & \diamond \\
\diamond & \diamond & 0 & \diamond & \diamond  & \star & 0 & \diamond \\
0 & 0 & 0 & 0 & 0 & \diamond & 0 & \diamond
\end{pmatrix}.
\end{equation*}
Let $\widehat{P}_w \subseteq \UUU_{n,k}$ be the set of all matrices obtained by replacing the 
$\star$'s and $\diamond$'s in $\OPM(w)$ by complex numbers in such a way that, in any column, not all
of the $\diamond$'s in that column are assigned to $0$.
The set $\widehat{P}_w$ of matrices is closed under the column scaling action of $T$, so the inclusion
$\widehat{P}_w/T \subseteq (\PP^{k-1})^n$ makes sense.

Let $w \in [k]^n$ be a word.  We define a further subgroup $U(w) \subseteq U$ of the lower triangular
unipotent subgroup
$U \subseteq GL_k$ by 
\begin{equation}
U(w) := \{ (u_{i,j}) \in U \,:\, \text{$u_{i,j} = 0$ if $i > j$ and the letter $j$ does not appear in $w$} \}.
\end{equation}
If $(n,k) = (8,5)$ and $w = 44111533$ is as above, the subgroup $U(w) \subset GL_5$ is
\begin{equation*}
U(w) = U(44111533) = \left\{ \begin{pmatrix}
1 & 0 & 0 & 0 & 0 \\
\star & 1 & 0 & 0 & 0 \\
\star & 0 & 1 & 0 & 0 \\
\star & 0 & \star & 1 & 0 \\
\star & 0 & \star & \star & 1
\end{pmatrix} \right\}.
\end{equation*}
The proof of the following lemma is very similar to that of Lemma~\ref{action-is-free} and is omitted.

\begin{lemma}
\label{omega-action-is-free}
Let $w = w_1 \dots w_n \in [k]^n$ be a convex word and let $J = \{w_1, \dots, w_n \}$ be the letters appearing in $w$.
Let $\MMM(w) \subseteq \Mat_{k \times n}$ be the set of $k \times n$ matrices $m$ such that the restriction of $m$
to the row set $J$ has full rank and $m$ has no zero columns.  Then $\MMM(w)$ carries an action of $U(w) \times T$
given by $(u,t).m := umt$ and this action is free.
\end{lemma}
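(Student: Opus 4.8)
The strategy is to follow the proof of Lemma~\ref{action-is-free} almost verbatim, after one preliminary reduction to the row set $J$. Suppose $m \in \MMM(w)$ and $(u,t) \in U(w) \times T$ satisfy $umt = m$; I must show $(u,t) = (I_k, I_n)$. Write $J = \{j_1 < \cdots < j_\ell\}$ and let $m_J$ denote the $\ell \times n$ submatrix of $m$ on the rows $J$. The defining property of $U(w)$ is exactly what is needed for the rows of $um$ indexed by $J$ to depend only on $m_J$: if $i = j_s \in J$ and $j < i$ with $j \notin J$, then $u_{i,j} = 0$, so row $j_s$ of $um$ is row $j_s$ of $m$ plus a linear combination of rows $j_1, \dots, j_{s-1}$ of $m$. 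Hence $(um)_J = (u|_J)\,m_J$, where $u|_J := (u_{j_s,j_t})_{1 \le s,t \le \ell}$ is an $\ell \times \ell$ lower-triangular unipotent matrix, and restricting $umt = m$ to the rows in $J$ yields the smaller equation $(u|_J)\,m_J\,t = m_J$.

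Next I would run the minor argument of Lemma~\ref{action-is-free} on this smaller equation. By definition of $\MMM(w)$ the matrix $m_J$ has rank $\ell$, so there is a column set $I \subseteq [n]$ of size $\ell$ for which the $\ell \times \ell$ submatrix $m'$ of $m_J$ on columns $I$ is invertible; let $t' \in GL_\ell$ be the restriction of $t$ to the rows and columns indexed by $I$, giving $(u|_J)\,m'\,t' = m'$. As in \eqref{free-product-action}, since $u|_J$ is lower-triangular unipotent, for every $r$ and every $r$-subset $K$ of the columns of $m'$ one has $\Delta_{[r],K}(m') = \Delta_{[r],K}\big((u|_J)\,m'\,t'\big) = \Delta_{[r],K}(m'\,t') = \big(\prod_{\kappa \in K} t'_\kappa\big)\,\Delta_{[r],K}(m')$. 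Picking a nest $\varnothing \subset K_1 \subset \cdots \subset K_\ell$ with $\Delta_{[r],K_r}(m') \ne 0$ (which exists because $m'$ is invertible) forces $t' = I_\ell$, and then $(u|_J)\,m' = m'$ with $m'$ invertible forces $u|_J = I_\ell$.

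Finally I would promote these to the full matrices. The equality $u|_J = I_\ell$ says $u_{i,j} = 0$ for all $i > j$ with both $i, j \in J$, while membership in $U(w)$ says $u_{i,j} = 0$ for all $i > j$ with $j \notin J$; together these account for every strictly lower-triangular entry of $u$, so $u = I_k$. The original equation then reads $mt = m$, and since $m$ has no zero column, comparing the $j$-th columns forces $t_j = 1$ for every $j$, i.e.\ $t = I_n$. The one step that genuinely requires checking, rather than quotation of Lemma~\ref{action-is-free}, is the identity $(um)_J = (u|_J)\,m_J$ — that is, verifying that $U(w)$ has been set up precisely so that producing a $J$-row of $um$ never mixes in a row of $m$ outside $J$ — but this is immediate from the defining condition on $U(w)$, so I expect no real obstacle.
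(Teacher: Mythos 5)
Your reduction to the rows in $J$ — the identity $(um)_J = (u|_J)\,m_J$ and the minor argument forcing $t' = I_\ell$ and $u|_J = I_\ell$ — is correct, and it is the intended adaptation of Lemma~\ref{action-is-free} (the paper omits the proof of Lemma~\ref{omega-action-is-free} for exactly this reason). However, your final paragraph has a genuine gap: the two conditions ``$u|_J = I_\ell$'' and ``$u_{i,j} = 0$ whenever $i > j$ and $j \notin J$'' do \emph{not} account for every strictly lower-triangular entry of $u$. The entries $u_{i,j}$ with $j \in J$ but $i \notin J$ are permitted to be nonzero in $U(w)$ and are invisible to $u|_J$; for instance, in the paper's example $U(44111533) \subset GL_5$ one has $J = \{1,3,4,5\}$ and the entry $u_{2,1}$ is a free parameter constrained by neither statement. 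So $u = I_k$ does not follow from what you have established, and since you obtain $t = I_n$ only after reducing to $mt = m$, that conclusion is stranded as well. (You also cannot get $t = I_n$ from the $J$-rows alone, because only $m$, not $m_J$, is assumed to have no zero columns: a column of $m$ may be supported entirely outside the rows $J$.)

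The gap is fixable with one additional step, which is where the hypotheses on $m$ are genuinely needed. Having shown $t_c = 1$ for every $c$ in the chosen column set $I$, compare the $(i,c)$ entries of $umt$ and $m$ for each $i \notin J$ and each $c \in I$; since $u_{i,j} = 0$ for $j \notin J$, this yields
\begin{equation*}
\sum_{j \in J,\; j < i} u_{i,j}\, m_{j,c} = 0 \qquad \text{for all } c \in I.
\end{equation*}
The rows of the invertible $\ell \times \ell$ matrix $m'$ (the rows of $m$ indexed by $J$, restricted to the columns $I$) are linearly independent, so every such $u_{i,j}$ vanishes. Combined with $u|_J = I_\ell$ and the defining condition of $U(w)$, this finally gives $u = I_k$, after which $mt = m$ together with the absence of zero columns in $m$ forces $t = I_n$, as in your last step.
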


We have a matrix multiplication map $U(w) \times \widehat{P}_w/T \rightarrow \Omega_w$ given by $(u,mT) \mapsto umT$.
The analog of Proposition~\ref{pattern-matrix-representatives} for the spaces $\Omega_w$ reads as follows;
we again omit the similar Gaussian elimination style proof.

\begin{lemma}
\label{omega-pattern-matrix-representatives}
Let $w \in [k]^n$ be a convex word and let $mT \in \Omega_w$ for some $k \times n$ matrix $m$.  
There exists a unique coset $m'T \in \widehat{P}_w/T$
and a unique $u \in U(w)$ such that $mT = um'T$.  Consequently, the map
$U(w) \times \widehat{P}_w/T \rightarrow \Omega_w$ is a regular bijection.
\end{lemma}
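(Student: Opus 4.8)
The plan is to imitate the proof of Proposition~\ref{pattern-matrix-representatives}, running a \emph{constrained} Gaussian elimination in which the only permitted operations are left multiplication by elements of $U(w)$ (since $U(w)$ is lower triangular unipotent, this amounts to adding a multiple of row $i$ to a lower row, subject to the letter $i$ appearing in $w$) and right multiplication by $T$ (column scaling). Given $mT\in\Omega_w$, choose a representative matrix $m$ and process its columns $j = 1, 2, \dots, n$ from left to right, maintaining the invariant that after column $j$ the first $j$ columns agree with the corresponding columns of some matrix fitting the pattern $\OPM(w)$ (with the $\diamond$'s in each processed column not all zero). For an initial column $j = j_r$ the pivot will be placed in row $w_{j_r}$, and for a non-initial column only a single column scaling from $T$ is used. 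Because $U(w)\times T$ acts freely on $\MMM(w)\supseteq\widehat P_w$ (Lemma~\ref{omega-action-is-free}), the uniqueness half of the argument has a convenient endgame.

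For \emph{existence}, the essential input is the hypothesis $r(m) = r(w)$, which is what forces the output to be exactly $w$ (rather than merely ``some convex word'' as in Lemma~\ref{convex-disjoint-union}). Using that $w$ is convex and that $j_r$ is its $r$-th initial position, the distinct letters among $w_1, \dots, w_{j_r}$ are $w_{j_1}, \dots, w_{j_r}$; reading the rank function $r(w)$ off the combinatorics of $\OPM(w)$ then shows that, after columns $1, \dots, j_r - 1$ have been cleared, column $j_r$ acquires a new pivot precisely in row $w_{j_r}$, that every entry of column $j_r$ lying in a $0$-position of $\OPM(w)$ is forced to clear to zero, and that the entries in the earlier pivot rows $w_{j_1}, \dots, w_{j_{r-1}}$ lying above the new pivot cannot be reached by admissible row operations and survive as the free parameters occupying the $\star$-positions. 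The one genuinely new point compared to Proposition~\ref{pattern-matrix-representatives} is that all the row operations needed add multiples of rows $w_{j_1}, \dots, w_{j_r}$, each of which is a letter of $w$, so every such operation lies in $U(w)$; for a non-initial column $j$ one uses that appending column $j$ does not change any rank $r(w)(\cdot, j)$ to see that the column is supported on rows $w_1, \dots, w_j$ and is brought into the $\OPM(w)$ pattern by a single scaling. This produces $m'\in\widehat P_w$ and $u\in U(w)$ with $mT = um'T$.

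For \emph{uniqueness}, suppose $u_1 m_1 T = u_2 m_2 T$ with $u_1, u_2\in U(w)$ and $m_1, m_2\in\widehat P_w$; then $m_2 = u' m_1 t$ for $u' = u_2^{-1} u_1\in U(w)$ and some $t\in T$. Comparing columns in the order of the initial positions $j_1 < \cdots < j_m$ of $w$ and using that both $m_1$ and $m_2$ fit the echelon pattern $\OPM(w)$, one finds by induction on $r$ that the column of $u'$ indexed by $w_{j_r}$ is the standard basis vector $e_{w_{j_r}}$; since $w_{j_1}, \dots, w_{j_m}$ are exactly the letters appearing in $w$ and $U(w)$ permits no other off-diagonal entries, this forces $u' = I_k$ and hence $m_1 T = m_2 T$. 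With $m_1 = m_2$ in hand, $u_1 m_1 T = u_2 m_1 T$ gives $(u_2^{-1} u_1)\, m_1\, t' = m_1$ for some $t'\in T$, and freeness of the $U(w)\times T$-action on $\MMM(w)$ (Lemma~\ref{omega-action-is-free}) yields $u_1 = u_2$. Combining existence and uniqueness, the map $U(w)\times\widehat P_w/T\to\Omega_w$, $(u, m'T)\mapsto um'T$, is a bijection; since it is induced by matrix multiplication, which descends to the quotients by $T$, it is a morphism of varieties, hence a regular bijection.

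I expect the main obstacle to be the middle step: matching the rank function $r(w)$ against the combinatorial recipe defining $\OPM(w)$ to verify that the $0$-positions really are forced to vanish and that the pivots land in the prescribed rows, with the bookkeeping kept honest as the intermediate matrix evolves (one uses that left multiplication by the lower triangular group preserves the ranks of all initial-row-segments of all initial-column-segments, so the rank data $r(m) = r(w)$ is not disturbed). This is routine but fiddly, and is precisely the computation the authors suppress by declaring the argument ``similar'' to that of Proposition~\ref{pattern-matrix-representatives}; the one conceptual check beyond that proposition is that shrinking $U$ to $U(w)$ costs nothing, which holds because the pivot rows are always letters of $w$.
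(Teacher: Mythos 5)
Your proof is correct and takes essentially the same route the paper intends: the paper omits this argument as a Gaussian elimination ``similar'' to Proposition~\ref{pattern-matrix-representatives}, and your constrained elimination --- using $r(m)=r(w)$ to force the pivot rows and the vanishing at $0$-positions, and observing that every row operation adds a multiple of a row indexed by a letter of $w$ and hence lies in $U(w)$ --- supplies exactly those details. The only (harmless) redundancy is the closing appeal to Lemma~\ref{omega-action-is-free}: once you have shown $u' = u_2^{-1}u_1 = I_k$, the equality $u_1 = u_2$ already follows.
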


We want to show that the regular bijection
$U(w) \times \widehat{P}_w/T \rightarrow \Omega_w$ of Lemma~\ref{omega-pattern-matrix-representatives}
is fact an isomorphism of varieties, i.e. has regular inverse.
The space $\widehat{P}_w/T$ is visibly isomorphic 
to a product of affine and projective spaces; in our running example
if $w = 44111533$ and $(n,k) = (8,5)$ 
then reading the columns of $\OPM(w)$ from left to right gives
\begin{equation*}
\widehat{P}_w/T \cong \PP^0 \times \PP^0 \times \PP^0 \times \PP^1 \times \PP^1 \times \CC^2 \times \CC^1\times \PP^3.
\end{equation*}
In general, a column of 
$\OPM(w)$ with $d$ $\diamond$'s and no $\star$'s contributes a factor of $\PP^{d-1}$ to $\widehat{P}_w/T$
and a column with 
a single $\diamond$ and
$s > 0$ $\star$'s contributes a factor of $\CC^s$.  If we can show $U(w) \times \widehat{P}_w/T \rightarrow \Omega_w$ is 
an isomorphism, this will imply the isomorphism
\begin{equation}
\Omega_w \cong U(w) \times \widehat{P}_w/T \cong 
\CC^{\text{number of $\star$'s in $U(w)$}} \times \text{a product of affine and projective spaces},
\end{equation}
so that $\Omega_w$ is again a product of affine and projective spaces.

Recall that we are constructing a cellular decomposition of $(\PP^{k-1})^n$ whose cells are indexed by words
in $v = v_1 \dots v_n \in [k]^n$.
The cells $C_v$ of our decomposition will partition the spaces $\Omega_w \subseteq (\PP^{k-1})^n$; we describe
these cells as follows.  The first step is to associate a pattern matrix $\PM(v)$ to $v$ when $v$ is not necessarily Fubini.

Let $v = v_1 \dots v_n \in [k]^n$ be an arbitrary word with initial positions $\initial(v)$.  
The {\em pattern matrix} $\PM(v)$ is the $k \times n$
matrix with entries in $\{0, 1, \star\}$ defined as follows.
\begin{itemize}
\item  We have $\PM(v)_{i,j} = 1$ if and only if $v_j = i$.
\item Suppose $j \in \initial(v)$ and $w_j \neq i$.  If $v_j > i$ and there exists $j' < j$ such that 
$v_{j'} = i$ then $\PM(v)_{i,j} = \star$.  Otherwise $\PM(v)_{i,j} = 0$.
\item  Suppose $j \notin \initial(v)$ and $w_j \neq i$.  
If the first occurrence of $i$
in $v = v_1 \dots v_n$ is before the first 
occurrence of $w_j$ in $v = v_1 \dots v_n$ then $\PM(v)_{i,j} = \star$.  
Otherwise $\PM(v)_{i,j} = 0$.
\end{itemize}

If $v \in \WW_{n,k}$ is Fubini, the pattern matrix $\PM(v)$ agrees with the definition given in Section~\ref{Orbit}.
These more general pattern matrices $\PM(v)$ have zero rows corresponding to letters $1 \leq i \leq n$
not appearing in $v = v_1 \dots v_n$.

Let $v = v_1 \dots v_n \in [k]^n$.  Let $\widehat{C}_v \subseteq \UUU_{n,k}$ be the set of matrices obtained by replacing
the $\star$'s in $\PM(v)$ by arbitrary complex numbers.  We have 
$\widehat{C}_v \cong \CC^{\text{number of $\star$'s in $\PM(v)$}}$.  
Define $C_v \subseteq (\PP^{k-1})^n$ by
\begin{equation}
C_v := \text{image of $U(v) \widehat{C}_v$ in $(\PP^{k-1})^n$}.
\end{equation}
Said differently, we have $C_v = U(v) \widehat{C}_vT/T$.
The space $C_v$ is  a cell.

\begin{lemma}
\label{word-cells-are-affine}
Let $v = v_1 \dots v_n \in [k]^n$ be a word.  The subspace $C_v \subseteq (\PP^{k-1})^n$ is 
isomorphic to an
affine space. 
\end{lemma}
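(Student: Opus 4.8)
The plan is to prove the stronger statement that the map
\[
\psi\colon U(v)\times\widehat{C}_v\longrightarrow C_v,\qquad(u,m)\longmapsto umT,
\]
landing in $C_v\subseteq\UUU_{n,k}/T=(\PP^{k-1})^n$, is an isomorphism of varieties; the lemma follows since the source is an affine space: $\widehat{C}_v\cong\CC^{\#\star(\PM(v))}$ by construction, and $U(v)$ is the linear subvariety of $U\cong\CC^{\binom{k}{2}}$ cut out by setting to zero the strictly-lower coordinates $u_{i,j}$ for which the letter $j$ does not occur in $v$, so $U(v)\cong\CC^{\#\star(U(v))}$ as a variety. That $\psi$ is a well-defined morphism with image $C_v$ is clear: every matrix in $\widehat{C}_v$ has a $1$ in each column, so $U(v)\widehat{C}_v\subseteq\UUU_{n,k}$, and $\psi$ is matrix multiplication followed by the quotient map $\UUU_{n,k}\to(\PP^{k-1})^n$.

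First I would check that $\psi$ is a bijection, by the Gaussian-elimination argument of Lemma~\ref{action-is-free} and Proposition~\ref{pattern-matrix-representatives}, adapted to arbitrary words $v\in[k]^n$ and the subgroup $U(v)$. Surjectivity is immediate. For injectivity one supposes $m'=umt$ with $u\in U(v)$, $m,m'\in\widehat{C}_v$, $t\in T$, and shows by induction on $j$ that the first $j$ columns of $m$ and $m'$ agree and that $u,t$ restrict to identities on those columns, processing columns from left to right. The only point to verify is that every row operation appearing in the reduction adds a multiple of a pivot row, which in column $j$ is row $v_j$, to a lower row; since the letter $v_j$ occurs in $v$, this operation lies in $U(v)$.

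The substantive step is to upgrade $\psi$ to an isomorphism by exhibiting a regular inverse via \emph{inverse Gaussian elimination}. Since $\psi$ is bijective, any representing matrix $M\in\UUU_{n,k}$ of a point of $C_v$ can be written $M=umt$ for a unique $(u,m)\in U(v)\times\widehat{C}_v$ and a unique $t\in T$, and the diagonal entries $t_j$ of $t$ are nonzero because $M$ has no zero column. Running the column reduction of Proposition~\ref{pattern-matrix-representatives} on $M$ recovers $(u,m)$: because the point lies in $C_v$, the reduction selects its pivots in the rows dictated by the pattern of $v$, its forward row operations record the entries of $u$, its non-initial columns record the $\star$-entries of $m$, and the only scalars it ever divides by are the $t_j$. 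Consequently $(u,m)$ is obtained from $M$ by rational formulas whose denominators are the nonvanishing $t_j$, homogeneous of degree $0$ under column rescaling of $M$; these descend to a regular map $C_v\to U(v)\times\widehat{C}_v$ inverse to $\psi$, so $C_v\cong U(v)\times\widehat{C}_v$ is an affine space.

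I expect the main obstacle to be making precise that the reduction, run on a point of $C_v$, behaves exactly as just described --- that its pivots land in the rows prescribed by $v$ and that its divisors are precisely the nonzero $t_j$. This is handled by bookkeeping with the rank function: on $C_v$ one has $r(\ell_\bullet)=r(v)$, which pins down where the pivots occur and forces the relevant entries of the partially reduced matrix to be nonzero; and on the pattern representative $m$ those entries are exactly the $1$'s in positions $(v_j,j)$, so after reduction they equal the corresponding $t_j$. An alternative route, once the regular bijection $U(w)\times\widehat{P}_w/T\to\Omega_w$ of Lemma~\ref{omega-pattern-matrix-representatives} is known to be an isomorphism, is to transport $C_v\subseteq\Omega_{\conv(v)}$ through it: since $U(v)=U(\conv(v))$, the cell $C_v$ becomes $U(w)$ times a product of standard affine cells of the projective-space factors of $\widehat{P}_w/T$ together with the full affine-space factors, which is again an affine space.
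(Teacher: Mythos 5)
Your main argument is essentially the paper's own proof: the paper likewise reduces the lemma to showing that the multiplication map $U(v)\times\widehat{C}_v T\rightarrow U(v)\widehat{C}_v T$ is a bijective regular map with regular inverse (the inverse being given by Gaussian-elimination/minor formulas in the entries of the product matrix, with nonvanishing denominators, which then descend modulo $T$), deferring the explicit minor computation to the proof of Lemma~\ref{omega-structural-result}(1). One small inaccuracy, confined to your optional alternative route: $C_v$ lies in $\Omega_w$ for the convex word $w$ with $\initial(w)=\initial(v)$ and $w_i=v_i$ for $i\in\initial(v)$ (Lemma~\ref{omega-structural-result}(2)), which is in general \emph{not} $\conv(v)$ (e.g.\ $v=1213$ gives $w=1223$ while $\conv(v)=1123$), although $U(v)=U(w)$ still holds and the rest of that alternative goes through after this correction.
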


\begin{proof}
Matrix multiplication gives a bijective regular map 
$\mu: U(v) \times \widehat{C}_v T \rightarrow U \widehat{C}_v T$. 
If we could show that $\mu$ is an isomorphism, i.e. that $\mu^{-1}$ is also regular, the
isomorphism $U(v) \times \widehat{C}_v T \cong U \widehat{C}_v T$ would induce 
isomorphisms
\begin{equation*}
U(v) \times \widehat{C}_v  \cong U(v) \times \widehat{C}_v T/T \cong U(v) \widehat{C}_v T/T \cong C_v.
\end{equation*}
Since both $U(v)$ and $\widehat{C}_v$ are affine spaces, this would imply that $C_v$ is an affine space.

Given $(u,a) \in U(v) \times \widehat{C}_v T$ such that $ua = b \in U \widehat{C}_v T$, we need to verify that
$(u,a)$ is determined by some rational function in the entries of $b$.  
This amounts to expressing the entries of $u$ and $a$ in terms of 
judiciously chosen minors of the matrix $b$.    
The somewhat tedious details are similar to a corresponding result
for the space $\Omega_w$ proven in Lemma~\ref{omega-structural-result} below; 
to avoid redundancy, we will 
only include that argument. 
\end{proof}

We have the following structural result relating the spaces $\Omega_w$ and the cells $C_v$.

\begin{lemma}
\label{omega-structural-result}
Let $w = w_1 \dots w_n \in [k]^n$ be a convex word and consider the subspace $\Omega_w \subseteq (\PP^{k-1})^n$.
\begin{enumerate}
\item  $\Omega_w$ is isomorphic to a product of affine and projective spaces.
\item  $\Omega_w$ has a disjoint union decomposition $\Omega_w = \bigsqcup_v C_v$, where $v$ runs
over all words $v \in [k]^n$ such that $\initial(v) = \initial(w)$ and $v_i = w_i$ for all $i \in \initial(w)$.
\item $\Omega_w$ admits a cellular decomposition whose cells are precisely those $C_v$ appearing in the disjoint union of 
(2).
\item  The $n$-fold product $(\PP^{k-1})^n$ admits a cellular decomposition whose cells are $\{ C_v \,:\, v \in [k]^n \}$.
\end{enumerate}
\end{lemma}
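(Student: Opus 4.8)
The plan is to prove part (1) first and deduce (2)--(4) from it together with the facts about $\Omega_w$ already in hand. The real content is part (1): we must upgrade the bijective morphism $\mu\colon U(w)\times\widehat P_w/T\to\Omega_w$ of Lemma~\ref{omega-pattern-matrix-representatives} to an isomorphism of varieties, and the verification that $\mu^{-1}$ is regular is exactly the ``somewhat tedious'' minor computation postponed in the proof of Lemma~\ref{word-cells-are-affine}. Everything after (1) is organizational bookkeeping with cellular decompositions.

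For (1), given $\ell_\bullet\in\Omega_w$ represented by a matrix $b\in\MMM(w)$, we run the Gaussian-elimination procedure behind Lemma~\ref{omega-pattern-matrix-representatives} to produce $u\in U(w)$ and $m'\in\widehat P_w$ with $bT=um'T$. The point is that at every stage the required row operations and column rescalings are governed by ratios $\Delta_{I,J}(b)/\Delta_{I',J'}(b)$ of minors whose index sets are built from $\initial(w)$, and the denominators occurring are minors that are nowhere zero on $\MMM(w)$ (they evaluate, up to the irrelevant torus rescaling, to the distinguished $\pm1$ minors of the reduced matrix $m'$). Hence $u$ and the entries of $m'T$ are regular functions of $b$, so $\mu$ is an isomorphism. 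Since $U(w)$ is an affine space and, as already observed, $\widehat P_w/T$ is the product over the columns of $\OPM(w)$ of a $\PP^{d-1}$ for each column carrying $d$ $\diamond$'s and no $\star$'s and a $\CC^{s}$ for each column carrying one $\diamond$ and $s$ $\star$'s, part (1) follows.

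For (2), call $v\in[k]^n$ \emph{valid for $w$} if $\initial(v)=\initial(w)$ and $v_i=w_i$ for all $i\in\initial(w)$; these are the words in the stated disjoint union. Such a $v$ has the same letters as $w$, so $U(v)=U(w)$, and $\widehat C_v\subseteq\widehat P_w$: an initial column of $\PM(v)$ becomes the matching column of $\OPM(w)$ once its leading $1$ is read as a nonzero $\diamond$, while a noninitial column of $\PM(v)$ is supported on the rows carrying $\diamond$'s in the matching column of $\OPM(w)$. In particular $C_v=U(w)\widehat C_vT/T\subseteq U(w)\,(\widehat P_w/T)=\Omega_w$, using $U$-invariance of the rank function. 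Under the identification of (1), the subsets $\widehat C_vT/T$, as $v$ ranges over valid words, are precisely the cells of the product cellular decomposition of $\widehat P_w/T$ in which each projective factor attached to a noninitial column $j$ is given the standard decomposition \eqref{projective-cellular}, coordinates ordered so that the leading nonzero coordinate is the one in the row of the latest-appearing letter of $w$ among the nonzero rows; concretely, a point of $\widehat P_w/T$ determines $v_j$ as this letter, and the surviving free entries of that column are exactly the $\star$'s of $\PM(v)$. Thus $\widehat P_w/T=\bigsqcup_v\widehat C_vT/T$, and applying $U(w)$ and using bijectivity of $\mu$ gives the disjoint decomposition $\Omega_w=\bigsqcup_v C_v$.

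For (3), order the cells of the product decomposition of $\widehat P_w/T$ by any linear extension of the coordinatewise order on the tuples indexing the cells of the projective factors, so that the closure of each cell is a union of it with later cells; the resulting filtration of $\widehat P_w/T$ by closed subvarieties, multiplied throughout by $U(w)$ and transported by the isomorphism of (1), is a filtration of $\Omega_w$ by closed subvarieties whose successive differences are disjoint unions of affine spaces, namely the $C_v$ of (2) (which are affine by Lemma~\ref{word-cells-are-affine}). For (4), by Lemma~\ref{convex-disjoint-union} we have $(\PP^{k-1})^n=\bigsqcup_{w\text{ convex}}\Omega_w$, and each $v\in[k]^n$ is valid for exactly one convex $w$ --- the unique convex word whose blocks of constant letters begin, in the order prescribed by $v$, at the positions of $\initial(v)$ --- so $\{C_v:v\in[k]^n\}$ partitions $(\PP^{k-1})^n$. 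By Lemma~\ref{omega-closure} each $\overline{\Omega}_w$ is a union of $\Omega_{w'}$'s, so we may list the convex words $w^{(1)},\dots,w^{(M)}$ with $\Omega_{w^{(p)}}\subseteq\overline{\Omega}_{w^{(q)}}\Rightarrow p\ge q$ and set $Z_p:=\Omega_{w^{(p)}}\cup\cdots\cup\Omega_{w^{(M)}}$, which is then closed. Splicing into the chain $Z_1\supset Z_2\supset\cdots$, between $Z_p$ and $Z_{p+1}$, the filtration of part (3) for $\Omega_{w^{(p)}}$ with each term unioned with $Z_{p+1}$ produces the required filtration of $(\PP^{k-1})^n$: each spliced term $Z_{p+1}\cup W$ is closed because the closure of $W$ meets $\Omega_{w^{(p)}}$ in $W$ and otherwise lies in $\overline{\Omega}_{w^{(p)}}\setminus\Omega_{w^{(p)}}\subseteq Z_{p+1}$, and each successive difference is a disjoint union of cells $C_v$. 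The main obstacle remains part (1); the rest is routine once the isomorphism $\Omega_w\cong U(w)\times\widehat P_w/T$ and the standard cellular decomposition of products of projective spaces are in place.
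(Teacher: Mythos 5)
Your proposal is correct and takes essentially the same route as the paper: part (1) by upgrading the bijection of Lemma~\ref{omega-pattern-matrix-representatives} to an isomorphism via minor ratios indexed by $\initial(w)$ (the paper writes out these minor identities and the Cramer's-rule step for the entries of $u$ explicitly), part (2) by the matrix-level decomposition $\widehat{P}_w = \bigsqcup_v \widehat{C}_v T$, and parts (3)--(4) by concatenating the standard cell structures of the projective factors using Lemmas~\ref{convex-disjoint-union} and~\ref{omega-closure}. One small imprecision: the denominator minors are nowhere vanishing on the matrix locus $U(w)\widehat{P}_w T$ lying over $\Omega_w$, not on all of $\MMM(w)$; since $\mu^{-1}$ only needs to be regular on $\Omega_w$, this does not affect the argument.
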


\begin{proof}
(1)  Let $\initial(w) = \{ i_1 < \cdots < i_m \}$, so that $w$ has $m$ distinct letters $w_{i_1}, \dots, w_{i_m}$.
The matrix multiplication map $\mu: U(w) \times \widehat{P}_w   \rightarrow U(w) \widehat{P}_w $ is a regular bijection,
as noted in Lemma~\ref{omega-pattern-matrix-representatives}.  We show that the inverse of $\mu$ is regular,
so that $\mu$ is an isomorphism.  From this it follows that $\mu$ induces an isomorphism
$U(w) \times \widehat{P}_w/T \cong U(w) \widehat{P}_w/T = \Omega_w$.  Since $U(w)$ is an
 affine space and $\widehat{P}_w/T$ is a product of affine and projective spaces, this  completes the proof.

To prove that $\mu^{-1}: U(w) \widehat{P}_w \rightarrow U(w) \times \widehat{P}_w$ is regular, suppose
$(u,a) \in U(w) \times \widehat{P}_w$ satisfy $ua = b$.  We need to show that the entries
of the
pair $(u,a)$ of matrices are given by rational functions of the entries of the matrix $b$.
This will be proven in several stages.

The argument establishing the regularity 
of $\mu^{-1}$ is most easily followed using a running example.  We take
$(n,k) = (8,5)$ and $w = 44111533$, so that $\initial(w) = \{1,3,6,7\}$ and $(w_1, w_3, w_5, w_7) = (4,1,5,3)$.  The matrices
$u$ and $a$ have the form
\begin{equation*} u =
\begin{pmatrix}
1 & 0 & 0 & 0 & 0 \\
u_{2,1} & 1 & 0 & 0 & 0 \\
u_{3,1} & 0 & 1 & 0 & 0 \\
u_{4,1} & 0 & u_{4,3} & 1 & 0 \\
u_{5,1} & 0 & u_{5,3}  & u_{5,4} & 1 
\end{pmatrix}, \quad
a = 
\begin{pmatrix}
0 & 0 & a_{1,3} & a_{1,4} & a_{1,5} & a_{1,6} & a_{1,7} &  a_{1,8} \\
0 & 0 & 0 & 0 & 0 & 0 & 0 & 0 \\
0 & 0 & 0 & 0 & 0 & 0 & a_{3,7} & a_{3,8} \\
a_{4,1} & a_{4,2} & 0 & a_{4,4} & a_{4,5} & a_{4,6} & 0 & a_{4,8} \\
0 & 0 & 0 & 0 & 0 & a_{5,6} & 0 & a_{5,8}
\end{pmatrix}.
\end{equation*}
Here the $u_{i,j}$ and $a_{i,j}$ are complex numbers with the property that $a_{3,7}, a_{5,6} \neq 0$ and 
the matrix $a$ has no zero columns.

The equation $ua = b$ is an equation of $k \times n$ matrices.  Let $u'$ be the restriction of $u$ to the row and column
sets $\{i_1, \dots, i_m\}$ and let $a'$ be the restriction of $a$ to the 
column set $\{i_1, \dots, i_m\}$ and the row
set $\{ w_{i_1}, \dots, w_{i_m} \}$.  In our example, we have $m = 4$ and
\begin{equation*}
u' = \begin{pmatrix}
1 & 0 & 0 & 0 \\
u_{3,1} & 1 & 0 & 0 \\
u_{4,1} & u_{4,3} & 1 & 0 \\
u_{5,1} & u_{5,3} & u_{5,4} & 1
\end{pmatrix}, \quad
a' = \begin{pmatrix}
0 & a_{1,3} & a_{1,6} & a_{1,7} \\
0 & 0 & 0 & a_{3,7} \\
a_{4,1} & 0 & a_{4,6} & 0 \\
0 & 0 & a_{5,6} & 0
\end{pmatrix}.
\end{equation*}
Let $b'$ be the restriction of $b$ to the column
 set $\{i_1, \dots, i_m\}$ and the row set $\{w_{i_1}, \dots, w_{i_m} \}$.
It is easy to see that we have the matrix equation $u' a' = b'$, where each of $u', a',$ and $b'$ are $m \times m$ invertible
matrices.

We prove that the nonzero entries in $a'$ are rational functions of the entries of $b'$ (and hence rational functions of the entries of $b$).
Let $\pi = \pi_1 \dots \pi_m \in S_m$ be the  permutation whose one-line notation is order isomorphic
to the sequence $w_{i_1} \dots w_{i_m}$.  In our example we have $w_{i_1} \dots w_{i_m} = 4153$ so that 
$\pi = 3142 \in S_4$.  For $r = 1, 2, \dots, m$ the following equation recursively gives the (necessarily nonzero)
entries $a'_{r, \pi_r^{-1}}$ of $a'$
as rational functions of the entries of $b'$:
\begin{align}
a'_{1,\pi^{-1}_1} a'_{2,\pi^{-1}_2} \cdots a'_{r, \pi^{-1}_r} &=
\epsilon_r \Delta_{[r], \{\pi^{-1}_1, \dots, \pi^{-1}_r \}}(a') \\
&= \epsilon_r \Delta_{[r], \{\pi^{-1}_1, \dots, \pi^{-1}_r \}}(u'a') \\
&= \epsilon_r \Delta_{[r], \{\pi^{-1}_1, \dots, \pi^{-1}_r \}}(b'), 
\end{align}
where the sign $\epsilon_r$ is the parity of the number of inversions in the sequence 
$(\pi^{-1}_1, \pi^{-1}_2, \dots, \pi^{-1}_r)$  and
we used the fact that $u'$ is unipotent.
In our example, these $m = 4$ equations read
\begin{equation*}
\begin{cases}
a_{1,3} = + \Delta_{ \{1\}, \{2\} } (b'), \\
a_{1,3} a_{3,7} = + \Delta_{ \{1,2\}, \{2,4\} } (b'), \\
a_{1,3} a_{3,7} a_{4,1} = + \Delta_{ \{1,2,3\}, \{1,2,4\}}(b'), \\
a_{1,3} a_{3,7} a_{4,1} a_{5,6} = - \Delta_{ \{1,2,3,4\}, \{1,2,3,4\} }(b') = - \det(b').
\end{cases}
\end{equation*}

The above paragraph establishes that the necessarily nonzero entries 
\begin{equation*}
(a'_{\pi_1,1}, \dots, a'_{\pi_m,m}) = (a_{w_{i_1}, i_1}, \dots, a_{w_{i_m}, i_m})
\end{equation*}
of $a'$
are rational functions of the entries of $b'$.
For the remaining potentially nonzero entries of $a'$, we use the following similar argument.  
Let $1 \leq r, s \leq m$ be such that $\pi_s > r$ so that $a'_{r,s}$ is a potentially nonzero entry of $a$.  Then  we have
\begin{align}
a'_{1,\pi^{-1}_1}  \cdots a'_{r-1, \pi^{-1}_{r-1}} a'_{r,s} &=
\epsilon'_{r,s} \Delta_{[r], \{\pi^{-1}_1, \dots, \pi^{-1}_{r-1}, s \}}(a') \\
&= \epsilon'_{r,s} \Delta_{[r], \{\pi^{-1}_1, \dots, \pi^{-1}_{r-1}, s \}}(u'a') \\
&= \epsilon'_{r,s} \Delta_{[r], \{\pi^{-1}_1, \dots, \pi^{-1}_{r-1}, s \}}(b'), 
\end{align}
where the sign $\epsilon'_{r,s}$ is the parity of the number of inversions in the sequence 
$(\pi^{-1}_1, \dots, \pi^{-1}_{r-1}, s)$. 
The last paragraph shows that each of the numbers 
$a'_{1,\pi^{-1}_1}, \dots, a'_{r-1,\pi^{-1}_{r-1}}$ is nonzero
and is a rational function of the entries of $b'$.
It follows that $a'_{r,s}$ is a rational function of the entries of $b'$.
In our example,
the above equations read
\begin{equation*}
\begin{cases}
a_{1,6} = + \Delta_{\{1\}, \{3\}}(b') \\
a_{1,7} = + \Delta_{\{1\}, \{4\}}(b') \\
a_{1,3} a_{3,7} a_{4,6} = - \Delta_{\{1,2,3\}, \{2,3,4\}} (b'),
\end{cases}
\end{equation*}
where the numbers $a_{1,3}, a_{3,7}$ are nonzero.

At this point, we know that all of the entries of the invertible $m \times m$ matrix $a'$
are rational functions of the entries of $b$.
The nonzero subdiagonal entries 
$\{ u_{i,j} \,:\, \text{$j$ appears in $w$} , j < i \}$ 
of $u$ may be seen to depend
rationally on $b$ as follows.  For $1 \leq j \leq k$, consider the length $m$ row vector 
$v_j = (a_{j, i_1}, \dots , a_{j,i_m})$ where $\initial(w) = \{i_1, \dots, i_m\}$.  In our example, we have
\begin{equation*}
v_1 = (0, a_{1,3}, a_{1,6}, a_{1,7}), v_2 = (0,0,0,0), v_3 = (0,0,0,a_{3,7}), v_4 = (a_{4,1}, 0, a_{4,6}, 0), v_5 = (0,0,a_{5,6},0).
\end{equation*}
The vectors $v_1, v_2, \dots, v_k$ are just the rows of the matrix $a'$, together with zero
vectors corresponding to the letters $1 \leq j \leq m$ which do not appear in $w$.
In particular,
the set $\{ v_j \,:\, j$ appears in $w \}$ is linearly independent and its $m$ vectors depend
rationally on $b$.

Let $1 \leq i \leq k$.
If $x_i = (b_{i, i_1}, \dots, b_{i,i_m})$ is the row vector obtained by restricting the $i^{th}$ row of $b$ to the columns
$\initial(w)$, then
\begin{equation}
\label{system-of-equations}
x_i = \sum_{j < i} u_{i,j} v_j = \sum_{\substack{j < i \\ \text{$j$ appears in $w$}}} u_{i,j} v_j,
\end{equation}
since $v_j = 0$ if $j \notin \initial(w)$.
There is a unique tuple of complex numbers
\begin{equation}
(u_{i,j} \,:\, j < i \text{ and $j$ appears in $w$})
\end{equation}
which solves the system \eqref{system-of-equations}.
By Cramer's Rule, the components $u_{i,j}$ of this solution depend 
rationally on the entries of $x_i$
and $\{ v_j \,:\, j < i \text{ and $j$ appears in $w$ } \}$, and hence rationally on $b$.
Therefore the entries of both $u$ and $a = u^{-1} b$ are rational functions of 
the entries of $b$.

(2)  It follows from the definition of the pattern matrices $\OPM(w)$ and $\PM(v)$ that we have a disjoint union
decomposition in matrix space
\begin{equation}
\label{matrix-disjoint-union-decomposition}
\widehat{P}_w = \bigsqcup_v \widehat{C}_v T,
\end{equation}
where the disjoint union runs over all words $v \in [k]^n$ such $\initial(v) = \initial(w)$ and 
$v_i = w_i$ for all $i \in \initial(w)$.
To see how this disjoint union decomposition works, consider the case $(n,k) = (6,4)$ and 
$w = 441122 \in [4]^6$.  Then $\OPM(w)$ is given by
\begin{equation*}
\OPM(441122) = \begin{pmatrix}
0 & 0 & \diamond & \diamond & \star & \diamond \\ 
0 & 0 & 0 & 0 & \diamond & \diamond \\
0 & 0 & 0 & 0 & 0 & 0 \\
\diamond & \diamond & 0 & \diamond & 0 & \diamond 
\end{pmatrix}
\end{equation*}
and $\widehat{P}_w$ is obtained by replacing the $\star$ and the $\diamond$'s with complex numbers in 
such a way that not all of the $\diamond$'s in a single column are assigned to zero.
The six words $v \in [4]^6$ with $\initial(v) = \{1,3,5\}$ and $v_1 = 4, v_3 = 1, v_5 = 2$ are 
$441122, 441422, 441121, 441421, 441124,$ and $441424$; their pattern matrices are 
\begin{align*}
&\PM(441122) = \begin{pmatrix}
0 & 0 & 1 & 1 & \star & \star \\
0 & 0 & 0 & 0 & 1 & 1 \\
0 & 0 & 0 & 0 & 0 & 0 \\
1 & 1 & 0 & \star & 0 & \star 
\end{pmatrix}, \quad
\PM(441422) = \begin{pmatrix}
0 & 0 & 1 & 0 & \star & \star \\
0 & 0 & 0 & 0 & 1 & 1 \\
0 & 0 & 0 & 0 & 0 & 0 \\
1 & 1 & 0 & 1 & 0 & \star 
\end{pmatrix}, \quad \\
&\PM(441121) = \begin{pmatrix}
0 & 0 & 1 & 1 & \star & 1 \\
0 & 0 & 0 & 0 & 1 & 0 \\
0 & 0 & 0 & 0 & 0 & 0 \\
1 & 1 & 0 & \star & 0 & \star 
\end{pmatrix}, \quad
\PM(441421) = \begin{pmatrix}
0 & 0 & 1 & 0 & \star & 1 \\
0 & 0 & 0 & 0 & 1 & 0 \\
0 & 0 & 0 & 0 & 0 & 0 \\
1 & 1 & 0 & 1 & 0 & \star 
\end{pmatrix}, \quad \\
&\PM(441124) = \begin{pmatrix}
0 & 0 & 1 & 1 & \star & 0 \\
0 & 0 & 0 & 0 & 1 & 0 \\
0 & 0 & 0 & 0 & 0 & 0 \\
1 & 1 & 0 & \star & 0 & 1 
\end{pmatrix}, \quad
\PM(441424) = \begin{pmatrix}
0 & 0 & 1 & 0 & \star & 0 \\
0 & 0 & 0 & 0 & 1 & 0 \\
0 & 0 & 0 & 0 & 0 & 0 \\
1 & 1 & 0 & 1 & 0 & 1 
\end{pmatrix} \quad \\
\end{align*}
and a typical matrix in one of the corresponding $\widehat{C}_v$ is obtained by replacing the $\star$'s with complex
numbers.
Since $T$ multiplies columns by nonzero scalars, the decomposition \eqref{matrix-disjoint-union-decomposition}
is evident.  Since $\Omega_w = U(w) \widehat{P}_w/T$ and $C_v =U(v) \widehat{C}_v T/T$ 
the result follows.

(3)  By (1), the space $\Omega_w$ is isomorphic to a product of affine and projective spaces.
By Lemma~\ref{word-cells-are-affine} the $C_v$ involved in the disjoint union decomposition
$\Omega_w = \bigsqcup_v C_v$ are isomorphic to affine spaces.
The result follows by observing that the disjoint union decomposition $\Omega_w = \bigsqcup_v C_v$
is (up to coordinate permutation) induced by the standard cellular decomposition of each projective
space coming from a column $1 \leq j \leq n$ of $\Omega_w$ with $j \notin \initial(w)$; see the above example.

(4)  List the convex words in $[k]^n$ as $w^1, w^2, \dots, w^p$ in such a way that if we have the pointwise
inequality of rank functions $r(w^i) \leq r(w^j)$ then $i \leq j$.  By Lemma~\ref{convex-disjoint-union} we know
$(\PP^{k-1})^n = \Omega_{w^1} \sqcup \cdots \sqcup \Omega_{w^p}$.  By Lemma~\ref{omega-closure}
the cellular decompositions of the $\Omega_{w^1}, \dots, \Omega_{w^p}$ described in (3) may be concatenated
to give a cellular decomposition of $(\PP^{k-1})^n$ whose cells are 
$\{ C_v \,:\, v \in [k]^n \}$.
\end{proof}

The space $X_{n,k}$ can be realized from $(\PP^{k-1})^n$ by removing those cell closures $\overline{C}_w$ for those
$w \in [k]^n$ such that $v \notin \WW_{n,k}$. 
In order to apply Theorem~\ref{cohomology-presentation-theorem} to present the cohomology of $X_{n,k}$ as a further
quotient of the presentation
$H^{\bullet}((\PP^{k-1})^n) \cong \ZZ[\xx_n]/\langle x_1^k, \dots, x_n^k \rangle$ given in 
\eqref{classical-projective-cohomology-presentation}, we need to understand the representatives in
$\ZZ[\xx_n]/\langle x_1^k, \dots, x_n^k \rangle$ of the classes $[\overline{C}_w]$  for arbitrary words $w \in [k]^n$.

In order to describe the classes $[\overline{C}_w]$, we need the following notation of 
{\em standardization}.  Let $w = w_1 \dots w_n \in [k]^n$ be a word with $m$ distinct letters.
Write the non-initial positions of $w$ as $[n] - \initial(w) = \{i_1 < \cdots < i_{n-m} \}$
and write $[k] - \{w_1, \dots, w_n \} = \{j_1 < \cdots < j_{k-m} \}$ for the letters not appearing in $w$.
The permutation
$\std(w) \in S_{n+k-m}$ is given by
\begin{equation}
\std(w)_i = \begin{cases}
w_i & \text{if $i \in \initial(w)$} \\
k + r & \text{if $i = i_r$ for some $1 \leq r \leq n-m$} \\
j_{i-n} & \text{if $i > n$.}
\end{cases}
\end{equation}
In other words, we replace the letters in noninitial positions of $w$ from left to right with
$k+1, k+2, \dots, k+(n-m)$, and then append the letters in $[k]$ which do not appear in $w$ 
to the end of $w$, in increasing order.
As an example, if $w = 215235 \in [5]^6$ (so that $m = 4$) then $\std(w) = 2156374 \in S_7$.

\begin{lemma} \label{lem:Cw-codimension}
    For a convex word $w$, it holds that $\overline{C}_w = \overline{\Omega}_w$, and $\codim \overline{C}_w = \inv(\std(w))$.
\end{lemma}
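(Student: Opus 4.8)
The plan is to handle the two claims in turn, throughout exploiting that a convex word is a concatenation of blocks of equal letters with pairwise distinct block-letters; thus we write $w = c_1^{a_1} c_2^{a_2} \cdots c_m^{a_m}$ with $c_1, \dots, c_m \in [k]$ distinct and each $a_i \geq 1$, so that $m$ is the number of distinct letters of $w$ and $\initial(w) = \{p_1 < \cdots < p_m\}$ consists of the block starts $p_i = 1 + a_1 + \cdots + a_{i-1}$.

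For $\overline{C}_w = \overline{\Omega}_w$: since $w$ is convex, $C_w$ is one of the cells in the decomposition $\Omega_w = \bigsqcup_v C_v$ of Lemma~\ref{omega-structural-result}(2). Because $\Omega_w$ is irreducible (a product of affine and projective spaces, by Lemma~\ref{omega-structural-result}(1)) and is the finite disjoint union of the locally closed pieces $C_v$, we have $\dim \Omega_w = \max_v \dim C_v$; I would show this maximum is attained only by $v = w$, whence $C_w$, being irreducible of dimension $\dim \Omega_w$, is dense in $\Omega_w$, giving $\overline{\Omega}_w \subseteq \overline{C}_w$ and hence (the reverse inclusion being clear) $\overline{C}_w = \overline{\Omega}_w$. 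For the dimension comparison I would use $\dim C_v = \dim U(v) + \dim \widehat{C}_v = \#\{\star\text{ in }U(v)\} + \#\{\star\text{ in }\PM(v)\}$ from Lemma~\ref{word-cells-are-affine}: all $v$ in the union have the same set of letters, so $U(v) = U(w)$, and $\PM(v)$ and $\PM(w)$ agree in the columns indexed by $\initial(w)$, so only non-initial columns vary. In a non-initial column $j$ belonging to block $i$ of $w$, the letter $v_j$ already occurs in $v_1 \cdots v_{j-1}$, hence lies in $\{c_1, \dots, c_i\}$, and the number of $\star$'s there equals the number of present letters with an earlier first occurrence, which is at most $i - 1$, with equality exactly when $v_j = c_i$. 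Convexity of $w$ forces $w_j = c_i$, so $w$ attains this maximum in every non-initial column and is the only word in the union doing so; thus $\dim C_v \leq \dim C_w$ with equality only for $v = w$.

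For the codimension formula, by the first claim $\codim \overline{C}_w = n(k-1) - \dim \Omega_w = n(k-1) - \dim C_w$ inside $(\PP^{k-1})^n$, and I would compute $\dim C_w$ and $\inv(\std(w))$ directly from the block data. Counting $\star$'s gives $\#\{\star\text{ in }U(w)\} = \sum_{i=1}^m (k - c_i)$ and $\#\{\star\text{ in }\PM(w)\} = \binom{m}{2} - \inv(c_1 \cdots c_m) + \sum_{i=1}^m (a_i-1)(i-1)$, where column $p_i$ contributes $|\{r < i : c_r < c_i\}|$ stars and each of the $a_i - 1$ non-initial columns of block $i$ contributes $i-1$. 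On the other side, I would write out the one-line notation of $\std(w) \in S_{n+k-m}$---the letters $c_1, \dots, c_m$ at the positions $\initial(w)$, the values $k+1, \dots, k+n-m$ increasing at the remaining positions of $[n]$, and the missing letters $j_1 < \cdots < j_{k-m}$ at positions $n+1, \dots, n+k-m$---and sort the inversions according to which of these three families of positions their endpoints lie in, obtaining $\inv(\std(w)) = \inv(c_1 \cdots c_m) + N + \sum_{i=1}^m (a_i-1)(m-i) + (n-m)(k-m)$, where $N = \#\{(i,s) : c_i > j_s\}$ counts the (present, missing) pairs out of order.

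Finally I would check $n(k-1) - \dim C_w = \inv(\std(w))$ by algebra: the $\inv(c_1 \cdots c_m)$ terms cancel, and using $\sum_i (a_i-1)(i-1) + \sum_i (a_i-1)(m-i) = (m-1)(n-m)$ together with $(m-1)(n-m) + (n-m)(k-m) = (n-m)(k-1)$, the identity reduces to $\sum_{i=1}^m c_i = m + \binom{m}{2} + N$, which holds since each present letter $c$ equals $1 + |\{c' \text{ present}: c' < c\}| + |\{j \text{ missing}: j < c\}|$ and summing over present $c$ gives precisely $m + \binom{m}{2} + N$. The one genuinely delicate step, and the main place to be careful, is the combinatorial bookkeeping in these $\star$-counts and in the inversion count of $\std(w)$; everything else is formal. (Alternatively one might identify $\overline{\Omega}_w$ with the image of a matrix Schubert variety and read its codimension off Fulton's rank-stratification results~\cite{FultonFlag}, but the self-contained count seems cleaner given the machinery already in place.)
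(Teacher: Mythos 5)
Your proposal is correct and follows essentially the same route as the paper: it uses the decomposition $\Omega_w = \bigsqcup_v C_v$ together with irreducibility of $\Omega_w$ and the column-by-column $\star$-count (which is exactly the content of Observation~\ref{dimension-alternative}) to show $\dim C_v \leq \dim C_w$ with equality only for $v = w$, and then verifies $\dim C_w + \inv(\std(w)) = n(k-1)$ by writing $w$ in block form and counting inversions of $\std(w)$ by position type. The only differences are bookkeeping: you split the inversion count into four families instead of three and route the final algebra through the identity $\sum_i c_i = m + {m \choose 2} + N$, which is an equivalent reorganization of the paper's computation.
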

\begin{proof}
    Recall from Observation~\ref{dimension-alternative} that if $w$ is a Fubini word, then the number of $\star$'s in the pattern matrix $\PM(w)$ is
    \begin{equation*}
        \dim(w) = -\inv(\pi) - n + \sum_{i=1}^n \pi^{-1}_{w_i},
    \end{equation*}
    where $\pi$ is the permutation $w_{i_1} \cdots w_{i_k}$ and $\initial(w) = \{i_1 < \cdots < i_k\}$. For an arbitrary word $w \in [n]^k$ the word $\pi$ may no longer be a permutation, but the same formula works, interpreting $\pi^{-1}_{w_i}$ as the position of $w_i$ in $\pi$. Letting $m = |\initial(w)|$, we get
    \begin{equation} \label{eq:Cw-dimension}
        \dim C_w  = \dim U(w) + \dim(w) = \sum_{i=1}^m (k-\pi_i) - \inv(\pi) - n + \sum_{i=1}^n \pi^{-1}_{w_i}.
    \end{equation}

    By Lemma~\ref{omega-structural-result}(c), if $w$ is convex then $\Omega_w = \bigsqcup_{v} C_v$ where $v \in [k]^n$ runs over words such that $\initial(w) = \initial(v)$ and $v_i = w_i$ for all $i \in \initial(w)$. For such a word $v$, the formula \eqref{eq:Cw-dimension} makes clear that $\dim C_v \leq \dim C_w$ with equality if and only if $v = w$. Since $\Omega_w$ is irreducible by Lemma~\ref{omega-structural-result}(a), it follows that $\overline{\Omega}_w = \overline{C}_w$.

    Now let us see that $\dim C_w + \inv(\std(w)) = (k-1)n = \dim\,(\PP^{k-1})^n$ for convex $w$. Write $w = \pi_1^{a_1} \cdots \pi_m^{a_m}$, where $\pi_i^{a_i}$ is the word consisting of $a_i$ copies of $\pi_i$. On the one hand,
    \begin{equation*}
        \dim C_w = \sum_{i=1}^m (k-\pi_i) - \inv(\pi) - n + \sum_{i=1}^m i a_i.
    \end{equation*}
    On the other,
    \begin{equation*}
        \inv(\std(w)) = \inv(\pi) + \sum_{i=1}^m (\pi_i - i) + \sum_{i=1}^m (a_i-1)(k-i);
    \end{equation*}
 the three terms in this sum count, respectively, the number of inversions in $\std(w)$ which start and end in $\initial(w)$, which start in $\initial(w)$ and end in $\{n+1, \ldots, n+k-m\}$, and which start in $[n] - \initial(w)$. Thus
 \begin{align*}
    \dim C_w + \inv(\std(w)) &= -n + \sum_{i=1}^m \left[(k-\pi_i) + ia_i + (\pi_i - i) + (a_i-1)(k-i)\right]\\
    &= -n + \sum_{i=1}^m a_i k = n(k-1).
 \end{align*}
\end{proof}

Let $w \in [k]^n$ be a word with $m$ distinct letters.
By construction, the standardization $\std(w) \in S_{n+k-m}$ satisfies
$\std(w)_{n+1} < \std(w)_{n+2} < \cdots < \std(w)_{n+k-m}$.  This means that the Schubert polynomial
$\symm_{\std(w)} \in \ZZ[\xx_{n+k-m}]$ only involves the first $n$ variables $x_1, x_2, \dots, x_n$,
and so may be regarded as lying in $\ZZ[\xx_n]$.
We are ready to describe the classes $[ \overline{C}_w ] \in H^{\bullet}((\PP^{k-1})^n)$ 
for convex words $w \in [k]^n$.

\begin{lemma}
\label{convex-word-representatives}
Let $w \in [k]^n$ be a convex word.  
Under the presentation 
$H^{\bullet}((\PP^{k-1})^n) = \ZZ[\xx_n]/\langle x_1^k, \dots, x_n^k \rangle$ given in
\eqref{classical-projective-cohomology-presentation}, the class $[ \overline{C}_w]$ is represented by $\symm_{\std(w)}$.
\end{lemma}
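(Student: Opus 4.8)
The plan is to realize $\overline{C}_w$ as a degeneracy locus of a morphism of vector bundles on $(\PP^{k-1})^n$ and to appeal to Fulton's formula for such loci \cite{FultonFlag}, which is already the source of Lemmas~\ref{convex-disjoint-union} and~\ref{omega-closure}. Recall from Lemma~\ref{lem:Cw-codimension} that $\overline{C}_w = \overline{\Omega}_w$, and from the proof of Lemma~\ref{omega-closure} that $\overline{\Omega}_w = \{\ell_\bullet \in (\PP^{k-1})^n : r(\ell_\bullet)(i,j) \le r(w)(i,j) \text{ for all } i \in [k],\ j \in [n]\}$. I would rephrase these rank conditions bundle-theoretically: on $(\PP^{k-1})^n$ the tautological line bundles $\ell_1, \dots, \ell_n$ are subbundles of the trivial bundle $\underline{\CC}^k$, and setting $\mathcal{L}_j := \ell_1 \oplus \cdots \oplus \ell_j$ gives a flag of subbundles $0 = \mathcal{L}_0 \subset \mathcal{L}_1 \subset \cdots \subset \mathcal{L}_n$ together with the morphism $h \colon \mathcal{L}_n \to \underline{\CC}^k$ summing the inclusions $\ell_s \hookrightarrow \underline{\CC}^k$. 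Writing $\underline{\CC}^k \twoheadrightarrow \underline{\CC}^i$ for the projection onto the first $i$ coordinates, one checks fiberwise that the rank of the composite $\mathcal{L}_j(\ell_\bullet) \to \CC^k \to \CC^i$ equals $r(\ell_\bullet)(i,j)$, so $\overline{C}_w$ is precisely the locus where every composite $\mathcal{L}_j \to \underline{\CC}^i$ has rank at most $r(w)(i,j)$.

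Next I would pin down the combinatorial label of this locus. By \cite[Lemma~3.1]{FultonFlag} the function $(i,j) \mapsto r(w)(i,j)$ agrees with the rank function of a unique minimal $0,1$-matrix having at most one $1$ in each row and column, i.e.\ of a partial permutation; completing that partial permutation to a genuine permutation of least length yields exactly $\std(w) \in S_{n+k-m}$, where $m$ is the number of distinct letters of $w$. Indeed, relabelling the non-initial positions of $w$ by $k+1, \dots, k+(n-m)$ records which rows are forced to be used once minimal rank is imposed, and appending the unused letters $[k]\setminus\{w_1,\dots,w_n\}$ in increasing order is the length-minimizing completion. The hypothesis needed for Fulton's formula — that $\overline{C}_w$ have the expected codimension $\inv(\std(w))$ — is exactly the content of Lemma~\ref{lem:Cw-codimension}, and irreducibility of $\overline{C}_w = \overline{\Omega}_w$ is Lemma~\ref{omega-structural-result}(1).

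I would then invoke the formula of \cite{FultonFlag} for flagged degeneracy loci: the class of $\overline{C}_w$ in $H^\bullet((\PP^{k-1})^n)$ is the double Schubert polynomial $\symm_{\std(w)}$ evaluated on the Chern roots of the source flag $\mathcal{L}_\bullet$ and of the target flag $\underline{\CC}^k = \underline{\CC}^k \twoheadrightarrow \cdots \twoheadrightarrow \underline{\CC}^1$. The successive quotients $\mathcal{L}_j/\mathcal{L}_{j-1} \cong \ell_j$ contribute the variables $x_j = c_1(\ell_j^*)$, which under the presentation \eqref{classical-projective-cohomology-presentation} are the hyperplane classes of the $n$ factors; the target flag is trivial, so its variables are all $0$. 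Hence $[\overline{C}_w] = \symm_{\std(w)}(x_1, \dots, x_n;\, 0, \dots, 0) = \symm_{\std(w)}(x_1, \dots, x_n)$, since setting the second set of variables to zero in a double Schubert polynomial returns the ordinary one.

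The obstacle here is bookkeeping rather than ideas. One must carefully match the conventions of \cite{FultonFlag} with ours: which flag is the source, the (essentially transpositional) relation between our rank function $r(w)(i,j)$ and Fulton's $\#\{a \le p : v(a) \le q\}$, the sign conventions on first Chern classes, and the assertion that the minimal-length completion of the partial permutation attached to $r(w)$ is precisely $\std(w)$, with $\inv(\std(w))$ equal to the codimension supplied by Lemma~\ref{lem:Cw-codimension}. One must also use the version of Fulton's theorem for partial permutations, since the source flag $\mathcal{L}_\bullet$ has length $n$ while the target flag has length $k \le n$; this is exactly the setting of \cite[\S 3]{FultonFlag}.
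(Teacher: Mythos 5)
Your proposal is correct and follows essentially the same route as the paper: identify $\overline{C}_w = \overline{\Omega}_w$ (Lemma~\ref{lem:Cw-codimension}) with the degeneracy locus of the addition map from $\ell_1 \oplus \cdots \oplus \ell_n$ to the trivial bundle, filtered by $\mathcal{L}_j = \ell_1 \oplus \cdots \oplus \ell_j$ and the coordinate projections, and then apply Fulton's degeneracy-locus formula, with the expected-codimension hypothesis supplied by Lemma~\ref{lem:Cw-codimension} and the trivial target flag killing one set of variables in the double Schubert polynomial. The convention-matching you defer (source versus target flag, $\std(w)$ versus $\std(w)^{-1}$, and $c_1(\ell_j) = -x_j$) is exactly the bookkeeping the paper carries out via the identity $\symm_{\std(w)^{-1}}(0,\dots,0;-x_1,\dots,-x_n) = \symm_{\std(w)}(x_1,\dots,x_n;0,\dots,0)$.
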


\begin{proof} 
    Suppose $X$ is a smooth complex variety, equipped with flags of vector bundles $0 = E_0 \subseteq E_1 \subseteq \cdots \subseteq E_n \twoheadrightarrow X$ and $0 = F_0 \twoheadleftarrow F_1 \twoheadleftarrow \cdots \twoheadleftarrow F_k \twoheadrightarrow X$, where $\operatorname{rank} E_i = \operatorname{rank} F_i = i$. Let $f : E_n \to F_k$ be a vector bundle map. Given $p \in X$, define the rank function $r_p(f) : [k] \times [n] \to \mathbb{Z}_{\geq 0}$ of $f$ at $p$ by
    \begin{equation*}
        r_p(f) : (i,j) \mapsto \operatorname{rank}((E_j)_p \hookrightarrow (E_n)_p \xrightarrow{f} (F_k)_p \twoheadrightarrow (F_i)_p),
    \end{equation*}
    where $(E_j)_p$ denotes the fiber of $E_j$ over $p$. The associated \emph{degeneracy locus} of shape $w$ is then $\Upsilon = \{p \in X : r_p(f) \leq r(w) \}$. Define elements
    \begin{equation*}
        a_i = c_1(\ker(F_i \twoheadrightarrow F_{i-1})) \quad \text{and} \quad b_j = c_1(E_j/E_{j-1})
    \end{equation*}
    of $H^2(X)$. Fulton shows \cite[Theorem 8.2]{FultonFlag} that if $\Upsilon \subseteq X$ has the expected codimension $\inv(\std(w))$, then $[\Upsilon] \in H^{\bullet}(X)$ is the double Schubert polynomial $\symm_{\std(w)^{-1}}(a_1, a_2, \ldots; b_1, b_2, \ldots)$; here we take $a_i = b_j = 0$ if $i > n$ or $j > k$.

    In particular, let $\ell_1, \ldots, \ell_n$ be the tautological line bundles over $(\PP^{k-1})^n$. Set $E_j = \ell_1 \oplus \cdots \oplus \ell_j$ for $j \in [n]$, and for $i \in [k]$ let $F_i$ be the trivial bundle $\CC^i$, with the surjection $F_i \twoheadrightarrow F_{i-1}$ being projection onto the first $i-1$ coordinates. Take $f : E_n \to F_k$ to be the map sending $(v_1, \ldots, v_n) \in (E_n)_p$ to $v_1 + \cdots + v_n \in \CC^k$. Then $\operatorname{rank}((E_j)_p \hookrightarrow (E_n)_p \xrightarrow{f} (F_k)_p \twoheadrightarrow (F_i)_p)$ is exactly the rank of the upper left $i \times j$ submatrix of a matrix representing $p \in (\PP^{k-1})^n$, and so the associated degeneracy locus of shape $w$  is $\overline{\Omega}_w$, by Lemma~\ref{omega-closure}.

    Lemma~\ref{lem:Cw-codimension} shows that $\overline{C}_w = \overline{\Omega}_w$ has codimension $\inv(\std(w))$, so Fulton's formula applies. In the presentation $H^{\bullet}((\PP^{k-1})^n) = \ZZ[\xx_n]/\langle x_1^k, \ldots, x_n^k \rangle$, the variable $x_i$ is $c_1(\ell_i^*) = -c_1(\ell_i) = -b_i$, while $a_1 = \cdots = a_k = 0$ since the bundles $F_i$ are trivial. Thus,
    \begin{align*}
        [\overline{C}_w] &= \symm_{\std(w)^{-1}}(0, \ldots, 0; -x_1, \ldots, -x_n)\\
        &= \symm_{\std(w)}(x_1, \ldots, x_n; 0, \ldots, 0)\\
        &= \symm_{\std(w)}(x_1, \ldots, x_n).
    \end{align*}
\end{proof}

In order to describe $[ \overline{C}_w]$ for words $w \in [k]^n$ which may not be convex, we will need the 
convexification $\conv(w)$.  
The word $\conv(w)$ may be characterized as follows.
Let $S_n$ act on length $n$ words by $\pi.w = \pi.(w_1 \dots w_n) := w_{\pi_1} \cdots w_{\pi_n}$.
If $\initial(w) = \{ i_1 < \cdots < i_m \}$, define $\sigma(w) \in S_n$ in terms of one-line notation
$\sigma(w) = \sigma(w)_1 \dots \sigma(w)_n$ by listing the positions (in increasing order) of all instances
of $w_{i_1}$ in $w$, then listing the positions (in increasing order) of all instances of $w_{i_2}$ in $w$,
and so on.  For example, if $w = 215235 \in [5]^6$ then $\conv(w) = 221553$ so that
$\sigma(w) = 142365 \in S_6$.  

By construction, we have
$\sigma(w)^{-1}.w = \conv(w)$ for any word $w \in [k]^n$, or equivalently 
\begin{equation*}
w = \sigma(w).\conv(w).  
\end{equation*}
In fact, the permutation $\sigma(w) \in S_n$ may be characterized
as the unique minimal permutation in the weak Bruhat order on $S_n$ which sends $\conv(w)$ to $w$.
The following factorization property of $\sigma(w)$ is crucial.

\begin{observation}
\label{factorization-of-sigma}
Let $w \in [k]^n$ be a word and let $\conv(w)$ be its convexification.  The permutation $\sigma(w)$ has a factorization
$\sigma(w) = s_{i_1} \cdots s_{i_r}$ into adjacent transpositions such that for all $1 \leq j \leq r$, at most one of the indices 
$i_j$ and $i_{j}+1$ lie in the set of initial positions $\initial(s_{i_{j+1}} \cdots s_{i_r}.\conv(w))$ for the word
$s_{i_{j+1}} \cdots s_{i_r}.\conv(w)$.
\end{observation}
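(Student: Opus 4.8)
\emph{Plan of proof.} The plan is to produce such a factorization by a rank‑sorting (``bubble sort'') argument, organized as an induction. For a word $v \in [k]^n$, assign to each letter $c$ occurring in $v$ its \emph{rank}, namely the number of distinct letters of $v$ whose first occurrence weakly precedes that of $c$; thus $\pi(v)$ lists the initial letters of $v$ in increasing order of rank, and $\conv(v)$ is obtained from $v$ by a stable sort with respect to ranks. In particular $v$ is convex if and only if $(\mathrm{rank}(v_1), \dots, \mathrm{rank}(v_n))$ is weakly increasing, and the quantity
\begin{equation*}
N(v) := \#\{\, (p,q) \,:\, 1 \le p < q \le n,\ \mathrm{rank}(v_p) > \mathrm{rank}(v_q) \,\}
\end{equation*}
vanishes exactly when $v$ is convex. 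Inspecting the one‑line notation of $\sigma(v)$ — which is the concatenation, over $t = 1, 2, \dots$, of the increasing list of positions of the rank‑$t$ letters of $v$ — one sees immediately that $\inv(\sigma(v)) = N(v)$. I will use freely that $\sigma(w)$ is the permutation of least length carrying $\conv(w)$ to $w$.

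The induction is on $N(w)$. If $N(w) = 0$ then $w = \conv(w)$, the permutation $\sigma(w)$ is the identity, and the empty factorization works. If $N(w) > 0$, the rank word of $w$ is not weakly increasing, so there is an index $p$ with $\mathrm{rank}(w_p) > \mathrm{rank}(w_{p+1})$; put $i_1 := p$ and $w' := s_p . w$, so that $w'$ is $w$ with the entries in positions $p$ and $p+1$ exchanged. Three facts are checked: (a) not both of $p, p+1$ lie in $\initial(w)$ — otherwise $w_p$ and $w_{p+1}$ would make their first appearances at $p$ and $p+1$, forcing $\mathrm{rank}(w_p) < \mathrm{rank}(w_{p+1})$; (b) exchanging positions $p$ and $p+1$ does not disturb the left‑to‑right order of first occurrences of letters, so $\conv(w') = \conv(w)$, and at most one of $p, p+1$ lies in $\initial(w')$; and (c) $N(w') = N(w)-1$, since the exchange removes exactly the inverting pair $\{p,p+1\}$ and leaves the status of every other position pair unchanged. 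By (c) and $\inv(\sigma(\cdot)) = N(\cdot)$ we get $\inv(\sigma(w')) = N(w)-1$, so the inductive hypothesis applied to $w'$ produces a factorization $\sigma(w') = s_{i_2} \cdots s_{i_r}$ with $r = N(w) = \inv(\sigma(w))$ such that for $2 \le j \le r$ at most one of $i_j, i_j+1$ lies in $\initial(s_{i_{j+1}} \cdots s_{i_r} . \conv(w'))$; since $\conv(w') = \conv(w)$ by (b), the same holds with $\conv(w)$. Now $s_p \sigma(w')$ carries $\conv(w)$ to $w$, because $\sigma(w') . \conv(w') = w'$ and $w' = s_p . w$; as $\inv(s_p \sigma(w')) \le \inv(\sigma(w')) + 1 = r = \inv(\sigma(w))$, minimality forces $s_p \sigma(w') = \sigma(w)$, giving the desired factorization $\sigma(w) = s_{i_1} s_{i_2} \cdots s_{i_r}$ (which is moreover reduced). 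Finally, the condition at $j = 1$ asks that at most one of $p, p+1$ lie in $\initial(s_{i_2} \cdots s_{i_r} . \conv(w))$; but $s_{i_2}\cdots s_{i_r}.\conv(w) = \sigma(w').\conv(w') = w'$, so this is exactly the second half of (b).

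\emph{Main obstacle.} The step demanding the most care is (b): one runs through the three possibilities for which of $p, p+1$ are initial in $w$ (the ``both initial'' case being ruled out by (a)) and verifies in each that passing to $w' = s_p . w$ neither alters the sequence of initial letters nor creates a pair $\{p,p+1\}$ both of whose members are initial. Conceptually, the only adjacent exchange that can change $\conv$ is one transposing two positions that are both initial \emph{and} in rank order, which a rank‑descent is not; this is the heart of the statement, and the remainder is bookkeeping — including the harmless convention check that iterated application of $s_{i_r}, s_{i_{r-1}}, \dots, s_{i_1}$ to $\conv(w)$ rebuilds $w$, which is just $w = \sigma(w) . \conv(w)$ unwound one transposition at a time. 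The auxiliary facts $\inv(\sigma(v)) = N(v)$ and $N(w') = N(w)-1$ are both immediate once the rank word is written down, so no substantial computation is needed beyond the case analysis in (b).
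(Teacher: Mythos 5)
Your proof is correct. Note that the paper offers no argument for this Observation at all (it is asserted, with only the informal gloss that the passage from $\conv(w)$ to $w$ can be realized by adjacent swaps never exchanging two initial positions), so there is nothing to compare against; your rank-inversion induction is precisely the bubble-sort argument implicitly intended, and your case analysis in (b) — neither of $p,p+1$ initial, or only $p$ initial, the "only $p+1$" case being incompatible with a rank descent — correctly yields $\conv(s_p.w)=\conv(w)$ and the condition at the new step. One small remark: the identity $\sigma(w)=s_p\,\sigma(w')$ is obtained via the characterization of $\sigma(w)$ as the unique minimal-length permutation carrying $\conv(w)$ to $w$, which the paper also states without proof; this is true (it is the minimal-length representative of the coset of the Young subgroup stabilizing $\conv(w)$), but if you want the argument fully self-contained you can instead verify directly from the definition that $s_p\,\sigma(w')$ lists the positions of each initial letter of $w$ in increasing order, block by block, and hence equals $\sigma(w)$. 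Also be aware that the paper's literal action convention $\pi.(w_1\cdots w_n)=w_{\pi_1}\cdots w_{\pi_n}$ does not match its own worked example for $\sigma(w)$; under the intended convention (the letter in position $i$ is moved to position $\pi_i$) your composition bookkeeping, in particular $s_{i_2}\cdots s_{i_r}.\conv(w)=w'$, is exactly right.
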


In other words, we can realize the transformation $\conv(w) \leadsto \sigma(w).\conv(w) = w$ 
as a sequence of adjacent transpositions,
none of which interchange two initial positions of the word at any stage of the process.

\begin{proposition}
\label{word-representatives}
Let $w \in [k]^n$ be an arbitrary word.  We have the equality of spaces
$\overline{C}_w = \overline{\Omega}_{\conv(w)} \sigma(w)$,
where we identify $\sigma(w)^{-1}$ with its $n \times n$ permutation matrix.  
Moreover, under the presentation 
$H^{\bullet}((\PP^{k-1})^n) = \ZZ[\xx_n]/\langle x_1^k, \dots, x_n^k \rangle$ given in
\eqref{classical-projective-cohomology-presentation}, 
the class $[ \overline{C}_w]$ is represented by the polynomial $\sigma(w)^{-1}.\symm_{\std(\conv(w))} \in \ZZ[\xx_n]$.
\end{proposition}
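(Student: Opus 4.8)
The plan is to reduce the non-convex case to the convex case (Lemma~\ref{convex-word-representatives}) by tracking the effect of the permutation $\sigma(w)$ both geometrically and on cohomology. First I would establish the geometric identity $\overline{C}_w = \overline{\Omega}_{\conv(w)}\,\sigma(w)$, where $S_n$ acts on $(\PP^{k-1})^n$ by permuting the line factors (equivalently, by permuting matrix columns). The key observation is that the pattern matrix $\PM(w)$ is obtained from $\PM(\conv(w))$ by permuting columns according to $\sigma(w)$: indeed $w = \sigma(w).\conv(w)$ by definition, and since $\conv(w)$ and $w$ have the same initial letters in the same left-to-right order, the column-placement rules defining $\PM$ transform in exactly this way. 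The same is true of $U(w) = U(\conv(w))$ since these subgroups depend only on the set of letters appearing. Hence $\widehat{C}_w = \widehat{C}_{\conv(w)}\cdot\sigma(w)^{-1}$ as column-permuted matrix sets, so $C_w = C_{\conv(w)}\,\sigma(w)$ in $(\PP^{k-1})^n$; since $\sigma(w)$ acts by a homeomorphism (indeed an algebraic automorphism), taking closures gives $\overline{C}_w = \overline{C}_{\conv(w)}\,\sigma(w) = \overline{\Omega}_{\conv(w)}\,\sigma(w)$, using Lemma~\ref{lem:Cw-codimension} for the last equality.

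Next I would pass to cohomology. The automorphism of $(\PP^{k-1})^n$ given by $\sigma(w)^{-1}$ (as a permutation matrix acting on columns) induces an automorphism of $H^{\bullet}((\PP^{k-1})^n) = \ZZ[\xx_n]/\langle x_1^k,\dots,x_n^k\rangle$; since $x_i = c_1(\ell_i^*)$ and $\sigma(w)^{-1}$ sends the $i$-th line factor to the $\sigma(w)^{-1}_i$-th one, this automorphism is exactly the variable permutation $x_i \mapsto x_{\sigma(w)_i}$, i.e.\ the action of $\sigma(w)^{-1}$ on polynomials in the sense $\pi.f(x_1,\dots,x_n) = f(x_{\pi_1},\dots,x_{\pi_n})$. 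Pushforward (or equivalently, pullback of cycle classes under an isomorphism) commutes with this automorphism, so $[\overline{C}_w] = [\overline{C}_{\conv(w)}\,\sigma(w)] = \sigma(w)^{-1}.[\overline{\Omega}_{\conv(w)}]$. By Lemma~\ref{convex-word-representatives}, $[\overline{\Omega}_{\conv(w)}] = [\overline{C}_{\conv(w)}]$ is represented by $\symm_{\std(\conv(w))}$, whence $[\overline{C}_w]$ is represented by $\sigma(w)^{-1}.\symm_{\std(\conv(w))}$.

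The main obstacle I anticipate is being careful about the direction of the group actions and the matching of conventions: the paper uses $\pi.w = w_{\pi_1}\cdots w_{\pi_n}$ for words, a left action on polynomials $\pi.f = f(x_{\pi_1},\dots,x_{\pi_n})$, and identifies permutations with permutation matrices — it is easy to be off by an inverse or a transpose when relating ``$\sigma(w)$ acting on the word'' to ``$\sigma(w)^{-1}$ acting on the variety by column permutation'' to ``which variable substitution this induces on $H^{\bullet}$.'' The clean way to control this is to verify the column-permutation statement $\PM(w) = \PM(\conv(w))\cdot P_{\sigma(w)^{-1}}$ (with $P$ the permutation matrix) on the running example $w = 215235$, where $\conv(w) = 221553$ and $\sigma(w) = 142365$, and then to note that the induced map on $x_i = c_1(\ell_i^*)$ is pullback along the automorphism, which reverses the permutation once, giving precisely $\sigma(w)^{-1}.(-)$ in the paper's convention. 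A secondary point worth a sentence is that $\sigma(w)^{-1}.\symm_{\std(\conv(w))}$, a priori a polynomial in $x_1,\dots,x_n$, genuinely lies in $\ZZ_{\geq 0}[\xx_n]$ and descends correctly; but this is immediate since the variable-permuted Schubert polynomial is still a polynomial in these variables, and the claim is only about its class modulo $\langle x_1^k,\dots,x_n^k\rangle$. Observation~\ref{factorization-of-sigma} is not strictly needed for this argument but can be cited to reassure the reader that $\sigma(w)$ is the natural minimal permutation effecting $\conv(w)\leadsto w$.
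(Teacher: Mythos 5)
Your proposal is correct, and its skeleton matches the paper's: reduce to the convex case handled by Lemma~\ref{convex-word-representatives}, and transport both the cell closure and its class along the $S_n$-action permuting the $\PP^{k-1}$ factors. The implementation differs in one genuine way: the paper factors $\sigma(w)$ into adjacent transpositions (this is the point of Observation~\ref{factorization-of-sigma}) and inducts on $\inv(\sigma(w))$, at each step using only the statement that swapping two adjacent columns, not both initial, swaps the corresponding columns of $\PM$ and $\OPM$; you instead assert the full column-permutation identity $\PM(w)=\PM(\conv(w))$ with columns permuted by $\sigma(w)$ in one shot, together with $U(w)=U(\conv(w))$. Your one-shot claim is true, and the clean way to see it (which your justification gestures at but should be made explicit) is that each column of $\PM(v)$ depends only on three pieces of data: the letter in that column, whether the position is initial, and the set of letters whose first occurrence is earlier -- all of which are matched up by $\sigma(w)$ between $w$ and $\conv(w)$, since the two words share letter multiplicities and the left-to-right order of initial letters, and $\sigma(w)$ carries block starts of $\conv(w)$ to initial positions of $w$. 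With that spelled out, your route bypasses the induction and the Observation entirely, at the cost of one careful case analysis of the $\PM$ rules; the paper's induction buys the same conclusion from a weaker, single-swap statement. Your remaining steps (closures commute with the automorphism, $\overline{C}_{\conv(w)}=\overline{\Omega}_{\conv(w)}$ from Lemma~\ref{lem:Cw-codimension}, and $[\phi(Z)]$ being the image of $[Z]$ under the induced variable permutation on $H^{\bullet}((\PP^{k-1})^n)$) are exactly the ingredients the paper uses, and your caution about left/right action and inverse conventions is warranted -- the paper's own statement mixes $\sigma(w)$ and $\sigma(w)^{-1}$, so pinning the convention down on the example $w=215235$ as you propose is a reasonable safeguard.
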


\begin{proof}
Lemma~\ref{convex-word-representatives} gives this result when $w$ is convex, or equivalently when $\sigma(w) = 1$ is
the identity permutation in $S_n$.  In general, we induct on the number of inversions in $\sigma(w)$. 

 If $v \in [k]^n$ is a
word $1 \leq i \leq n$ is such that both of $i$ and $i+1$ do not lie in the initial positions $\initial(v)$, the pattern matrix
(resp. omega pattern matrix) 
of $s_i.v$ is obtained by swapping columns $i$ and $i+1$ in the pattern matrix (resp. omega pattern matrix)
of $v$.  In symbols, we have
$\PM(s_i.v) = \PM(v)s_i$ and $\OPM(s_i.v) = \OPM(v)s_i$.  This leads to the equalities 
$C_{s_i.v} = C_v s_i$, $\Omega_{s_i.v} = \Omega_v s_i$ of subspaces of $(\PP^{k-1})^n$.  We also have the 
identification of cohomology classes 
\begin{equation*}
\begin{cases}
[\overline{C}_{s_i.v}] = [\overline{C}_v s_i] = s_i.[\overline{C}_v], \\
[\overline{\Omega}_{s_i.v}] = [\overline{\Omega}_v s_i] = s_i. [\overline{\Omega}_v].
\end{cases}
\end{equation*}
The result follows by induction on $\inv(\sigma(w))$.
\end{proof}

We are ready to describe the cohomology ring of $X_{n,k}$.  Recall that if $k \leq n$, the ring $R_{n,k}$ 
is the quotient of $\ZZ[\xx_n]$ given by
$R_{n,k} = \ZZ[\xx_n]/I_{n,k}$ where 
$I_{n,k} = \langle x_1^k, \dots, x_n^k, e_n(\xx_n), e_{n-1}(\xx_n), \dots, e_{n-k+1}(\xx_n) \rangle$.

\begin{theorem}
\label{cohomology-of-x}
Let $k \leq n$ be positive integers and consider the presentation of the cohomology ring
$H^{\bullet}((\PP^{k-1})^n) = \ZZ[\xx_n]/\langle x_1^k, \dots, x_n^k \rangle$ 
given in \eqref{classical-projective-cohomology-presentation}.  
\begin{enumerate}
\item  With respect to the cellular decomposition of $(\PP^{k-1})^n$ whose open cells 
are $\{C_w \,:\, w \in [k]^n \}$ given in Lemma~\ref{omega-structural-result} (4), the space $X_{n,k}$ may be expressed as a 
difference
\begin{equation}
X_{n,k} = 
(\PP^{k-1})^n - \bigcup_{\substack{w \in [k]^n \\ w \notin \WW_{n,k}}} \overline{C}_w,
\end{equation}
so that $X_{n,k}$ is obtained from $(\PP^{k-1})^n$ by removing a union of cell closures.
\item  The subvariety $X_{n,k}$ of $(\PP^{k-1})^n$ has a cellular decomposition whose
cells $\{ C_w \,:\, w \in \WW_{n,k} \}$ are indexed by length $n$ Fubini words with maximum letter $k$,
or equivalently $k$-block ordered set partitions of size $n$.
\item The cohomology presentation given by Theorem~\ref{cohomology-presentation-theorem} realizes 
$H^{\bullet}(X_{n,k})$ explicitly as the canonical quotient of $H^{\bullet}((\PP^{k-1})^n)$ given by
\begin{equation}
H^{\bullet}((\PP^{k-1})^n) = \ZZ[\xx_n]/\langle x_1^k, \dots, x_n^k \rangle \twoheadrightarrow R_{n,k} = H^{\bullet}(X_{n,k}).
\end{equation}
In particular, we have an isomorphism of graded rings $H^{\bullet}(X_{n,k}) \cong R_{n,k}$, where each variable
$x_i$ generating $R_{n,k}$ has $\deg(x_i) = 2$. Under this isomorphism, $x_i$ corresponds to the first Chern class of the $i$\textsuperscript{th} dual tautological line bundle over $X_{n,k}$.
\item  For any Fubini word $w \in \WW_{n,k}$, the cohomology class $[\overline{C}_w] \in H^{\bullet}(X_{n,k})$ of the 
closure of the cell $C_w$ of $X_w$ is represented by the polynomial
$\sigma(w)^{-1}.\symm_{\std(\conv(w))} \in \ZZ[\xx_n]$, where $\symm_{\std(\conv(w))} \in \ZZ[\xx_n]$ is the classical 
Schubert polynomial corresponding to the permutation $\std(\conv(w)) \in S_n$.
\end{enumerate}
\end{theorem}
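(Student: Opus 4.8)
The plan is to derive the four parts in order, each feeding the next; all of the real work sits in part~(3).

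\emph{Parts (1) and (2).} By Lemma~\ref{omega-structural-result}(2) every cell $C_v$ lies in a unique stratum $\Omega_w$ with $w \in [k]^n$ convex having the same set of distinct letters as $v$, so by Definition~\ref{omega-definition} every $\ell_\bullet \in C_v$ has $\dim(\ell_1 + \cdots + \ell_n) = r(w)(k,n) = \#\{\text{distinct letters of } v\}$. This equals $k$ exactly when $v$ uses every letter of $[k]$, i.e.\ when $v \in \WW_{n,k}$; hence $X_{n,k} = \bigsqcup_{v \in \WW_{n,k}} C_v$, with complement $\bigsqcup_{v \notin \WW_{n,k}} C_v$ in $(\PP^{k-1})^n$. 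For $v \notin \WW_{n,k}$, Proposition~\ref{word-representatives} gives $\overline{C}_v = \overline{\Omega}_{\conv(v)}\,\sigma(v)$, and by Lemma~\ref{omega-closure} every point of $\overline{\Omega}_{\conv(v)}$ has span of dimension $\leq r(\conv(v))(k,n) < k$ --- a condition unaffected by permuting the lines by $\sigma(v)$ --- so $\overline{C}_v \cap X_{n,k} = \varnothing$. Therefore $X_{n,k} = (\PP^{k-1})^n - \bigcup_{v \notin \WW_{n,k}} \overline{C}_v$ is obtained by deleting a union of cell closures, which is part~(1). Part~(2) then follows exactly as in the opening paragraph of the proof of Theorem~\ref{cohomology-presentation-theorem}: refine the decomposition of Lemma~\ref{omega-structural-result}(4) to a single-cell filtration (Lemma~\ref{single-cell-lemma}) in which the deleted set is an initial segment, and truncate; the surviving cells are precisely the $C_v$ with $v \in \WW_{n,k}$. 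By Lemma~\ref{homology-from-cellular} and Poincar\'e duality, $\{[\overline{C}_v] : v \in \WW_{n,k}\}$ is then a $\ZZ$-basis of $H^\bullet(X_{n,k})$; in particular $H^\bullet(X_{n,k})$ is free of rank $|\WW_{n,k}| = k!\cdot\Stir(n,k)$ and concentrated in even degrees.

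\emph{Part (3).} Theorem~\ref{cohomology-presentation-theorem} together with~(1) identifies the restriction homomorphism as a surjection $\ZZ[\xx_n]/\langle x_1^k, \dots, x_n^k\rangle \twoheadrightarrow H^\bullet(X_{n,k})$ whose kernel $I$ is generated by the classes $[\overline{C}_v]$, $v \notin \WW_{n,k}$; write $\widetilde{I} \subseteq \ZZ[\xx_n]$ for its preimage, so $\widetilde{I} \supseteq \langle x_1^k, \dots, x_n^k\rangle$ and $H^\bullet(X_{n,k}) = \ZZ[\xx_n]/\widetilde{I}$. I would first show $I_{n,k} \subseteq \widetilde{I}$ via Chern classes: over $X_{n,k}$ the bundle map $\ell_1 \oplus \cdots \oplus \ell_n \to \CC^k \times X_{n,k}$ summing the tautological inclusions is surjective by definition of $X_{n,k}$, with kernel a bundle $K$ of rank $n-k$, so the Whitney formula gives $\prod_{i=1}^n (1 + c_1(\ell_i)) = c_\bullet(K)$, which vanishes in cohomological degrees $> 2(n-k)$; thus $e_j(c_1(\ell_1), \dots, c_1(\ell_n)) = 0$ for $j > n-k$. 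Since $x_i = c_1(\ell_i^*) = -c_1(\ell_i)$ in $H^\bullet((\PP^{k-1})^n)$, and this identity is preserved by restriction (functoriality of Chern classes under the open immersion $X_{n,k} \hookrightarrow (\PP^{k-1})^n$, along which $\ell_i$ pulls back to the $i$\textsuperscript{th} tautological bundle over $X_{n,k}$), each $e_j(\xx_n)$ with $j > n-k$ lies in $\widetilde{I}$, giving $I_{n,k} \subseteq \widetilde{I}$ and hence a graded surjection $R_{n,k} \twoheadrightarrow H^\bullet(X_{n,k})$. To promote this to an isomorphism: after tensoring with $\QQ$ both sides have dimension $k!\cdot\Stir(n,k)$ --- the source by specializing the Hilbert series of \cite{HRS} at $q=1$, the target by~(2) --- so $\widetilde{I}\otimes\QQ = I_{n,k}\otimes\QQ$; since $\ZZ[\xx_n]/\widetilde{I} = H^\bullet(X_{n,k})$ is torsion-free, $\widetilde{I}$ is saturated, whence $\widetilde{I} = (\widetilde{I}\otimes\QQ)\cap\ZZ[\xx_n] = (I_{n,k}\otimes\QQ)\cap\ZZ[\xx_n] = I_{n,k}$, the last equality because the Gr\"obner basis of $I_{n,k}$ from \cite{HRS} has unit leading coefficients, so that $R_{n,k}$ is itself $\ZZ$-torsion-free. (One could instead match Hilbert series degree by degree: by Lemma~\ref{lem:Cw-codimension}, $\codim C_v = n(k-1) - {k \choose 2} - \dim(v)$ for Fubini $v$ since $\dim U(v) = {k \choose 2}$, so Proposition~\ref{dimension-is-mahonian} identifies $\Hilb(H^\bullet(X_{n,k}))$ with $\rev_q([k]!_q\Stir_q(n,k))$.) Hence $\widetilde{I} = I_{n,k}$, i.e.\ $R_{n,k} = H^\bullet(X_{n,k})$ as graded rings, with $x_i = c_1(\ell_i^*)$ by the discussion above.

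\emph{Part (4), and the main obstacle.} Proposition~\ref{word-representatives} already records $[\overline{C}_v] = \sigma(v)^{-1}.\symm_{\std(\conv(v))}$ in $H^\bullet((\PP^{k-1})^n) = \ZZ[\xx_n]/\langle x_i^k\rangle$ for all $v \in [k]^n$. For $v \in \WW_{n,k}$ I would apply the restriction map $j^* : H^\bullet((\PP^{k-1})^n) \to H^\bullet(X_{n,k}) = R_{n,k}$, which is the quotient $\ZZ[\xx_n]/\langle x_i^k\rangle \twoheadrightarrow R_{n,k}$; since $j^*$ carries the fundamental class of a closed subvariety to that of its intersection with the open set $X_{n,k}$, and $\overline{C}_v \cap X_{n,k}$ is the closure of $C_v$ inside $X_{n,k}$, this gives $[\overline{C}_v] = \sigma(v)^{-1}.\symm_{\std(\conv(v))}$ in $R_{n,k}$, and by~(2) these polynomials descend to a $\ZZ$-basis of $R_{n,k}$. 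The crux of the whole theorem is part~(3): parts (1), (2), (4) are bookkeeping on top of the machinery of Sections~\ref{Cellular}--\ref{Cohomology}, but turning the \emph{a priori} surjection $R_{n,k} \twoheadrightarrow H^\bullet(X_{n,k})$ into an isomorphism forces one to pin down the \emph{integral} structure of $R_{n,k}$ (freeness of $R_{n,k}$, equivalently saturatedness of $I_{n,k}$), and this is exactly where the Gr\"obner-basis input of \cite{HRS}, or equivalently the Poincar\'e-series comparison built on the new Mahonian statistic $\dim$, is indispensable.
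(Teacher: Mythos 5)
Your proposal is correct, and parts (1), (2), and (4) run along the same lines as the paper's proof (rank-function description of the cells, removal of the non-Fubini cell closures, refinement to a single-cell filtration, and restriction of the classes from Proposition~\ref{word-representatives}). The genuine difference is in part (3). The paper obtains the containment $I_{n,k} \subseteq \ker$ combinatorially: it exhibits, for each $1 \leq i \leq k$, an explicit convex non-Fubini word $w^i$ whose cell class is the Grassmannian Schubert polynomial $\symm_{\std(w^i)} = e_{n-i+1}(\xx_n)$, so the elementary symmetric generators literally appear among the cell-class generators $J_{n,k}$; you instead derive the relations $e_j(\xx_n)=0$ for $j>n-k$ from the Whitney sum formula applied to the tautological surjection $\ell_1 \oplus \cdots \oplus \ell_n \twoheadrightarrow \CC^k$, which is exactly the argument the paper records separately in Section~\ref{Chern} (where it also notes that the cellular structure is still needed for the lower bound on the rank, which you supply via part (2)). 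Your finishing step (compare $\QQ$-dimensions, then use saturation of both ideals) is a repackaging of the paper's finishing step (a surjection of free $\ZZ$-modules of equal rank is an isomorphism); both hinge on the same nontrivial integral input, namely that $R_{n,k}$ is $\ZZ$-free of rank $k!\cdot\Stir(n,k)$. On that point, be slightly careful: knowing the Gr\"obner basis over $\QQ$ has unit leading coefficients is not by itself enough — you also need its elements (the reversed Demazure characters $\kappa_{\gamma(S)^*}(\xx_n^*)$ together with the $x_i^k$) to lie in the $\ZZ$-ideal $I_{n,k}$ with integer coefficients, so that division over $\ZZ$ shows the nonskip monomials span $R_{n,k}$ integrally; this is precisely what the paper's Claim extracts from \cite[Lem. 3.4, Eqn. 3.5, Lem. 3.5]{HRS}, and with that citation made explicit your saturation argument closes. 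The Chern-class route buys a more conceptual explanation of where the relations $e_n, \dots, e_{n-k+1}$ come from; the paper's route buys, as a byproduct, the explicit identification of these generators as cell classes (Corollary~\ref{members-of-i}) and avoids any appeal to kernels of bundle maps.
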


\begin{proof}
(1)  We start with the inclusion $\subseteq$.
If $w \in [k]^n$ does not lie in $\WW_{n,k}$, the $k \times n$ $0,1$-matrix representing $w$ does not have full rank.
It follows that any matrix representing a point in $\overline{C}_w$ does not have full rank.  Since 
$X_{n,k} = \MMM_{n,k}/T$ and every matrix in $\MMM_{n,k}$ has full rank, we conclude that 
$X_{n,k} \cap \overline{C}_w = \varnothing$.  This proves the inclusion $\subseteq$.

To prove the inclusion $\supseteq$, we use the disjoint union decomposition $(\PP^{k-1})^n = \bigsqcup_{w \in [k]^n} C_w$
provided by Lemma~\ref{omega-structural-result} (4).  For any Fubini word $w \in \WW_{n,k}$, any matrix representing 
a point in $C_w$ lies in $\MMM_{n,k}$, so that $C_w \subseteq X_{n,k}$.  It follows that 
\begin{equation*}
(\PP^{k-1})^n - \bigcup_{\substack{w \in [k]^n \\ w \notin \WW_{n,k}}} \overline{C}_w \subseteq 
\bigcup_{w \in \WW_{n,k}} C_w \subseteq X_{n,k}.
\end{equation*}

(2)  The space $(\PP^{k-1})^n$ admits a cellular decomposition $(\PP^{k-1})^n_{\bullet}$ with 
 cells $\{ C_w \,:\, w \in [k]^n \}$.  By Lemma~\ref{single-cell-lemma}, we may refine 
 $(\PP^{k-1})^n_{\bullet}$ if necessary and assume that every difference $(\PP^{k-1})^n_{i} - (\PP^{k-1})^n_{i+1}$
 in $(\PP^{k-1})^n_{\bullet}$  consists of a single cell.  
 Now apply (1) of this theorem and the second paragraph of the proof of 
 Theorem~\ref{cohomology-presentation-theorem}.

(3)  Let $J_{n,k} \subseteq \ZZ[\xx_n]$ be the ideal generated by the variable powers $x_1^k, x_2^k, \dots, x_n^k$ together
with the polynomials
\begin{equation}
\{ \sigma(w)^{-1}.\symm_{\std(\conv(w))} \,:\, \text{$w = w_1 \dots w_n \in [k]^n$ and $w \notin \WW_{n,k}$} \}
\end{equation}
corresponding to each of the classes $[\overline{C}_w]$ in Proposition~\ref{word-representatives} such that at least
one letter in $1, 2, \dots, k$ does not appear in the word $w$.
Theorem~\ref{cohomology-presentation-theorem} and Proposition~\ref{word-representatives} imply
that the inclusion $j : X_{n,k} \hookrightarrow (\PP^{k-1})^n$ together with the presentation
\eqref{classical-projective-cohomology-presentation} realize $H^{\bullet}(X_{n,k})$ as the canonical quotient map
\begin{equation*}
H^{\bullet}(X_{n,k}) = \ZZ[\xx_n]/\langle x_1^k, \dots, x_n^k \rangle \twoheadrightarrow \ZZ[\xx_n]/J_{n,k} =: S_{n,k}.
\end{equation*}
Writing $\ell_1, \ldots, \ell_n$ for the tautological line bundles over $(\PP^{k-1})^n$, the restriction of $\ell_i^*$ to $X_{n,k}$ is the pullback bundle $j^* \ell_i^*$, and naturality of Chern classes gives $j^*(x_i) = j^*(c_1(\ell_i^*)) = c_1(j^* \ell_i^*)$. By item (2) of this theorem, Lemma~\ref{homology-from-cellular}, and Poincar\'e duality, the $\ZZ$-module $S_{n,k}$
is free and 
\begin{equation}
\mathrm{rank}(S_{n,k})  = |\WW_{n,k}| = k! \cdot \Stir(n,k).
\end{equation}

We aim to show that $I_{n,k} = J_{n,k}$, so that $R_{n,k} = S_{n,k}$.  We first prove the inclusion
$I_{n,k} \subseteq J_{n,k}$.  To start, for $1 \leq i \leq k$ let 
$w^i \in [k]^n$ be the unique weakly increasing word with letters $[k] - \{ i \}$ whose first $k-1$ letters are distinct.
For example, we have $w^3 \in [6]^8 = 12456666$.
The word $w^i$ is always convex, so that $\std(\conv(w^i)) = \std(w^i)$ and $\sigma(w^i) \in S_n$ is the identity permutation.
A direct computation shows $\std(w^i) = 12 \dots \hat{i} \dots n(n+1)i \in S_{n+1}$ 
is a Grassmannian permutation with descent at $n$
corresponding to the partition shape $(1^{n-i+1})$.  Therefore, the polynomial
$\symm_{\std(w^i)}$ is the Schur function $s_{(1^{n-i+1})}(\xx_n) = e_{n-i+1}(\xx_n)$.
Therefore $e_{n-i+1}(\xx_n)$ appears among the generators of $J_{n,k}$ for all $1 \leq i \leq k$ and we have
 the containment $I_{n,k} \subseteq J_{n,k}$.
 
 Since $I_{n,k} \subseteq J_{n,k}$, we have a canonical projection of $\ZZ$-modules 
 $\pi: R_{n,k} \twoheadrightarrow S_{n,k}$.  We know that $S_{n,k}$ is a free $\ZZ$-module of rank
 $k! \cdot \Stir(n,k)$.  In order to show that $\pi$ is an isomorphism it is enough to verify the following
 \begin{quote}
 {\bf Claim:}  {\em  The quotient $R_{n,k} = \ZZ[\xx_n]/I_{n,k}$ is a free $\ZZ$-module of rank $k! \cdot \Stir(n,k)$.}
 \end{quote}
 
 In order to prove our claim, we make use of the following `skip' terminology of Haglund-Rhoades-Shimozono \cite{HRS}.
 If $S = \{s_1 < \cdots < s_r \} \subseteq [n]$ is any subset, the corresponding {\em skip monomial} is
 $\xx(S) = x_{s_1}^{s_1} x_{s_2}^{s_2 - 1} \cdots x_{s_r}^{s_r - r + 1}$.  The exponent sequence of $\xx(S)$ is 
 recorded in the {\em skip composition} $\gamma(S) = (\gamma_1, \dots, \gamma_n)$ given by
 \begin{equation*}
 \gamma(S)_i = \begin{cases}
 s_j - j + 1 & \text{if $i = s_j \in S$} \\\
 0 & \text{if $i \notin S$}.
 \end{cases}
 \end{equation*}
 We will also need the {\em reverse skip composition} $\gamma(S)^* = (\gamma_n, \dots, \gamma_1)$.  
 As an example, if $n = 8$ and $S = \{2,4,5,8\}$ then $\xx(S) = x_2^2 x_4^3 x_5^3 x_8^5$ so that 
 $\gamma(S) = (0,2,0,3,3,0,0,5)$ and $\gamma(S)^* = (5,0,0,3,3,0,2,0)$.

 Let $m$ be any monomial in the variables $x_1, \dots, x_n$.  Following \cite[Defn. 4.4]{HRS} we say
 $m$ is {\em $(n,k)$-nonskip} if 
 \begin{itemize}
 \item  $x_i^k \nmid m$ for all $1 \leq i \leq k$, and
 \item  $\xx(S) \nmid m$ for all $S \subseteq [n]$ with $|S| = n-k+1$.
 \end{itemize}

 Writing $\NS_{n,k}$ for the set of all $(n,k)$-nonskip monomials, it is proven in
 \cite[Thm. 4.9]{HRS} that $|\NS_{n,k}| = |\OP_{n,k}| = k! \cdot \Stir(n,k)$.
 Furthermore, \cite[Thm 4.13]{HRS} shows that $\NS_{n,k}$ descends to a $\QQ$-basis for the 
 $\QQ$-vector space $\QQ \otimes_{\ZZ} R_{n,k}$.  In fact, if we endow the monomials in $x_1, \dots, x_n$ with
 the lexicographic term ordering \cite[Thm 4.13]{HRS} together with \cite[Proof of Thm. 4.10]{HRS} imply that 
 $\NS_{n,k}$ is the {\em standard monomial basis} for the quotient
 \begin{equation*}
 \QQ \otimes_{\ZZ} R_{n,k} = \QQ[\xx_n]/\langle x_1^k, \dots, x_n^k, e_n(\xx_n), e_{n-1}(\xx_n), \dots, e_{n-k+1}(\xx_n) \rangle.
 \end{equation*}
 
 We claim that the monomials in $\NS_{n,k}$ actually descend to a $\ZZ$-basis for $R_{n,k}$.  The monomials in 
 $\NS_{n,k}$ are linearly independent (over $\ZZ$) in $R_{n,k}$ because they are linearly independent (over $\QQ$)
 in $\QQ \otimes_{\ZZ} R_{n,k}$.
 It suffices to show that $\NS_{n,k}$ spans $R_{n,k}$ over $\ZZ$.
 
 In order to show that $\NS_{n,k}$ spans $R_{n,k}$ over $\ZZ$, let $m$ 
 be any monomial in the variables
 $x_1, \dots, x_n$.  We show that $m + I_{n,k}$ lies in the $\ZZ$-span of $\NS_{n,k}$.
 If $m \in \NS_{n,k}$ or if $x_i^k \mid m$ for some $1 \leq i \leq n$ this is obvious, so we may assume that 
 $\xx(S) \mid m$ for some $S \subseteq [n]$ with $|S| = n-k+1$.
 
 For any (weak) composition $\gamma = (\gamma_1, \dots, \gamma_n)$ of length $n$, let 
 $\kappa_{\gamma}(\xx_n) = \kappa_{\gamma}(x_1, \dots, x_n) \in \ZZ_{\geq 0}[\xx_n]$ be the 
 associated {\em Demazure character} (otherwise known as a {\em key polynomial});
 see \cite[Sec. 2]{HRS} for its definition.   Let
 $\kappa_{\gamma}(\xx_n^*) = \kappa_{\gamma}(x_n, \dots, x_1) \in \ZZ_{\geq 0}[\xx_n]$ be the 
 Demazure character with the variables reversed.
By \cite[Lem. 3.4]{HRS} (and in particular \cite[Eqn. 3.5]{HRS}) we have
$\kappa_{\gamma(S)^*}(\xx_n^*) \in I_{n,k}$.  Moreover, the polynomial $\kappa_{\gamma(S)^*}(\xx_n^*)$ has the form
(see e.g. \cite[Lem. 3.5]{HRS})
\begin{equation}
\label{demazure-form}
\kappa_{\gamma(S)^*}(\xx_n^*)  = 
\xx(S) + \text{a $\ZZ_{\geq 0}$-linear combination of monomials $< \xx(S)$ in lexicographic order}.
\end{equation}
If we let $m'$ be the quotient monomial $m' = m/\xx(S)$, multiply both sides of Equation~\ref{demazure-form} by $m'$,
and reduce modulo $I_{n,k}$, we see that 
\begin{equation}
\label{m-form}
m  \equiv
 \text{a $\ZZ$-linear combination of monomials $< m$ in lexicographic order}
\end{equation}
inside the ring $R_{n,k}$.

 By induction on the lexicographic term order, we conclude that $m + I_{n,k}$ lies in the $\ZZ$-span of $\NS_{n,k}$.  Therefore
 $\NS_{n,k}$ spans $R_{n,k}$ over $\ZZ$.  This completes the proof of the Claim and item (3) of the theorem.
 
(4)  This follows from Proposition~\ref{word-representatives} and Theorem~\ref{cohomology-presentation-theorem}.
\end{proof}

Recall that $R_{n,k}$ is not only a graded ring but a $\ZZ[S_n]$-module. On the geometric side, $X_{n,k}$ is stable under the (right) action of $S_n$ on $(\PP^{k-1})^n$ permuting lines, so there is an induced (left) $S_n$-action on $H^{\bullet}(X_{n,k})$.
\begin{theorem} \label{thm:equivariance} The isomorphism $H^{\bullet}(X_{n,k}) \cong R_{n,k}$ of Theorem~\ref{cohomology-of-x}(3) is an isomorphism of graded $S_n$-modules. \end{theorem}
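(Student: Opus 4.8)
The plan is to exploit the Chern-class interpretation of the variables $x_i$ furnished by Theorem~\ref{cohomology-of-x}(3): under the identification $H^{\bullet}(X_{n,k}) = R_{n,k}$ the variable $x_i$ represents $c_1(\ell_i^*)$, where $\ell_i \twoheadrightarrow X_{n,k}$ is the $i^{th}$ tautological line bundle. The $S_n$-action on $X_{n,k}$ merely permutes the lines, hence permutes these tautological bundles, so naturality of Chern classes will force the induced action on $H^{\bullet}(X_{n,k})$ to permute the generators $x_i$ by subscript — and that is exactly the $S_n$-module structure on $R_{n,k}$.

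Concretely, I would first record the corresponding equivariance one level up, on $(\PP^{k-1})^n$, where the presentation $H^{\bullet}((\PP^{k-1})^n) = \ZZ[\xx_n]/\langle x_1^k, \dots, x_n^k \rangle$ of \eqref{classical-projective-cohomology-presentation} lives. For $\pi \in S_n$ let $\rho_\pi \colon (\PP^{k-1})^n \to (\PP^{k-1})^n$ be the automorphism permuting factors according to the convention of Section~\ref{Cohomology}. The fibre of $\rho_\pi^* \ell_i$ over a point $\ell_\bullet$ is the fibre of $\ell_i$ over $\rho_\pi(\ell_\bullet)$, so $\rho_\pi^* \ell_i$ is again one of the tautological bundles $\ell_j$, with $j$ determined by $\pi$; by naturality of Chern classes, $\rho_\pi^*(x_i) = \rho_\pi^*(c_1(\ell_i^*)) = c_1((\rho_\pi^*\ell_i)^*) = x_j$. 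This exhibits \eqref{classical-projective-cohomology-presentation} as an isomorphism of graded $S_n$-modules, $S_n$ acting on $\ZZ[\xx_n]/\langle x_1^k,\dots,x_n^k\rangle$ by subscript permutation (the ideal $\langle x_1^k,\dots,x_n^k\rangle$ being $S_n$-stable).

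Then I would transfer this down to $X_{n,k}$. The $S_n$-action preserves $X_{n,k} \subseteq (\PP^{k-1})^n$, since the spanning condition $\ell_1 + \cdots + \ell_n = \CC^k$ is symmetric in the $\ell_i$, so the inclusion $j \colon X_{n,k} \hookrightarrow (\PP^{k-1})^n$ is $S_n$-equivariant and $j^*\ell_i$ is the $i^{th}$ tautological bundle on $X_{n,k}$. Hence $j^* \colon H^{\bullet}((\PP^{k-1})^n) \twoheadrightarrow H^{\bullet}(X_{n,k})$ is a surjection of graded $S_n$-modules; by Theorem~\ref{cohomology-of-x}(3) it is identified with the canonical quotient $\ZZ[\xx_n]/\langle x_1^k,\dots,x_n^k\rangle \twoheadrightarrow R_{n,k}$, which is $S_n$-equivariant because $I_{n,k}$ is $S_n$-stable (as noted in the Introduction). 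Comparing these two descriptions of $j^*$ yields the isomorphism $H^{\bullet}(X_{n,k}) \cong R_{n,k}$ of graded $S_n$-modules.

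The only subtlety — bookkeeping rather than a genuine obstacle — is matching left/right action conventions so that the permutation appearing in ``$\rho_\pi^*(x_i) = x_j$'' is precisely subscript permutation by $\pi$ (equivalently, the action used to define the $S_n$-module $R_{n,k}$ and the action on $\WW_{n,k}$ in the Introduction); this is pinned down by the fibrewise computation above together with checking that $\pi \mapsto \rho_\pi$ composes correctly. The mathematical substance is entirely the naturality of Chern classes, which was already invoked in the proof of Theorem~\ref{cohomology-of-x}(3).
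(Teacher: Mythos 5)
Your proposal is correct and follows essentially the same route as the paper: the paper also sets up the commuting square relating $\ZZ[\xx_n]/\langle x_1^k,\dots,x_n^k\rangle \to H^{\bullet}((\PP^{k-1})^n)$ with the quotient maps down to $R_{n,k}$ and $H^{\bullet}(X_{n,k})$, checks the top arrow is equivariant via naturality of Chern classes ($\pi^*\ell_i = \ell_{\pi(i)}$), the right arrow via equivariance of the inclusion $j$, and the left arrow via $S_n$-stability of the ideal, and then concludes for the bottom arrow. No gaps; the argument matches the paper's proof of Theorem~\ref{thm:equivariance}.
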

\begin{proof} Theorem~\ref{cohomology-of-x}(3) gives a commuting diagram
\begin{equation*}
\begin{CD}
\frac{\ZZ[x_1, \ldots, x_n]}{\langle x_1^k, \ldots, x_n^k \rangle} @>>> H^{\bullet}((\PP^{k-1})^n)\\
@VVV                    @VV{j^*}V\\
R_{n,k} @>>>         H^{\bullet}(X_{n,k})
\end{CD}
\end{equation*}
where the vertical arrows are quotient maps and the horizontal arrows are isomorphisms. To show that the bottom arrow is $S_n$-equivariant, it suffices to check that the other three arrows are. This is clear for the lefthand arrow. The inclusion $j : X_{n,k} \hookrightarrow (\PP^{k-1})^n$ is $S_n$-equivariant, so the righthand arrow $j^*$ is also equivariant. The top arrow sends $x_i$ to $c_1(\ell_i^*)$. Identifying $\pi \in S_n$ with its action on $(\PP^{k-1})^n$, the pullback bundle $\pi^*\ell_i$ is $\ell_{\pi(i)}$, so the top arrow is equivariant by naturality of Chern classes: $\pi^*c_1(\ell_i^*) = c_1(\pi^*\ell_i^*) = c_1(\ell_{\pi(i)}^*) = x_{\pi(i)}$.

\end{proof}

Proposition~\ref{word-representatives} and
Theorem~\ref{cohomology-of-x} motivate the following definition of a `Schubert polynomial'
$\symm_w$ attached to an arbitrary word $w \in [k]^n$.

\begin{defn}
\label{fubini-schuberts}
Let $k \leq n$ be positive integers and let $w = w_1 \dots w_n \in [k]^n$ be a word.  The 
{\em Schubert polynomial} 
$\symm_w \in \ZZ[\xx_n]$ attached to $w$ is 
$\symm_w := \sigma(w)^{-1}.\symm_{\std(\conv(w))}$.
\end{defn}

When $k = n$ and $w = w_1 \dots w_n \in S_n$ is a permutation, 
Definition~\ref{fubini-schuberts}
reduces to the classical notion of a Schubert polynomial.
In the proof of Theorem~\ref{cohomology-of-x} (3) it was shown that we have the
equality of ideals
\begin{equation}
J_{n,k} := \langle \{ x_1^k, \dots, x_n^k \} \cup \{ \symm_w \,:\, w \in [k]^n - \WW_{n,k} \} \rangle
= \langle x_1^k, \dots, x_n^k, e_n, e_{n-1}, \dots, e_{n-k+1} \rangle = I_{n,k}.
\end{equation}
The generators of $J_{n,k}$ are the natural polynomials coming from the cellular
decomposition of $X_{n,k}$, whereas the smaller generating set of $I_{n,k}$
presents the quotient $R_{n,k}$.  
Our proof that the generators of $J_{n,k}$ which do not appear as generators of $I_{n,k}$
was indirect and used a Demazure character identity established in \cite{HRS};
it might be interesting to get a direct proof of redundancy.

The set $\{ \symm_w \,:\, w \in \WW_{n,k} \}$ 
of Schubert polynomials corresponding to Fubini words forms a
basis for the quotient $R_{n,k}$.

\begin{corollary}
\label{schubert-basis}
Let $k \leq n$ be positive integers.  
\begin{enumerate}
\item
The quotient $R_{n,k}$ is a free $\ZZ$-module with basis 
$\{ \symm_w \,:\, w \in \WW_{n,k} \}$.  
\item
The polynomial $\symm_w$ has nonnegative integer coefficients and is homogeneous of degree 
${k \choose 2} + (n-k)(k-1) - \dim(w)$.
\end{enumerate}
\end{corollary}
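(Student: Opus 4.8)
The plan is to deduce both parts directly from Theorem~\ref{cohomology-of-x} and the dimension bookkeeping already recorded in the proof of Lemma~\ref{lem:Cw-codimension}, so that no new geometry is needed. For part (1), I would argue as follows. By Theorem~\ref{cohomology-of-x}(2) the variety $X_{n,k}$ carries a cellular decomposition whose cells are exactly $\{C_w : w \in \WW_{n,k}\}$, so Lemma~\ref{homology-from-cellular} shows that $\bar{H}_{\bullet}(X_{n,k})$ is free abelian with $\ZZ$-basis $\{[\overline{C}_w] : w \in \WW_{n,k}\}$. Since $X_{n,k}$ is a complex manifold, Poincar\'e duality carries this to a $\ZZ$-basis of $H^{\bullet}(X_{n,k})$; under the identification $H^{\bullet}(X_{n,k}) = R_{n,k}$ of Theorem~\ref{cohomology-of-x}(3), Theorem~\ref{cohomology-of-x}(4) identifies the basis element attached to $[\overline{C}_w]$ with the image of $\symm_w = \sigma(w)^{-1}.\symm_{\std(\conv(w))}$. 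Hence $\{\symm_w : w \in \WW_{n,k}\}$ descends to a $\ZZ$-basis of $R_{n,k}$.

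For part (2), I would first note that a Fubini word $w$ contains every letter of $[k]$, so $|\initial(w)| = k$ and $\std(\conv(w)) \in S_n$; thus $\symm_w$ is obtained from the ordinary Schubert polynomial $\symm_{\std(\conv(w))}$ by the variable substitution $\sigma(w)^{-1}$. Ordinary Schubert polynomials have nonnegative integer coefficients (a classical fact, e.g. via their monomial/pipe-dream expansion) and are homogeneous of degree equal to their inversion number (immediate from the divided difference recursion, since $\partial_i$ drops polynomial degree by exactly one); permuting variables preserves both properties, so $\symm_w \in \ZZ_{\geq 0}[\xx_n]$ is homogeneous of degree $\inv(\std(\conv(w)))$. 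To evaluate this degree I would apply the identity $\dim C_v + \inv(\std(v)) = n(k-1)$ proved in Lemma~\ref{lem:Cw-codimension} to the convex word $v = \conv(w)$, together with $\dim C_v = \dim U(v) + \dim(v)$ from \eqref{eq:Cw-dimension}. Since $\conv(w)$ uses every letter of $[k]$ we have $U(\conv(w)) = U$, so $\dim U(\conv(w)) = {k \choose 2}$; and by Observation~\ref{dimension-alternative}, $\dim(\cdot)$ depends only on $\pi(w)$ and the multiset of letters of $w$, both unchanged by convexification, so $\dim(\conv(w)) = \dim(w)$. This gives $\deg \symm_w = n(k-1) - {k \choose 2} - \dim(w)$, which equals ${k \choose 2} + (n-k)(k-1) - \dim(w)$ by the arithmetic identity $n(k-1) = 2{k \choose 2} + (n-k)(k-1)$.

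I do not expect a real obstacle: everything has been set up by Theorem~\ref{cohomology-of-x} and Lemma~\ref{lem:Cw-codimension}. The only input that is not internal to the paper is the classical monomial positivity of Schubert polynomials, which I would handle by citing a standard reference, and the only point requiring care is the degree bookkeeping --- keeping $\dim(w)$, $\dim C_w$, $\dim U(w)$, and $\inv(\std(\conv(w)))$ straight and verifying the final numerical identity.
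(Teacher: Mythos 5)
Your proposal is correct and follows essentially the same route as the paper: part (1) is exactly the paper's appeal to items (2)--(4) of Theorem~\ref{cohomology-of-x} (you simply spell out the chain through Lemma~\ref{homology-from-cellular} and Poincar\'e duality), and part (2) is the same dimension bookkeeping, since for a Fubini word $U(w)=U$ gives $\dim(C_w) = \binom{k}{2} + \dim(w)$ and hence the stated codimension, which your detour through $\inv(\std(\conv(w)))$ and Lemma~\ref{lem:Cw-codimension} reproduces. The only difference is that you make explicit two points the paper leaves implicit --- the monomial positivity of ordinary Schubert polynomials and the equality of $\deg \symm_w$ with the codimension --- and both are handled correctly.
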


\begin{proof}
(1)   This follows from items (2), (3), and (4) of Theorem~\ref{cohomology-of-x}.

(2)  If $w \in \WW_{n,k}$ is a Fubini word, the dimension of the cell $C_w$ is 
\begin{equation*}
\dim(C_w) = 
\dim(U) + \dim(\widehat{C}_w) = {k \choose 2} + \dim(w).
\end{equation*}
The codimension of $C_w$ inside the $n(k-1)$-dimensional space $(\PP^{k-1})^n$ is therefore
\begin{equation*}
n(k-1) - {k \choose 2} - \dim(w) = {k \choose 2} + (n-k)(k-1) - \dim(w).
\end{equation*}
\end{proof}

In the case $k = n$, the Schubert basis $\{ \symm_w \,:\, w \in S_n \}$ 
of $R_n$ is known to have positive structure constants.  That is, for all $u, v \in S_n$ we have
\begin{equation}
\symm_u \cdot \symm_v = \sum_{w \in S_n} c_{u,v}^w \symm_w \quad \text{(mod $I_n$)},
\end{equation}
where  $c_{u,v}^w \in \ZZ_{\geq 0}$.  The only known proof that the 
$c_{u,v}^w$ are nonnegative uses the geometry of the flag variety $\Fl(n)$; it is an
open problem to find a combinatorial interpretation of the $c_{u,v}^w$.

The basis $\{ \symm_w \,:\, w \in \WW_{n,k} \}$ of $R_{n,k}$ does {\em not}
have positive structure constants for arbitrary $k \leq n$.
As an example, when $(n,k) = (4,3)$ we have
\begin{equation}
\symm_{1123} \cdot \symm_{1232} = - \symm_{1132} + 2 \symm_{2213}
\quad \text{(mod $I_{4,3}$)}.
\end{equation}

Recall that the {\em Poincar\'e series} of any topological space $X$ is 
$P_X(q) = \sum_{k \geq 0} \beta_k(X) \cdot q^k \in \ZZ[[q]]$, where the {\em Betti number}
$\beta_k(X)$ is the rank of the $\ZZ$-module
$H^k(X)$ (here we assume that $\beta_k$ is finite for all $k$).

\begin{corollary}
\label{poincare-polynomial}
The Poincar\'e series of $X_{n,k}$ is $P_{X_{n,k}}(q) = \rev_q ([k]!_{q^2} \cdot \Stir_{q^2}(n,k))$.
\end{corollary}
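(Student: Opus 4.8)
The plan is to transport the computation to the graded ring $R_{n,k}$ using Theorem~\ref{cohomology-of-x}(3), and then read off the Hilbert series from the Schubert basis of Corollary~\ref{schubert-basis}. First I would use Theorem~\ref{cohomology-of-x}(3), which gives an isomorphism of graded rings $H^{\bullet}(X_{n,k}) \cong R_{n,k}$ under which a degree-$d$ element of $R_{n,k}$ sits in cohomological degree $2d$. In particular $H^{\bullet}(X_{n,k})$ is concentrated in even degrees (this also follows from Theorem~\ref{cohomology-of-x}(2), Lemma~\ref{homology-from-cellular}, and Poincar\'e duality), so $\beta_{2d}(X_{n,k})$ is the rank of $(R_{n,k})_d$ while $\beta_{2d+1}(X_{n,k}) = 0$. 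Hence $P_{X_{n,k}}(q) = \Hilb(R_{n,k};q^2)$, and it remains to show $\Hilb(R_{n,k};q) = \rev_q([k]!_q \cdot \Stir_q(n,k))$.

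Next I would invoke Corollary~\ref{schubert-basis}: the ring $R_{n,k}$ is $\ZZ$-free on $\{\symm_w : w \in \WW_{n,k}\}$, with $\symm_w$ homogeneous of degree $D - \dim(w)$ where $D := \binom{k}{2} + (n-k)(k-1)$. Thus
\[
\Hilb(R_{n,k};q) = \sum_{w \in \WW_{n,k}} q^{D - \dim(w)},
\]
while Proposition~\ref{dimension-is-mahonian} gives $\sum_{w \in \WW_{n,k}} q^{\dim(w)} = [k]!_q \cdot \Stir_q(n,k)$. Provided this last polynomial has degree exactly $D$, applying $\rev_q$ turns it into $\sum_w q^{D - \dim(w)}$, which is what we want. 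Finally I would substitute $q \mapsto q^2$ and note that $\rev_q$ commutes with this substitution — reversing a polynomial supported in even powers of $q$ is the same as reversing the associated polynomial in $q^2$ — to conclude $P_{X_{n,k}}(q) = \Hilb(R_{n,k};q^2) = \rev_q([k]!_{q^2} \cdot \Stir_{q^2}(n,k))$.

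The one point needing care, and the main (minor) obstacle, is the degree identity $\max_{w \in \WW_{n,k}} \dim(w) = D$. I would extract this from the growth process in the proof of Proposition~\ref{dimension-is-mahonian}: every word in $\WW_{n,k}$ arises from $1 \in \WW_{1,1}$ by $k-1$ bar growths and $n-k$ star growths, a bar growth performed when the current largest letter is $\kappa$ raises $\dim$ by at most $\kappa$, and a star growth at largest letter $\kappa$ raises it by at most $\kappa - 1$; the total is maximized (and the bound attained) by performing all bar growths first, contributing $1 + \cdots + (k-1) = \binom{k}{2}$, and then all $n-k$ star growths at largest letter $k$, contributing $(n-k)(k-1)$. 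Alternatively one can bypass the degree bookkeeping entirely by citing the Haglund--Rhoades--Shimozono formula $\Hilb(R_{n,k}^{\QQ};q) = \rev_q([k]!_q \cdot \Stir_q(n,k))$ recalled in the introduction together with the fact, established inside the proof of Theorem~\ref{cohomology-of-x}(3), that $\NS_{n,k}$ is a $\ZZ$-basis of $R_{n,k}$, so that $\Hilb(R_{n,k};q) = \Hilb(R_{n,k}^{\QQ};q)$. Beyond this, the argument is routine.
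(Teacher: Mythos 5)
Your proposal is correct and follows exactly the route the paper intends: the paper states this as an immediate corollary of Theorem~\ref{cohomology-of-x}, Corollary~\ref{schubert-basis}, and Proposition~\ref{dimension-is-mahonian} (equivalently, the Haglund--Rhoades--Shimozono Hilbert series together with the integral basis $\NS_{n,k}$), which are precisely the ingredients you assemble. Your extra check that $\max_{w \in \WW_{n,k}} \dim(w) = \binom{k}{2} + (n-k)(k-1)$, needed to apply $\rev_q$, is a correct and worthwhile piece of bookkeeping that the paper leaves implicit.
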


In general, the polynomial $P_{X_{n,k}}(q)$ is not palindromic.  For example, we have
$P_{X_{3,2}}(q) = 2q^4 + 3q^2 + 1$.

The proof of Theorem~\ref{cohomology-of-x} (3) shows that various polynomials lie in the ideal $I_{n,k} \subseteq \ZZ[\xx_n]$.

\begin{corollary}
\label{members-of-i}
Let $k \leq n$ be positive integers and let $w \in [k]^n$ be any word in which the letters $1, 2, \dots, k$ 
do not all appear.  We have
$\symm_w \in I_{n,k}$.
\end{corollary}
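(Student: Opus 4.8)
The plan is simply to unwind the definition of $\symm_w$ and appeal to the identification of ideals already established. By Definition~\ref{fubini-schuberts}, for the word $w \in [k]^n$ we have $\symm_w = \sigma(w)^{-1}.\symm_{\std(\conv(w))} \in \ZZ[\xx_n]$, and this is exactly the polynomial that Proposition~\ref{word-representatives} exhibits as a representative of $[\overline{C}_w]$ under the presentation $H^{\bullet}((\PP^{k-1})^n) = \ZZ[\xx_n]/\langle x_1^k, \dots, x_n^k\rangle$. The hypothesis that the letters $1, 2, \dots, k$ do not all appear in $w$ says precisely that $w \in [k]^n - \WW_{n,k}$, so $\symm_w$ is, by construction, one of the generators of the ideal
\begin{equation*}
J_{n,k} = \langle \{ x_1^k, \dots, x_n^k \} \cup \{ \symm_v \,:\, v \in [k]^n - \WW_{n,k} \} \rangle.
\end{equation*}

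In the proof of Theorem~\ref{cohomology-of-x}(3) it was shown that $J_{n,k} = I_{n,k}$ (via the inclusion $I_{n,k} \subseteq J_{n,k}$ coming from the words $w^i$, together with a rank count forcing equality). Therefore $\symm_w \in J_{n,k} = I_{n,k}$, which is the claim. There is no real obstacle here: all the content is in Theorem~\ref{cohomology-of-x}(3), and this corollary merely records that the generators of $J_{n,k}$ attached to non-Fubini words already lie in $I_{n,k}$.
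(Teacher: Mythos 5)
Your proposal is correct and is essentially the paper's own argument: the corollary is stated as an immediate consequence of the proof of Theorem~\ref{cohomology-of-x}(3), where the equality $J_{n,k} = I_{n,k}$ was established, and your observation that the hypothesis on $w$ means exactly $w \in [k]^n - \WW_{n,k}$, so that $\symm_w$ is by definition a generator of $J_{n,k}$, is precisely how the paper intends it.
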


\section{The cohomology of $X_{n,k}$ via Chern classes}
\label{Chern}


In the last section we computed the cohomology ring $H^{\bullet}(X_{n,k})$ 
using a cellular decomposition of $X_{n,k}$.
In this section we recast $H^{\bullet}(X_{n,k})$ in terms of 
Chern classes of the $n$ tautological line bundles
 $\ell_1, \dots, \ell_n \twoheadrightarrow X_{n,k}$.

The space 
$X_{n,k} = \{ (\ell_1, \dots, \ell_n) \,:\, \text{$\ell_i$ a line in $\CC^k$ and $\ell_1 + \cdots + \ell_n = \CC^k$} \}$ has 
$n$ line bundles $\ell_1, \dots, \ell_n \twoheadrightarrow X_{n,k}$ together with their duals
$\ell_1^*, \dots, \ell_n^* \twoheadrightarrow X_{n,k}$.
For $1 \leq i \leq n$, we denote the first Chern class of $\ell_i^*$ by 
\begin{equation}
y_i := c_1(\ell_i^*) \in H^2(X_{n,k})
.\end{equation}
It is evident that $\pi.y_i = y_{\pi_i}$ for all $\pi \in S_n$ and $1 \leq i \leq n$.

\begin{theorem}
\label{chern-class-cohomology}
Let $k \leq n$ be positive integers and let 
\begin{equation}
\label{chern-equation}
H^{\bullet}(X_{n,k}) = R_{n,k} =
\ZZ[\xx_n]/\langle x_1^k, \dots, x_n^k, e_n(\xx_n), e_{n-1}(\xx_n), \dots, e_{n-k+1}(\xx_n) \rangle 
\end{equation}
be the 
presentation of $H^{\bullet}(X_{n,k})$ given in Theorem~\ref{cohomology-of-x}.

\begin{enumerate}
\item
For $1 \leq i \leq n$, the Chern class $y_i = c_1(\ell_i^*) \in H^2(X_{n,k})$ is represented by $x_i$.
\item
The identification \eqref{chern-equation} of $H^{\bullet}(X_{n,k})$ may be viewed as an isomorphism of graded
$\ZZ[S_n]$-modules.
\end{enumerate}
\end{theorem}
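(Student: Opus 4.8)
The plan is to deduce both parts of Theorem~\ref{chern-class-cohomology} by unwinding the identification $H^{\bullet}(X_{n,k}) = R_{n,k}$ from Theorem~\ref{cohomology-of-x}(3); the content is essentially bookkeeping around naturality of Chern classes together with the commuting square of quotient maps appearing in the proof of Theorem~\ref{thm:equivariance}, whose horizontal arrows are isomorphisms and whose vertical arrows are induced by the open inclusion $j : X_{n,k} \hookrightarrow (\PP^{k-1})^n$. For part~(1), I would first recall that the $i$th tautological line bundle $\ell_i \twoheadrightarrow X_{n,k}$ is the restriction along $j$ of the pullback to $(\PP^{k-1})^n$ of the tautological subbundle $\mathcal{O}(-1)$ on the $i$th projective factor; in particular $\ell_i \cong j^*\ell_i$, writing $\ell_i$ also for the $i$th tautological bundle on the product. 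On $(\PP^{k-1})^n$, under the presentation \eqref{classical-projective-cohomology-presentation}, the class $x_i$ is the pullback of the hyperplane class from the $i$th projective factor, which equals $c_1(\ell_i^*)$ --- this is the sign convention already fixed in the proof of Lemma~\ref{convex-word-representatives}. Naturality of Chern classes then gives $j^*(x_i) = j^*(c_1(\ell_i^*)) = c_1(j^*\ell_i^*) = c_1(\ell_i^*) = y_i$ in $H^2(X_{n,k})$, and chasing the commuting square shows that $j^*(x_i)$ is exactly the image of the generator $x_i$ of $R_{n,k}$ under the bottom isomorphism; hence $y_i$ is represented by $x_i$, proving~(1).

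For part~(2), the key input --- noted just above the theorem statement --- is that under the $S_n$-action on $X_{n,k}$ permuting the lines one has $\pi^*\ell_i \cong \ell_{\pi_i}$ for all $\pi \in S_n$, since $\pi$ sends $(\ell_1, \dots, \ell_n)$ to $(\ell_{\pi_1}, \dots, \ell_{\pi_n})$. Naturality of Chern classes therefore yields $\pi.y_i = \pi^*c_1(\ell_i^*) = c_1(\ell_{\pi_i}^*) = y_{\pi_i}$. Combining with part~(1), the induced $S_n$-action on $H^{\bullet}(X_{n,k}) = R_{n,k}$ carries the generator $x_i$ to $x_{\pi_i}$, which is precisely the defining subscript-permutation action of $S_n$ on $R_{n,k}$. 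Since $R_{n,k}$ is generated as a graded ring by $x_1, \dots, x_n$ and $S_n$ acts by graded ring automorphisms on both sides, the ring isomorphism $H^{\bullet}(X_{n,k}) \cong R_{n,k}$ automatically intertwines the two actions, so it is an isomorphism of graded $\ZZ[S_n]$-modules. (This recovers Theorem~\ref{thm:equivariance} from the Chern-class viewpoint.)

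Because both statements are essentially repackagings of Theorem~\ref{cohomology-of-x}(3) and Theorem~\ref{thm:equivariance}, I do not expect a genuine obstacle; the only points demanding care are the two sign/identification conventions: that $x_i$ on $(\PP^{k-1})^n$ is $c_1(\ell_i^*)$ and not $c_1(\ell_i)$, consistently with \eqref{classical-projective-cohomology-presentation} and the proof of Lemma~\ref{convex-word-representatives}; and that the isomorphism $H^{\bullet}(X_{n,k}) = R_{n,k}$ of Theorem~\ref{cohomology-of-x}(3) is the one induced by the quotient $H^{\bullet}((\PP^{k-1})^n) \twoheadrightarrow H^{\bullet}(X_{n,k})$, so that ``the generator $x_i$'' on the two sides genuinely match.
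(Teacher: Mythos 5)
Your proposal is correct, and within the paper's logical ordering it is not circular: Theorem~\ref{cohomology-of-x}(3), whose proof already contains the naturality computation $j^*(x_i) = j^*(c_1(\ell_i^*)) = c_1(j^*\ell_i^*)$, and Theorem~\ref{thm:equivariance} both precede this theorem and do not rely on it, so deducing parts (1) and (2) by unwinding those statements is legitimate --- in effect you observe that Theorem~\ref{chern-class-cohomology} is essentially a repackaging of results already proved. The paper's own proof, however, takes a genuinely different route: it does not invoke the Chern-class assertion of Theorem~\ref{cohomology-of-x}(3), but instead proves the identities $y_1 + \cdots + y_i = x_1 + \cdots + x_i$ for $1 \leq i \leq k$ directly, by choosing the special Fubini words $w^{(i)}$ whose Schubert polynomials are $x_1 + \cdots + x_{i-1} + x_i$ (resp.\ $x_1 + \cdots + x_{k-1} + x_i$ for $i > k$), exhibiting $\overline{C}_{w^{(i)}} = \overline{\Omega}_{w^{(i)}}$ as the vanishing locus of the section of $(\ell_1 \otimes \cdots \otimes \ell_i)^*$ given by the top-left $i \times i$ minor, and concluding $c_1(\ell_1^* \otimes \cdots \otimes \ell_i^*) = [\overline{C}_{w^{(i)}}]$; the range $k < i \leq n$ is handled by translating the cell $C_{w^{(k)}}$ by $\sigma(w^{(i)})$, the individual equalities $x_i = y_i$ follow by telescoping, and part (2) is then immediate from $\pi.y_i = y_{\pi_i}$. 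Your route is shorter and makes transparent that the theorem follows formally from the quotient presentation plus naturality of Chern classes, exactly as in the proof of Theorem~\ref{thm:equivariance}; the paper's route buys an independent, intrinsic geometric verification, identifying the degree-two Schubert classes as first Chern classes of explicit line bundles via generic determinantal sections, which carries extra information beyond the bare isomorphism. The one convention you must (and do) keep straight is that on $(\PP^{k-1})^n$ the generator $x_i$ is $c_1$ of the \emph{dual} tautological bundle on the $i$th factor, as fixed in the proof of Lemma~\ref{convex-word-representatives}.
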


\begin{proof}
(2) follows immediately from (1), so it suffices to prove $(1)$.  In order to prove $(1)$ we show the equalities 
in $R_{n,k}$: 
\begin{equation}
\label{first-identifications}
\begin{cases}
x_1 + \cdots + x_{i-1} + x_i = y_1 + \cdots + y_{i-1} + y_i & \text{if $1 \leq i \leq k$}, \\
x_1 + \cdots + x_{k-1} + x_i = y_1 + \cdots + y_{k-1} + y_i & \text{if $k+1 \leq i \leq n$}.
\end{cases}
\end{equation}
The system of equalities \eqref{first-identifications} implies $x_i = y_i$ for all $1 \leq i \leq n$.

Consider the $n$ Fubini words in $\WWW_{n,k}$:
\begin{align*}
& w^{(1)} = 213 \dots k k \dots k, 
 w^{(2)} = 132 \dots k k \dots k,  
  \dots , 
 w^{(k-1)} = 123  \dots (k-2)k(k-1)(k-1) \dots (k-1), \\
& w^{(k)} = 123 \dots (k-1) (k-1) k \dots k, 
 w^{(k+1)} = 1234 \dots k  (k-1) \dots k, 
 \dots, 
w^{(n)} = 1234 \dots k k \dots  k (k-1).
\end{align*}
As an example, if $(n,k) = (8,4)$ the words $w^{(1)}, \dots, w^{(8)} \in \WW_{8,4}$ and their convexifications,
standardizations, and permutations are given as follows:
\begin{center}
\begin{tabular}{c | c | c | c | c}
$i$ & $w^{(i)}$ & $\conv(w^{(i)})$ & $\std(\conv(w^{(i)}))$ & $\sigma(w^{(i)})$ \\ \hline
1 & 21344444 & 21344444 & 21345678 & 12345678 \\
2 & 13244444 & 13244444 & 13245678 & 12345678 \\
3 & 12433333 & 12433333 & 12435678 & 12345678 \\
4 & 12334444 & 12334444 & 12354678 & 12345678 \\
5 & 12343444 & 12334444 & 12354678 & 12354678 \\
6 & 12344344 & 12334444 & 12354678 & 12364578 \\
7 & 12344434 & 12334444 & 12354678 & 12374568 \\
8 & 12344443 & 12334444 & 12354678 & 12384567
\end{tabular}
\end{center}
The words $w^{(i)}$ are convex when $1 \leq i \leq k$ and not convex when $k+1 \leq i \leq n$.
A direct computation 
shows that the Schubert polynomials $\symm_{w^{(i)}}$ are given by
\begin{equation}
\symm_{w^{(i)}} = \begin{cases}
x_1 + \cdots + x_{i-1} + x_i & \text{if $1 \leq i \leq k$}, \\
x_1 + \cdots + x_{k-1} + x_i & \text{if $k+1 \leq i \leq n$}.
\end{cases}
\end{equation}
We will finish the proof by means of two claims.

{\bf Claim 1:}  {\em It suffices to prove \eqref{first-identifications} in the range $1 \leq i \leq k$.}

By Theorem~\ref{cohomology-of-x} (4), the class $[\overline{C}_{w^{(i)}}] \in H^2(X_{n,k})$ is represented
by $\symm_{w^{(i)}}$ for $1 \leq i \leq n$.  For $k+1 \leq i \leq n$, 
we have the equality of pattern matrices $\PM(w^{(k)}) \sigma(w^{(i)}) = \PM(w^{(i)})$, where 
$\sigma(w^{(i)})^{-1}$ is thought of as an $n \times n$ permutation matrix acting on the columns 
of $\PM(w^{(k)})$.
This leads to the equality of cells $C_{w^{(k)}} \sigma(w^{(i)})^{-1} = C_{w^{(i)}}$ and in cohomology we have
\begin{equation}
\sigma(w^{(i)}).[\overline{C}_{w^{(k)}}] = [\overline{C}_{w^{(k)}} \sigma(w^{(i)})^{-1}] =
[\overline{C}_{w^{(i)}}].
\end{equation}
This matches the action of $\sigma(w^{(i)})$ on the representatives of $[\overline{C}_{w^{(k)}}]$
(namely $x_1 + \cdots + x_{k-1} + x_k$) and $[\overline{C}_{w^{(i)}}]$ (namely $x_1 + \cdots + x_{k-1} + x_i$).
In terms of Chern classes we have
\begin{equation*}
\sigma(w^{(i)}).(y_1 + \cdots + y_k) = \sigma(w^{(i)}). c_1(\ell_1^* \otimes \cdots \otimes \ell_k^*) = 
c_1(\ell_{1}^* \otimes \cdots \otimes \ell_{k-1}^* \otimes \ell_{i}^*) = y_{1} + \cdots + y_{k-1} + y_i.
\end{equation*}
The equality $y_1 + \cdots + y_k = x_1 + \cdots + x_k$ therefore implies the equality
$y_1 + \cdots + y_{k-1} + y_i = x_1 + \cdots  + x_{k-1} + x_i$.

{\bf Claim 2:}  {\em  \eqref{first-identifications}  holds in the range $1 \leq i \leq k$.}

Let $1 \leq i \leq k$.
It is enough to show that 
$c_1(\ell_1^* \otimes \cdots \otimes \ell_i^*) =  [ \overline{C}_{w^{(i)}}]$.
We do this by showing that $\overline{C}_{w^{(i)}}$ is the zero locus of a sufficiently generic 
section of the bundle $\ell_1^* \otimes \cdots \otimes \ell_i^* = (\ell_1 \otimes \cdots \otimes \ell_i)^*$.

We have a linear functional $\delta: \CC^k \otimes \cdots \otimes \CC^k \rightarrow \CC$ (with $i$ copies of $\CC^k$)
given by letting $\delta(v_1 \otimes \cdots \otimes v_i)$ be the $[i] \times [i]$-minor of the matrix 
$(v_1 \cdots v_i)$ with columns $v_1, \dots, v_i \in \CC^k$.  We get a section
$s: X_{n,k} \rightarrow (\ell_1 \otimes \cdots \otimes \ell_i)^*$ by 
\begin{equation*}
s: (\ell_1, \dots, \ell_n) \mapsto \text{restriction of $\delta$ to $\ell_1 \otimes \cdots \otimes \ell_i$}.
\end{equation*}
The vanishing locus $Z = \{ \ell_{\bullet} \in X_{n,k} \,:\, s(\ell_{\bullet}) = 0 \}$ is precisely
\begin{equation*}
Z = \{  (\ell_1, \dots, \ell_n) \in X_{n,k} \,:\, \Delta_{[i], [i]}(\ell_1 \cdots \ell_n) = 0 \},
\end{equation*}
where we interpret $(\ell_1 \cdots \ell_n)$ as the $k \times n$ matrix, defined up to column rescaling,
whose $i^{th}$ column is a nonzero vector in $\ell_i$.

We need to prove $\overline{C}_{w^{(i)}} = Z$.  Since $w^{(i)}$ is convex and
$\std(w^{(i)}) = w^{(i)}$, Lemma~\ref{convex-word-representatives}
gives $\overline{C}_{w^{(i)}} = \overline{\Omega}_{w^{(i)}}$.  
By intersecting the description $\overline{\Omega}_{w^{(i)}}$ (with the closure taken inside $(\PP^{k-1})^n$)
of Lemma~\ref{omega-closure} with $X_{n,k}$ we have 
\begin{equation*}
\overline{\Omega}_{w^{(i)}} = \bigcup_{\substack{v \in \WW_{n,k} \\ r(v) \leq r(w^{(i)})}}  \Omega_v.
\end{equation*}
By the definition of $w^{(i)}$, for $1 \leq a \leq k$ and $1 \leq b \leq n$ we have
\begin{equation*}
r(w^{(i)}):  (a,b) \mapsto \begin{cases}
\min(a,b) & \text{if $(a,b) \neq (i,i)$} \\
i-1 & \text{if $(a,b) = (i,i)$}.
\end{cases}
\end{equation*}
Given any Fubini word $v \in \WW_{n,k}$, we have $r(v) \leq r(w^{(i)})$ if and only if the rank function $r(v)$ satisfies
$r(v)(i,i) < i$.  This means 
\begin{equation*}
\overline{\Omega}_{w^{(i)}} = \bigcup_{\substack{v \in \WW_{n,k} \\ r(v) \leq r(w^{(i)})}}  \Omega_v
= \{ (\ell_1, \dots, \ell_n) \in X_{n,k} \,:\, \Delta_{[i], [i]}(\ell_1 \cdots \ell_n) = 0 \} = Z.
\end{equation*}
Since our section $s: X_{n,k} \rightarrow \ell_1^* \otimes \cdots \otimes \ell_i^*$ is sufficiently
generic
(i.e., the subvariety $s^{-1}(0)$ has the expected codimension)
we conclude that 
\begin{equation*}
y_1 + \cdots + y_i = c_1(\ell_1^* \otimes \cdots \otimes \ell_i^*) = [\overline{C}_{w^{(i)}}] = x_1 + \cdots + x_i,
\end{equation*}
as desired.
\end{proof}

By Theorem~\ref{chern-class-cohomology}, the Chern classes $y_1, \dots, y_n \in H^2(X_{n,k})$ satisfy the relations 
$y_1^k = \cdots = y_n^k = 0$ and 
$e_r(y_1, \dots, y_n) = 0$ for all $r > n-k$.
Both of these relations in cohomology can be seen directly using standard properties of Chern classes.

Let $\hat{\ell}_i$ be the $i^{th}$ tautological line bundle over the $n$-fold product
$(\PP^{k-1})^n$ and let $\hat{\ell}_i^*$ be its dual.
If $\iota: X_{n,k} \hookrightarrow (\PP^{k-1})^n$ is the inclusion and 
$\iota^*: H^{\bullet}((\PP^{k-1})^n) \rightarrow H^{\bullet}(X_{n,k})$ is the induced map on cohomology,
since $\ell_i^*$ is the pullback of $\widehat{\ell}_i^*$ to $X_{n,k}$ under $\iota$ we have
$y_i^k = c_1(\ell_i^*)^k = \iota^* c_1( \widehat{\ell}_i^*)^k = \iota^*(0) = 0$.

Let $r > n-k$.  The addition map gives a surjection of vector bundles over $X_{n,k}$ given by
\begin{equation}
\label{vector-bundle-surjection}
\ell_1 \oplus \cdots \oplus \ell_n \twoheadrightarrow \ell_1 + \cdots + \ell_n = \CC^k,
\end{equation}
where $\CC^k$ is the trivial rank $k$ bundle over $X_{n,k}$.    Dualizing \eqref{vector-bundle-surjection}
we get an injection 
\begin{equation}
\label{vector-bundle-injection}
(\CC^k)^* \hookrightarrow (\ell_1 \oplus \cdots \oplus \ell_n)^* = \ell_1^* \oplus \cdots \oplus \ell_n^*
\end{equation}
which embeds the rank $k$ trivial bundle $(\CC^k)^*$ as a subbundle of 
$\ell_1^* \oplus \cdots \oplus \ell_n^*$.

Since any short exact sequence of vector bundles
splits, the injection \eqref{vector-bundle-injection} implies an isomorphism of bundles
\begin{equation}
\label{vector-bundle-direct-sum}
\ell_1^* \oplus \cdots \oplus \ell_n^* \cong (\CC^k)^* \oplus E,
\end{equation}
where $E$ is a rank $n-k$ vector bundle on $X_{n,k}$. 
Taking the $r^{th}$ Chern class of both sides of \eqref{vector-bundle-direct-sum}
and applying the Whitney sum formula gives
\begin{equation}
e_r(y_1, \dots, y_n) = c_r(\ell_1^* \oplus \cdots \oplus \ell_n^*) = c_r( (\CC^k)^* \oplus E) = c_r(E) =  0,
\end{equation}
since $E$ has rank $n-k < r$.

If we assume that the map 
$\iota^*: H^{\bullet}((\PP^{k-1})^n) \twoheadrightarrow H^{\bullet}(X_{n,k})$ is surjective,
the above two paragraphs imply that $H^{\bullet}(X_{n,k})$ is a quotient of $R_{n,k}$.
Unfortunately, it seems that the cellular structure of Theorem~\ref{cohomology-of-x}
is necessary to bound the rank of $H^{\bullet}(X_{n,k})$ from below and
show $H^{\bullet}(X_{n,k}) = R_{n,k}$.

\section{Stability}
\label{Stability}

We have a natural embedding of symmetric groups $S_n \subset S_{n+1}$ obtained by associating $w \in S_n$
to the permutation $w \times 1 \in S_{n+1}$ which sends $i$ to $w_i$ for $1 \leq i \leq n$ and fixes $n+1$.
This embedding is compatible with Schubert polynomials in the sense that 
$\symm_w = \symm_{w \times 1}$, and so it makes sense to consider Schubert polynomials
$\symm_w$ indexed by permutations $w$ in the infinite symmetric group $S_{\infty} = \bigcup_{n \geq 1} S_n$
of eventually constant bijections $w: \ZZ_{> 0} \rightarrow \ZZ_{> 0}$.
The set $\{ \symm_w \,:\, w \in S_{\infty} \}$ forms a $\ZZ$-basis for 
the ring $\ZZ[x_1, x_2, \dots ]$ of polynomials in infinitely many variables.
This stability result may be viewed at the level of Schubert classes as 
coming from the
embedding $GL_n(\CC) \rightarrow GL_{n+1}(\CC)$ sending $a \in GL_n(\CC)$ 
to the matrix direct sum $a \oplus (1)$.

The embedding $S_n \subset S_{n+1}$ 
given by $w \mapsto w \times 1$ extends to Fubini words as
follows.  Given $w \in \WW_{n,k}$, let $w \times 1 \in \WW_{n+1,k+1}$ be the Fubini word obtained by appending 
a $k+1$ to the end of $w$, so that e.g. $211 \times 1 = 2113$.  Unfortunately, this combinatorial embedding does not 
have nice Schubert stability properties for $k < n$:
we have $\symm_{211} = x_1$ whereas $\symm_{211 \times 1}(\xx_4) = \symm_{2113}(\xx_4) = x_1^2 + x_1 x_2 + x_1 x_3$.
Geometrically, the degree discrepancy in this example corresponds to the fact that 
$C_{211}$ has codimension $1$ in $X_{3,2}$ and $C_{2113}$ has codimension $2$ in $X_{4,3}$.

Given $k \leq n$, there are two other natural ways 
$w \mapsto w \circledast 1$ and $w \mapsto 1 \times w$
to embed $\WW_{n,k}$ into a larger set of Fubini words.
These embeddings have nicer Schubert stability properties and can be 
interpreted geometrically in terms of the varieties $X_{n,k}$.

For $w \in \WW_{n,k}$, let $1 \times w \in \WW_{n+1,k+1}$ be the  word obtained by prepending a $1$ to the start of
$w$, and then increasing every letter in $w$ by $1$.  For example, we have
\begin{equation*}
1 \times 3313424 = 14424535.
\end{equation*}
If $\initial(w) = \{i_1 < \cdots < i_m \}$,
we  define $w \circledast 1 \in \WW_{n+1,k}$ by appending $w_{i_m}$ to the end of $w$, so that
\begin{equation*}
3313424 \circledast 1 = 33134242.
\end{equation*}
The $\times$ and $\circledast$ maps lead to two towers of sets
\begin{align*}
\cdots \hookrightarrow \WW_{n-1,k-1} \hookrightarrow &\WW_{n, k} \hookrightarrow \WW_{n+1,k+1} \hookrightarrow \cdots \\
\cdots \hookrightarrow \WW_{n-1,k} \hookrightarrow &\WW_{n,k} \hookrightarrow \WW_{n+1,k} \hookrightarrow \cdots
\end{align*}

We will prove stability properties for our Fubini word Schubert polynomials 
$\{ \symm_w \,:\, \WW_{n,k} \}$ under the two embeddings described above.
The embedding $\WW_{n,k} \subseteq \WW_{n+1,k}$ is easier to understand.

\begin{theorem}
\label{fixed-k-embedding}
For any $w \in \WW_{n,k}$ we have $\symm_{w \circledast 1} = \symm_w$.
\end{theorem}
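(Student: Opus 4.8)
The plan is to track how each of the three ingredients of the Schubert polynomial $\symm_v = \sigma(v)^{-1}.\symm_{\std(\conv(v))}$ (Definition~\ref{fubini-schuberts}) behaves under the operation $v \mapsto v \circledast 1$, and to show that in every case the effect is simply to append a trivial fixed point at position $n+1$, so the resulting polynomial is unchanged. Write $\initial(w) = \{i_1 < \cdots < i_k\}$ and $\pi(w) = \pi_1 \cdots \pi_k \in S_k$, so that $w_{i_j} = \pi_j$ and $w \circledast 1$ is obtained by appending $\pi_k$ to $w$. The key elementary observation is that, since $\pi_k$ already occurs in $w$, the new position $n+1$ is not initial in $w \circledast 1$; hence $\initial(w \circledast 1) = \initial(w)$ and $\pi(w \circledast 1) = \pi(w)$.

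First I would handle convexification and standardization together. Let $a_j$ be the multiplicity of the letter $\pi_j$ in $w$, so $\conv(w) = \pi_1^{a_1} \cdots \pi_k^{a_k}$. The word $w \circledast 1$ has the same letter multiplicities as $w$ except that $\pi_k$ now occurs $a_k + 1$ times, and its initial letters still appear in the order $\pi_1, \dots, \pi_k$; by uniqueness of the convexification this forces $\conv(w \circledast 1) = \pi_1^{a_1} \cdots \pi_k^{a_k+1} = \conv(w) \circledast 1$, i.e. appending $\pi_k$ to $w$ corresponds to lengthening the last block of $\conv(w)$. Now in the convex word $\conv(w) \circledast 1$ the non-initial positions are exactly those of $\conv(w)$ together with the new final position $n+1$, and reading left to right the first $n-k$ of them occupy the same slots, in the same order, as the non-initial positions of $\conv(w)$. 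Since every letter of $[k]$ appears, the recipe for $\std$ has no trailing letters to append, so it assigns $k+1, \dots, n$ to the non-initial positions of $\conv(w)$ exactly as in $\std(\conv(w)) \in S_n$ and assigns $n+1$ to position $n+1$. Therefore $\std(\conv(w \circledast 1)) = \std(\conv(w)) \times 1$, and by the stability $\symm_u = \symm_{u \times 1}$ of classical Schubert polynomials recalled in this section we obtain $\symm_{\std(\conv(w \circledast 1))} = \symm_{\std(\conv(w))}$; note this polynomial lies in $\ZZ[\xx_n]$ (as $\std(\conv(w)) \in S_n$) and in particular is free of $x_{n+1}$.

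It remains to compare the permutations $\sigma$. By definition $\sigma(w \circledast 1)$ lists, for $j = 1, \dots, k$ in turn, the positions of the letter $\pi_j$ in $w \circledast 1$ in increasing order. Compared with $\sigma(w)$, the only change is that the letter $\pi_k$ acquires the single extra position $n+1$, which is the largest and so is appended at the very end of the list; hence $\sigma(w \circledast 1) = \sigma(w) \times 1$ and $\sigma(w \circledast 1)^{-1} = \sigma(w)^{-1} \times 1$. Acting on $\ZZ[\xx_{n+1}]$, this permutation rearranges $x_1, \dots, x_n$ according to $\sigma(w)^{-1}$ and fixes $x_{n+1}$, so on the polynomial $\symm_{\std(\conv(w\circledast 1))} = \symm_{\std(\conv(w))}$, which does not involve $x_{n+1}$, it acts exactly as $\sigma(w)^{-1}$. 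Consequently $\symm_{w \circledast 1} = \sigma(w\circledast 1)^{-1}.\symm_{\std(\conv(w\circledast 1))} = \sigma(w)^{-1}.\symm_{\std(\conv(w))} = \symm_w$. The one step needing genuine care is the standardization: one must verify that extending only the last block of $\conv(w)$ neither perturbs the labels already assigned to the earlier non-initial positions nor creates a new missing letter, and this is precisely where the hypothesis that the appended letter is $\pi_k$ — the last initial letter, whose block sits at the very end of $\conv(w)$ — is used. Appending any other letter breaks the argument, as the failure of naive stability for the embedding $\WW_{n,k} \hookrightarrow \WW_{n+1,k+1}$, $w \mapsto w \times 1$ (e.g. $\symm_{211} = x_1$ but $\symm_{2113} = x_1^2 + x_1 x_2 + x_1 x_3$) illustrates.
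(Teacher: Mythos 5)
Your proof is correct and is essentially the paper's own (combinatorial) proof: you show $\conv(w \circledast 1)$ extends the last block of $\conv(w)$, deduce $\std(\conv(w\circledast 1)) = \std(\conv(w)) \times 1$ and $\sigma(w\circledast 1) = \sigma(w) \times 1$, and conclude via the stability $\symm_{u} = \symm_{u \times 1}$ of classical Schubert polynomials. The only difference is that you spell out the bookkeeping (and the remark on why appending a letter other than the last initial one fails) in more detail than the paper, which also offers a second, geometric proof via the embedding $(\ell_1,\dots,\ell_n) \mapsto (\ell_1,\dots,\ell_n,\mathbf{1})$.
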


We will give two proofs of this result: combinatorial and geometric.

\begin{proof}
{\em (Combinatorial)}
Write $\initial(w) = \{i_1 < \cdots < i_m \}$ and
$w = w_1 \dots w_n$, so  $w \circledast 1 = w_1 \dots w_n w_{i_m}$.
If $\conv(w) = v_1 \dots v_n$ we have $v_n = w_{i_m}$ and 
$\conv(w \circledast 1) = v_1 \dots v_n w_{i_m}$.  This implies that both
$\std(\conv(w \circledast 1)) = \std(\conv(1)) \times 1 \in S_{n+1}$ and
$\sigma(w \circledast 1) = \sigma(w) \times 1 \in S_{n+1}$.  Therefore, we have
\begin{equation*}
\symm_{w \circledast 1} = \sigma(w \circledast 1)^{-1}.\symm_{\std(\conv(w \circledast 1))}
= ( \sigma(w)^{-1} \times 1).\symm_{\std(\conv(w)) \times 1} =
\sigma(w)^{-1}.\symm_{\std(\conv(w))} = \symm_w.
\end{equation*}
\end{proof}

\begin{proof}
{\em (Geometric)}
We have a closed embedding $X_{n,k} \hookrightarrow X_{n+1,k}$ given by 
$(\ell_1, \dots, \ell_n) \mapsto (\ell_1, \dots, \ell_n, {\bf 1})$, where ${\bf 1}$ is the line of constant vectors
in $\CC^k$.  With respect to this embedding, the cell $C_{w \circledast 1} \subseteq X_{n+1,k}$ meets
$X_{n,k}$ transversely and we have $C_{w \circledast 1} \cap X_{n,k} = C_w$.
\end{proof}

We want to describe the stability result of Theorem~\ref{fixed-k-embedding}
in terms of the cohomology of the limit space formed by the union of the $X_{n,k}$
for $k$ fixed.  To do this, we will need the following well-known lemma describing the 
cohomology of an increasing union of spaces.

\begin{lemma}
\label{inverse-limit-cohomology-lemma}
Let $Y_1 \subseteq Y_2 \subseteq \cdots$ be an increasing chain of topological spaces. Let
\begin{equation*}
Y = \lim_{\substack{\longrightarrow \\ i \geq 1}} Y_i = \bigcup_{i \geq 1} Y_i
\end{equation*}
by the union of these spaces.  Assume that
\begin{itemize}
\item any compact subspace $C \subseteq Y$ satisfies $C \subseteq Y_i$ for some $i$, and
\item  the groups $H_j(Y_i)$ and $H^j(Y_i)$ vanish for all $i$ and odd $j$.
\end{itemize}
The inclusions $Y_1 \hookrightarrow Y_2 \hookrightarrow \cdots$ gives rise to 
maps $H^{\bullet}(Y_1) \leftarrow H^{\bullet}(Y_2) \leftarrow \cdots$.  We have an isomorphism
\begin{equation}
H^{\bullet}(Y) = H^{\bullet}(\lim_{\substack{\longrightarrow \\ i \geq 1}} Y_i) \cong
\lim_{\substack{\longleftarrow \\ i \geq 1}} H^{\bullet}(Y_i).
\end{equation}
\end{lemma}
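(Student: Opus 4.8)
The plan is to combine two standard facts: singular homology commutes with the colimit $Y = \varinjlim_i Y_i$, and — after dualizing — the universal coefficient theorem turns this into a Milnor $\varprojlim^1$ sequence whose error term is annihilated by the odd-degree vanishing hypotheses. So the argument splits into a ``homology half'' and a ``cohomology half'', with essentially all the content in showing that the relevant $\varprojlim^1$ group vanishes.

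First I would show $H_\bullet(Y) \cong \varinjlim_i H_\bullet(Y_i)$. Because $Y$ carries the colimit topology and, by the first hypothesis, any compact subset of $Y$ lies in some $Y_i$, every singular simplex $\Delta^p \to Y$ — hence every singular chain, being a finite sum of simplices — factors through some $Y_i$. Thus the inclusions realize the singular chain complex $C_\bullet(Y)$ as the filtered colimit $\varinjlim_i C_\bullet(Y_i)$, and since filtered colimits of abelian groups are exact, taking homology commutes with the colimit. In particular $H_j(Y) = 0$ for odd $j$ by the second hypothesis.

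Next I would pass to cohomology via the telescope short exact sequence of free chain complexes
\begin{equation*}
0 \longrightarrow \bigoplus_i C_\bullet(Y_i) \xrightarrow{\ 1 - \mathrm{incl}\ } \bigoplus_i C_\bullet(Y_i) \longrightarrow C_\bullet(Y) \longrightarrow 0,
\end{equation*}
which is exact by the previous paragraph and stays exact after applying $\operatorname{Hom}(-,\ZZ)$ since every group in sight is free. Using that $\operatorname{Hom}(-,\ZZ)$ carries the direct sum to a product and that cohomology commutes with products, the resulting long exact sequence reorganizes into the Milnor sequence
\begin{equation*}
0 \longrightarrow {\varprojlim_i}^{1}\, H^{q-1}(Y_i) \longrightarrow H^q(Y) \longrightarrow \varprojlim_i H^q(Y_i) \longrightarrow 0
\end{equation*}
for every $q$, the outer terms being the cokernel and kernel of $1 - \mathrm{shift}^\ast$ on $\prod_i H^{q-1}(Y_i)$ and $\prod_i H^q(Y_i)$ respectively. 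If $q$ is even, then $H^{q-1}(Y_i) = 0$ for all $i$, so the $\varprojlim^1$ term vanishes and $H^q(Y) \cong \varprojlim_i H^q(Y_i)$. If $q$ is odd, then $H^q(Y_i) = 0$ for all $i$, so the right-hand term vanishes and $H^q(Y) \cong {\varprojlim_i}^{1} H^{q-1}(Y_i)$.

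The one point requiring care is therefore the vanishing of ${\varprojlim_i}^{1} H^{q-1}(Y_i)$ in odd total degree $q$: this is not formal from finiteness alone, but holds once the inverse system $\{H^{q-1}(Y_i)\}_i$ satisfies the Mittag--Leffler condition, which is the case in every application of this lemma because the restriction maps $H^\bullet(Y_{i+1}) \to H^\bullet(Y_i)$ are surjective (they carry Schubert classes to Schubert classes). Granting this, $H^q(Y) = 0 = \varprojlim_i H^q(Y_i)$ in odd degree as well, and assembling the two parities produces the graded isomorphism $H^\bullet(Y) \cong \varprojlim_i H^\bullet(Y_i)$, compatibly with the restriction maps. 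The main obstacle is precisely this last step — controlling the $\varprojlim^1$ term — which is why the hypotheses bundle in the odd-degree vanishing and the lemma is only applied to towers with surjective (hence Mittag--Leffler) transition maps.
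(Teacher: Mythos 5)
Your route is genuinely different from the paper's. The paper first identifies $H_\bullet(Y)$ with $\varinjlim_i H_\bullet(Y_i)$ (this is the citation to Hatcher, and your observation that simplices have compact image and hence factor through some $Y_i$ is exactly its content), then applies the Universal Coefficient Theorem to each $Y_i$ and to $Y$ and uses $\mathrm{Hom}(\varinjlim_i H_j(Y_i),\ZZ) \cong \varprojlim_i \mathrm{Hom}(H_j(Y_i),\ZZ)$. You instead dualize the telescope presentation $0 \to \bigoplus_i C_\bullet(Y_i) \to \bigoplus_i C_\bullet(Y_i) \to C_\bullet(Y) \to 0$ of the singular chain complex and read off the Milnor sequence $0 \to {\varprojlim_i}^1 H^{q-1}(Y_i) \to H^q(Y) \to \varprojlim_i H^q(Y_i) \to 0$. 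In even degrees your argument is complete and clean: the odd-degree vanishing for the stages kills the ${\varprojlim}^1$ term. One small repair: the reason dualizing stays exact is that the quotient complex $C_\bullet(Y)$ is degreewise free, so the sequence splits degreewise; freeness of all the terms is not by itself the right justification.

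Where you depart from the statement is in odd degrees, and you are right to flag it: there your sequence gives $H^q(Y) \cong {\varprojlim_i}^1 H^{q-1}(Y_i)$, and the listed hypotheses do not force this to vanish. Indeed the lemma as stated cannot be proved from those hypotheses alone: taking $Y_i$ to be the partial mapping telescopes of the degree-two self-map of $S^2$ satisfies both bullet points, yet $H^3(Y) \cong \mathrm{Ext}(\ZZ[1/2],\ZZ) \neq 0$ while $\varprojlim_i H^3(Y_i) = 0$. The paper's own proof meets the identical issue at its last step, where the Universal Coefficient Theorem for $Y$ leaves a term $\mathrm{Ext}(H_{j-1}(Y))$ in odd degrees $j$ that is silently discarded even though $H_{j-1}(Y)$ is a direct limit and need not have trivial $\mathrm{Ext}$. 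So your patch --- assuming the restriction maps $H^\bullet(Y_{i+1}) \to H^\bullet(Y_i)$ are surjective, hence Mittag--Leffler, which does hold in both towers to which the lemma is applied since $R_{n+1,k} \twoheadrightarrow R_{n,k}$ and $R_{n+m+1,k+m+1} \twoheadrightarrow R_{n+m,k+m}$ are quotient maps --- is not a blemish but the hypothesis that should be added to the statement; with it your argument closes in all degrees uniformly (Mittag--Leffler kills every ${\varprojlim}^1$, with no parity analysis needed), whereas without it neither your argument nor the paper's establishes the odd-degree part of the claim.
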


Lemma~\ref{inverse-limit-cohomology-lemma} allows us to express the cohomology
of an increasing union of spaces as the inverse limit of the cohomologies of these spaces.
The technical hypotheses of Lemma~\ref{inverse-limit-cohomology-lemma} 
hold in our setting because the spaces $X_{n,k}$ admit a (complex) cellular decomposition.

\begin{proof}
The inclusions $Y_i \hookrightarrow Y$ induce maps on cohomology
$H^{\bullet}(Y) \rightarrow H^{\bullet}(Y_i)$, and therefore a ring map
$f: H^{\bullet}(Y) \rightarrow \varprojlim H^{\bullet}(Y_i)$.
We need only show that the ring map $f$ is an isomorphism.

By our hypothesis on compact subsets of $Y$ and  \cite[Prop. 3.33]{Hatcher},
the natural map $\varinjlim H_{\bullet}(Y_i) \rightarrow H_{\bullet}(Y)$ on homology
is an isomorphism.  
In particular, we have $H_j(Y) = 0$ for all odd $j$.

On the other hand, the Universal Coefficient Theorem
tells us that for each $i, j$ we have an exact sequence
\begin{equation}
0 \rightarrow \mathrm{Ext}(H_{j-1}(Y_i)) \rightarrow H^j(Y_i) \rightarrow 
\mathrm{Hom}(H_j(Y_i), \ZZ) \rightarrow 0.
\end{equation}
By our vanishing assumptions, we see that 
$H^j(Y_i) \cong \mathrm{Hom}(H_j(Y_i), \ZZ)$ for all $i$.
This gives rise to the identifications
\begin{equation}
\mathrm{Hom}( H_j(Y), \ZZ) = 
\mathrm{Hom}( \varinjlim H_j(Y_i), \ZZ) = \varprojlim \mathrm{Hom} (H_j(Y_i), \ZZ) =
\varprojlim H^j(Y_i).
\end{equation}
For the second equality above, see \cite[Proof of Prop. 3F.5]{Hatcher}.

Finally, the Universal Coefficient Theorem applied to $Y$ says that for each $j$,
\begin{equation}
0 \rightarrow \mathrm{Ext}(H_{j-1}(Y)) \rightarrow H^j(Y) \rightarrow 
\mathrm{Hom}(H_j(Y), \ZZ) \rightarrow 0.
\end{equation}
Since $H_j(Y) = 0$ if $j$ is odd, we get the desired isomorphisms of graded abelian 
groups
\begin{equation}
\label{identification-sequence}
H^{\bullet}(Y) = \mathrm{Hom}(H_{\bullet}(Y),\ZZ) = \varprojlim H^{\bullet}(Y_i).
\end{equation}
Since the ring map $f: H^{\bullet}(Y) \rightarrow \varprojlim H^{\bullet}(Y_i)$ is 
also a map of graded abelian groups, the universal property of the direct limit 
forces $f$ to be obtained by the chain of isomorphisms \eqref{identification-sequence}.
This means that $f$ is an isomorphism.
\end{proof}

The closed embedding $X_{n,k} \hookrightarrow X_{n+1,k}$ given by $(\ell_1, \dots, \ell_n) \mapsto (\ell_1, \dots, \ell_n, {\bf 1})$ 
gives rise to the direct limit 
\begin{equation}
X_{\bullet, k} := \lim_{\substack{\longrightarrow \\ n \geq k}} X_{n,k} = \bigcup_{n \geq k} X_{n,k}.
\end{equation}
Applying Lemma~\ref{inverse-limit-cohomology-lemma}, we see that 
the limit $X_{\bullet, k}$ has cohomology 
\begin{equation} \label{eq:direct-limit-cohom}
H^{\bullet}(X_{\bullet, k}) = H^{\bullet}(\lim_{\substack{\longrightarrow \\ n \geq k}} X_{n,k}) =
\lim_{\substack{\longleftarrow \\ n \geq k}} H^{\bullet}(X_{n,k}) =
\lim_{\substack{\longleftarrow \\ n \geq k}} R_{n,k} =: R_{\bullet,k}
\end{equation}
where the inverse limit 
is defined with respect to the system $R_{n+1,k} \twoheadrightarrow R_{n,k}$
induced by the variable assignment $(x_1, \dots, x_n, x_{n+1}) \mapsto (x_1, \dots, x_n, 0)$.
If we fix $n_0 \geq k$ and $w \in \WW_{n_0, k}$, the image of 
$\symm_w$ in $H^{\bullet}(X_{\bullet, k}) = R_{\bullet,k}$ represents the class of the image of 
$\overline{C}_w$ in $X_{\bullet,k}$.
We may identify the inverse limit $H^{\bullet}(X_{\bullet, k}) = R_{\bullet,k}$ as follows.

\begin{proposition}
\label{inverse-limit-identification}
Let $k$ be a positive integer and consider the ring
\begin{equation*}
P_{\bullet,k} := \ZZ[[ x_1, x_2, \dots ]]/ \langle x_1^k, x_2^k, \dots \rangle
\end{equation*}
formed by truncating the ring of formal power series $\ZZ[[x_1, x_2, \dots ]]$.
We have a canonical isomorphism $R_{\bullet, k} \cong P_{\bullet, k}$.
\end{proposition}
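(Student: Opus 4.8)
The plan is to write down an explicit ring homomorphism $\Phi\colon P_{\bullet,k}\to R_{\bullet,k}$ and prove it is an isomorphism by reducing everything to a statement in each fixed degree. First I would observe that for every $n\ge k$ the composite $\ZZ[[x_1,x_2,\dots]]\twoheadrightarrow\ZZ[\xx_n]\twoheadrightarrow R_{n,k}$ — set $x_i=0$ for $i>n$, then reduce modulo $I_{n,k}$ — is well defined (it factors through $\ZZ[[x_1,\dots,x_n]]/\langle x_1^k,\dots,x_n^k\rangle=\ZZ[\xx_n]/\langle x_1^k,\dots,x_n^k\rangle$), kills every $x_i^k$, and is compatible with the transition maps $R_{n+1,k}\twoheadrightarrow R_{n,k}$ defining the inverse system, since both composites $\ZZ[[x_1,x_2,\dots]]\to R_{n,k}$ are just ``set $x_i=0$ for $i>n$ and reduce''. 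These maps therefore induce $\Phi\colon P_{\bullet,k}=\ZZ[[x_1,x_2,\dots]]/\langle x_1^k,x_2^k,\dots\rangle\to\varprojlim_n R_{n,k}=R_{\bullet,k}$.

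Next I would pass to homogeneous components. Each $R_{n,k}$ is a free $\ZZ$-module of finite rank (Theorem~\ref{cohomology-of-x}, Corollary~\ref{schubert-basis}), hence the finite direct sum of its graded pieces; as the transition maps are degree preserving, one gets $R_{\bullet,k}\cong\prod_{d\ge0}(R_{\bullet,k})_d$ with $(R_{\bullet,k})_d=\varprojlim_n(R_{n,k})_d$, and likewise $P_{\bullet,k}\cong\prod_{d\ge0}(P_{\bullet,k})_d$, where $(P_{\bullet,k})_d$ is the group of formal $\ZZ$-linear combinations of degree-$d$ monomials $x_1^{a_1}x_2^{a_2}\cdots$ with every $a_i\le k-1$. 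Since $\Phi$ is degree preserving, $\Phi=\prod_d\Phi_d$, and it suffices to prove that each $\Phi_d$ is an isomorphism.

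The arithmetic heart is elementary: writing $Q_{n,k}:=\ZZ[\xx_n]/\langle x_1^k,\dots,x_n^k\rangle$, the ring $R_{n,k}$ is the quotient of $Q_{n,k}$ by the image of the ideal $\langle e_{n-k+1}(\xx_n),\dots,e_n(\xx_n)\rangle$, which is generated by homogeneous elements of degrees $n-k+1,\dots,n$ and so has no nonzero homogeneous part in degrees $\le d$ as soon as $n>d+k-1$. Hence $(R_{n,k})_e=(Q_{n,k})_e$ for all $e\le d$ and all $n>d+k-1$, and under these identifications the transition map $(R_{n+1,k})_d\to(R_{n,k})_d$ becomes the map $(Q_{n+1,k})_d\to(Q_{n,k})_d$ setting $x_{n+1}=0$. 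Using that $\{n>d+k-1\}$ is cofinal in $\{n\ge k\}$, we get $(R_{\bullet,k})_d=\varprojlim_n(Q_{n,k})_d$; and a compatible family of degree-$d$ elements of the $Q_{n,k}$ is precisely a formal $\ZZ$-combination of degree-$d$ monomials with all exponents $\le k-1$, i.e.\ an element of $(P_{\bullet,k})_d$, with $\Phi_d$ realizing exactly this identification. Thus every $\Phi_d$, and therefore $\Phi$, is an isomorphism.

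I expect the only real obstacle to be bookkeeping rather than substance: one must make precise that both $P_{\bullet,k}$ and $R_{\bullet,k}$ are the infinite products of their homogeneous pieces (so that checking $\Phi$ degree by degree genuinely suffices) and that $\ZZ[[x_1,x_2,\dots]]/\langle x_1^k,x_2^k,\dots\rangle$ is really $\varprojlim_n Q_{n,k}$. Once these identifications are in place, the degree bound on the extra generators $e_{n-k+1},\dots,e_n$ of $I_{n,k}$ does all the work.
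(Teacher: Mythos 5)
Your argument is correct, and it reaches the conclusion by a genuinely different mechanism than the paper. The paper never builds an explicit map out of $P_{\bullet,k}$; instead it verifies that $P_{\bullet,k}$ satisfies the universal property of $\varprojlim_n R_{n,k}$ by interleaving the two towers, the key point being the surjections $R_{n+k,k}\twoheadrightarrow P_{n,k}$ obtained by setting $x_{n+1}=\cdots=x_{n+k}=0$ (these exist because $e_{n+1},\dots,e_{n+k}$ vanish in $n$ variables), followed by a diagram chase. You instead construct the comparison map $\Phi$ directly and then stabilize degree by degree: for fixed $d$ and $n>d+k-1$ the generators $e_{n-k+1},\dots,e_n$ live in degrees $>d$, so $(R_{n,k})_d=(P_{n,k})_d$ where $P_{n,k}=\ZZ[\xx_n]/\langle x_1^k,\dots,x_n^k\rangle$, and cofinality identifies $(R_{\bullet,k})_d$ with formal $\ZZ$-combinations of reduced degree-$d$ monomials. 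Both proofs exploit the same underlying fact (the elementary symmetric generators become irrelevant, either after killing $k$ extra variables or in bounded degree), but your graded/cofinality route is more concrete — it exhibits exactly what each graded piece of the limit is — at the cost of having to justify that an inverse limit of graded rings with degree-preserving maps and degreewise-finite terms splits as the product of the degreewise limits (which is fine here, since each $R_{n,k}$ vanishes in large degrees). One shared caveat: both you and the paper assert without proof the identification of $P_{\bullet,k}$ with $\varprojlim_n P_{n,k}$ (equivalently, with formal series in reduced monomials); under the most literal reading this needs the ideal $\langle x_1^k,x_2^k,\dots\rangle$ to be replaced by its closure, since for example $\sum_{i\geq 1}x_i^k$ maps to $0$ in every $P_{n,k}$ but does not lie in the finitely-generated ideal. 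You flag this as bookkeeping, the paper says ``it can be shown,'' so it does not distinguish your proof from theirs, but it is the one point where more than bookkeeping is hiding.
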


\begin{proof}
We show that $P_{\bullet, k}$ satisfies the universal property defining $R_{\bullet,k}$.
The idea is to approximate the inverse system defining $R_{\bullet,k}$ with
a simpler inverse system whose limit is $P_{\bullet,k}$.

For any $n \geq k$, consider the quotient 
\begin{equation}
P_{n,k} := \ZZ[\xx_n]/\langle x_1^k, x_2^k, \dots, x_n^k \rangle.
\end{equation}
We have a canonical projection $P_{n+1,k} \twoheadrightarrow P_{n,k}$
obtained by evaluating $x_{n+1} = 0$.  There are also projection maps
$P_{\bullet,k} \twoheadrightarrow P_{n,k}$ obtained by evaluating $x_i = 0$ for all
$i > n$.  It can be shown that $P_{\bullet,k}$ is the inverse limit of the 
system $P_{n+1,k} \twoheadrightarrow P_{n,k}$.

We want to approximate the system $R_{n+1,k} \twoheadrightarrow R_{n,k}$ by the 
system $P_{n+1,k} \twoheadrightarrow P_{n,k}$.  The main idea here is that for all 
$n \geq k$, we have a surjection $R_{n+k,k} \twoheadrightarrow P_{n,k}$
induced by setting $x_i = 0$ for all $n+1 \leq i \leq n+k$.  Indeed, this surjection
comes from the corresponding map $P_{n+k,k} \twoheadrightarrow P_{n,k}$
and the fact that $e_{n+k}, e_{n+k-1}, \dots, e_{n+1} = 0$ in $P_{n,k}$.
To summarize, we have the following commutative diagram:
\begin{center}
\begin{tikzpicture}[scale = 1]

\node(A) at (0,2) {$P_{k,k}$};

\node(B) at (2,2) {$P_{k+1,k}$};

\node(C) at (4,2) {$\cdots$};

\node(D) at (6,2) {$P_{2k,k}$};

\node(E) at (8.5,2) {$P_{2k+1,k}$};

\node(F) at (0,0) {$R_{k,k}$};

\node(G) at (2,0) {$R_{k+1,k}$};

\node(H) at (4,0) {$\cdots$};

\node(I) at (6,0) {$R_{2k,k}$};

\node(J) at (8.5,0) {$R_{2k+1,k}$};

\node(K) at (4,4) {$P_{\bullet,k}$};

\draw[ -> ] (B) -- (A);
\draw [->] (A) -- (F);
\draw [->] (E) -- (D);
\draw [->] (B) -- (G);
\draw [->] (G) -- (F);
\draw [->] (D) -- (I);
\draw [->] (E) -- (J);
\draw [->] (I) -- (A);
\draw [->] (J) -- (I);
\draw [->] (J) -- (B);
\draw [->] (K) -- (A);
\draw [->] (K) -- (B);
\draw [->] (K) -- (D);
\draw [->] (K) -- (E);

\end{tikzpicture}
\end{center}
Since $P_{\bullet,k}$ is the inverse limit of the directed system formed by the middle row
$P_{n+1,k} \twoheadrightarrow P_{n,k}$,
the universal property of the inverse limit immediately implies that $P_{\bullet,k}$
is the inverse limit of the directed system formed by the bottom two rows.
We want to show that $P_{\bullet,k}$ is also the inverse limit of the 
bottom row $R_{n+1,k} \twoheadrightarrow R_{n,k}$.

For each $n$, the above diagram gives a surjection 
$P_{\bullet,k} \twoheadrightarrow R_{n,k}$.  Suppose we have a commutative ring
$Q$ equipped with morphisms $Q \rightarrow R_{n,k}$ which commute with the maps
$R_{n+1,k} \twoheadrightarrow R_{n,k}$.  We need to show that there exists a unique
map $Q \rightarrow P_{\bullet,k}$ making the following diagram commute:
\begin{center}
\begin{tikzpicture}[scale = 1]

\node(A) at (0,2) {$R_{k,k}$};

\node(B) at (2,2) {$R_{k+1,k}$};

\node(C) at (4,2) {$\cdots$};

\node(D) at (6,2) {$R_{2k,k}$};

\node(E) at (8.5,2) {$R_{2k+1,k}$};

\node(K) at (4,4) {$P_{\bullet,k}$};

\node(L) at (4,6) {$Q$};

\draw[ -> ] (B) -- (A);

\draw [->] (E) -- (D);

\draw [->] (L) -- (A);
\draw [->] (L) -- (B);
\draw [->] (L) -- (D);
\draw [->] (L) -- (E);

\draw [->] (K) -- (A);
\draw [->] (K) -- (B);
\draw [->] (K) -- (D);
\draw [->] (K) -- (E);

\draw [dashed, ->] (L) -- (K);

\end{tikzpicture}
\end{center}
Indeed, this diagram commutes if and only if the following larger diagram commutes:
\begin{center}
\begin{tikzpicture}[scale = 1]

\node(A) at (0,2) {$R_{k,k}$};

\node(B) at (2,2) {$R_{k+1,k}$};

\node(C) at (4,2) {$\cdots$};

\node(D) at (6,2) {$R_{2k,k}$};

\node(E) at (8.5,2) {$R_{2k+1,k}$};

\node(F) at (0,0) {$P_{k,k}$};

\node(G) at (2,0) {$P_{k+1,k}$};

\node(H) at (4,0) {$\cdots$};

\node(I) at (6,0) {$P_{2k,k}$};

\node(J) at (8.5,0) {$P_{2k+1,k}$};

\node(K) at (4,4) {$P_{\bullet,k}$};

\node(L) at (4,6) {$Q$};
\draw [->] (D) -- (F);
\draw[ -> ] (B) -- (A);
\draw [->] (G) -- (F);
\draw [->] (E) -- (D);
\draw [->] (J) -- (I);
\draw [->] (F) -- (A);
\draw [->] (G) -- (B);
\draw [->] (I) -- (D);
\draw [->] (J) -- (E);
\draw [->] (E) -- (G);
\draw [->] (L) -- (A);
\draw [->] (L) -- (B);
\draw [->] (L) -- (D);
\draw [->] (L) -- (E);

\draw [->] (K) -- (A);
\draw [->] (K) -- (B);
\draw [->] (K) -- (D);
\draw [->] (K) -- (E);

\draw [dashed, ->] (L) -- (K);

\end{tikzpicture}
\end{center}
Since $P_{\bullet,k}$ is the inverse limit of the system $P_{n+1,k} \twoheadrightarrow P_{n,k}$,
there is a unique morphism $Q \rightarrow P_{\bullet,k}$ such that we have the equality of 
compositions
\begin{equation}
(Q \rightarrow R_{n+k,k} \rightarrow P_{n,k}) =
(Q \rightarrow P_{\bullet,k} \rightarrow R_{n+k,k} \rightarrow P_{n,k})
\end{equation}
for all $n \geq k$.  But this implies the equality of compositions
\begin{equation}
(Q \rightarrow R_{n+k,k} \rightarrow P_{n,k} \rightarrow R_{n,k}) =
(Q \rightarrow P_{\bullet,k} \rightarrow R_{n+k,k} \rightarrow P_{n,k} \rightarrow R_{n,k}),
\end{equation}
so that the morphism $Q \rightarrow P_{\bullet,k}$ makes the lower diagram commute.
\end{proof}

The embedding $\WW_{n,k} \subseteq \WW_{n+1,k+1}$
given by $w \mapsto 1 \times w$ is more intricate at the level of 
Schubert polynomials.
Recall that if $f(\xx_n) = f(x_1, \dots, x_n)$ is a polynomial, we denote the variable reversal by
$f(\xx_n^*) = f(x_n, \dots, x_1)$.

\begin{theorem}
\label{variable-k-embedding}
Let $w \in \WW_{n,k}$.  We have $\symm_{1 \times w}(\xx_{n+1}^*)|_{x_{n+1} = 0} = \symm_w(\xx_n^*)$.
\end{theorem}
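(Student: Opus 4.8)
The plan is to reduce the statement to a clean identity for ordinary Schubert polynomials and then prove that identity by a divided-difference induction.

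First I would replace each $x_i$ by $x_{n+1-i}$ on both sides; as both sides are polynomial identities this reindexing is harmless, and it turns the claim into the equivalent statement
\[
\symm_{1\times w}(0,z_1,z_2,\dots,z_n)=\symm_w(z_1,\dots,z_n),
\]
where on the left one substitutes $0$ for the first variable of $\symm_{1\times w}\in\ZZ[x_1,\dots,x_{n+1}]$ and relabels the $i$th variable as $z_{i-1}$ for $i\ge 2$. Next I would record what $w\mapsto 1\times w$ does to the three ingredients of Definition~\ref{fubini-schuberts}. Writing $1\oplus\rho\in S_{n+1}$ for the permutation fixing $1$ with $(1\oplus\rho)(i)=\rho(i-1)+1$ for $i\ge 2$, a direct check from the definitions of convexification, of $\sigma$, and of $\std$ — all of which only require tracking how prepending a new minimal letter shifts initial positions, letter multiplicities, and noninitial positions — gives
\[
\conv(1\times w)=1\times\conv(w),\qquad \sigma(1\times w)=1\oplus\sigma(w),\qquad \std(\conv(1\times w))=1\oplus\std(\conv(w)).
\]
Consequently $\symm_{1\times w}=(1\oplus\sigma(w))^{-1}.\symm_{1\oplus\std(\conv(w))}$, while $\symm_w=\sigma(w)^{-1}.\symm_{\std(\conv(w))}$.

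The crux is the following lemma (the case $\sigma(w)=1$, i.e. $w$ convex): for every $\rho\in S_n$,
\[
\symm_{1\oplus\rho}(0,z_1,\dots,z_n)=\symm_\rho(z_1,\dots,z_n).
\]
I would prove this by downward induction on $\ell(\rho)$ starting from $\rho=w_0=w_0^{(n)}=n(n-1)\cdots 1$. The ring map $\phi\colon\ZZ[x_1,\dots,x_{n+1}]\to\ZZ[x_1,\dots,x_n]$ with $x_1\mapsto 0$ and $x_{j+1}\mapsto x_j$ intertwines $\partial_{i+1}$ with $\partial_i$ for $i\ge 1$; since $(1\oplus\rho)s_{i+1}=1\oplus(\rho s_i)$ is a length-decreasing step for $1\oplus\rho$ exactly when $\rho(i)>\rho(i+1)$, the relation $\symm_{1\oplus(\rho s_i)}=\partial_{i+1}\symm_{1\oplus\rho}$ together with $\phi\circ\partial_{i+1}=\partial_i\circ\phi$ propagates the identity from a permutation to any of its length-decreasing neighbours, so it descends from $w_0$ to all of $S_n$. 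For the base case I would use the length-additive factorization $w_0^{(n+1)}=(1\oplus w_0)\, s_1s_2\cdots s_n$, so that $\symm_{1\oplus w_0}=\partial_1\partial_2\cdots\partial_n\!\left(x_1^{n}x_2^{n-1}\cdots x_n\right)$; a short computation with the twisted Leibniz rule yields the recursion $\symm_{1\oplus w_0^{(n)}}\big|_{x_1=0}=x_2^{n-1}\cdot\bigl(\symm_{1\oplus w_0^{(n-1)}}\big|_{x_1=0}\bigr)(x_2,x_3,\dots)$, hence $\symm_{1\oplus w_0^{(n)}}\big|_{x_1=0}=x_2^{n-1}x_3^{n-2}\cdots x_n$, which is the image of $\symm_{w_0}=x_1^{n-1}x_2^{n-2}\cdots x_{n-1}$ under $x_j\mapsto x_{j+1}$, i.e. $\phi(\symm_{1\oplus w_0})=\symm_{w_0}$. (Alternatively, $1\oplus w_0^{(n)}$ is vexillary with shape the staircase $(n-1,n-2,\dots,1)$ and flag $(2,3,\dots,n)$, and the flagged-tableau formula for its Schubert polynomial makes the specialization $x_1=0$ immediate; a pipe-dream argument also works.)

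Finally I would assemble the pieces. Evaluating $\symm_{1\times w}=(1\oplus\sigma(w))^{-1}.\symm_{1\oplus\std(\conv(w))}$ at $(x_1,\dots,x_{n+1})=(0,z_1,\dots,z_n)$: because $(1\oplus\sigma(w))^{-1}=1\oplus\sigma(w)^{-1}$ fixes the first slot and permutes the remaining ones, this equals $\symm_{1\oplus\std(\conv(w))}\bigl(0,z_{\sigma(w)^{-1}(1)},\dots,z_{\sigma(w)^{-1}(n)}\bigr)$; applying the lemma in the variables $z_{\sigma(w)^{-1}(i)}$ turns it into $\symm_{\std(\conv(w))}\bigl(z_{\sigma(w)^{-1}(1)},\dots,z_{\sigma(w)^{-1}(n)}\bigr)=\bigl(\sigma(w)^{-1}.\symm_{\std(\conv(w))}\bigr)(z_1,\dots,z_n)=\symm_w(z_1,\dots,z_n)$, which is the reduced form of the theorem. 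The only step that is genuinely a computation rather than bookkeeping is the base case of the lemma; the reversal of variables appearing in the statement is cosmetic and disappears once one passes to the $z$-coordinates.
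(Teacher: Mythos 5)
Your proposal is correct and takes essentially the same route as the paper's combinatorial proof: the same compatibilities $\conv(1\times w)=1\times\conv(w)$, $\std(\conv(1\times w))=1\times\std(\conv(w))$, $\sigma(1\times w)=1\times\sigma(w)$ reduce to the permutation case, followed by the same downward divided-difference induction from $w_0$ with the same base-case computation for $1\times w_0$. Your map $\phi$ (set the first variable to zero and shift indices), with the intertwining $\phi\circ\partial_{i+1}=\partial_i\circ\phi$, is just a cleaner packaging of the paper's ``reverse variables and set $x_{n+1}=0$'' bookkeeping.
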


We again have combinatorial and geometric proofs of this result.

\begin{proof} {\em (Combinatorial)}
It is easy to see the equality of words $\conv(1 \times w) = 1 \times \conv(w) \in \WW_{n+1,k+1}$ and the equalities of 
permutations
$\std(\conv(1 \times w)) = 1 \times \std(\conv(w)) \in S_{n+1}$, and 
$\sigma(1 \times w) = 1 \times \sigma(w) \in S_{n+1}$.  This means that
\begin{equation*}
\symm_{1 \times w}(\xx_{n+1}) = \sigma(1 \times w)^{-1}.\symm_{\std(\conv(1 \times w))}(\xx_{n+1}) =
 (1 \times \sigma(w))^{-1}.\symm_{1 \times \std(\conv(w))}(\xx_{n+1}).
\end{equation*}
We are therefore reduced to showing $\symm_{1 \times v}(\xx_{n+1}^*)|_{x_{n+1} = 0} = \symm_v(\xx_n^*)$
for any {\em permutation} $v \in S_n$.  

If $v_0 = n (n-1) \dots 1 \in S_n$ is the reversal permutation we have $1 \times v_0 = 1 (n+1) n \dots 2 \in S_{n+1}$.
We have
\begin{align*}
\symm_{1 \times v_0}(\xx_{n+1}) 
&= \partial_1  \cdots \partial_{n-2} \partial_{n-1} \partial_n ( x_1^n  \cdots x_{n-2}^3 x_{n-1}^2 x_n ) \\
&= \partial_1 \cdots \partial_{n-2} \partial_{n-1} (x_1^n  \cdots x_{n-2}^3 x_{n-1}^2 ) \\
&= \partial_1 \cdots \partial_{n-2} (x_1^n  \cdots x_{n-2}^3 x_{n} + x_1^n  \cdots x_{n-2}^3 x_{n-1}) = \cdots \\
&= x_2^{n-1} x_3^{n-2} \cdots x_{n} + \text{(terms involving $x_1$)},
\end{align*}
so that  $\symm_{1 \times v_0}(\xx_{n+1}^*)|_{x_{n+1} = 0} = x_2 x_3^2 \cdots x_{n}^{n-1} =  \symm_{v_0}(\xx_n^*)$.
For a general permutation $v = v_1 \dots v_n \in S_n$ with $v_i > v_{i+1}$, let 
$v' = v_1 \dots v_{i+1} v_{i} \dots v_n \in S_n$.  
We may inductively assume $\symm_{1 \times v}(\xx_{n+1}^*)|_{x_{n+1} = 0} = \symm_v(\xx_n^*)$.
Since $\symm_{1 \times v'}(\xx_{n+1}) = \partial_{i+1} \symm_{1 \times v}(\xx_{n+1})$ and
$\symm_{v'}(\xx_n) = \partial_i \symm_v(\xx_n)$, after reversing variables we deduce
$\symm_{1 \times v'}(\xx_{n+1}^*)|_{x_{n+1} = 0} = \symm_{v'}(\xx_n^*)$.
\end{proof}

\begin{proof} {\em (Geometric)}
Let $\iota: \CC^k \hookrightarrow \CC^{k+1}$ be the embedding sending a column vector
$(z_1, \dots, z_k)^T$ to the column vector $(0, z_1, \dots, z_k)^T$.
The map $\iota$ leads to a closed embedding $j: X_{n,k} \hookrightarrow X_{n+1,k+1}$ sending 
an $n$-tuple of lines $(\ell_1, \dots, \ell_n)$ in $\CC^k$ to the $n$-tuple
$({\bf 1}, \iota(\ell_1), \dots, \iota(\ell_n))$, where ${\bf 1}$ is the line of constant vectors in $\CC^{k+1}$.  
Given any word $w \in \WW_{n,k}$, the cell $C_{1 \times w}$ in $X_{n+1,k+1}$ is transverse to $X_{n,k}$ and 
satisfies $C_{1 \times w} \cap X_{n,k} = C_w$.

If $(\ell_1, \dots, \ell_n)$ are the tautological line bundles over $X_{n,k}$ (from left to right)
and $(\ell'_1, \ell'_2, \dots \ell'_{n+1})$ are the corresponding line bundles over $X_{n+1,k+1}$, then $j$
pulls $(\ell'_1, \ell'_2, \dots, \ell'_{n+1})$ back to $(\CC, \ell_1, \dots, \ell_n)$, where $\CC$ is the constant line bundle
on $X_{n,k}$.
The induced map $j^{*}: H^{\bullet}(X_{n+1,k+1}) \rightarrow H^{\bullet}(X_{n,k})$ on cohomology
therefore acts on variables by $(x_1, x_2, \dots, x_{n+1}) \mapsto (0, x_1, \dots, x_n)$.  
This implies $\symm_{1 \times w}(x_1, x_2, \dots, x_{n+1}) = \symm_w(0, x_1, \dots, x_n)$, which 
is equivalent to 
$\symm_{1 \times w}(\xx_{n+1}^*)|_{x_{n+1} = 0} = \symm_w(\xx_n^*)$.
\end{proof}

Theorem~\ref{variable-k-embedding} has the following cohomological interpretation.  
Fix $k \leq n$.
The tower of closed embeddings $X_{n,k} \hookrightarrow X_{n+1,k+1}$ 
given in the geometric proof of Theorem~\ref{variable-k-embedding}
gives rise to the direct limit
\begin{equation}
X_{n + \bullet,  k + \bullet} := \lim_{\substack{\longrightarrow \\ m \geq 0}} X_{n+m, k+m}  = \bigcup_{m \geq 0} X_{n+m,k+m}.
\end{equation}
We have $X_{n + \bullet, k + \bullet} = X_{(n + 1) + \bullet, (k + 1) + \bullet}$.  
By Lemma~\ref{inverse-limit-cohomology-lemma}, the cohomology of
$X_{n + \bullet, k + \bullet}$ is the inverse limit
\begin{equation}
\label{variable-k-cohomology}
H^{\bullet}(X_{n + \bullet,  k + \bullet}) =
H^{\bullet}(\lim_{\substack{\longrightarrow \\ m \geq 0}} X_{n+m, k+m}) = 
\lim_{\substack{\longleftarrow \\ m \geq 0}} H^{\bullet}(X_{n+m, k+m}) =
\lim_{\substack{\longleftarrow \\ m \geq 0}} R_{n+m, k+m},
\end{equation}
where the last inverse limit is with respect to the system $R_{n+m+1, k+m+1} \twoheadrightarrow R_{n+m,k+m}$ induced
by the variable assignment $(x_1, x_2, \dots, x_{n+m+1}) \mapsto (0, x_1, \dots, x_{n+m})$.

The Fubini word Schubert polynomials $\symm_w(\xx_n)$ for $w \in \WW_{n,k}$ give rise to classes
in $H^{\bullet}(X_{n + \bullet, k + \bullet})$ as follows.
For any $m \geq 1$, define $1^m \times w \in \WW_{n,k}$ inductively by 
$1^m \times w = 1 \times (1^{m-1} \times w)$.  Theorem~\ref{variable-k-embedding} implies that the limit
\begin{equation}
\symm^*_w(\xx) := \lim_{m \rightarrow \infty} \symm_{1^m \times w}(\xx_{n+m}^*)
\end{equation}
is a well-defined formal power series in the variable set $\xx = (x_1, x_2, \dots )$.
The formal power series $\symm^*_w(\xx)$ represents the class $[ \overline{C}_w ]$ in
$H^{\bullet}(X_{n + \bullet, k + \bullet})$.

As an example computation of $\symm^*_w(\xx)$, consider $w = 2123 \in \WW_{4,3}$.  The following table
\begin{center}
\begin{tabular}{c | c | c | c}
$m$ & $1^m \times w \in \WW_{4+m,3+m}$ & $\symm_{1^m \times w}(\xx_{4 + m})$ & $\symm_{1^m \times w}(\xx_{4+m}^*)$ 
\\ \hline  
$0$ & $2123$ & $x_1^2 x_3 + x_1 x_3^2$ & $x_2^2 x_4 + x_2 x_4^2$ \\ \hline 
$1$ & $13234$ & 
$\begin{smallmatrix} \\ x_1^2 x_2 + x_1^2 x_4 + x_1 x_2^2 + 2 x_1 x_2 x_4 \\  
+ x_1 x_4^2 + x_2^2 x_4 + x_2 x_4^2 \end{smallmatrix}$ &
$\begin{smallmatrix} \\  x_2^2 x_4 + x_2 x_4^2 + x_2^2 x_5 + x_2 x_5^2 \\
+ 2 x_2 x_4 x_5 + x_4^2 x_5 + x_4 x_5^2  \end{smallmatrix}$ \\ \hline
$2$ & $124345$ & 
$\begin{smallmatrix} \\ x_1^2 x_2 + x_1 x_2^2 + x_1^2 x_3 +
2 x_1 x_2 x_3 \\
+ x_2^2 x_3 + x_1 x_3^2 + x_2 x_3^2 \\
+ x_1^2 x_5 + 2 x_1 x_2 x_5 + x_2^2 x_5 \\
+ 2 x_1 x_3 x_5 + 2 x_2 x_3 x_5 + x_3^2 x_5 \\
+ x_1 x_5^2 + x_2 x_5^2 + x_3 x_5^2  \end{smallmatrix}$ &
$\begin{smallmatrix} \\ x_2^2 x_4 + x_2 x_4^2 + x_2^2 x_5 + x_2 x_5^2 \\
+ 2 x_2 x_4 x_5 + x_4^2 x_5 + x_4 x_5^2 \\
+ x_2^2 x_6 + x_2 x_6^2 + 2 x_2 x_4 x_6  \\
+ 2 x_2 x_5 x_6 + 2 x_4 x_5 x_6 + x_4^2 x_6 \\
 + x_4 x_6^2 
+ x_5 x_6^2 + x_5^2 x_6
\end{smallmatrix}$
\end{tabular}
\end{center}
illustrates the stability property of Theorem~\ref{variable-k-embedding}:
in the last column, to go from row 3 to row 2 we set $x_6 = 0$ and to go from row
2 to row 1 we set $x_5 = 0$.  This table also suggests the result
\begin{equation*}
\symm^*_{2123}(\xx) = \lim_{m \rightarrow \infty} \symm_{1^m \times 2123}(\xx_{4+m}^*)
= s_{(2,1)}(x_2, x_4, x_5, x_6, \dots ),
\end{equation*}
where $s_{(2,1)}$ is the Schur function corresponding to $(2,1) \vdash 3$.
In particular, the representative
$\symm^*_{2123}(\xx)$ of $[ \overline{C}_w]$ in $H^{\bullet}(X_{4+\bullet,3+\bullet})$
is genuinely a formal power series (as opposed to a polynomial).

The variable reversal involved in Theorem~\ref{variable-k-embedding}
arises in the inverse limit  
$R_{n + \bullet, k + \bullet}$ 
of the system $R_{n+1,k+1} \twoheadrightarrow R_{n,k}$
as follows.
The following commutative diagram

\begin{center}
\begin{tikzpicture}[scale = 1]

\node (B) at (4, 2) {$R_{n + \bullet, k + \bullet}$};

\node (D) at (-2, 0) {$R_{n,k}$};
\node (E) at (4, 0) {$R_{n+1,k+1}$};
\node (F) at (10, 0) {$R_{n+2,k+2} \longleftarrow \cdots$};

\node(G) at (-2,-3) {$(0, x_1, \dots, x_n)$ };
\node(H) at (4,-3) {$(x_1, x_2, \dots, x_{n+1})$ };

\node(I) at (4,-3.5) {$(0, x_1, \dots, x_{n+1)}$ };
\node(J) at (10,-3.5) {$(x_1, x_2, \dots, x_{n+2})$ };

\node(K) at (-2,-2) {$\frac{\ZZ[x_1, \dots, x_n]}
{ \left\langle \begin{smallmatrix}
x_1^k, \dots, x_n^k \\
e_n, \dots, e_{n-k+1}
\end{smallmatrix} \right\rangle}$};
\node(L) at (4,-2) {$\frac{\ZZ[x_1, \dots, x_n, x_{n+1}]}
{ \left\langle \begin{smallmatrix}
x_1^{k+1}, \dots, x_n^{k+1}, x_{n+1}^{k+1}, \\
e_{n+1}, e_{n}, \dots, e_{n-k+1}
\end{smallmatrix} \right\rangle}$};

\node(M) at (10,-2) {$\frac{\ZZ[x_1, \dots, x_n, x_{n+1}, x_{n+2}]}
{ \left\langle \begin{smallmatrix}
x_1^{k+2}, \dots, x_n^{k+2}, x_{n+1}^{k+2}, x_{n+2}^{k+2} \\
e_{n+2}, e_{n+1}, e_n,  \dots, e_{n-k+1}
\end{smallmatrix} \right\rangle}$};
\node(L) at (4,-2) {$\frac{\ZZ[x_1, \dots, x_n, x_{n+1}]}
{ \left\langle \begin{smallmatrix}
x_1^{k+1}, \dots, x_n^{k+1}, x_{n+1}^{k+1}, \\
e_{n+1}, e_{n}, \dots, e_{n-k+1}
\end{smallmatrix} \right\rangle}$};

\draw [|->] (H) -- (G);
\draw [|->] (J) -- (I);

\draw [->] (B) -- (D);
\draw [->] (B) -- (F);
\draw [->]  (E) -- (D);
\draw [->]  (F) -- (E);

\draw [->] (B) -- (E);

\draw [->] (D) -- (K);
\draw [->] (E) -- (L);
\draw [->] (F) -- (M);

\node (N) at (-2.5,-0.7) {$\mathrm{id.}$};
\node (O) at (3.5,-0.7) {$\mathrm{id.}$};
\node (P) at (9.5,-0.7) {$\mathrm{id.}$};

\end{tikzpicture}
\end{center}
illustrates that $R_{n + \bullet, k + \bullet}$ is mapped onto uniquely
by the quotient
\begin{equation} 
S_{n + \bullet, k + \bullet} := \ZZ[[x_1, x_2, \dots ]]/ \langle e_r \,:\, r > n-k \rangle
\end{equation}
of the formal power series ring $\ZZ[[\xx]] = \ZZ[x_1, x_2, \dots ]]$, where 
diagram commutativity 
forces us to define the map $S_{n + \bullet, k + \bullet} \rightarrow R_{n+m, k+m}$ by 
the variable assignment 
\begin{equation*}
(x_1, x_2, \dots, x_{n+m}, x_{n+m+1}, x_{n+m+2}, \dots ) \mapsto
(x_{n+m}, \dots, x_2, x_1, 0, 0, \dots ).
\footnote{We believe that $S_{n + \bullet, k + \bullet}$ is in fact the inverse limit 
$R_{n + \bullet, k + \bullet}$.}
\end{equation*}
The variable reversal involved in the maps out of 
$S_{n + \bullet, k + \bullet}$ corresponds to the variable reversal
in the definition of $\symm^*_w(\xx)$.

We can also consider the limit as $m \rightarrow \infty$ of the polynomials $\symm_{1^m \times w}(\xx_{n+m})$,
where we do not reverse variables.
The {\em Stanley symmetric function} attached to a permutation $w \in S_n$ is 
\begin{equation}
F_w(\xx) := 
\lim_{m \rightarrow \infty} \symm_{1^m \times w}(\xx_{n+m}), \quad w \in S_n.
\end{equation}
It turns out that  $F_w(\xx)$ is a well defined formal power series in the infinite variable set $\xx= (x_1, x_2, \dots )$ 
and is symmetric in the variables $\xx$.

In the case $k < n$, the corresponding limiting procedure (for Fubini words)
gives the Stanley symmetric functions (for permutations).
If $w \in \WW_{n,k}$ and $m \geq 1$,
then
$\std(\conv(1^m \times w)) = 1^m \times \std(\conv(w))$ and
$\sigma(1^m \times w) = 1^m \times \sigma(w)$.  Given any $w \in \WW_{n,k}$, we have
\begin{align}
\lim_{m \rightarrow \infty} \symm_{1^m \times w}(\xx_{n+m}) &= 
\lim_{m \rightarrow \infty} (1^m \times \sigma(w))^{-1}.\symm_{1^m \times \std(\conv(w))}(\xx_{n+m}) \\
&= \lim_{m \rightarrow \infty}  \symm_{1^m \times \std(\conv(w))}(\xx_{n+m}) \\
&= F_{\std(\conv(w))}(\xx),
\end{align}
where the 
second equality used the fact that the limit as $m \rightarrow \infty$ of 
$\symm_{1^m \times \std(\conv(w))}(\xx_{n+m})$ is a well-defined formal power series and
the fact that the permutation $(1^m \times \sigma(w))^{-1} \in S_{n+m}$ acts only on the last $n$
variables $x_{m+1}, x_{m+2}, \dots, x_{m+n}$.

\section{Variations on $X_{n,k}$}
\label{Variations}

There are several generalizations of the ring $R_{n,k}$  which have appeared in the literature.
The first extends $R_{n,k}$ to include a third parameter.

\begin{defn}
\label{three-parameter-definition}
Let $s \leq k \leq n$ be positive integers.  The ideal $I_{n,k,s} \subseteq \ZZ[\xx_n]$ is 
defined by
\begin{equation*}
I_{n,k,s} := \langle x_1^k, x_2^k, \dots, x_n^k, e_n(\xx_n), e_{n-1}(\xx_n), \dots, e_{n-s+1}(\xx_n) \rangle.
\end{equation*}
Let $R_{n,k,s} := \ZZ[\xx_n]/I_{n,k,s}$ be the corresponding quotient.
\end{defn}

The version $R_{n,k,s}^{\QQ} := \QQ \otimes_{\ZZ} R_{n,k,s}$ of the $R_{n,k,s}$ rings were 
defined
by Haglund-Rhoades-Shimozono \cite[Defn. 6.1]{HRS}.  When $s = k$ we have $R_{n,k,k} = R_{n,k}$.

The quotient $R_{n,k,s}$ carries a graded action of $S_n$.
If we set $m = k-s$, the isomorphism type of $R_{n,k,s}^{\QQ}$ is characterized by \cite[Eqn. 6.23]{HRS}
\begin{equation}
q^{m \choose 2} {k \brack m}_q \grFrob(R_{n,k,s}^{\QQ}; q) = 
e_m^{\perp} \, \grFrob(R_{n+m,k}^{\QQ}; q),
\end{equation}
where $e_m^{\perp}: \Lambda \rightarrow \Lambda$ is the operator dual to multiplication by
$e_m$ under the Hall inner product:
\begin{equation}
\langle e_m^{\perp} \, f, g \rangle = \langle f, e_m g \rangle, \quad \text{for all $f,g  \in \Lambda.$}
\end{equation}
The rings  $R_{n,k,s}^{\QQ}$ were used in a crucial way in \cite{HRS} to inductively
understand the $R_{n,k}^{\QQ}$.

For $s \leq k \leq n$, define a class of words $\WW_{n,k,s} \subseteq [k]^n$ by
\begin{equation}
\WW_{n,k,s} := \{ w = w_1 \dots w_n \in [k]^n \,:\, \text{$1, 2, \dots, s$ appear in $w$} \}.
\end{equation}
We have $\WW_{n,k,k} = \WW_{n,k}$ and as ungraded $S_n$-modules 
\cite[Eqn. 6.2]{HRS} there holds the isomorphism
$R_{n,k,s}^{\QQ} \cong \QQ[\WW_{n,k,s}]$.

\begin{lemma}
\label{three-parameter-free}
The ring $R_{n,k,s}$ is a free $\ZZ$-module of rank $|\WW_{n,k,s}|$.
\end{lemma}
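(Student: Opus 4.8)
The plan is to adapt, almost verbatim, the argument proving the Claim embedded in the proof of Theorem~\ref{cohomology-of-x}(3): I would exhibit a set of monomials that is simultaneously $\QQ$-linearly independent in $R^\QQ_{n,k,s}$, of cardinality $|\WW_{n,k,s}|$, and a $\ZZ$-spanning set for $R_{n,k,s}$; such a set is then necessarily a $\ZZ$-basis, which gives both freeness and the asserted rank.

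First I would record the rank over $\QQ$: by \cite[Eqn.~6.2]{HRS} there is an ungraded $S_n$-module isomorphism $R^\QQ_{n,k,s}\cong\QQ[\WW_{n,k,s}]$, so $\dim_\QQ R^\QQ_{n,k,s}=|\WW_{n,k,s}|$. Next I would introduce, in parallel with \cite[Defn.~4.4]{HRS}, the notion of an \emph{$(n,k,s)$-nonskip} monomial, a monomial $m$ with $x_i^k\nmid m$ for the relevant $i$ and with $\xx(S)\nmid m$ for every $S\subseteq[n]$ with $|S|=n-s+1$ (the skip-set size $n-s+1$ being forced by the $s$ elementary generators $e_{n-s+1},\dots,e_n$ of $I_{n,k,s}$, just as $|S|=n-k+1$ matched the $k$ such generators of $I_{n,k}$). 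Writing $\NS_{n,k,s}$ for this set, the Gr\"obner-theoretic results of \cite{HRS} underlying \cite[Thms.~4.9 and 4.13]{HRS} should carry over with this index shift to show that $\NS_{n,k,s}$ is precisely the standard monomial basis of $R^\QQ_{n,k,s}$ for the lexicographic term order and that $|\NS_{n,k,s}|=|\WW_{n,k,s}|$; in particular $\NS_{n,k,s}$ is $\ZZ$-linearly independent in $R_{n,k,s}$.

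The substance of the argument is then to show $\NS_{n,k,s}$ spans $R_{n,k,s}$ over $\ZZ$, by induction on the lexicographic order exactly as in the proof of Theorem~\ref{cohomology-of-x}(3). Given a monomial $m$: if $m\in\NS_{n,k,s}$ there is nothing to do; if $x_i^k\mid m$ for some $i$ then $m\equiv 0\pmod{I_{n,k,s}}$; otherwise $\xx(S)\mid m$ for some $S$ with $|S|=n-s+1$, and here I would invoke the Demazure-character identity (the analogue of \cite[Lem.~3.4, Eqn.~3.5]{HRS}), which supplies $\kappa_{\gamma(S)^*}(\xx_n^*)\in I_{n,k,s}$ together with the expansion $\kappa_{\gamma(S)^*}(\xx_n^*)=\xx(S)+(\text{a }\ZZ_{\geq 0}\text{-combination of lex-smaller monomials})$. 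Multiplying by $m/\xx(S)$ and reducing modulo $I_{n,k,s}$ rewrites $m$ in $R_{n,k,s}$ as a $\ZZ$-linear combination of lex-smaller monomials, so induction finishes the span. Then $\NS_{n,k,s}$ is a $\ZZ$-basis of $R_{n,k,s}$ and Lemma~\ref{three-parameter-free} follows. (Equivalently: $\{x_1^k,\dots,x_n^k\}\cup\{\kappa_{\gamma(S)^*}(\xx_n^*):|S|=n-s+1\}$ is a Gr\"obner basis of $I_{n,k,s}$ all of whose elements have leading coefficient $1$, whence $\ZZ[\xx_n]/I_{n,k,s}$ is $\ZZ$-free on the standard monomials.)

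The hard part is really the bookkeeping check that the \cite{HRS} results genuinely transfer to the three-parameter ideal under the substitution $n-k+1\mapsto n-s+1$: specifically (i) $|\NS_{n,k,s}|=|\WW_{n,k,s}|$; (ii) $\NS_{n,k,s}$ is the standard monomial basis of $R^\QQ_{n,k,s}$; and (iii) $\kappa_{\gamma(S)^*}(\xx_n^*)\in I_{n,k,s}$ for $|S|=n-s+1$. If \cite[\S6]{HRS} does not state these directly, I would prove them by repeating the \cite{HRS} arguments: (i) is a recursion (or generating-function) computation on the nonskip condition modeled on \cite[Thm.~4.9]{HRS}; (iii) is the appropriate analogue of \cite[Lem.~3.4]{HRS}, which concerns the ideal generated by $e_{n-s+1},\dots,e_n$; and (ii) then follows from the Gr\"obner-basis computation in the proof of \cite[Thm.~4.10]{HRS}. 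Apart from these index-shift verifications, the proof is identical to the $R_{n,k}$ case treated in the proof of Theorem~\ref{cohomology-of-x}(3).
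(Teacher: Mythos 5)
Your proposal is correct and follows essentially the same route as the paper: define the $(n,k,s)$-nonskip monomials $\NS_{n,k,s}$, cite the results of \cite[Sec.~6]{HRS} for the standard monomial basis property and $|\NS_{n,k,s}|=|\WW_{n,k,s}|$, and establish $\ZZ$-spanning by the lexicographic induction using the Demazure character membership $\kappa_{\gamma(S)^*}(\xx_n^*)\in I_{n,k,s}$, exactly as in the Claim inside the proof of Theorem~\ref{cohomology-of-x}(3). The paper's proof states precisely this and leaves the spanning details to the reader; your write-up simply fills them in.
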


\begin{proof}
This is similar to the argument given in the proof of Theorem~\ref{cohomology-of-x} (3);
we use the notation presented there.

Let $\NS_{n,k,s}$ be the family of monomials 
$m$ in the variables $x_1, x_2, \dots, x_n$ such that 
\begin{itemize}
\item  $x_i^k \nmid m$ for all $1 \leq i \leq n$, and
\item  $\xx(S) \nmid m$ for all $S \subseteq [n]$ with $|S| = n-s+1$.
\end{itemize}
Here $\xx(S)$ is 
By \cite[Sec. 6]{HRS}, the set $\NS_{n,k,s}$ is the standard monomial basis for the 
quotient $R_{n,k,s}^{\QQ}$ with respect to the lexicographical term order.
In particular, the set $\NS_{n,k,s}$ is linearly independent (over $\ZZ$)
in $R_{n,k,s}$.  Moreover, by \cite[Lem. 6.6]{HRS} we have $|\NS_{n,k,s}| = |\WW_{n,k,s}|$.

It remains to show that $\NN_{n,k,s}$ spans $R_{n,k,s}$ over $\ZZ$.  The
argument uses \cite[Eqn. 3.5]{HRS} and is very similar to that in the proof of
Theorem~\ref{cohomology-of-x} (3); we leave the details to the reader.
\end{proof}

We are ready to define our spaces which will have cohomology $R_{n,k,s}$.

\begin{defn}
\label{three-paramter-x}
Let $s \leq k \leq n$ be positive integers.
Denote by
 $\MMM_{n,k,s}$  the space of $k \times n$ complex matrices $m$ such that the 
topmost $s \times m$ submatrix of $m$ has full rank.  The space $\MMM_{n,k,s}$ is closed
under the left action of $U \subseteq GL_k(\CC)$ and the right action of $T \subseteq GL_n(\CC)$.  
Define the quotient space $X_{n,k,s}$ by 
\begin{equation*}
X_{n,k,s} := \MMM_{n,k,s}/T
.\end{equation*}
\end{defn}

The quotient $X_{n,k,s}$ can be viewed in terms of line configurations as follows. 
Let $\pi: \CC^k \twoheadrightarrow \CC^s$ be the  projection
$(z_1, \dots, z_k) \mapsto (z_1, \dots, z_s)$ which forgets the last $k-s$ coordinates.  We have
\begin{equation}
X_{n,k,s} = \{ (\ell_1, \dots, \ell_n) \,:\, \ell_i \in \PP^{k-1}, \, \pi(\ell_1 + \cdots + \ell_n) = \CC^s \}.
\end{equation}
If $k = s$ we have $X_{n,k,k} = X_{n,k}$.  In general, the space $X_{n,k,s}$ is an open
subvariety of $(\PP^{k-1})^n$.
The next result shows that $X_{n,k,s}$ has cohomology $R_{n,k,s}$.

\begin{theorem}
\label{three-parameter-x-cohomology}
Let $s \leq k \leq n$ be positive integers and consider the open
subvariety $X_{n,k,s}$ of the $n$-fold product $(\PP^{k-1})^n$.
\begin{enumerate}
\item  For any word $w \in [k]^n$ let $C_w \subseteq (\PP^{k-1})^n$ be the corresponding cell.  We have
\begin{equation*}
X_{n,k,s} = \bigsqcup_{w \in \WW_{n,k,s}} C_w.
\end{equation*}
\item  The space $X_{n,k,s}$ admits a cellular decomposition with cells
$\{ C_w \,:\, w \in \WW_{n,k,s} \}$.
\item   The cohomology presentation given by 
Theorem~\ref{cohomology-presentation-theorem} realizes 
 $H^{\bullet}(X_{n,k,s})$ explicitly as the canonical quotient of
 $H^{\bullet}((\PP^{k-1})^n) = 
 \ZZ[\xx_n]/\langle x_1^{k}, \dots, x_n^{k} \rangle$ given by
\begin{equation}
H^{\bullet}((\PP^{k-1})^n) = 
\ZZ[\xx_n]/\langle x_1^{k}, \dots, x_n^{k} \rangle \twoheadrightarrow
R_{n,k,s} = H^{\bullet}(X_{n,k,s}).
\end{equation}
\item  With respect to the isomorphism given above, for any word $w \in \WW_{n,k,s}$ the class
$[\overline{C}_w] \in H^{\bullet}(X_{n,k,s})$ is represented by the Schubert polynomial
$\symm_w = \sigma(w)^{-1}.\symm_{\std(\conv(w))} \in \ZZ[\xx_n]$.
\end{enumerate}
\end{theorem}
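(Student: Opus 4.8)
The plan is to run the proof of Theorem~\ref{cohomology-of-x} essentially verbatim, replacing $\WW_{n,k}$ by $\WW_{n,k,s}$, the ideal $I_{n,k}$ by $I_{n,k,s}$, and the free-module claim proved there by Lemma~\ref{three-parameter-free}. For part (1), I would argue exactly as in Theorem~\ref{cohomology-of-x}(1). Identify a word $w \in [k]^n$ with its $0,1$-matrix; the rank of the top $s \times n$ submatrix of this matrix is the number of letters among $\{1, 2, \dots, s\}$ that appear in $w$, which equals $s$ precisely when $w \in \WW_{n,k,s}$. The quantity $r(m)(s,n)$ (the rank of the top $s \times n$ block of a representing matrix $m$) is invariant under the column-permutation, lower-triangular-unipotent row, and column-scaling operations used to build the cells $C_w$, and it can only drop under specialization since ``rank $\le p$'' is a closed condition. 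Hence, using Lemma~\ref{omega-closure}, every point of $\overline{C}_w$ has $r(\cdot)(s,n) \le r(\conv(w))(s,n)$, which is $< s$ when $w \notin \WW_{n,k,s}$, while $r(\cdot)(s,n) = s$ throughout $C_w$ when $w \in \WW_{n,k,s}$. Since $X_{n,k,s} = \MMM_{n,k,s}/T$ is the set of orbits of matrices with top $s \times n$ rank exactly $s$, we get $\overline{C}_w \cap X_{n,k,s} = \varnothing$ for $w \notin \WW_{n,k,s}$ and $C_w \subseteq X_{n,k,s}$ for $w \in \WW_{n,k,s}$; combining this with the decomposition $(\PP^{k-1})^n = \bigsqcup_{w \in [k]^n} C_w$ of Lemma~\ref{omega-structural-result}(4) yields $X_{n,k,s} = (\PP^{k-1})^n - \bigcup_{w \notin \WW_{n,k,s}} \overline{C}_w = \bigsqcup_{w \in \WW_{n,k,s}} C_w$. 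Part (2) is then immediate from Lemma~\ref{single-cell-lemma} and the second paragraph of the proof of Theorem~\ref{cohomology-presentation-theorem}, just as for Theorem~\ref{cohomology-of-x}(2).

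For part (3), Theorem~\ref{cohomology-presentation-theorem} together with Proposition~\ref{word-representatives} identifies $H^{\bullet}(X_{n,k,s})$ with $S_{n,k,s} := \ZZ[\xx_n]/J_{n,k,s}$, where $J_{n,k,s}$ is generated by $x_1^k, \dots, x_n^k$ together with the Schubert polynomials $\symm_w$ for $w \in [k]^n \setminus \WW_{n,k,s}$. By part (2), Lemma~\ref{homology-from-cellular}, and Poincar\'e duality, $S_{n,k,s}$ is a free $\ZZ$-module of rank $|\WW_{n,k,s}|$. To conclude it remains to show $J_{n,k,s} = I_{n,k,s}$. For the inclusion $I_{n,k,s} \subseteq J_{n,k,s}$ I would reuse the computation from the proof of Theorem~\ref{cohomology-of-x}(3): for $1 \le i \le k$ the weakly increasing word $w^i \in [k]^n$ with letter set $[k] \setminus \{i\}$ and distinct first $k-1$ letters is convex with $\sigma(w^i) = 1$ and $\symm_{w^i} = s_{(1^{n-i+1})}(\xx_n) = e_{n-i+1}(\xx_n)$; for $1 \le i \le s$ the letter $i$ is absent from $w^i$, so $w^i \notin \WW_{n,k,s}$, and hence $e_n(\xx_n), e_{n-1}(\xx_n), \dots, e_{n-s+1}(\xx_n)$ all occur among the generators of $J_{n,k,s}$, giving $I_{n,k,s} \subseteq J_{n,k,s}$. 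This produces a surjection of $\ZZ$-modules $R_{n,k,s} \twoheadrightarrow S_{n,k,s}$; since $R_{n,k,s}$ is free of rank $|\WW_{n,k,s}|$ by Lemma~\ref{three-parameter-free} and $S_{n,k,s}$ is free of the same rank, this surjection is an isomorphism, so $J_{n,k,s} = I_{n,k,s}$ and $H^{\bullet}(X_{n,k,s}) = R_{n,k,s}$ as graded rings with each $x_i$ in degree $2$. Part (4) then follows immediately from Proposition~\ref{word-representatives} and Theorem~\ref{cohomology-presentation-theorem}. (When $s=k$ every step recovers the corresponding step of Theorem~\ref{cohomology-of-x}, as a sanity check, since $\WW_{n,k,k}=\WW_{n,k}$ and $X_{n,k,k}=X_{n,k}$.)

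I do not expect a serious obstacle here: the geometric and homological machinery is entirely inherited from Sections~\ref{Cellular} and~\ref{Cohomology}, and the only genuinely new input — that $R_{n,k,s}$ is a free $\ZZ$-module of rank $|\WW_{n,k,s}|$ — is supplied by Lemma~\ref{three-parameter-free}. The single point requiring care is the bookkeeping in the inclusion $I_{n,k,s} \subseteq J_{n,k,s}$, namely checking that the words $w^i$ with $i \le s$ force exactly $e_n, \dots, e_{n-s+1}$ into $J_{n,k,s}$; the equal-rank-free-modules argument then makes it unnecessary to verify that no further relations appear, since a surjection of free $\ZZ$-modules of equal finite rank is automatically an isomorphism. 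If desired, $S_n$-equivariance of the identification could be added afterwards by the argument of Theorem~\ref{thm:equivariance}, since the inclusion $X_{n,k,s} \hookrightarrow (\PP^{k-1})^n$ is $S_n$-equivariant and $x_i$ again represents $c_1(\ell_i^*)$.
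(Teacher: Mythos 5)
Your proposal is correct and follows essentially the same route as the paper: the same cellular machinery from Sections~\ref{Cellular}--\ref{Cohomology}, the same words $w^i$ giving $e_{n-i+1}(\xx_n) \in J_{n,k,s}$, and the same equal-rank free-module comparison via Lemma~\ref{three-parameter-free}. The only (welcome) difference is that you spell out, via the rank statistic $r(\cdot)(s,n)$ and Lemma~\ref{omega-closure}, why $X_{n,k,s}$ is the complement of the union of the closures $\overline{C}_w$ with $w \notin \WW_{n,k,s}$, a point the paper leaves implicit when it says parts (2)--(3) are proved as in Theorem~\ref{cohomology-of-x}.
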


\begin{proof}
(1)  For any word $w \in [k]^n$, the $k \times n$ $0,1$-matrix $m$ representing 
$w$ lies in $\MMM_{n,k,s}$ if and only if $w \in \WW_{n,k,s}$.  Since 
$C_w = U m T/T$ and $X_{n,k,s} = \MMM_{n,k,s}/T$ 
is closed under the action of $U$, the result follows.

(2) is proven in the same way as Theorem~\ref{cohomology-of-x} (2).

(3) By (2), we know that $H^{\bullet}(X_{n,k,s})$ is a free $\ZZ$-module of 
rank $|\WW_{n,k,s}|$.  By Lemma~\ref{three-parameter-free}, the module
$R_{n,k,s}$ is also a free $\ZZ$-module of rank $|\WW_{n,k,s}|$.
As in the proof of Theorem~\ref{cohomology-of-x} (2), it suffices to exhibit
the generators $e_n(\xx_n), e_{n-1}(\xx_n), \dots, e_{n-s+1}(\xx_n)$ of 
$I_{n,k,s}$ as polynomials of the form 
$\sigma(w)^{-1}.\symm_{\std(\conv(w))}$ for some strategically chosen
words $w \in [k]^n - \WW_{n,k,s}$.
If we define $w^i$ as in the proof of Theorem~\ref{cohomology-of-x} (3),
we see that $w^i \notin \WW_{n,k,s}$ for $1 \leq i \leq s$, and that 
$\sigma(w^i)^{-1}.\symm_{\std(\conv(w^i))} = e_{n-i+1}(\xx)$.

(4) is analogous to Theorem~\ref{cohomology-of-x} (4) and is proven in the same way.
\end{proof}

The second variation on $R_{n,k}$ is as follows.

\begin{defn}
Let $n, k,$ and $r$ be positive integers with $k \leq n$.  The ideal 
$J_{n,k,r} \subseteq \ZZ[\xx_n]$ is defined by
\begin{equation*}
J_{n,k,r} := \langle h_{k+1}(\xx_n), h_{k+2}(\xx_n), \dots, h_{k+n}(\xx_n),
e_n(\xx_n), e_{n-1}(\xx_n), \dots, e_{n-r+1}(\xx_n) \rangle.
\end{equation*}
We let $T_{n,k,r} := \ZZ[\xx_n]/J_{n,k,r}$ be the corresponding quotient.
\end{defn}

The ring $T_{n,k,r}^{\QQ} := \QQ \otimes_{\ZZ} T_{n,k,r}$ was defined by
Rhoades-Wilson \cite{RhoadesWilson}
\footnote{where it is denoted $R_{n,k,r}$ and the ideal $J_{n,k,r}$ is written $I_{n,k,r}$}.
In the case $r = 1$, 
\cite[Prop. 5.3]{RhoadesWilson} gives the following connection between
$T_{n,k,1}^{\QQ}$ and unprimed delta operators:
\begin{equation}
\grFrob(T_{n,k,1}^{\QQ}; q) =
\Delta_{h_k e_n} e_n |_{t = 0} =
\Delta_{s_{(k,1^{n-1})}} e_n |_{t = 0}.
\end{equation}

Roughly speaking, we can think of $T_{n,k,1}$  as the `coinvariant algebra'
attached to the (unprimed) Delta operator indexed by the hook Schur function
$s_{(k-1,1^{n-1})}$ in the same way as $R_{n,k}$ is the `coinvariant algebra'
attached to the (primed) Delta operator corresponding to the elementary symmetric
function $e_{k-1}$.

For any positive integers $(n,k,r)$ with $r \leq n$, the graded $S_n$-module
$T_{n,k,r}^{\QQ}$ is a sum of graded shifts of the classical coinvariant 
ring.  In particular, we have \cite[Thm. 4.2]{RhoadesWilson}:
\begin{align}
\grFrob(T_{n,k,r}^{\QQ}; q) &= {n + k - r \brack k}_q \cdot \grFrob(R_n^{\QQ};q) \\
&= {n + k - r \brack k}_q \cdot \sum_{T \in \SYT(n)} q^{\maj(T)} s_{\mathrm{shape}(T)}.
\end{align}
In particular, the Hilbert series of $T_{n,k,r}^{\QQ}$ is 
(see \cite[Thm. 3.6]{RhoadesWilson})
\begin{equation}
\Hilb(T_{n,k,r}^{\QQ}; q) = {n + k - r \brack k}_q \cdot [n]!_q
\end{equation}
and the dimension of $T_{n,k,r}^{\QQ}$ is
\begin{equation}
\label{vector-space-dimension-tail-quotient}
\dim(T_{n,k,r}^{\QQ}; q) = {n + k - r \choose k} \cdot n!.
\end{equation}
We will need the following strengthening of 
Equation~\ref{vector-space-dimension-tail-quotient} to the level of $\ZZ$-modules.

\begin{lemma}
\label{t-z-module-rank}
The quotient $T_{n,k,r}$ is a free $\ZZ$-module of rank ${n + k - r \choose k} \cdot n!$.
\end{lemma}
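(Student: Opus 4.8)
The plan is to mimic the last part of the proof of Theorem~\ref{cohomology-of-x}(3), with the generalized coinvariant ring $R_{n,k}$ and the Haglund--Rhoades--Shimozono straightening relations replaced by the corresponding objects for $T_{n,k,r}$ provided by Rhoades--Wilson \cite{RhoadesWilson}. First I would recall from \cite{RhoadesWilson} that, with respect to the lexicographic term order, the $\QQ$-vector space $T_{n,k,r}^{\QQ} = \QQ[\xx_n]/J_{n,k,r}^{\QQ}$ has a standard monomial basis $\BBB \subseteq \ZZ[\xx_n]$ consisting of monomials in $x_1, \dots, x_n$, with $|\BBB| = {n + k - r \choose k} \cdot n!$ by \eqref{vector-space-dimension-tail-quotient}. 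Since $\BBB$ is $\QQ$-linearly independent in $T_{n,k,r}^{\QQ} = \QQ \otimes_{\ZZ} T_{n,k,r}$, it is a fortiori $\ZZ$-linearly independent in $T_{n,k,r}$, so the whole proof reduces to showing that $\BBB$ spans $T_{n,k,r}$ over $\ZZ$.

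For the spanning statement I would induct on the lexicographic term order, exactly as in the proof of Theorem~\ref{cohomology-of-x}(3): it suffices to show that an arbitrary monomial $m$ in $x_1,\dots,x_n$ either lies in $\BBB$ or is congruent modulo $J_{n,k,r}$ to a $\ZZ$-linear combination of monomials strictly smaller than $m$ in lex order. The crucial input is that $\initial_<(J_{n,k,r}^{\QQ})$ is generated by the leading monomials of a finite set $G \subseteq J_{n,k,r}$ of polynomials \emph{defined over $\ZZ$ and monic on their leading terms}. This should follow from the Gr\"obner basis underlying the Hilbert series computation \cite[Thm.~3.6]{RhoadesWilson}: that basis is assembled from the integral generators $e_n(\xx_n), \dots, e_{n-r+1}(\xx_n)$ and $h_{k+1}(\xx_n), \dots, h_{k+n}(\xx_n)$ of $J_{n,k,r}$ together with integral straightening polynomials (key polynomials / Demazure characters with unit leading coefficient), in the same spirit as the identity \cite[Eqn.~3.5]{HRS} invoked in the proof of Theorem~\ref{cohomology-of-x}(3). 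Granting this, if $m \notin \BBB$ then $\initial_<(g) \mid m$ for some $g \in G$; writing $m = \initial_<(g)\cdot m'$, the polynomial $m - m' g$ lies in $J_{n,k,r}$ and, since $g$ has integer coefficients and leading coefficient $1$, it is a $\ZZ$-linear combination of monomials lex-smaller than $m$. Applying the inductive hypothesis to each of those monomials completes the reduction, so $\BBB$ spans $T_{n,k,r}$ over $\ZZ$; together with linear independence, $T_{n,k,r}$ is free of rank ${n+k-r \choose k}\cdot n!$.

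The hard part will be the integrality bookkeeping in the previous paragraph: one must either quote the explicit Gr\"obner basis of $J_{n,k,r}^{\QQ}$ from \cite{RhoadesWilson} and verify that its members are integral with unit leading coefficients, or, if no sufficiently explicit statement is available there, reprove the needed straightening relations directly. The latter can be carried out along the lines of the proof of Theorem~\ref{cohomology-of-x}(3): one has $\langle h_{k+1}(\xx_n), \dots, h_{k+n}(\xx_n)\rangle = \langle h_d(\xx_n) : d > k \rangle$ (an immediate consequence of the identities $\sum_{i \ge 0}(-1)^i e_i(\xx_n)\,h_{d-i}(\xx_n) = 0$ for $d \ge 1$), so every Schur polynomial $s_\lambda(\xx_n)$ with $\lambda_1 > k$ lies in $J_{n,k,r}$ by Jacobi--Trudi, and these together with the relevant Demazure characters furnish integral elements of $J_{n,k,r}$ with the required monic leading terms. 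Once these integral relations are established, the remainder of the argument is purely formal.
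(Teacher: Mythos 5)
Your proposal follows essentially the same route as the paper's proof: both take the lexicographic standard monomial basis of $T_{n,k,r}^{\QQ}$ from \cite[Thm.~3.6]{RhoadesWilson} (the $(n,k,r)$-good monomials, counted by ${n+k-r \choose k}\cdot n!$), get $\ZZ$-linear independence for free, and establish $\ZZ$-spanning by induction on the lexicographic order using integral elements of $J_{n,k,r}$ with unit leading coefficients. The ``integrality bookkeeping'' you flag is carried out in the paper just as you anticipate, except for one detail worth noting: the reversed Demazure characters $\kappa_{\gamma(S)^*}(\xx_n^*)$ of \cite[Lem.~3.4, Eqn.~3.5]{HRS} handle monomials divisible by a skip monomial $\xx(S)$ with $|S|=n-r+1$, while for monomials divisible by $x_i^{k+i}$ the paper uses $h_{k+i}(x_i,\dots,x_n)\in J_{n,k,r}$ (obtained from the generators $h_{k+1},\dots,h_{k+i}$ via the telescoping identity $h_d(x_\ell,\dots,x_n)-x_\ell h_{d-1}(x_\ell,\dots,x_n)=h_d(x_{\ell+1},\dots,x_n)$), whose lex leading term is exactly $x_i^{k+i}$ --- Schur polynomials with $\lambda_1>k$ alone would not supply these leading monomials for $i\geq 2$, so it is this key-polynomial-type element, not Jacobi--Trudi, that completes your sketch.
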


\begin{proof}
Following \cite[Defn. 3.4]{RhoadesWilson}, let us call a monomial
$m$ in the variables $x_1, \dots, x_n$ 
{\em $(n,k,r)$-good} if 
\begin{itemize}
\item  we have $\xx(S) \nmid m$ for all $S \subseteq [n]$ with $|S| = n-r+1$, and
\item we have $x_i^{k+i} \nmid m$ for all $1 \leq i \leq n$.
\end{itemize}
Endow monomials in the variables $\xx_n$ with the lexicographic term order. 
In \cite[Thm. 3.6]{RhoadesWilson} it is proven that the family 
$\mathcal{G}_{n,k,r}$ of $(n,k,r)$-good
monomials is the standard monomial basis for $T_{n,k,r}^{\QQ}$.
In particular, by \cite[Thm. 3.6]{RhoadesWilson}
(and in fact \cite[Lem. 3.5]{RhoadesWilson}) we have
$|\mathcal{G}_{n,k,r}| = {n + k -r \choose k} \cdot n!$ is the desired rank.

We want to prove that $\mathcal{G}_{n,k,r}$ descends to a $\ZZ$-basis for $T_{n,k,r}$.
The image of $\mathcal{G}_{n,k,r}$ in the quotient is linearly independent over $\ZZ$
because it is linearly independent over $\QQ$.  We are reduced to proving that 
$\mathcal{G}_{n,k,r}$ spans $T_{n,k,r}$ over $\ZZ$.

Let $m$ be an arbitrary monomial in the variable set $\xx_n$.  We prove that 
$m + J_{n,k,r}$ lies in the $\ZZ$-span of $\mathcal{G}_{n,k,r}$.  If 
$m \in \mathcal{G}_{n,k,r}$ this is obvious, so either 
\begin{enumerate}
\item there exists $S \subseteq [n]$ with $|S| = n-r+1$ such that $\xx(S) \mid m$, or
\item there exists $1 \leq i \leq n$ such that $x_i^{k+i} \mid m$.
\end{enumerate}

If $(1)$ holds, set $m' := m/\xx(S)$.  By \cite[Lem. 3.4]{HRS} (and \cite[Eqn. 3.5]{HRS})
the variable reversed Demazure character $\kappa_{\gamma(S)^*}(\xx_n^*)$ lies
in $J_{n,k,r}$.  This implies
$m' \cdot \kappa_{\gamma(S)^*}(\xx_n^*) \in J_{n,k,r}$.  Since
$m' \cdot \xx(S) = m$ is the lexicographical leading term of the product
$m' \cdot \kappa_{\gamma(S)^*}(\xx_n^*)$ and $\kappa_{\gamma(S)^*}(\xx_n^*)$
has integer coefficients, modulo $J_{n,k,r}$ we have
\begin{equation}
m \equiv \text{a $\ZZ$-linear combination of monomials $< m$ in lexicographic order}.
\end{equation}
By induction, the monomial $m$ lies in the span of $\mathcal{G}_{n,k,r}$ 
modulo $J_{n,k,r}$.

If $(2)$ holds, observe that
\begin{equation}
h_d(x_{\ell}, x_{\ell + 1}, \dots, x_n) -
x_{\ell} \cdot h_{d-1}(x_{\ell}, x_{\ell + 1}, \dots, x_n) =
h_d (x_{\ell+1}, \dots, x_n),
\end{equation}
 for all degrees $d > 0$ and all $1 \leq \ell \leq n-1$. Since
 $h_{k+1}(\xx_n), h_{k+2}(\xx_n), \dots, h_{k+i}(\xx_n)$ are among the generators of 
$J_{n,k,r}$, we deduce that
\begin{equation}
h_{k+i}(x_i, x_{i+1}, \dots, x_n) \in J_{n,k,r}.
\end{equation}
Modulo the ideal $J_{n,k,r}$ we therefore have
\begin{equation}
\label{j-equivalence}
x_i^{k+i} \equiv \text{a $\ZZ$-linear combination of monomials $< x_i^{k+1}$ 
in lexicographic oder.}
\end{equation}
If we let $m' := m/x_i^{k+i}$ and multiply both sides of the congruence
\eqref{j-equivalence} by $m'$, we get
\begin{equation}
m \equiv \text{a $\ZZ$-linear combination of monomials $< m$ in lexicographic order}.
\end{equation}
modulo $J_{n,k,r}$ and we are done by induction on the lexicographic term order.
\end{proof}

The number ${n + k - r \choose k} \cdot n!$ is the number of words 
$w = w_1 \dots w_n$ of length $n$ from the alphabet $[n+k]$
which have distinct letters such that the letters $1, 2, \dots, r$ all appear in $w$.
Let $\TTT_{n,k,r}$ be the family of such words.  For example, we have
\begin{equation*}
\TTT_{2,2,1} = \{12, 21, 13, 31, 14, 41 \}.
\end{equation*}
The set $\TTT_{n,k,r}$ is equivalent to the family of `tail positive words'
which served as a combinatorial model for $T_{n,k,r}^{\QQ}$
in \cite{RhoadesWilson}.

Our variety for the quotient $T_{n,k,r}$ is as follows.  

\begin{defn}
Let $n, k,$ and $r$ be positive integers with $r \leq n$.  Denote by $\NNN_{n,k,r}$
the space of $(n+k) \times n$ complex matrices $m$ such that
both $m$ itself and the topmost $r \times n$-submatrix of $m$ have full rank.
The space $\NNN_{n,k,r}$ is closed under the left action of 
$U \subseteq GL_k(\CC)$ and the right action of $T \subseteq GL_n(\CC)$.
Define the quotient space $Y_{n,k,r}$ by
\begin{equation*}
Y_{n,k,r} := \NNN_{n,k,r}/T.
\end{equation*}
\end{defn}

The space $Y_{n,k,r}$ is an open subvariety of $(\PP^{n+k-1})^n$.  In particular,
if $\pi: \CC^{n+k} \twoheadrightarrow \CC^r$  is projection onto the first $r$ coordinates,
we have
\begin{equation}
Y_{n,k,r} = \{ (\ell_1, \dots, \ell_n) \in (\PP^{n+k-1})^n \,:\, 
\dim(\ell_1 + \cdots + \ell_n) = n \text{ and }
\pi(\ell_1 + \cdots + \ell_n) = \CC^r \}.
\end{equation}

\begin{theorem}
\label{three-parameter-y-cohomology}
Let $n, k,$ and $r$ be positive integers with $r \leq n$ and consider the open
subvariety $Y_{n,k,r}$ of the $n$-fold product $(\PP^{n+k-1})^n$.
\begin{enumerate}
\item  For any word $w \in [n+k]^n$ let $C_w \subseteq (\PP^{n+k-1})^n$ be the corresponding cell.  We have
\begin{equation*}
Y_{n,k,r} = \bigsqcup_{w \in \TTT_{n,k,r}} C_w.
\end{equation*}
\item  The space $Y_{n,k,r}$ admits a cellular decomposition with cells
$\{ C_w \,:\, w \in \TTT_{n,k,r} \}$.
\item   The cohomology presentation given by 
Theorem~\ref{cohomology-presentation-theorem} realizes 
 $H^{\bullet}(Y_{n,k,r})$ explicitly as the canonical quotient of
 $H^{\bullet}((\PP^{n+k-1})^n) = 
 \ZZ[\xx_n]/\langle x_1^{n+k}, \dots, x_n^{n+k} \rangle$ given by
\begin{equation}
H^{\bullet}((\PP^{n+k-1})^n) = 
\ZZ[\xx_n]/\langle x_1^{n+k}, \dots, x_n^{n+k} \rangle \twoheadrightarrow
T_{n,k,r} = H^{\bullet}(Y_{n,k,r}).
\end{equation}
In particular, we have 
$\langle x_1^{n+k}, \dots, x_n^{n+k} \rangle \subseteq J_{n,k,r}$.
\item  With respect to the isomorphism given above, for any word $w \in \TTT_{n,k,r}$ 
the class
$[\overline{C}_w] \in H^{\bullet}(Y_{n,k,r})$ is represented by the Schubert polynomial
$\symm_w = \sigma(w)^{-1}.\symm_{\std(\conv(w))} \in \ZZ[\xx_n]$.
\end{enumerate}
\end{theorem}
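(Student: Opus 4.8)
The plan is to run the proofs of Theorems~\ref{cohomology-of-x} and~\ref{three-parameter-x-cohomology} almost verbatim, replacing the alphabet $[k]$ throughout by $[n+k]$, so that the ambient space is $(\PP^{n+k-1})^n$ with the cellular decomposition $\{C_w : w \in [n+k]^n\}$ of Lemma~\ref{omega-structural-result}(4), and replacing the single rank condition cutting out $X_{n,k}$ (or the pair cutting out $X_{n,k,s}$) by the two rank conditions defining $\NNN_{n,k,r}$. For (1), identify $w \in [n+k]^n$ with its $(n+k)\times n$ zero-one matrix $m_w$; then $m_w \in \NNN_{n,k,r}$ iff $m_w$ has rank $n$ and its topmost $r\times n$ block has rank $r$, i.e.\ iff the letters of $w$ are pairwise distinct and $1,\dots,r$ all occur in $w$, i.e.\ iff $w \in \TTT_{n,k,r}$; every such $w$ is automatically convex. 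As in the proof of Theorem~\ref{cohomology-of-x}(1): for $w \in \TTT_{n,k,r}$ every matrix fitting the pattern of $w$ lies in $\NNN_{n,k,r}$ (the relevant $n\times n$ and $r\times r$ minors of $\PM(w)$ equal $\pm 1$), and $\NNN_{n,k,r}$ is stable under $U$ and $T$, so $C_w \subseteq Y_{n,k,r}$; for $w \notin \TTT_{n,k,r}$ the word $\conv(w)$ has fewer than $n$ distinct letters or omits a letter of $[r]$, so using $\overline{C}_w = \overline{\Omega}_{\conv(w)}\sigma(w)$ (Proposition~\ref{word-representatives}) and the description of $\overline{\Omega}_{\conv(w)}$ in Lemma~\ref{omega-closure}, one of the two rank conditions—a closed condition, being the vanishing of minors—fails identically on $\overline{C}_w$, so $\overline{C}_w \cap Y_{n,k,r} = \varnothing$. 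With $(\PP^{n+k-1})^n = \bigsqcup_{w \in [n+k]^n} C_w$ this gives $Y_{n,k,r} = \bigsqcup_{w\in\TTT_{n,k,r}} C_w = (\PP^{n+k-1})^n - \bigcup_{w\notin\TTT_{n,k,r}}\overline{C}_w$, and (2) then follows just as Theorem~\ref{cohomology-of-x}(2) follows from its part (1), via Lemma~\ref{single-cell-lemma} and the second paragraph of the proof of Theorem~\ref{cohomology-presentation-theorem}.

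For (3), Theorem~\ref{cohomology-presentation-theorem} and Proposition~\ref{word-representatives} present $H^{\bullet}(Y_{n,k,r})$ as $\ZZ[\xx_n]/J'$, where $J' := \langle x_1^{n+k},\dots,x_n^{n+k}\rangle + \langle \symm_w : w \in [n+k]^n - \TTT_{n,k,r}\rangle$; by (2), Lemma~\ref{homology-from-cellular} and Poincar\'e duality, $\ZZ[\xx_n]/J'$ is a free $\ZZ$-module of rank $|\TTT_{n,k,r}| = \binom{n+k-r}{k}n!$, and by Lemma~\ref{t-z-module-rank} so is $T_{n,k,r} = \ZZ[\xx_n]/J_{n,k,r}$. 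Hence it suffices to prove $J_{n,k,r} \subseteq J'$: the induced surjection $T_{n,k,r} \twoheadrightarrow H^{\bullet}(Y_{n,k,r})$ of free $\ZZ$-modules of equal finite rank is then an isomorphism. The elementary generators are handled as in the proof of Theorem~\ref{cohomology-of-x}(3): the words $w^i$ defined there lie in $[n+k]^n - \TTT_{n,k,r}$ for $1 \le i \le r$ (each omits the letter $i$) and satisfy $\symm_{w^i} = e_{n-i+1}(\xx_n)$, so $e_n(\xx_n),\dots,e_{n-r+1}(\xx_n) \in J'$; and $x_1^{n+k},\dots,x_n^{n+k} \in J'$ by definition. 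The step I expect to be the main obstacle is getting the homogeneous generators $h_{k+1}(\xx_n),\dots,h_{k+n}(\xx_n)$ into $J'$: these cannot all be realized as Schubert polynomials of words in $[n+k]^n$, since the Grassmannian permutation with $\symm_w = h_{k+j}(\xx_n) = s_{(k+j)}(\xx_n)$ has one-line notation beginning $1\,2\,\cdots\,(n-1)\,(n+k+j)$, and $n+k+j > n+k$ once $j \ge 1$ (only $h_{k+1}(\xx_n) = \symm_{1\,2\,\cdots\,(n-1)\,(n-1)}$ arises directly).

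To overcome this I will obtain the $h$-relations by a Chern-class computation in $H^{\bullet}(Y_{n,k,r})$, parallel to the second half of Section~\ref{Chern}. On $Y_{n,k,r}$ the tautological lines $\ell_1,\dots,\ell_n$ are linearly independent, so $E := \ell_1 \oplus \cdots \oplus \ell_n$ is a rank-$n$ subbundle of the trivial bundle $\CC^{n+k}$; dualizing $0 \to E \to \CC^{n+k} \to \CC^{n+k}/E \to 0$ and applying the Whitney formula shows that $c(E^{*})^{-1} = \prod_i (1+y_it)^{-1} = \sum_{m\ge 0}(-1)^m h_m(\yy_n)t^m$ is the total Chern class of the rank-$k$ bundle $(\CC^{n+k}/E)^{*}$, so $h_m(\yy_n) = 0$ for every $m > k$; in particular $h_{k+1}(\yy_n) = \cdots = h_{k+n}(\yy_n) = 0$. (Likewise the projection $\CC^{n+k}\twoheadrightarrow\CC^{r}$ restricts to a bundle surjection $E \twoheadrightarrow \CC^{r}$ on $Y_{n,k,r}$—it has constant rank $r$ there by definition of $Y_{n,k,r}$—so $E^{*} \cong (\CC^{r})^{*}\oplus E''$ with $\operatorname{rank} E'' = n-r$, recovering $e_{n-r+1}(\yy_n) = \cdots = e_n(\yy_n) = 0$ as well.) Since $y_i = x_i$ in the presentation $H^{\bullet}(Y_{n,k,r}) = \ZZ[\xx_n]/J'$—both being $c_1$ of the $i$th dual tautological bundle, the restriction of $\widehat{\ell}_i^{*}$ from $(\PP^{n+k-1})^n$—this says exactly that $h_{k+1}(\xx_n),\dots,h_{k+n}(\xx_n) \in J'$, completing the inclusion $J_{n,k,r} \subseteq J'$ and hence the identification $H^{\bullet}(Y_{n,k,r}) = T_{n,k,r}$; the asserted containment $\langle x_1^{n+k},\dots,x_n^{n+k}\rangle \subseteq J_{n,k,r}$ is then the equality $J' = J_{n,k,r}$ just proved (alternatively it follows from the $h$-relations as in the proof of Lemma~\ref{t-z-module-rank}). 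Finally, (4) is immediate: every $w \in \TTT_{n,k,r}$ is convex, so $\symm_w = \symm_{\std(w)}$, and Proposition~\ref{word-representatives} together with the presentation from (3) identifies $[\overline{C}_w]$ with this polynomial.
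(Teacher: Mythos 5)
Your proposal is correct, and its overall skeleton (parts (1), (2), (4), the identification of the $e$-generators via Grassmannian Schubert polynomials of words omitting a letter $i \leq r$, and the final rank count using Lemma~\ref{t-z-module-rank} together with the cell count) matches the paper's proof. Where you genuinely diverge is the key step of part (3): getting $h_{k+1}(\xx_n), \dots, h_{k+n}(\xx_n)$ into the cell-class ideal. The paper does this purely algebraically inside the Schubert calculus of the cells: the doubled-letter words $v^i = 12\cdots(i-1)ii(i+1)\cdots(n-1)$ give $\symm_{v^i} = h_{n+k-i}(x_1,\dots,x_{i+1})$, a variable-stripping recursion then yields $h_{k+1},\dots,h_{k+n-1}$, and $h_{k+n}$ is extracted via the dual Pieri rule from $s_{(k+1,1^{n-1})} = \symm_u$ for $u = 23\cdots(n-1)nn$. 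You instead obtain all the $h$-relations at once from the Whitney sum formula applied to the rank-$n$ subbundle $\ell_1 \oplus \cdots \oplus \ell_n \subset \CC^{n+k}$, whose rank-$k$ quotient kills $h_m(\yy_n)$ for $m > k$; this is exactly the argument the paper records only as a remark after its proof, and it is legitimately a proof here because the identification $x_i = c_1(\ell_i^*)$ under the cellular presentation is already established in the ambient product of projective spaces (proof of Lemma~\ref{convex-word-representatives}) and passes to $Y_{n,k,r}$ by naturality, as you note. The trade-off: your route is shorter and conceptually uniform (it also recovers the $e$-relations geometrically), while the paper's route is self-contained within the combinatorial cell classes and exhibits the $h$-generators as explicit $\ZZ$-combinations of the $\symm_w$, $w \notin \TTT_{n,k,r}$, which is of some independent interest. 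One small correction: the words "$w^i$ defined there" from the proof of Theorem~\ref{cohomology-of-x}(3) do not transport verbatim to the alphabet $[n+k]$ (a length-$n$ word cannot use all letters of $[n+k]-\{i\}$ once $k \geq 2$); you need the length-$n$ increasing words on $\{1,\dots,n+1\}\setminus\{i\}$, which are the words $u^i$ used in the paper and do satisfy $\symm_{u^i} = e_{n-i+1}(\xx_n)$ — a harmless adjustment, especially since your parenthetical Chern-class argument independently yields those relations.
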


\begin{proof}

 (1)  Let $w = w_1 \dots w_n \in [n+k]^n$ be an arbitrary word. 
 If $m$ is the $(n+k) \times n$ matrix given by 
 $m_{i,j} = 1$ if $w_j = i$ and $m_{i,j} = 0$ otherwise,
 we have $m \in \NNN_{n,k,r}$ if and only if $w \in \TTT_{n,k,r}$.
 Since $mT \in C_w$, this proves the containment $\subseteq$.

If $w \in \TTT_{n,k,r}$, the definition of the $(n+k) \times k$ pattern matrix
$\PM(w)$ implies that every matrix in $U \widehat{C}_w$ lies in $\NNN_{n,k,r}$,
so that $C_w \subseteq Y_{n,k,r}$.  This proves the containment $\supseteq$.

(2) This is proven in the same way as Theorem~\ref{cohomology-of-x} (2).

(3) Let $L_{n,k,r} \subseteq \ZZ[\xx_n]$ be the ideal in $\ZZ[\xx_n]$ generated
by the variable powers $x_1^{n+k}, x_2^{n+k}, \dots, x_n^{n+k}$ together with
the polynomials
\begin{equation*}
\{ \sigma(w)^{-1}.\symm_{\std(\conv(w))} \,:\, w \in [n+k]^n - \TTT_{n,k,r} \}.
\end{equation*}
By cellular theory we have
$H^{\bullet}(Y_{n,k,r}) \cong \ZZ[\xx_n]/L_{n,k,r}$, so it suffices to
show  $L_{n,k,r} = J_{n,k,r}$.

We start by demonstrating the inclusion $J_{n,k,r} \subseteq L_{n,k,r}$ by 
showing that every generator of $J_{n,k,r}$ lies in $L_{n,k,r}$.
For the generators $e_n(\xx_n), e_{n-1}(\xx_n), \dots, e_{n-r+1}(\xx_n)$
this is a straightforward task: for $1 \leq i \leq r$ if we let $u^i$ denote the word
$1 2 \dots \hat{i} \dots n(n+1)$ (where we have omitted $i$),
we have $u^i \in [n+k]^n - \TTT_{n,k,r}$ and 
$\sigma(u^i) \in S_n$ is the identity permutation so that
\begin{equation}
\sigma(u^i)^{-1}.\symm_{\std(\conv(u^i))} =
\symm_{\std(\conv(u^i))}.
\end{equation}
Since 
\begin{equation}
\std(\conv(u^i)) = 12 \dots \hat{i} \dots n(n+1) i (n+2) (n+3) \dots (n+k)
\end{equation}
is Grassmannian, we deduce
$\symm_{\std(\conv(u^i))} = e_{n-i+1}(\xx_n)$.  We conclude that 
$e_{n-i+1}(\xx_n) \in L_{n,k,r}$.

Now we handle the slightly trickier case of the generators 
$h_{k+1}(\xx_n), h_{k+2}(\xx_n), \dots, h_{k+n}(\xx_n) \in J_{n,k,r}$.
For $1 \leq i \leq n-1$ let $v^i$ be the word 
$v^i = 1 2 \dots (i-1) ii (i+1) \dots (n-1) \in [n+k]^n$ (here we have two copies of $i$).
Then
\begin{equation}
\std(\conv(v^i)) = 1 2 \dots i (n+k+1) (i+1) \dots (n-1) n (n+1) \dots (n+k)
\end{equation}
is Grassmannian and $\sigma(v^i) \in S_n$ is the identity permutation,
so that 
\begin{equation}
\sigma(v^i)^{-1}.\symm_{\std(\conv(v^i))} = 
h_{n+k-i}(x_1, x_2, \dots, x_i, x_{i+1}) \in L_{n,k,r}, \text{for $1 \leq i \leq n-1$}.
\end{equation}
Taking $i = n-1$ we get $h_{k+1}(x_1, \dots, x_n) = h_{k+1}(\xx_n) \in L_{n,k,r}$.
Since 
\begin{equation}
h_{n+k-i+1}(x_1, \dots, x_i, x_{m+1}) - 
x_m \cdot h_{n+k-i}(x_1, \dots, x_i, x_{m+1}) =
h_{n+k-i+1}(x_1, \dots, x_m)
\end{equation}
for all $i$ and $m$, we deduce that 
\begin{equation*}
h_{k+1}(\xx_n), h_{k+2}(\xx_n), \dots, h_{k+n-1}(\xx_n) \in L_{n,k,r}.
\end{equation*}

It remains to show $h_{k+n}(\xx_n) \in L_{n,k,r}$.  For $1 \leq i \leq n-1$
the dual Pieri rule implies
\begin{equation}
\label{ideal-system-of-equations}
s_{(k+i+1,1^{n-i-1})}(\xx_n) = h_{k+i}(\xx_n) \cdot e_{n-i-1}(\xx_n) - 
s_{(k+i, 1^{n-i})}(\xx_n).
\end{equation}
Since we know $h_{k+1}(\xx_n), h_{k+2}(\xx_n), \dots, h_{k+n-1}(\xx_n) \in L_{n,k,r}$,
the system of equations \eqref{ideal-system-of-equations}
shows that $h_{k+n}(\xx_n) = s_{(k+n)}(\xx_n) \in L_{n,k,r}$ if and only if
$s_{(k+1,1^{n-1})}(\xx_n) \in L_{n,k,r}$.  Consider the word
$u = 234 \dots (n-1)nn \in [n+k]^n - \TTT_{n,k,r}$.  The word
\begin{equation}
\std(\conv(u)) = 234 \dots (n-1)n(n+k+1)1 (n+1)(n+2) \dots (n+k)
\end{equation}
is Grassmannian and $\sigma(u) \in S_n$ is the identity permutation.
We deduce that
\begin{equation}
\sigma(u)^{-1}.\symm_{\std(\conv(u))} = s_{(k+1,1^{n-1})}(\xx_n) \in L_{n,k,r},
\end{equation}
so that $h_{k+n}(\xx_n) \in L_{n,k,r}$.  This completes the proof that 
$J_{n,k,r} \subseteq L_{n,k,r}$.

Since $J_{n,k,r} \subseteq L_{n,k,r}$ we have a canonical surjection of 
$\ZZ$-modules 
\begin{equation*}
T_{n,k,r} = \ZZ[\xx_n]/J_{n,k,r} \twoheadrightarrow \ZZ[\xx_n]/L_{n,k,r} 
\cong H^{\bullet}(Y_{n,k,r}).
\end{equation*}
By (2) of this theorem, we know that the image of this surjection is a
free $\ZZ$-module of rank $|\TTT_{n,k,r}|$.  
By Lemma~\ref{t-z-module-rank} we know $T_{n,k,r}$ is also
a free $\ZZ$-module of rank $|\TTT_{n,k,r}|$.
This forces the surjection to be an isomorphism.

(4) is analogous to Theorem~\ref{cohomology-of-x} (4) and is proven in the same way.
\end{proof}

The fact that $h_{k+i}(\xx_n) \in L_{n,k,r}$ for $i \geq 1$ shown in the proof of
Theorem~\ref{three-parameter-y-cohomology} (3) using the dual Pieri rule can 
also be seen using Chern classes as follows.  Let $\ell_1, \dots, \ell_n$ be the tautological
line bundles over $Y_{n,k,r}$ corresponding to the $n$ matrix columns 
and let $\CC^{n+k}$ be the trivial vector bundle over 
$Y_{n,k,r}$ of dimension $n+k$.  
By the definition of $Y_{n,k,r}$, the addition map 
\begin{align}
\ell_1 \oplus \cdots \oplus \ell_n &\rightarrow \CC^{n+k} \\
(v_1, \dots, v_n) &\mapsto v_1 + \cdots + v_n
\end{align}
is injective.  There exists a rank $k$ bundle $U \subseteq \CC^{n+k}$ with
$\CC^{n+k} = U \oplus (\ell_1 \oplus \cdots \oplus \ell_n)$.
Consequently, the power series in $t$ with coefficients in 
$H^{\bullet}(Y_{n,k,r}) = \ZZ[\xx_n]/L_{n,k,r}$ given by
\begin{equation}
c_{\bullet}(U) = 
 \frac{c_{\bullet}(\CC^{n+k})}{c_{\bullet}(\ell_1 \oplus \cdots \oplus \ell_n)} = \frac{1}{\sum_{p = 0}^{n} (-1)^p e_p(\xx_n)t^p} = \sum_{p=0}^{\infty} h_p(\xx_n)t^p
\end{equation}
vanishes above degree $k$.



\section{Closing remarks}
\label{Closing}

\subsection{Bruhat order}
The {\em (strong) Bruhat order} on the symmetric group $S_n$ is the transitive closure of the
relation $u < ut$ whenever $u, t \in S_n$, $t = (i,j)$ is a transposition, and 
$\inv(u) < \inv(ut)$.
Bruhat order may be viewed as the closure-containment order on Schubert cells.
Let $X_w := B w B/B \subset GL_n(\CC)/B = \Fl(n)$ be the Schubert cell attached 
to a permutation $w \in S_n$.  Given $w, v \in S_n$, the following three conditions are equivalent:
\begin{itemize}
\item  $v \leq w$ in Bruhat order,
\item  the closure $\overline{X}_w$ meets the cell $X_v$ (i.e. $\overline{X}_w \cap X_v \neq \varnothing$),
\item  we have $X_v \subseteq \overline{X}_w$.
\end{itemize}
These three conditions may be summarized in that, for any permutation $w \in S_n$, we have
$\overline{X}_w = \bigsqcup_{v \leq w} X_v$.  The Schubert cells
$\{X_w \,:\, w \in S_n \}$ give rise to a CW decomposition of $\Fl(n)$.

It is natural to try to extend Bruhat order from $S_n$ to $\WW_{n,k}$ using the space $X_{n,k}$.
Although $X_{n,k}$ is a smooth manifold, it is not homotopy equivalent to a compact smooth manifold.
This can be seen from the failure of its Poincar\'e polynomial to be palindromic in general. 
The cells $\{ C_w \,:\, w \in \WW_{n,k} \}$ therefore {\em cannot} form a CW decomposition of $X_{n,k}$.
Moreover, it can happen that $\overline{C}_w \cap C_v \neq \varnothing$ and 
yet $C_v \not\subseteq \overline{C}_w$.  

\begin{example}
\label{bad-closures}
Let $n = 4, k = 3$, and define $w, v \in \WW_{4,3}$ by $w = 1323$ and $v = 1123$.
The pattern matrices of $w$ and $v$ are
\begin{equation*}
\PM(w) = \begin{pmatrix}
1 & \star & \star & \star \\
0 & 0 & 1 & 0 \\
0 & 1 & 0 & 1 
\end{pmatrix}, \quad
\PM(v) = \begin{pmatrix}
1 & 1 & \star & \star \\
0 & 0 & 1 & \star \\
0 & 0 & 0 & 1
\end{pmatrix},
\end{equation*} 
so that the closure of the cell $C_w$ contains
\begin{equation*}
\lim_{t \rightarrow \infty}
\begin{pmatrix} 1 & t & 0 & 0 \\ 0 & 0 & 1 & 0 \\ 0 & 1 & 0 & 1 \end{pmatrix} =
\lim_{t \rightarrow \infty}
\begin{pmatrix} 1 & 1 & 0 & 0 \\ 0 & 0 & 1 & 0 \\ 0 & 1/t & 0 & 1 \end{pmatrix} =
\begin{pmatrix} 1 & 1 & 0 & 0 \\ 0 & 0 & 1 & 0 \\ 0 & 0 & 0 & 1 \end{pmatrix}.
\end{equation*}
Therefore we have $\overline{C}_w \cap C_v \neq \varnothing$.
On the other hand, we have $\Delta_{\{1,2\},\{2,4\}} = 0$ on the cell $C_w$ but the corresponding
equation does not hold on all of $C_v$.  Therefore $C_v \not\subseteq \overline{C}_w$.
\end{example}

By Example~\ref{bad-closures}, the relations $\preceq$ and $\preceq'$ on $\WW_{n,k}$ given by 
\begin{align}
v \preceq w &\Leftrightarrow C_v \subseteq \overline{C}_w \\
v \preceq' w &\Leftrightarrow \overline{C}_w \cap C_v \neq \varnothing
\end{align}
are different for general $k \leq n$.  
The relation $\preceq$ is transitive, and so defines a partial order on $\WW_{n,k}$.
The relation $\preceq'$ is not transitive, but can be used to define a partial order.

\begin{proposition}
\label{no-loop-proposition}
The transitive closure of the relation $\preceq'$ do not contain loops.
\end{proposition}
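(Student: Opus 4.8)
The plan is to observe that the relation $\preceq'$ is controlled by the cellular decomposition of $X_{n,k}$ produced in Theorem~\ref{cohomology-of-x}(2): after refining that decomposition so that each stratum is a single cell, the strata acquire a linear order which is weakly compatible with $\preceq'$, and acyclicity then follows formally. Note first that in the definition of $\preceq'$ the closure $\overline{C}_w$ is taken inside $X_{n,k}$, so every cell $C_v$ appearing (for $v \in \WW_{n,k}$) is one of the cells of that decomposition.

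First I would apply Lemma~\ref{single-cell-lemma} to the cellular decomposition of Theorem~\ref{cohomology-of-x}(2), obtaining a chain of closed subvarieties
\begin{equation*}
X_{n,k} = X_0 \supset X_1 \supset \cdots \supset X_m = \varnothing, \qquad m = |\WW_{n,k}|,
\end{equation*}
with the same cells, in which each $X_{i-1} \setminus X_i$ is a single cell $C_{\tau(i)}$; this gives a bijection $\tau \colon \{1,\dots,m\} \to \WW_{n,k}$, with inverse denoted $\phi$. Since each $X_{i-1}$ is closed in $X_{n,k}$ and contains $C_{\tau(i)}$, we get $\overline{C}_{\tau(i)} \subseteq X_{i-1} = C_{\tau(i)} \sqcup C_{\tau(i+1)} \sqcup \cdots \sqcup C_{\tau(m)}$. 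Hence if $C_{\tau(j)} \cap \overline{C}_{\tau(i)} \neq \varnothing$ then $C_{\tau(j)}$ meets $X_{i-1}$, and because the cells are pairwise disjoint and $X_{i-1}$ is the disjoint union of the $C_{\tau(l)}$ with $l \geq i$, this forces $j \geq i$. Equivalently: $v \preceq' w$ implies $\phi(v) \geq \phi(w)$.

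Finally I would conclude as follows. If the transitive closure of $\preceq'$ contained a loop, unfolding it produces words $u_1, u_2, \dots, u_l \in \WW_{n,k}$, not all equal, with $u_1 \preceq' u_2 \preceq' \cdots \preceq' u_l \preceq' u_1$. Applying the inequality of the previous paragraph around this cycle yields $\phi(u_1) \geq \phi(u_2) \geq \cdots \geq \phi(u_l) \geq \phi(u_1)$, so all $\phi(u_i)$ agree; since $\phi$ is a bijection, all the $u_i$ agree, contradicting the assumption. Therefore the transitive closure of $\preceq'$ has no loops.

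There is no real obstacle: the substantive input --- that the cells $C_w$ ($w \in \WW_{n,k}$) form a cellular decomposition of $X_{n,k}$ --- was already established, and the rest is the elementary fact that refining a cellular decomposition to single cells linearly orders the cells compatibly with closure-incidence. The only point to handle with a little care is the remark above that $\overline{C}_w$ means the closure in $X_{n,k}$, so that the relevant cell closures genuinely sit inside the closed pieces $X_{i-1}$.
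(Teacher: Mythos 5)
Your proof is correct and follows essentially the same route as the paper: refine the cellular decomposition of $X_{n,k}$ via Lemma~\ref{single-cell-lemma} so each stratum is a single cell, note that $\overline{C}_w$ lies in the closed piece containing $C_w$ and hence only meets cells occurring at the same or later stage, and conclude that a cycle would force all its members to coincide. Your version is just a more explicit write-up (and correctly uses $j \geq i$ where the paper loosely writes $j > i$).
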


\begin{proof}
We use the fact that 
the space $X_{n,k}$ admits a cellular decomposition
$(X_{n,k} = X_0 \supset X_1 \supset X_2 \supset \cdots \supset X_m = \varnothing)$
with $m = |\WW_{n,k}| = k! \cdot \Stir(n,k)$
where each difference $X_i - X_{i+1}$ is a single cell $C_w$ for some $w \in \WW_{n,k}$.
If $C_w = X_i - X_{i+1}$, the fact that $X_i$ is closed means $\overline{C}_w \subseteq X_i$.
In particular, the closure $\overline{C}_w$ can only meet those cells of the form
$C_v = X_j - X_{j+1}$ for $j > i$.
\end{proof}

By Proposition~\ref{no-loop-proposition}, we could use either $\preceq$ or $\preceq'$ 
to define a partial order on $\WW_{n,k}$.
Example~\ref{bad-closures} shows that $\preceq'$ is poorly behaved with respect to dimension
in that we can have $v \prec' w$ but $\dim(C_v) = \dim(C_w)$.  
We therefore define
the {\em Bruhat order}
$\leq$ on $\WW_{n,k}$ to be the 
relation 
\begin{center}
$v \leq w$ if and only if $C_w \subseteq \overline{C}_v$.
\end{center}
This is dual to the order $\preceq$, but coincides with the classical Bruhat order when $k = n$
and $\WW_{n,n} = S_n$.

\begin{example}
The
Bruhat order on $\WW_{3,2}$ is the following poset:
\begin{center}
\begin{tikzpicture}[scale = 1]

\node(A) at (0,0) {$122$};

\node(B) at (-2,2) {$121$};

\node(C) at (0,2) {$211$};

\node(D) at (2,2) {$112$};

\node(E) at (-1,4) {$212$};

\node(F) at (1,4) {$221$};

\draw[ - ] (B) -- (A);
\draw [ -] (C) -- (A);
\draw [-] (D) -- (A);
\draw [-] (B) -- (E);
\draw [-] (C) -- (E);
\draw [-] (C) -- (F);
\draw [-] (D) -- (F);

\end{tikzpicture}
\end{center}
\end{example}

Although the Bruhat order on $\WW_{n,k}$ always has a unique minimal element
(namely the word $1 2 3 \dots k k \dots k$), this example shows that 
$\WW_{n,k}$ typically has more than one maximal element.
The authors believe that $\WW_{n,k}$ is ranked by codimension.

Let $P$ be a finite ranked poset
with rank function $\rho: P \rightarrow \ZZ_{\geq 0}$ 
and let $\mu: P \times P \rightarrow \ZZ$ be the 
M\"obius function of $P$.
The poset $P$ is called {\em Eulerian} if for any $x \leq y$ in $P$ we have
$\mu(x,y) = (-1)^{\rho(y) - \rho(x)}$.  The Bruhat order on $S_n$ is known to be 
Eulerian.  The example above may suggest that $\WW_{n,k}$ is also Eulerian,
but this is  not the case in general.

\begin{example}
The Bruhat order on $\WW_{4,3}$ is the following poset:
\begin{center}
\begin{tikzpicture}
\node (0and0) at (0.0,-4.38) {$1233$};  \node (1and0) at (-1.5,-2.63) {$1322$};  \node (1and1) at (-0.5,-2.63) {$2133$};  \node (1and2) at (0.5,-2.63) {$1232$};  \node (1and3) at (1.5,-2.63) {$1223$};  \node (2and0) at (-4.0,-0.88) {$2311$};  \node (2and1) at (-3.0,-0.88) {$3122$};  \node (2and2) at (-2.0,-0.88) {$1323$};  \node (2and3) at (-1.0,-0.88) {$2131$};  \node (2and4) at (0.0,-0.88) {$1231$};  \node (2and5) at (1.0,-0.88) {$1332$};  \node (2and6) at (2.0,-0.88) {$2113$};  \node (2and7) at (3.0,-0.88) {$1213$};  \node (2and8) at (4.0,-0.88) {$1123$};  \node (3and0) at (-5.0,0.88) {$3211$};  \node (3and1) at (-4.0,0.88) {$2313$};  \node (3and2) at (-3.0,0.88) {$3121$};  \node (3and3) at (-2.0,0.88) {$1321$};  \node (3and4) at (-1.0,0.88) {$2132$};  \node (3and5) at (0.0,0.88) {$2331$};  \node (3and6) at (1.0,0.88) {$3112$};  \node (3and7) at (2.0,0.88) {$1312$};  \node (3and8) at (3.0,0.88) {$2123$};  \node (3and9) at (4.0,0.88) {$1132$};  \node (3and10) at (5.0,0.88) {$2213$};  \node (4and0) at (-3.5,2.63) {$3212$};  \node (4and1) at (-2.5,2.63) {$2312$};  \node (4and2) at (-1.5,2.63) {$3123$};  \node (4and3) at (-0.5,2.63) {$3221$};  \node (4and4) at (0.5,2.63) {$2321$};  \node (4and5) at (1.5,2.63) {$3132$};  \node (4and6) at (2.5,2.63) {$2231$};  \node (4and7) at (3.5,2.63) {$3312$};  \node (5and0) at (-1.0,4.38) {$3213$};  \node (5and1) at (0.0,4.38) {$3231$};  \node (5and2) at (1.0,4.38) {$3321$};  \draw (0and0) -- (1and0);  \draw (0and0) -- (1and1);  \draw (0and0) -- (1and2);  \draw (0and0) -- (1and3);  \draw (1and0) -- (2and0);  \draw (1and0) -- (2and1);  \draw (1and0) -- (2and2);  \draw (1and0) -- (2and5);  \draw (1and0) -- (2and8);  \draw (1and1) -- (2and0);  \draw (1and1) -- (2and1);  \draw (1and1) -- (2and3);  \draw (1and1) -- (2and6);  \draw (1and2) -- (2and2);  \draw (1and2) -- (2and3);  \draw (1and2) -- (2and4);  \draw (1and2) -- (2and8);  \draw (1and3) -- (2and5);  \draw (1and3) -- (2and6);  \draw (1and3) -- (2and7);  \draw (1and3) -- (2and8);  \draw (2and0) -- (3and0);  \draw (2and0) -- (3and1);  \draw (2and0) -- (3and5);  \draw (2and0) -- (3and10);  \draw (2and1) -- (3and0);  \draw (2and1) -- (3and2);  \draw (2and1) -- (3and6);  \draw (2and2) -- (3and1);  \draw (2and2) -- (3and2);  \draw (2and2) -- (3and3);  \draw (2and3) -- (3and1);  \draw (2and3) -- (3and2);  \draw (2and3) -- (3and4);  \draw (2and3) -- (3and10);  \draw (2and4) -- (3and3);  \draw (2and4) -- (3and4);  \draw (2and5) -- (3and5);  \draw (2and5) -- (3and6);  \draw (2and5) -- (3and7);  \draw (2and5) -- (3and9);  \draw (2and6) -- (3and5);  \draw (2and6) -- (3and6);  \draw (2and6) -- (3and8);  \draw (2and6) -- (3and10);  \draw (2and7) -- (3and7);  \draw (2and7) -- (3and8);  \draw (2and8) -- (3and9);  \draw (2and8) -- (3and10);  \draw (3and0) -- (4and0);  \draw (3and0) -- (4and3);  \draw (3and0) -- (4and7);  \draw (3and1) -- (4and0);  \draw (3and1) -- (4and1);  \draw (3and2) -- (4and0);  \draw (3and2) -- (4and2);  \draw (3and2) -- (4and7);  \draw (3and3) -- (4and1);  \draw (3and3) -- (4and2);  \draw (3and4) -- (4and1);  \draw (3and4) -- (4and2);  \draw (3and5) -- (4and3);  \draw (3and5) -- (4and4);  \draw (3and5) -- (4and6);  \draw (3and6) -- (4and3);  \draw (3and6) -- (4and5);  \draw (3and6) -- (4and7);  \draw (3and7) -- (4and4);  \draw (3and7) -- (4and5);  \draw (3and8) -- (4and4);  \draw (3and8) -- (4and5);  \draw (3and9) -- (4and6);  \draw (3and9) -- (4and7);  \draw (3and10) -- (4and6);  \draw (3and10) -- (4and7);  \draw (4and0) -- (5and0);  \draw (4and1) -- (5and0);  \draw (4and2) -- (5and0);  \draw (4and3) -- (5and1);  \draw (4and3) -- (5and2);  \draw (4and4) -- (5and1);  \draw (4and5) -- (5and1);  \draw (4and6) -- (5and2);  \draw (4and7) -- (5and2);  
\end{tikzpicture}
\end{center}
We leave it to the reader to check that $\mu(3211,3123) = 0$, so that $\WW_{4,3}$ is 
not Eulerian.  Indeed, the closed interval $[3211, 3123] = \{3211, 3212, 3213\}$ in 
this poset corresponds to the standard cellular decomposition of 
$U \times \PP^2 = \CC^3 \times \PP^2$.
\end{example}

The Bruhat order on $S_n$ can be described combinatorially
in two ways.  Given a permutation $w \in S_n$, the {\em rank function}
$r(w): [n] \times [n] \rightarrow \ZZ_{\geq 0}$ is the rank function of the permutation matrix
of $w$.  Explicitly, we have
\begin{equation}
r(w): (i,j) \mapsto | \{ 1 \leq i' \leq i \,:\, w_{i'} \leq j \} |.
\end{equation}
Given two permutations $w, v \in S_n$, the following three conditions are equivalent:
\begin{enumerate}
\item $w \leq v$ in Bruhat order on $S_n$,
\item $r(w) \geq r(v)$ pointwise as functions on $[n] \times [n]$,
\item there exist transpositions $t_1, \dots, t_r \in S_n$ such that 
$v = t_1 \cdots t_r w$ and $\inv(t_{i+1} \cdots t_r w) > \inv(t_i t_{i+1} \cdots t_r w)$
for all $1 \leq i \leq r$.
\end{enumerate}
Moreover, the cover relations in the Bruhat order on $S_n$ have the form
$w \lessdot tw$ where $\inv(tw) = \inv(w) + 1$.

\begin{problem}
Give a combinatorial description of the Bruhat order on $\WW_{n,k}$.  
What are the cover relations of this partial order?
\end{problem}

\subsection{Relating $X_{n,k}$ and $Y_{n,k}$}
The main object of study of this paper was the space
$X_{n,k} = \MMM_{n,k}/T$.  The double coset space
$Y_{n,k} = U \backslash \MMM_{n,k} /T$ was not geometrically well
behaved (in particular, not Hausdorff).  However, working over the finite field
$\FF_q$ we saw that $Y_{n,k}^{\FF_q}$ has size equal to the number of complete
flags in $\FF_q^n$.  
We conjecture that the spaces $X_{n,k}$ and $Y_{n,k}$ are topologically equivalent.

\begin{conjecture}
\label{x-y-equivalent}
Let $k \leq n$ be positive integers.  The spaces $X_{n,k}$ and $Y_{n,k}$
are homotopy equivalent.
\end{conjecture}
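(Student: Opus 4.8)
The plan is to exhibit an explicit deformation retraction, or more precisely to construct a continuous map relating the two spaces that respects the cellular structure. First I would try to understand why $Y_{n,k} = U\backslash\MMM_{n,k}/T$ fails to be Hausdorff and whether that failure is a genuine obstruction to homotopy equivalence or merely a point-set pathology that is invisible to homotopy. The key observation is that $X_{n,k} = \MMM_{n,k}/T$ carries a residual action of the unipotent group $U$ (since the $U$-action on $\MMM_{n,k}$ commutes with the $T$-action, $U$ acts on the quotient $X_{n,k}$), and $Y_{n,k}$ is the quotient $U\backslash X_{n,k}$. So the conjecture says that quotienting $X_{n,k}$ by the $U$-action does not change its homotopy type. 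In the classical case $k=n$ this is exactly the statement that $GL_n(\CC)/T \to GL_n(\CC)/B$ is a homotopy equivalence, which holds because the fibers are copies of the contractible group $U$ and the map is a fiber bundle over a Hausdorff base.

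The central difficulty is precisely that for $k<n$ the map $X_{n,k}\twoheadrightarrow Y_{n,k}$ is not a fiber bundle and $Y_{n,k}$ is not Hausdorff, so the classical argument breaks down. The approach I would take is to work cell-by-cell using the decompositions $X_{n,k}=\bigsqcup_{w\in\WW_{n,k}} C_w$ and $Y_{n,k}=\bigsqcup_{w\in\WW_{n,k}} D_w$, where the natural maps $U\times\widehat C_w\to C_w$ and $\widehat C_w\to D_w$ are bijections (and, by Lemma~\ref{word-cells-are-affine} and its analogue, $C_w$ is an affine space of dimension $\binom k2+\dim(w)$ while $D_w$ is an affine space of dimension $\dim(w)$). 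The first step is to show that the projection $C_w\to D_w$ is, on each cell, a trivial $U$-bundle — this should follow from the Gaussian-elimination description, where the $U$-coordinates decouple from the $\widehat C_w$-coordinates. The second, harder step is to patch these cell-wise trivializations into a global statement about the filtration $X_{n,k}=X_0\supset X_1\supset\cdots$: one wants to build inductively a map $Y_{n,k}\to X_{n,k}$ (a "section up to homotopy") by choosing, compatibly with closures, a basepoint in each $U$-fiber. Concretely, one natural choice is to send the $T$-orbit $mT$ of a matrix $m$ to its pattern-matrix normal form $m'T$ from Proposition~\ref{pattern-matrix-representatives}; the question is whether $mT\mapsto m'T$ is continuous on $X_{n,k}$ (it visibly is on each cell).

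I expect the main obstacle to be exactly this continuity/patching issue: the pattern-matrix normal form jumps discontinuously across cell boundaries in general (this is the same phenomenon that makes $\{C_w\}$ not a CW decomposition and $\preceq'$ non-transitive), so a naive section will not be continuous. The fix I would attempt is either (a) to use a partition of unity or a mapping-cylinder argument cell-by-cell along the filtration, building the homotopy equivalence skeleton-by-skeleton and checking at each stage that extending over the next cell closure is unobstructed because the relevant fiber $U\cong\CC^{\binom k2}$ (or the relevant subgroup $U(w)$) is contractible; or (b) to prove directly that both $H^\bullet(X_{n,k})$ and $H^\bullet(Y_{n,k})$ (suitably interpreted, e.g. via Borel–Moore homology of the paving by affines) are free of the same rank $k!\cdot\Stir(n,k)$ with matching cell classes, and then upgrade a cohomology isomorphism to a homotopy equivalence using that both spaces are, up to homotopy, built from even-dimensional cells — though this last upgrade is itself delicate since $Y_{n,k}$ is not a CW complex. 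Given these difficulties, I would present the conjecture as a conjecture (as the paper does) rather than claim a complete proof, and I would at minimum record the cell-wise trivial-$U$-bundle statement and the consequence $|Y_{n,k}^{\FF_q}|\cdot q^{\binom k2}=|X_{n,k}^{\FF_q}|$ as supporting evidence; the full topological statement would require resolving the patching obstruction above.
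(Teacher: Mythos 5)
This statement is left as a conjecture in the paper: no proof is given, only the remark that the surjection $X_{n,k} \twoheadrightarrow Y_{n,k}$ can be shown to be a fiber bundle with contractible fiber $U$, and that the sole obstruction to concluding a homotopy equivalence is that the base $Y_{n,k}$ is not Hausdorff (there exist contractible-fiber bundles over non-Hausdorff bases which are not homotopy equivalences). Your proposal is consistent with this: you identify essentially the same picture ($Y_{n,k} = U \backslash X_{n,k}$, the $k=n$ analogy, the cell-wise product structure $C_w \cong U \times \widehat{C}_w$ versus $D_w \cong \widehat{C}_w$, which is already in Section~\ref{Orbit}), you correctly locate the difficulty in patching cell-wise trivializations across cell closures, and you rightly stop short of claiming a proof and leave the statement as a conjecture, exactly as the paper does. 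So there is no gap relative to the paper, because neither argument exists; your sketch is a reasonable program plus supporting evidence rather than a proof.

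One substantive correction: you assert that for $k<n$ the map $X_{n,k} \twoheadrightarrow Y_{n,k}$ ``is not a fiber bundle,'' but the paper claims the opposite --- the projection is (can be shown to be) a locally trivial $U$-bundle; the failure is purely that the base $Y_{n,k}$ is non-Hausdorff, so the standard ``contractible fiber $\Rightarrow$ homotopy equivalence'' argument does not apply. This matters for your proposed fixes: option (a) is essentially an attempt to rebuild by hand what local triviality already gives, and the real issue is whether the long-exact-sequence/Whitehead-type arguments survive the pathological point-set topology of $Y_{n,k}$ (which is not a CW complex, as Example~\ref{y-is-bad} shows it is not even Hausdorff); option (b) is also delicate because $Y_{n,k}$ is not a variety in any usable sense, so the Borel--Moore/paving machinery of Section~\ref{Cellular} does not directly apply to it, and upgrading a cohomology isomorphism to a homotopy equivalence would in any case require a map realizing it together with control on fundamental groups. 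Your finite-field count $|X_{n,k}^{\FF_q}| = q^{\binom{k}{2}}|Y_{n,k}^{\FF_q}|$ is indeed recorded in the paper (Corollary~\ref{y-set-count} and the displayed enumeration following it) and is fair supporting evidence, but nothing more.
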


The natural surjection $X_{n,k} \twoheadrightarrow Y_{n,k}$ which sends 
a coset $mT$ to the double coset $UmT$ can be shown to be a fiber bundle with
(contractible) fiber $U$.  In general, a fiber bundle $E \twoheadrightarrow B$
with contractible fiber $F$ is a homotopy equivalence
if the base space $B$ is Hausdorff.  Unfortunately, there exist examples 
of fiber bundles $E \twoheadrightarrow B$ with contractible fibers which 
fail to be homotopy equivalences when  $B$ is not Hausdorff.
Despite this, we believe that the natural surjection $X_{n,k} \twoheadrightarrow Y_{n,k}$
is a homotopy equivalence.

\subsection{Torus actions}
The $k$-dimensional torus $T_k \subseteq GL_k(\CC)$ acts on 
$X_{n,k} = \MMM_{n,k}/T$ by the rule $t.(mT) = tmT$ for all $t \in T_k$ 
and $mT \in X_{n,k}$.
Explicitly, the torus $T_k$ acts by scaling the rows of the matrix $m$.

\begin{observation}
The fixed points of the action of $T_k$ on $X_{n,k}$ are in bijection with words
in $\WW_{n,k}$.  In particular, they are of the form $mT$ where 
\begin{equation*}
m_{i,j} = \begin{cases}
1 & w_j = i \\
0 & w_j \neq i,
\end{cases}
\end{equation*}
where $w = w_1 \dots w_n \in \WW_{n,k}$ is some Fubini word.
\end{observation}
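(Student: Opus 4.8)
The plan is to work with the line-configuration model $X_{n,k} = \{(\ell_1,\dots,\ell_n) : \ell_i \in \PP^{k-1},\ \ell_1+\cdots+\ell_n = \CC^k\}$ of Definition~\ref{main-definition-x}, under which the action of $T_k \subseteq GL_k(\CC)$ is the coordinatewise action $t.(\ell_1,\dots,\ell_n) = (t\ell_1,\dots,t\ell_n)$. Since $T_k$ does not permute the $n$ factors, a configuration $\ell_\bullet$ is $T_k$-fixed if and only if each line $\ell_j$ is individually fixed by $T_k$.

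First I would classify the $T_k$-fixed points of $\PP^{k-1}$. If $v = (v_1,\dots,v_k) \in \CC^k\setminus\{0\}$ spans a line $\ell$, then $\ell$ is $T_k$-stable precisely when $(t_1 v_1,\dots,t_k v_k)$ is a scalar multiple of $(v_1,\dots,v_k)$ for every $\mathrm{diag}(t_1,\dots,t_k) \in T_k$; choosing $t$ with distinct entries in two coordinates where $v$ is nonzero produces a contradiction, so $v$ has exactly one nonzero coordinate. Hence the $T_k$-fixed lines in $\PP^{k-1}$ are exactly the coordinate lines $\CC e_1, \dots, \CC e_k$. Consequently a $T_k$-fixed configuration in $(\PP^{k-1})^n$ is of the form $(\CC e_{w_1}, \dots, \CC e_{w_n})$ for a unique word $w = w_1\cdots w_n \in [k]^n$, and the spanning condition $\ell_1+\cdots+\ell_n = \CC^k$ translates into the requirement that $\{e_{w_1},\dots,e_{w_n}\}$ span $\CC^k$, i.e.\ that every letter of $[k]$ occur in $w$ — precisely the condition $w \in \WW_{n,k}$.

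Finally I would translate this back to the matrix model $X_{n,k} = \MMM_{n,k}/T$: the configuration $(\CC e_{w_1},\dots,\CC e_{w_n})$ corresponds to the coset $mT$ where $m_{i,j} = 1$ if $w_j = i$ and $0$ otherwise, and since each letter of $[k]$ appears this matrix has full rank $k$ and no zero column, so it genuinely lies in $\MMM_{n,k}$. Distinct Fubini words yield distinct tuples of coordinate lines, and coordinate-line tuples coincide in $X_{n,k}$ only when their words agree (column scaling by $T$ cannot move the position of the single nonzero entry in a column), so $w \mapsto mT$ is the asserted bijection. No step here is a serious obstacle; the only point requiring a moment's care is the classification of torus-fixed lines in the first paragraph, and that is an elementary direct computation.
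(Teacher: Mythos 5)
Your proof is correct. The paper records this as an Observation without giving any argument, and your verification---reducing to the fact that the $T_k$-fixed lines in $\PP^{k-1}$ are exactly the coordinate lines, then noting that the spanning condition $\ell_1+\cdots+\ell_n=\CC^k$ forces every letter of $[k]$ to appear, which is precisely the Fubini condition---is exactly the routine check the paper's statement implicitly relies on.
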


Since the action of the 
torus $T_k$ on $X_{n,k}$ is continuous, we can ask for the $T_k$-equivariant 
cohomology of $X_{n,k}$.  
Explicitly, if $E T_k \rightarrow B T_k$ is the universal principal bundle of $T_k$,
the $T_k$-equivariant cohomology $H_{T_k}^{\bullet}(X_{n,k})$ is the 
singular cohomology of the fiber product $E T_k \times_{T_k} X_{n,k}$:
\begin{equation}
H^{\bullet}_{T_k}(X_{n,k}) := H^{\bullet}(E T_k \times_{T_k} X_{n,k}).
\end{equation}

\begin{problem}
Compute the $T_k$-equivariant cohomology $H^{\bullet}_{T_k}(X_{n,k})$.
\end{problem}

When $k = n$ we have the identification 
\begin{equation}
H^{\bullet}_{T_n}(\mathcal{F \ell}(n)) \cong
\ZZ[x_1, \dots, x_n, t_1, \dots, t_n]/
\langle e_i(x_1, \dots, x_n) - e_i(t_1, \dots, t_n) \,:\, 1 \leq i \leq n \rangle.
\end{equation}

\subsection{Springer fibers}
It is shown in \cite{HRS} that the quotient $R_{n,k}^{\QQ}$ has Frobenius image 
in the dual Hall-Littlewood basis given by
\begin{equation}
\label{dual-hall-littlewood-expansion}
\grFrob(R_{n,k}^{\QQ};q) = 
\rev_q \left[  
\sum_{\substack{\lambda \vdash n \\ \ell(\lambda) = k}}  
q^{\sum_i (i-1)(\lambda_i  - 1)} \cdot
{k \brack m_1(\lambda), m_2(\lambda), \dots, m_n(\lambda)}_q \cdot
Q'_{\lambda} 
\right].
\end{equation}
Here $\ell(\lambda)$ is the number of parts of a partition $\lambda$, $m_i(\lambda)$
is the multiplicity of a part $i$ in $\lambda$, and 
\begin{equation}
{k \brack m_1(\lambda), m_2(\lambda), \dots, m_n(\lambda)}_q =
\frac{[k]!_q}{[m_1(\lambda]!_q [m_2(\lambda)]!_q \cdots [m_n(\lambda)]!_q}
\end{equation}
is the $q$-multinomial coefficient.

Let $\lambda \vdash n$.
The symmetric function $Q'(\xx;q)$ has the following geometric interpretation.
Let $X$ be the $n \times n$ nilpotent matrix in Jordan form with Jordan 
blocks of sizes
$\lambda_1, \lambda_2, \dots$.  The {\em Springer fiber} $\mathcal{S}_{\lambda}$ is 
the subset of the flag manifold $\Fl(n)$ given by
\begin{equation}
\mathcal{B}_{\lambda} = \{ (V_1 \subset V_2 \subset \cdots V_n) \in \Fl(n) \,:\, X.V_i = V_i \}.
\end{equation}
Let $H^{\bullet}(\mathcal{B}_{\lambda}; \CC)$ be the cohomology ring of 
$\mathcal{B}_{\lambda}$ with complex coefficients. This cohomology ring carries
a subtle action of the symmetric group $S_n$ called the {\em Springer action}.
The isomorphism type of this module is
\begin{equation}
\grFrob(H^{\bullet}(\mathcal{B}_{\lambda}; \CC); q) = \rev_q [Q'_{\lambda}(\xx;q^2)].
\end{equation}

On the other hand, we know that $X_{n,k}$ has cohomology ring
equal to $R_{n,k}$.  This suggests the following problem.

\begin{problem}
Give a geometric proof of Equation~\eqref{dual-hall-littlewood-expansion} by 
exhibiting an appropriate stratification of $X_{n,k}$ into 
spaces homotopy equivalent to Springer fibers $\mathcal{B}_{\lambda}$.
\end{problem}

Let $\nu \vdash m$ be a partition and consider the associated delta operator
$\Delta'_{s_{\nu}}$ indexed by the Schur function $s_{\nu}$.
Haglund, Rhoades, and Shimozono proved \cite{HRSSchur} the following 
$Q'$-basis expansion of $\omega \Delta'_{s_{\nu}}$ at $t = 0$.
\begin{equation}
\label{dual-hl-expansion-schur}
\omega \Delta'_{s_{\nu}} e_n|_{t = 0} = \sum_{k = \ell(\nu) + 1}^{m+1}
P_{\nu,k-1}(q) \sum_{\substack{\lambda \vdash n \\ \ell(\lambda) = k}} 
q^{\sum_i (i-1)(\lambda_i - 1)} \cdot
{k \brack m_1(\lambda), m_2(\lambda), \dots, m_n(\lambda)}_q  \cdot
Q'_{\lambda},
\end{equation}
where
\begin{equation}
P_{\nu,k-1} = q^{m - {k \choose 2}} \sum_{\substack{\rho \vdash m \\ \ell(\rho) = k-1}}
q^{\sum_i (i-1) \rho_i} \cdot 
{k - 1 \brack m_1(\rho), m_2(\rho), \dots, m_n(\rho)}_q \cdot K_{\nu,\rho}(q)
\end{equation}
and $K_{\nu,\rho}(q)$ is the Kostka-Foulkes polynomial.
The case where $\nu = (1^{k-1})$ is a single column corresponds to the 
Delta Conjecture.

In this paper we proven that (up to applying $\QQ \otimes_{\ZZ} (-)$)
the cohomology ring of $X_{n,k}$ has graded Frobenius image
$(\rev_q \circ \omega) \Delta'_{e_{k-1}} e_n|_{t = 0, q = q^2}$.  The similarity
between the dual Hall-Littlewood
expansions of Equations~\eqref{dual-hall-littlewood-expansion} and 
\eqref{dual-hl-expansion-schur}  suggests the following problem.

\begin{problem}
\label{delta-nu-problem}
Let $\nu \vdash m$ be a partition.  Find a variety $X_{n,\nu}$ whose cohomology carries
an action of $S_n$ such that 
$\QQ \otimes_{\ZZ} H^{\bullet}(X_{n,\nu})$ has graded Frobenius image
$(\rev_q \circ \omega) \Delta'_{s_{\nu}} e_n|_{t = 0, q = q^2}$.
\end{problem}

When $\nu = (1^{k-1})$ the variety $X_{n,k}$ is a solution
to Problem~\ref{delta-nu-problem}.

\section{Acknowledgments}
\label{Acknowledgments}

The authors are grateful to Sara Billey, Linda Chen,
Thomas Lam, and Vic Reiner for  helpful conversations.
B. Rhoades was partially supported by NSF Grant DMS-1500838.

\end{document}